\pgfplotsset{compat=1.16} 
\numberwithin{equation}{section}
\theoremstyle{plain}
\newtheorem{theorem}{Theorem}[section]
\newtheorem{prop}[theorem]{Proposition}
\newtheorem{lem}[theorem]{Lemma}
\newtheorem{cor}[theorem]{Corollary}
\theoremstyle{definition}
\newtheorem{example}[theorem]{Example}
\newtheorem{question}[theorem]{Question}
\newtheorem{task}[theorem]{Task}
\newtheorem{remark}[theorem]{Remark}
\newtheorem*{question*}{Question}
\theoremstyle{definition}
\newtheorem{definition}[theorem]{Definition}
\theoremstyle{definition}
\newtheorem{thmx}{Theorem}
\newcommand{\C}{\mathbb{C}}
\newcommand{\Z}{\mathbb{Z}}
\newcommand{\D}{\mathbb{D}}
\newcommand{\cF}{\mathcal{F}}
\newcommand{\cR}{\mathcal{R}}
\newcommand{\cG}{\mathcal{G}}
\newcommand{\cK}{\mathcal{K}}
\newcommand{\cI}{\mathcal{I}}
\newcommand{\cD}{\mathcal{D}}
\newcommand{\cV}{\mathcal{V}}
\newcommand{\cT}{\mathcal{T}}
\newcommand{\cU}{\mathcal{U}}
\newcommand{\dT}{\widetilde{\mathcal{T}}}
\newcommand{\cW}{\mathcal{W}}
\newcommand{\cQ}{\mathcal{Q}}
\renewcommand{\phi}{\varphi}
\DeclareMathOperator{\Int}{Int}
\DeclarePairedDelimiterX{\inp}[2]{\langle}{\rangle}{#1, #2}
\date{\today}
\begin{document}

\title[Correspondences on hyperelliptic surfaces, and Hurwitz spaces]{Correspondences on hyperelliptic surfaces, combination theorems, and Hurwitz spaces}

\author[S. Mukherjee]{Sabyasachi Mukherjee}
\address{School of Mathematics, Tata Institute of Fundamental Research, 1 Homi Bhabha Road, Mumbai 400005, India}
\email{sabya@math.tifr.res.in, mukherjee.sabya86@gmail.com}
\thanks{Both authors were supported by the Department of Atomic Energy, Government of India, under project no.12-R\&D-TFR-5.01-0500, an endowment of the Infosys Foundation, and SERB research project grant MTR/2022/000248.} 

\author{S. Viswanathan}
\address{School of Mathematics, Tata Institute of Fundamental Research, 1 Homi Bhabha Road, Mumbai 400005, India}
\email{viswa@math.tifr.res.in, drsvkrishna4@gmail.com}

\begin{abstract}
    We construct a general class of correspondences on hyperelliptic Riemann surfaces of arbitrary genus that combine finitely many Fuchsian genus zero orbifold groups and Blaschke products. As an intermediate step, we first construct analytic combinations of these objects as partially defined maps on the Riemann sphere. We then give an algebraic characterization of these analytic combinations in terms of hyperelliptic involutions and meromorphic maps on compact Riemann surfaces. These involutions and meromorphic maps, in turn, give rise to the desired correspondences. The moduli space of such correspondences can be identified with a product of Teichm\"uller spaces and Blaschke spaces. The explicit description of the correspondences then allows us to construct a dynamically natural injection of this product space into appropriate Hurwitz spaces.
\end{abstract}

\maketitle

\setcounter{tocdepth}{1}
\tableofcontents

\section{Introduction}\label{intro_sec}

\noindent Algebraic correspondences were first studied as dynamical objects in a seminal paper by Bullett and Penrose \cite{bullett1994mating}. These are Zariski-closed subsets $\mathfrak{C}\subset Z\times W$, where $Z$ and $W$ are algebraic varieties. A correspondence $\mathfrak{C}$ can also be regarded as a multi-valued map $f: Z\rightarrow W$, for which there exist (finite and surjective) coordinate projections $\pi_1: \mathfrak{C}\rightarrow Z$, $\pi_2: \mathfrak{C}\rightarrow W$, allowing $f$ to be expressed as the composition $f = \pi_2\circ \pi_1^{-1}$ (cf. \cite{Ful98,dinh2006distribution, bartholdi2024correspondences}). Iteration of the multi-valued map $f$ gives us a dynamical system.
\begin{equation*}
\begin{tikzcd}
	& \mathfrak{C} \\
	\\
	Z && W
	\arrow["{\pi_1}"', from=1-2, to=3-1]
	\arrow["{\pi_2}", from=1-2, to=3-3]
	\arrow["f"', from=3-1, to=3-3]
\end{tikzcd}
\end{equation*} 

\subsection{Dynamical unification of Fuchsian groups and Blaschke products}
It was anticipated by Fatou that a comprehensive understanding of the dynamics of algebraic correspondences could provide a unifying framework for the dynamical theories of Kleinian groups and rational maps on the Riemann sphere \cite{Fat29}. However, efforts to realize this vision in complete generality have encountered significant technical obstacles (see \cite{bullett2001regular}). Meanwhile, several explicit classes of correspondences realizing combinations/matings of rational maps and Kleinian groups have been constructed and their parameter spaces have been investigated in the last few decades (see \cite{bullett1994mating, bullett2000mating,BF05,BH07,bullett2016mating, bullett2017dynamics, bullett2020mating, mj2023matings, bullett2024mating, luo2024general, de2024equidistribution, mj2024simultaneous, luo2025} for the mating phenomena in the holomorphic world, and \cite{LLMM21,LLMM23,LM23,LMM24,luo2024general} for parallel results in the antiholomorphic world). The correspondences that feature in these are multi-valued self-maps of the Riemann sphere, possibly with nodal singularities.     

In this paper, we present a rich class of correspondences $\mathfrak{C}\subset\Sigma\times\Sigma$, where $\Sigma$ is a hyperelliptic Riemann surface of genus $g$. These correspondences naturally arise as matings of
\begin{itemize}[leftmargin=6mm]
    \item a subhyperbolic rational map $R$,
    \item hyperbolic Blaschke products $\{B_j\}_{j=1}^r$, and
    \item Fuchsian groups $\{G_i\}_{i=1}^l$, with each quotient $\D/G_i$ belonging to a specific class $\mathcal{F}$ of genus zero orbifolds (see Section~\ref{mateable_maps_subsec} for a precise definition of~$\mathcal{F}$).
\end{itemize}
As a result, we have a tractable approach to fully understand the dynamics of $\mathfrak{C}$. Further, our construction not only serves to extend classes of correspondences arising as matings in the cited literature, but also yields the first such examples of correspondences on higher genus Riemann surfaces. We now state one of our main theorems.

\begin{thmx}\label{mating_thm_intro}
Given hyperbolic Blaschke products $\{B_j\}_{j=1}^r$, and Fuchsian groups $\{G_i\}_{i=1}^l$, with $\D/G_i\in \mathcal{F}$. There exists a compact Riemann surface $\Sigma$ equipped with a hyperelliptic involution $\eta$, and a meromorphic map $\cR:\Sigma\rightarrow\widehat{\C}$, such that the algebraic correspondence $\mathfrak{C}\in \Sigma\times\Sigma$, defined by,  
\begin{equation}\label{intro_corr_eq}
    (z, w) \in \mathfrak{C} \iff \cR(w) = \cR(\eta(z)),\ w \neq \eta(z),
\end{equation}
combines the dynamics of $\{G_i\}_{i=1}^l$ and $\{B_j\}_{j=1}^r$ in the following sense. There exist $\eta-$symmetric subsets $\{\dT_{i}\}_{i=1}^l$ and $\{\widetilde{\cV}_j\}_{j=1}^r$ of $\Sigma$, such that the following hold.
\begin{enumerate}[leftmargin=8mm]
    \item On $\dT_i$, the action of the correspondence $\mathfrak{C}$ is orbit-equivalent to the action of the group $G_i$ on $\D$. Further, the orbifold $\dT_i/\mathfrak{C}$ is isomorphic to $\D/G_i$.
    \item $\widetilde{\cV}_j$ has two components, and $\mathfrak{C}$ has a forward branch carrying one of the components of $\widetilde{\cV}_j$ onto itself such that this branch is conformally conjugate to the Blaschke product $B_j$.
\end{enumerate}
\end{thmx}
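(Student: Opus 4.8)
The plan is to follow the three-step scheme announced in the abstract: first build an \emph{analytic combination} as a partially defined holomorphic map on $\widehat{\C}$, then characterize it algebraically through a hyperelliptic double cover together with a meromorphic map, and finally read off the dynamical properties (1)--(2) of the resulting correspondence. For the first step, to each $G_i$ with $\D/G_i\in\mathcal{F}$ I would associate its Bowen--Series (Nielsen) map $\cN_i$: an orientation-preserving, piecewise-M\"obius, expanding self-map of $\partial\D$ that is orbit-equivalent to the $G_i$-action on $\D$ (the classical dictionary for genus-zero Fuchsian orbifold groups). Each hyperbolic $B_j$ already restricts to an expanding circle map on $\partial\D$. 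I would then assemble a single partially defined holomorphic map $F$ on $\widehat{\C}$ by taking one disk for each $\cN_i$ and each $B_j$ and conformally welding their boundary circles along a prescribed adjacency pattern, arranging that $F$ agrees with $\cN_i$ on the $i$-th group disk and with $B_j$ on the $j$-th Blaschke disk.

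Next I would upgrade this conformal welding to a genuine meromorphic model. Since for a cofinite $G_i$ the limit set is all of $\partial\D$, the welding homeomorphisms need not be quasisymmetric, so a plain quasiconformal surgery is likely to fail; I expect to need a David-type surgery together with the David--Beltrami integrability theorem to produce an integrable almost-complex structure whose straightening conjugates $F$ to a map of the desired algebraic form. The structural point is that the straightened expanding branched self-cover admits a \emph{doubling}, ramified over a configuration of points dictated by the welding pattern (whose cardinality fixes the genus $g$), yielding a compact Riemann surface $\Sigma$ with hyperelliptic involution $\eta$ (the deck transformation of $\pi\colon\Sigma\to\Sigma/\eta\cong\widehat{\C}$) and a meromorphic map $\cR\colon\Sigma\to\widehat{\C}$ encoding the forward dynamics. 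One then checks that $\cR(w)=\cR(\eta(z)),\ w\neq\eta(z)$ reproduces $F$; here the deletion $w\neq\eta(z)$ removes precisely the branch along the graph of $\eta$, so $\mathfrak{C}$ has degree $\deg\cR-1$ in each variable.

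For the third step I would define $\dT_i$ and $\widetilde{\cV}_j$ as the $\eta$-symmetric preimages under $\pi$ of the group-disks and Blaschke-disks. Property (1) then follows by transporting the Bowen--Series orbit-equivalence through the straightening conjugacy: grand orbits of $\mathfrak{C}|_{\dT_i}$ match $G_i$-orbits, and passing to quotients gives the orbifold isomorphism $\dT_i/\mathfrak{C}\cong\D/G_i$. For property (2), $\widetilde{\cV}_j=\pi^{-1}(\text{$j$-th Blaschke disk})$ splits into two components interchanged by $\eta$, and selecting the branch of $w\in\cR^{-1}(\cR(\eta(\cdot)))$ that remains in a single component produces a holomorphic self-map conformally conjugate to $B_j$, by construction of that piece.

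The main obstacle is the second step: realizing the combinatorial welding by an actual meromorphic structure on a compact surface while simultaneously (a) handling the non-quasisymmetric welding along full-circle limit sets via David surgery and verifying the relevant area/regularity conditions, (b) carrying out the ramification bookkeeping that determines $g$ and certifies that $\eta$ is genuinely hyperelliptic, and (c) ensuring the algebraic relation reproduces $F$ globally rather than merely locally. Additional care is needed to confirm that the deletion $w\neq\eta(z)$ excises exactly the $\eta$-graph branch, so that the group and Blaschke branches of $\mathfrak{C}$ remain correctly separated.
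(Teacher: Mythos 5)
Your overall scheme---Bowen--Series models for the groups, a David-type surgery to produce a partially defined holomorphic map, a welding/doubling to obtain $(\Sigma,\eta,\cR)$, and then reading off the group and Blaschke dynamics---is indeed the architecture of the paper. However, there are two genuine gaps. First, your combination step is not actually a construction: ``taking one disk for each $\cN_i$ and each $B_j$ and conformally welding their boundary circles along a prescribed adjacency pattern'' does not produce a holomorphic map on any open subset of $\widehat{\C}$ beyond the disks themselves, and gives no mechanism by which the constituent circle dynamics interact. The paper supplies the missing ingredient: a critically fixed polynomial $P$ (whose existence with prescribed critical multiplicities follows from the Pilgrim et al.\ classification) whose invariant bounded Fatou components host the $A_{\Gamma_i}$ and $B_j$; the surgery is performed relative to $P$, the complement of the Fatou components carries $P$'s own dynamics, and the choice of $P$ is what guarantees that the resulting domain $\cD$ (and hence $\Sigma$) is connected. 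Relatedly, the mating $S$ is \emph{not} defined on the whole of each group disk but only off the fundamental-domain ``holes'' of the Bowen--Series maps; it is precisely these holes, zipped up along the boundary involution, that force $\Sigma/\langle\eta\rangle$ to be a sphere and hence $\Sigma$ to be hyperelliptic. Hyperellipticity is not automatic from ``doubling'': it rests on the fact that the $\D/G_i$ are genus-zero orbifolds, and your ramification bookkeeping does not engage with this.

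Second, item (1) of the theorem does not follow by ``transporting the Bowen--Series orbit-equivalence through the straightening conjugacy.'' The Bowen--Series map (and a fortiori the factor Bowen--Series map, which is not even injective) is a single-valued map that destroys the group structure; the entire point of passing to the correspondence is to recover it. In the paper, each $\dT_i$ has $p_i$ components (not the fiber of a degree-two map over a disk), $\cR$ restricted to each component is an $n_i$-fold branched cover, and the local branches of $\mathfrak{C}$ on $\dT_i$ are generated by $\eta$ together with the deck transformations of $\cR|_{\dT_i}$, which form a cyclic group of order $n_ip_i$. One must then verify that $\langle\eta,\tau_i\rangle$ acts properly discontinuously with an explicit fundamental domain whose induced side pairings reproduce the preferred generators of $\widehat{\Gamma}_i$ (including the elliptic element $M_{\omega_i}$), yielding $\dT_i/\mathfrak{C}\cong\D/\widehat{\Gamma}_i$. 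This is the technical core of the proof and is absent from your outline. Your treatment of item (2) is essentially correct, as is your degree count $\deg\cR-1$ for the forward branches.
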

\noindent For a precise statement, see Theorem~\ref{corr_recover_thm}, where the role of $G_i$ is played by the groups $\widehat{\Gamma}_i$.

\subsection{Parametric unification of Teichm{\"u}ller spaces and Blaschke product spaces}
The parallels between the dynamical theories of rational maps and Kleinian groups were systematically formalized into a dictionary by Sullivan \cite{sullivan1985quasiconformal} (see also \cite{mcmullen1995classification, mcmullen1996renormalization,LM97,mcmullen1998quasiconformal}). In this context, we intend to highlight the following two theorems.
\begin{itemize}[leftmargin=6mm]
    \item Let $G$ be a finitely generated, geometrically finite Kleinian group with a connected limit set, and let $\mathfrak{T}(G)$ denote its quasiconformal deformation space. Then we have the following biholomorphic equivalence:
$$
\mathfrak{T}(G) \cong \text{Teich}(S_1)\times\text{Teich}(S_2)\times\cdots\times\text{Teich}(S_l),
$$
for appropriate finite type Riemann surfaces $S_i$, $i\in \{1, 2, \cdots, l\}$ (see \cite[Theorem 5.1.3]{marden2016hyperbolic}, \cite{ahlfors1964finitely}).

\item Let $\mathcal{H}$ denote a hyperbolic component in the parameter space of degree $d$ (marked) rational maps with connected Julia set. Then
$$
\mathcal{H} \cong \mathcal{B}_{d_1}\times\mathcal{B}_{d_2}\times\cdot\times\mathcal{B}_{d_r},
$$
where $\mathcal{B}_{d_j}$ is the space of normalized degree $d_j$ Blaschke products with an attracting fixed point in $\D$, and $d_j\geq 2$, $j\in \{1, 2, \cdots, r\}$, are suitable integers (see \cite{milnor2012hyperbolic} for a precise statement).
\end{itemize}

The above two statements can be interpreted as a local-global principle in dynamics. That is, global deformations of Kleinian groups and rational maps can, in essence, be re-engineered by performing local deformations on certain fundamental components. The parameter space of the correspondences constructed in Theorem~\ref{mating_thm_intro} also admits such a local-global principle. More precisely, for a correspondence $\mathfrak{C}$ (as in Theorem~\ref{mating_thm_intro}), denote its deformation space by $\mathfrak{T(\mathfrak{C})}$; we show that,
$$
\mathfrak{T(\mathfrak{C})} \cong \left(\text{Teich}(\D/G_1)\times\cdots\times\text{Teich}(\D/G_l)\right)\times\left(\mathcal{B}_{d_1}\times\cdots\times\mathcal{B}_{d_r}\right),
$$
where $\mathfrak{C}$ combines the dynamics of the Fuchsian groups $\{G_i\}_{i=1}^l$, and the hyperbolic Blaschke products $\{B_j\}_{j=1}^r$ (see Section~\ref{Hurwitz_sec}). This bijection is preceded by the identification of the moduli space $\mathfrak{T(\mathfrak{C})}$ of marked correspondences with a suitable sub-collection in the Hurwitz space of degree~$d$ meromorphic maps on genus~$g$ hyperelliptic Riemann surfaces with~$q$ ordered marked points, denoted by $\mathcal{H}_{g, d, q}^{\text{hyp}}$ (see Section~\ref{equivalence_of_corr_subsec}). This brings us to our other main theorem, which is proved in Section~\ref{Hurwitz_sec}.
\begin{thmx}\label{hurwitz_thm_intro}
    Given a collection of Fuchsian groups $\{G_i\}_{i=1}^l$ with $\D/G_i\in\cF$, and hyperbolic Blaschke products $\{B_j\}_{j=1}^r$. There exists an injective map $\mathfrak{B}$ from 
    $$
    \text{Teich}(\D/G_1)\times\cdots\times\text{Teich}(\D/G_l)\times\mathcal{B}_{d_1}\times\cdots\times\mathcal{B}_{d_r}
    $$ 
 to an appropriate Hurwitz space $\mathcal{H}_{g, d, q}^{\text{hyp}}$; such that for any $(\Sigma,\cR)$ in the image, the correspondence on $\Sigma$ defined by Equation~\eqref{intro_corr_eq} combines the groups $\{G_i\}_{i=1}^l$ and the Blaschke products $\{B_j\}_{j=1}^r$ as conformal dynamical systems. 
\end{thmx}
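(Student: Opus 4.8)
The plan is to realize $\mathfrak{B}$ as the parametrized version of the construction underlying Theorem~\ref{mating_thm_intro}, and then to establish injectivity by showing that the dynamical decomposition furnished by that theorem is intrinsic to the image point $(\Sigma,\cR)$ and faithfully recovers each factor of the domain. First I would fix, for each $i$, the standard identification of $\text{Teich}(\D/G_i)$ with the space of marked Fuchsian groups uniformizing the orbifold $\D/G_i$: a marked conformal structure $\tau_i$ corresponds to a marked Fuchsian group $G_i^{\tau_i}$ with $\D/G_i^{\tau_i}\in\cF$. Thus a point $(\tau_1,\ldots,\tau_l)\in\prod_i\text{Teich}(\D/G_i)$ is the same datum as a tuple of marked Fuchsian groups, and feeding these together with $(B_1,\ldots,B_r)\in\prod_j\mathcal{B}_{d_j}$ into the construction of Theorem~\ref{mating_thm_intro} produces a triple $(\Sigma,\eta,\cR)$.

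I would then define $\mathfrak{B}(\tau_1,\ldots,\tau_l,B_1,\ldots,B_r)$ to be the point of $\mathcal{H}_{g,d,q}^{\text{hyp}}$ given by $(\Sigma,\cR)$ together with the $q$ marked points arising canonically from the construction (the ramification data together with the distinguished fixed points attached to each $G_i$ and each $B_j$, ordered by the index labels). Well-definedness and the ``combines as conformal dynamical systems'' clause are then immediate from Theorem~\ref{mating_thm_intro}: the correspondence $\mathfrak{C}$ attached to $(\Sigma,\cR,\eta)$ via Equation~\eqref{intro_corr_eq} carries the promised $\eta$-symmetric pieces $\dT_i$ and $\widetilde{\cV}_j$ realizing the groups and the Blaschke products.

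The core of the argument is injectivity. Suppose two tuples map to the same point of $\mathcal{H}_{g,d,q}^{\text{hyp}}$; then there is a biholomorphism $\Phi:\Sigma\to\Sigma'$ intertwining $\cR$ with $\cR'$ and preserving the ordered marked points. Since the hyperelliptic involution $\eta$ is intrinsic to $\Sigma$ (being central in its automorphism group for $g\geq 2$), $\Phi$ conjugates $\eta$ to $\eta'$ and hence carries $\mathfrak{C}$ to $\mathfrak{C}'$ conformally; so it suffices to recover each factor from the conformal-dynamical data of $\mathfrak{C}$. I would argue that the decomposition into the invariant regions where $\mathfrak{C}$ is orbit-equivalent to a group action and those carrying an attracting Blaschke branch is dynamically canonical --- it is read off from grand orbits, limit/Julia sets, and attracting behavior --- so $\Phi$ must send $\dT_i$ to $\dT_i'$ and $\widetilde{\cV}_j$ to $\widetilde{\cV}_j'$, respecting labels. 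Restricting $\Phi$ yields a marking-preserving conformal isomorphism $\dT_i/\mathfrak{C}\to\dT_i'/\mathfrak{C}'$ of orbifolds, i.e. an equality $\tau_i=\tau_i'$ in $\text{Teich}(\D/G_i)$, and a conformal conjugacy between the self-branches on $\widetilde{\cV}_j$ and $\widetilde{\cV}_j'$, which by the normalization defining $\mathcal{B}_{d_j}$ forces $B_j=B_j'$.

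The main obstacle I anticipate is making this recovery genuinely rigid, rather than merely up to the coarse moduli/conjugacy ambiguity. Two issues must be controlled. First, one must verify that the $q$ marked points are placed so as to rigidify \emph{each} Teichm\"uller factor and \emph{each} Blaschke factor simultaneously: the ordering must encode enough combinatorial boundary data of every orbifold piece (its cone points/punctures and a marking of its fundamental group) and of every Blaschke piece (its attracting fixed point and a boundary marking) to eliminate precisely the residual automorphisms that separate Teichm\"uller from moduli and normalized from unnormalized Blaschke products. Second, one must show the decomposition is canonical independently of the construction --- that $\Phi$ can neither mix a group piece with a Blaschke piece nor permute like-indexed pieces --- which requires separating the two dynamical behaviors intrinsically, e.g. via the local fixed-point and multiplier structure (parabolic versus attracting cycles) and the orbifold signatures. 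Once these rigidity inputs are in place, injectivity follows and $\mathfrak{B}$ is the desired injection.
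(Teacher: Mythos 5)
Your overall strategy coincides with the paper's: define $\mathfrak{B}$ by running the mating construction with a system of marked points, and prove injectivity by reconstructing the Teichm\"uller and Blaschke coordinates from the intrinsic dynamics of the correspondence. However, the two issues you flag at the end as ``obstacles I anticipate'' are precisely the mathematical content of the proof, and your proposal does not resolve either of them, so as written there is a genuine gap. First, well-definedness is not ``immediate from Theorem~\ref{mating_thm_intro}'': that theorem produces \emph{some} pair $(\Sigma,\cR)$, and one must show that the output is independent of the choices made in the surgery. The paper does this by introducing \emph{standard} conformal matings (normalized uniformizers $r_i$ with $r_i(0)=c_i$, $r_i(1)=\xi_i$, and normalized conjugacies $h_i$) and proving that any two standard matings of the same data are M\"obius conjugate (Proposition~\ref{uniqueness_standard_mating_prop}, which rests on conformal removability of $\mathfrak{X}_P(\mathcal{J}(P))$), and that the associated quadruple $(\Sigma,\cR,\mathfrak{D},\eta)$ is unique up to biholomorphism (Proposition~\ref{uniqeness_of_mero_prop}). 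Without this step your map $\mathfrak{B}$ is not known to be a function.

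Second, your claim that the decomposition into group-like and map-like pieces ``is read off from grand orbits, limit/Julia sets, and attracting behavior'' needs a concrete mechanism, and the paper's mechanism depends on a specific choice of markings that your sketch does not supply. The decisive device is to mark one point $x_{p+r+1}$ on $\Lambda(\mathfrak{C})$ lying over $\mathfrak{X}_P(z_0)$, where $z_0$ is the landing point of the dynamical $0$-ray of $P$: density of iterated preimages of $z_0$ in $\mathcal{J}(P)$ makes the closure of the $\mathfrak{C}$-grand orbit of this single marked point equal to the limit set, which is what renders the decomposition of $\Sigma\setminus\widetilde{\Lambda}$ intrinsic. One then needs the additional marked points $w_{i,s}$ and $t_{p+j}$ (endpoints of welding lines and boundary fixed points) together with the combinatorial graph $\mathscr{G}$ to reconstruct the welding lines as hyperbolic geodesics, hence $\partial\mathfrak{D}$ and $\mathfrak{D}$ itself, and to normalize the conjugacy from $\widecheck{\cV}_j$ to $\D$ so that $B_j$ (not merely its conjugacy class) is recovered. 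Finally, for this combinatorial data to be usable uniformly over the parameter space one must know that all limit sets in the image are quasiconformally conjugate (Section~\ref{limit_sets_equiv_subsec}), so that $\mathscr{G}$ does not depend on the parameter; your proposal does not address this point. These are not routine verifications but the substance of Sections~\ref{standard_matings_subsec}--\ref{injectivity_subsec}.
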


In summary, the correspondences that we construct not only combine Fuchsian groups and Blaschke products, but also unify their deformation spaces.

\subsection{Intuitive proofs of the main theorems} 

\subsubsection*{Sketch of Theorem~\ref{mating_thm_intro}} We begin by replacing the groups $\{G_i\}_{i=1}^l$ with maps $\{A_{G_i}\}_{i=1}^l$ that are variants of certain piecewise analytic, Markov, circle maps introduced by Bowen and Series \cite{bowen1979markov} (see Section~\ref{mateable_maps_subsec}). Then we look for an appropriate critically fixed polynomial $P$, and surgically replace the dynamics of $P$ on its bounded immediate basins with those of the maps $\{A_{G_i}\}_{i=1}^l$ and $\{B_j\}_{j=1}^r$. This gives us a partially defined meromorphic map $S:\overline{\cD}\ (\subset\widehat{\C})\rightarrow\widehat{\C}$, which we call a \emph{conformal mating} of the collections $\{A_{G_i}\}_{i=1}^l$ and $\{B_j\}_{j=1}^r$. The surgery above is a combination of quasiconformal and David surgery tools (cf. \cite{lyubich2020david}). We also remark that these conformal matings can be regarded as degenerate versions of \emph{rational-like maps} (cf. \cite{buf03,luo2024general}).

To understand why conformal matings are partially defined maps on $\widehat{\mathbb{C}}$, it is perhaps instructive to recall that the properly discontinuous group action of $G_i$ on $\D$ comes equipped with a fundamental domain. Said differently, an essential attribute of such a group action is that of \emph{non-recurrence} or escaping a domain. On the other hand, the interesting bits of dynamics of a rational map are trapped in invariant sets. Therefore, conformal matings of these two objects exhibit dual behavior (see Section~\ref{conf_mating_sec}).

Now, if the points in $\cD$ eventually map to points in a region where $S$ is undefined, the machinery of iteration breaks down. To resolve this issue, we ``seal” the hole by welding a second copy of the domain along the boundary via the map $S$. This is possible because the restriction $S|_{\partial\cD}$ turns out to be an orientation-reversing piecewise analytic map. The resulting object is a compact Riemann surface $\Sigma$ of genus $g$, referred to in this paper as a \emph{blender surface} (see Section~\ref{blender_surf_subsec}). We then lift the dynamics of $S$ to the surface $\Sigma$ via a carefully constructed meromorphic map $\cR:\Sigma\rightarrow\widehat{\C}$ (see Theorem~\ref{alg_weak_b_inv_thm}).

In this lifted setting, points no longer disappear into a hole, they hop to the second copy of the domain $\cD$. By judiciously analyzing this hopping behavior across the two copies of the domain $\cD$, one can recover everything about the maps $\{B_j\}_{j=1}^r$ and the groups $\{G_i\}_{i=1}^l$ (see Sections~\ref{group-like_subsubsec} and~\ref{map-like_subsubsec}). And this is despite making a seemingly serious compromise in suppressing the richness of the group $G_i$ by a single-valued map $A_{G_i}$. 

\subsubsection*{Sketch of Theorem~\ref{hurwitz_thm_intro}} A precursor to the existence of an injective map from the product of Teichm\"uller spaces and Blaschke product spaces into an appropriate Hurwitz space is the existence of a well-defined map. To that end, we first observe that for every element
$$
(G'_i,\cdots,G'_l, B_1,\cdots,B_r) \in  \text{Teich}(\D/G_1)\times\cdots\times\text{Teich}(\D/G_l)\times\mathcal{B}_{d_1}\times\cdots\times\mathcal{B}_{d_r},
$$
we have an associated blender surface $\Sigma$, and a meromorphic map $\cR:\Sigma\rightarrow\widehat{\C}$. The pair $(\Sigma,\cR)$ turns out to be unique up to biholomorphic change of coordinates, as shown in Section~\ref{standard_matings_subsec}. This fact gives us a well-defined map $\mathfrak{B}$ from the product of Teichm\"uller spaces and Blaschke products spaces to an appropriate Hurwitz space $\mathcal{H}_{g, d, q}^{\text{hyp}}$ (see Section~\ref{equivalence_of_corr_subsec}). We denote by $\mathscr{C}$ the equivalence classes of correspondences $\mathfrak{C}$, defined (as in Equation~\ref{intro_corr_eq}) by the tuples $(\Sigma, \cR)$ lying in the image of $\mathfrak{B}$. 

The injectivity of the map $\mathfrak{B}$ is established by explicitly constructing an inverse $\mathfrak{B}'$ on $\mathscr{C}$. This amounts to recovering the data of the groups $\{G'_i\}_{i=1}^l$ and the maps $\{B_j\}_{j=1}^r$ entirely from the data of the marked blender surface $\Sigma$ and the meromorphic map $\cR$. This recovery is facilitated by the fact that the limit sets $\Lambda(\mathfrak{C})$ of the correspondences $\mathfrak{C} \in \mathscr{C}$ are all quasiconformally equivalent (see Section~\ref{limit_sets_equiv_subsec}). Therefore, the topological shape of the limit set is fixed for all parameters. 

The complement of $\Lambda(\mathfrak{C})$ is essentially the collection of domains where the action of the correspondence exhibits a combination of the dynamics of the groups $G'_i$ and maps $B_j$. Thus, it is just a question of identifying the domains $\{\dT_{i}\}_{i=1}^l$ and the domains $\{\widetilde{\cV}_j\}_{j=1}^r$. The action of the correspondence on these domains would determine both the groups and the maps uniquely. We keep track of that combinatorial data by introducing a marking schematic on our blender surfaces $\Sigma$ (see Section~\ref{marked_blender surface_subsec}). These steps in succession give us means to construct the map $\mathfrak{B}'$, which in turn establishes the injectivity of $\mathfrak{B}$.

\subsection{Notable features and novelties}

\subsubsection*{Algebraicity of conformal matings} The conformal mating $S:\overline{\cD} \longrightarrow \widehat{\C}$ is an analytic combination of algebraic objects, namely, rational maps and Fuchsian groups. By construction, $S$ is only partially defined on the Riemann sphere, admitting no analytic continuation to the whole sphere. Hence, a priori, the map $S$ could be transcendental. Since no general GAGA-type principle applies to such maps, the algebraicity of $S$ requires new techniques.

To that end, we introduce an abstract class of maps, which we call \emph{weak boundary involutions}; conformal matings are particular instances of these maps. A weak boundary involution is a partially defined meromorphic map on a domain $\cD\subset\widehat{\C}$, that extends continuously to the closure $\overline{\cD}$, and is orientation-reversing on the non-singular part of $\partial\cD$. These maps generalize the notion of boundary involutions introduced in \cite[\S 14]{luo2024general}; hence the name, weak boundary involution (see Definition~\ref{weak_b_inv_def}). Some of the techniques used to study boundary involutions also extend to weak boundary involutions; but the latter presents a substantially general setting for our purposes (see Remark~\ref{B_involution_rem}). 

Recall the welding construction of a blender surface. An essential feature enabling that construction is the boundary action of conformal matings. We gather such functional features of conformal matings in the definition of weak boundary involutions. Consequently, given a general weak boundary involution $S:\overline{\cD}\rightarrow\widehat{\C}$, we can and will construct a compact Riemann surface $\Sigma$ by welding two copies of $\cD$ along $\partial\cD$ via the map $S$. We then define an involution $\eta$ that takes one copy of $\cD$ to the other, and a meromorphic map $\cR:\Sigma\rightarrow\widehat{\C}$, which acts as the identity map on one copy of $\cD$ and as the map $S$ on the other copy (see Section~\ref{char_weak_b_inv_subsec}). The map $S$ can then be described explicitly in terms of the algebraic objects $\cR$ and $\eta$ (see Equation~\ref{alg_char_eq}). 
This observation (implicit in \cite{luo2024general}) motivated us to work with weak boundary involutions, as it streamlined our treatment of conformal matings.

Algebraicity of conformal matings is fundamental to the development of our theory, for it enables the construction of correspondences on the Riemann surface $\Sigma$, ensures hyperellipticity, and dictates the topology of $\Sigma$. 

\subsubsection*{Hyperellipticity of blender surfaces} 
Consider a weak B-involution $S:\overline{\cD}\to\widehat{\C}$. Instead of welding two copies of $\cD$ (as in the construction of $\Sigma$), one could start with a single copy of $\overline{\cD}$, and glue the boundary $\partial\cD$ with itself via the action of $S|_{\partial\cD}$, producing a surface denoted by $\widecheck{\Sigma}$. This construction naturally induces a ramified double cover $\cQ: \Sigma \longrightarrow \widecheck{\Sigma}$ (see Section~\ref{blender_surf_subsec}). 

For a general weak boundary involution $S: \overline{\cD} \to \widehat{\C}$, the resulting surface $\Sigma$ may be disconnected. The same is true for the auxiliary surface $\widecheck{\Sigma}$. We have compiled a gallery of examples in Section~\ref{examples_sec} to showcase various ways in which $\Sigma$ and $\widecheck{\Sigma}$ appear. 
In the proof of Theorem~\ref{mating_thm_intro}, critically fixed polynomials $P$ are chosen for the construction of conformal matings $S:\overline{\cD}\to\widehat{\C}$, partly to ensure that $\cD$ is connected (see Theorem~\ref{mating_surf_exists_prop}). This in turn forces both $\Sigma$ and $\widecheck{\Sigma}$ to be connected. 

Now, to establish hyperellipticity\footnote{For convenience of exposition we shall include the Riemann sphere and tori in the class of hyperelliptic Riemann surfaces.} of the blender surface $\Sigma$, it would suffice to show that the auxiliary surface $\widecheck{\Sigma}$ is the Riemann sphere. To see this, observe that the holes in $\widehat{\C} \setminus \cD$ can be interpreted as fundamental domains for Fuchsian genus zero orbifold groups. The construction of the auxiliary surface $\widecheck{\Sigma}$ amounts to zipping along the boundaries of these holes, thus producing a topological sphere. If, instead, the groups $G_i$ uniformized orbifolds of higher genus, the gluing along the boundaries of the holes would inevitably introduce handles to the surface $\widecheck{\Sigma}$.
Thus, the fact that the groups in our combination framework uniformize genus zero orbifolds, lies at the heart of the hyperellipticity of the blender surfaces $\Sigma$.   

\subsubsection*{Emergence of higher genus blender surfaces} As noted above, all previously known correspondences that arise as matings of groups and maps are defined on the Riemann sphere, possibly with nodal singularities. In contrast, the setting we consider here breaks this genus zero tradition. While it is possible to compute the genus explicitly via combinatorial arguments in specific cases; we do not pursue this route, as such calculations offer little conceptual insight. Instead, we provide a geometric/dynamical explanation for why higher genus surfaces arise in our construction (see Section~\ref{construct_blender_subsec} for a detailed treatment).

Let $\eta$ be a hyperelliptic involution on the surface $\Sigma$, where $\Sigma$ is as in Theorem~\ref{mating_thm_intro}. If $\Sigma$ has genus $g$, then the map $\eta$ has exactly $2g + 2$ fixed points. It turns out that the number of fixed points of $\eta$ is at least the number of fixed points of the conformal mating $S$ on the non-singular part of the boundary $\partial\cD$. Further, the fixed points of $S$ on the non-singular part of $\partial\cD$ correspond to the order $2$ orbifold points of $\D/G_i$. Therefore, if the number of such orbifold points in $\{\D/G_i\}_{i=1}^l$ is greater than $2$; we are bound to see handles on the surface $\Sigma$. In summary, the emergence of a higher genus is a feature that is governed by the action of $S$ on $\partial\cD$ (see Proposition~\ref{genus_prop} and Corollary~\ref{orbifold_genus_cor}). 

\subsubsection*{Explicit embeddings into Hurwitz spaces} It is worth glossing over a couple of specific consequences of Theorems~\ref{mating_thm_intro} and~\ref{hurwitz_thm_intro}, as they illuminate potential directions for future study (for precise questions, see Section~\ref{questions_subsec}). 

With the help of our mating framework, we can analytically combine the dynamics of 
\begin{itemize}[leftmargin=6mm]
    \item the modular group, denoted by $G_1$,
    \item a group $G_2$ uniformizing a genus $0$ orbifold with two punctures and two orbifold points each of order $2$, and
    \item a degree $2$ hyperbolic Blaschke product $B$, 
\end{itemize}
to get a weak B-involution $S:\overline{\cD}\rightarrow\widehat{\C}$. The domain $\cD$ turns out to be an open annulus (see Figure~\ref{111_fig}). Therefore, the blender surface $\Sigma$ would be a torus. The meromorphic map $\cR:\Sigma\rightarrow \widehat{\C}$ would end up having degree $4$ with $6$ simple critical points and a double critical point (see Example~\ref{111_ex}). By our main theorems, we would get an injective map from $\text{Teich}(G_2)\times \mathcal{B}_2$ into the Hurwitz space $\mathcal{H}_{1,4,11}^{\text{hyp}}$, which is an appropriate moduli space of tori equipped with $11$ ordered marked points and degree $4$ elliptic functions. 

The same can be repeated for a Hecke group $G_3\cong \Z/2\Z\ast\Z/5\Z$, the group $G_2$ as above, and a group $G_4$ uniformizing a sphere with $2$ punctures and an order $2$ orbifold point (see Example~\ref{111''_ex}). In this case, $\Sigma$ is a genus $2$ Riemann surface, and the meromorphic map $\cR:\Sigma\rightarrow\widehat{\C}$ is of degree $5$ with a unique pole. Consequently, Theorem~\ref{hurwitz_thm_intro} would give us an injective map from $\text{Teich}(G_2)$ to the Hurwitz space~$\mathcal{H}_{2, 5, 16}^{\text{hyp}}$.

\subsection{Questions and Connections}\label{questions_subsec}

\subsubsection*{Generalizations of previous results} In \cite{mj2023matings}, the correspondences considered were obtained as matings of a single group with a single map. The current paper extends that framework to allow for the combination of multiple groups with multiple maps.

The main result of \cite{mj2024simultaneous} develops techniques for combining two suitably chosen Fuchsian groups, each uniformizing a surface in $\mathcal{F}$. This contrasts sharply with the Bers’ simultaneous uniformization theorem; as the two groups involved need not produce homeomorphic quotient orbifolds. In the present work, we generalize this on two fronts. We permit an arbitrary (finite) number of Fuchsian groups (each uniformizing a surface in $\mathcal{F}$), and we impose no restrictions on their choice.

\subsubsection*{Embedding connectedness loci in Hurwitz spaces} The statement of Theorem~\ref{hurwitz_thm_intro} asserts the existence of an injective map from the product of finitely many Teichm\"uller spaces and Blaschke product spaces into an appropriate Hurwitz space. Spaces of hyperbolic Blaschke products can be identified with the principal hyperbolic components in connectedness loci of complex polynomials. In the spirit of \cite{luo2024general}, we ask if one can construct injective maps from the product of finitely many Teichm\"uller spaces and connectedness loci of polynomials into Hurwitz spaces, and whether such maps are generically continuous?

In the special case of matings of the groups $G_1, G_2$, and quadratic hyperbolic Blaschke products $B$ described above, this reduces to the following concrete question: 
\begin{question}
Let $G_1$ be the modular group, $G_2$ be a Fuchsian group uniformizing a sphere with two punctures and two order $2$ orbifold points, and $g_c(z):=z^2+~c$. Does there exist a topological embedding $\Psi$ of the Mandelbrot set $\mathbb{M}$ into an appropriate moduli space of pairs $(\Sigma, \cR)$ such that
\begin{enumerate}[leftmargin=8mm]
    \item $\Sigma$ is a marked torus,
    \item $\cR:\Sigma\to\widehat{\C}$ is a degree $4$ elliptic function, and
    \item if $\Psi(c)=(\Sigma, \cR)$, for $c\in\mathbb{M}$, then the correspondence $\cR(w)=\cR(-z)$ on the torus $\Sigma$ combines $G_1$, $G_2$, and $g_c$?
\end{enumerate}
\end{question}

\subsubsection*{Boundedness and degeneration}

In order to investigate the parameter spaces of correspondences arising from the mating framework designed in this paper, it is important to extend the techniques and results of \cite{luo2025} to the current setting. One important question in this direction is whether the injective map $\mathfrak{B}$ in Theorem~\ref{hurwitz_thm_intro} admits relatively compact images under appropriate restrictions. More precisely, we ask the following.
    \begin{question}
        Fix all but one of the coordinates in the product of Teichm\"uller spaces and Blaschke product spaces, and consider its image under $\mathfrak{B}$. Is this restricted image relatively compact in an appropriate topology on the Hurwitz space?
    \end{question}
An affirmative answer to this question would allow one to study the correspondences that appear as limits of the correspondences combining groups and maps (as in Theorem~\ref{mating_thm_intro}).

We note that boundedness questions have a long and rich history in the study of Kleinian groups as well as rational maps; for instance, see \cite{Ber70,Thu86a,Thu86b, Kap01} for results in the Kleinian world, and \cite{Eps00,NP20,Luo22,DL25} and references therein for various boundedness results in rational parameter spaces.

The study of boundedness of various parameter loci of complex dynamical systems often necessitates a good understanding of dynamically meaningful compactifications of the ambient spaces. We refer the reader to \cite{DeM05,McM09,DF14,Kiw15,Luo21} for such compactifications  for rational maps and the study of limiting dynamics of their boundary objects. A framework to study degenerations of certain algebraic correspondences on the Riemann sphere to algebraic correspondences defined on trees of spheres was set up in \cite{luo2025}. In the present setting, the interplay between the dynamical properties of the correspondences and the algebro-geometric properties of the ambient Hurwitz spaces (that parametrize the correspondences) merits further exploration.

\begin{task}\label{degen_task}
Develop a theory of degenerations for correspondences on genus $g$ compact Riemann surfaces given by Formula~\eqref{intro_corr_eq}. Specifically, study when such correspondences degenerate to algebraic correspondences defined on trees of compact Riemann surfaces (possibly of lower genera) as the pair of $(\Sigma,\cR)$ degenerates.
\end{task}
We remark that when $g\geq 1$, the degeneration of the pair $(\Sigma,\cR)$ is equivalent to the surface $\Sigma$ going to the boundary of the moduli space of genus $g$ surfaces or the degree of the meromorphic map $\cR$ dropping in the limit. It is the former possibility that makes Task~\ref{degen_task} more involved than the degenerations studied in \cite{luo2025}.

\subsubsection*{The inverse problem} Finally, keeping in mind that the mating framework developed in this paper combines a collection of groups and maps to produce a surface $\Sigma$ and a meromorphic map $\cR:\Sigma\rightarrow\widehat{\C}$, we ask the following.  
    \begin{question}
    Can we characterize such tuples $(\Sigma, \cR)$? In other words, can we determine when such tuples appear as matings of groups and maps? 
    \end{question}

\subsection*{Acknowledgments} This research was supported in part by the International Centre for Theoretical Sciences (ICTS) during the course of the program - ICTS New trends in Teichm{\"u}ller theory (code: ICTS/nteich2025/02). Part of this work was carried out during the authors' visit to IIT Tirupati and University of Barcelona. The authors thank them for their support and hospitality.
The authors also thank Yusheng Luo and Mahan Mj for useful discussions.

\section{Preliminaries}\label{prelim_sec}

\subsection{Critically Fixed Rational Maps}\label{crit_fixed_rat_subsec}

Let $f: \widehat{\mathbb{C}}\rightarrow\widehat{\mathbb{C}}$ be a holomorphic map of degree $d$. In other words, $f$ is a rational function $f(z) = \frac{p(z)}{q(z)}$, where $p(z)$, $q(z)$ are co-prime polynomials. Let $\text{Fix}(f)$ be the set of fixed points of $f$, and $\text{Crit}(f)$ be the set of critical points of $f$. The sets $\text{Fix}(f)$ and $\text{Crit}(f)$ play an important role in both the local and global dynamics of $f$. Counted with multiplicity, there are $2d-2$ critical points and $d+1$ fixed points of $f$. We are interested in those functions $f$ whose critical points are fixed. 

Critically fixed rational maps form a proper subclass in the family of postcritically finite rational maps - these are maps with finite critical orbits. Critically fixed rational maps were studied extensively in \cite{pilgrim15,Hlu19} and a complete topological classification of such maps were given (based on the Thurston theory for branched coverings of $\mathbb{S}^2$).
The following theorem will be sufficient for our purposes.

\begin{theorem}[\cite{pilgrim15}]\label{kevin}
    Let $m_1, m_2, \cdots, m_n, d$ be positive integers such that 
    \begin{itemize}
        \item $m_i \leq d-1$, $i\in\{1,\cdots,n\}$, and
        \item $\displaystyle\sum_{i = 1}^n m_i = 2d-2$.
    \end{itemize}
    Then, there exists a critically fixed rational map $R$ of degree $d$, such that
    \begin{itemize}
        \item $\mathrm{Crit}(R)=\{c_1, c_2, \cdots, c_n\}$, with
        \item $m_i$ being the multiplicity of the critical point $c_i$, $i\in\{1,\cdots,n\}$,
    \end{itemize}
    if and only if $n \leq d$.
\end{theorem}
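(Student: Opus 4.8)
I would argue this by a fixed-point count together with the rational fixed-point index formula. A degree $d$ rational map has exactly $d+1$ fixed points counted with multiplicity. In the desired map each prescribed point $c_i$ is a fixed point of local degree $m_i+1\ge 2$, hence of multiplier $R'(c_i)=0\ne 1$; thus each $c_i$ is a \emph{simple} fixed point, and since the $c_i$ are distinct this already gives $n\le d+1$. To exclude the extremal case $n=d+1$, note that then \emph{every} fixed point of $R$ is one of the superattracting points $c_i$, each with multiplier $0$, so all fixed points are simple and Milnor's holomorphic fixed-point formula $\sum_{R(p)=p}\frac{1}{1-R'(p)}=1$ is applicable. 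Every summand equals $1$, so the left-hand side is $d+1$, forcing $d+1=1$, which is impossible for $d\ge 2$. (When $d=1$ the hypotheses force $n=0$, and the claim is vacuous.) Hence $n\le d$.

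\textbf{Sufficiency: reduction to a graph.} For the converse I would realize the prescribed critical portrait as the \emph{edge blow-up} of a planar graph. The governing dictionary is that a connected multigraph $\mathcal G$ embedded in $S^2$, with vertices $v_1,\dots,v_n$ of graph-degrees $m_1,\dots,m_n$ and with $E$ edges, blows up to a critically fixed orientation-preserving branched self-cover of $S^2$ of degree $d=E+1$ whose critical points are the $v_i$ with local degrees $m_i+1$. Under this dictionary the hypotheses become exactly the combinatorial conditions for such a $\mathcal G$ to exist: the handshake lemma gives $\sum m_i=2E=2d-2$ (so the sum is automatically even); the bound $m_i\le d-1=E$ is precisely the condition that $(m_i)$ be the degree sequence of a loopless multigraph; and $n\le d$ is precisely $E\ge n-1$, the condition permitting such a realization to be connected. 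I would therefore first produce, by a standard and elementary degree-sequence realization (building a connected planar skeleton and raising vertex degrees through parallel edges drawn as nested arcs), a connected planar multigraph $\mathcal G$ with degree sequence $(m_1,\dots,m_n)$ and $E=d-1$ edges; routing the surplus degree through parallel edges rather than through new adjacencies keeps $\mathcal G$ planar.

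\textbf{Sufficiency: realization and the main obstacle.} Blowing up the edges of $\mathcal G$ yields a postcritically finite topological branched self-cover $f_{\mathcal G}$ of $S^2$ of degree $d$ that is critically fixed with the prescribed local degrees. It remains to promote $f_{\mathcal G}$ to a bona fide rational map, and this is where the substance of the proof lies: one must check that $f_{\mathcal G}$ admits \emph{no Thurston obstruction}, after which Thurston's characterization of rational maps produces a rational $R$ that is Thurston-equivalent to $f_{\mathcal G}$; since being critically fixed with a given critical portrait is a Thurston-equivalence invariant here, $R$ is the required map. I expect this no-obstruction step to be the main obstacle. The key point is that it is exactly the connectivity of $\mathcal G$ --- equivalently the hypothesis $n\le d$ --- that rules out obstructions: the blow-up is modeled on the connected spine $\mathcal G$ and thereby inherits an expanding, subdivision-rule type structure, which prevents any $f_{\mathcal G}$-invariant multicurve from having transition eigenvalue $\ge 1$. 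By contrast, the necessity and the combinatorial realization are comparatively routine. As a consistency check, in special cases one can write $R$ down explicitly: for instance the two-vertex multigraph with $d-1$ parallel edges blows up to $z\mapsto z^d$.
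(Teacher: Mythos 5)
The paper does not prove Theorem~\ref{kevin}; it is quoted from \cite{pilgrim15}, so there is no internal argument to measure you against. Your outline is in fact the strategy of that reference (and of \cite{Hlu19}): necessity by fixed-point counting, sufficiency by realizing the data as a connected planar multigraph and blowing up its edges. Your necessity half is complete and correct: each $c_i$ is a superattracting, hence simple, fixed point, so $n\le d+1$, and $n=d+1$ would make the holomorphic fixed-point index sum equal to $d+1$ rather than $1$.

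The sufficiency half has two gaps of very different sizes. A small one: the assertion that the surplus degree can always be carried by edges parallel to a spanning tree is false. For $(m_1,m_2,m_3)=(2,2,2)$ (so $d=4$ and $E=3$) the unique connected loopless realization is a $3$-cycle, and no ``tree plus parallel edges'' multigraph has this degree sequence, since the underlying simple graph of such a multigraph is acyclic. The statement you actually need --- that under the stated numerical conditions there is a connected \emph{planar} loopless multigraph with degrees $(m_i)$ and $d-1$ edges --- is true, but it requires creating new adjacencies and then verifying planarity, as is done carefully in \cite{pilgrim15}. The serious gap is the one you flag yourself: the absence of Thurston obstructions for the blown-up map is the analytic heart of the theorem, and ``inherits an expanding, subdivision-rule type structure'' is a hope, not an argument. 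The standard way to close it is the Pilgrim--Tan ``blowing up an arc'' theorem, which guarantees that blowing up the identity map of $S^2$ along the edges of a connected embedded graph produces a Thurston map combinatorially equivalent to a rational map; this is precisely the input invoked in \cite{pilgrim15}. Without that theorem, or an independent verification that no invariant multicurve has transition eigenvalue $\ge 1$, your sufficiency direction is a correct plan rather than a proof.
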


The theorem above gives us enough flexibility to construct critically fixed rational maps. We record a precise statement in the following lemma. This will be used in a later section.

\begin{lem}\label{combi_lem}
    Given positive integers $m_1, m_2, \cdots, m_n$, there exists a critically fixed polynomial $P$ and a subset $\{c_i\}_{i=1}^n$ of $\text{Crit}(P)$ such that the multiplicity of the critical point $c_i$ is $m_i$, $i\in\{1,\cdots,n\}$.
\end{lem}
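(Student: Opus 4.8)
The plan is to deduce the lemma from Theorem~\ref{kevin} by producing a critically fixed rational map one of whose critical points is totally ramified and fixed, and then recognizing such a map as a polynomial after a M\"obius change of coordinates. The guiding observation is that a degree $d$ polynomial is exactly a degree $d$ rational map for which $\infty$ is a fixed point of local degree $d$; equivalently, $\infty$ is a fixed critical point of the maximal possible multiplicity $d-1$. So if I can arrange for Theorem~\ref{kevin} to output a critically fixed rational map that has a critical point of multiplicity $d-1$ (which is then automatically a totally ramified fixed point, since the map is critically fixed), I can conjugate that point to $\infty$ and obtain a critically fixed polynomial, while the remaining prescribed critical points become the finite critical points I want.

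Concretely, I would set $M = \sum_{i=1}^n m_i$ and take $d = M+1$. Then I would feed Theorem~\ref{kevin} the multiset of $n+1$ target multiplicities $\{m_1, \dots, m_n, d-1\}$. I need to verify its hypotheses: each entry is $\le d-1$ (clear, since $m_i \le M = d-1$ and the added entry is $d-1$ itself); the entries sum to $M + (d-1) = 2M = 2d-2$, as required; and the number of prescribed critical points is $n+1$, which must satisfy $n+1 \le d = M+1$, i.e.\ $n \le M$. This last inequality holds automatically because the $m_i$ are positive integers, so $M = \sum_i m_i \ge n$. Thus Theorem~\ref{kevin} applies and yields a critically fixed rational map $R$ of degree $d$ with critical points $c_0, c_1, \dots, c_n$, where $c_0$ has multiplicity $d-1$ and $c_i$ has multiplicity $m_i$ for $i \in \{1, \dots, n\}$.

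To finish, I would observe that $c_0$ is a fixed point of $R$ (as $R$ is critically fixed) at which $R$ has local degree $d$, so $c_0$ is totally ramified: its only $R$-preimage is itself. Conjugating $R$ by a M\"obius transformation sending $c_0$ to $\infty$ produces a degree $d$ rational map $P$ for which $\infty$ is the sole preimage of $\infty$; hence $P$ has no finite poles and is a polynomial of degree $d$. Since M\"obius conjugacy preserves critical points, their multiplicities, and the property of being critically fixed, $P$ is a critically fixed polynomial whose finite critical points include the images of $c_1, \dots, c_n$, with respective multiplicities $m_1, \dots, m_n$; these images form the desired subset of $\mathrm{Crit}(P)$. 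I do not anticipate a genuine obstacle here: the only things to get right are the bookkeeping that lets Theorem~\ref{kevin} fire with one multiplicity pinned at $d-1$, and the small structural fact that a critically fixed rational map possessing a critical point of multiplicity $d-1$ is affinely conjugate to a polynomial.
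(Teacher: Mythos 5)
Your proposal is correct and follows essentially the same route as the paper: set $d=\sum_i m_i+1$, append an extra prescribed multiplicity $d-1$, check the three hypotheses of Theorem~\ref{kevin}, and then conjugate the resulting fully ramified fixed critical point to $\infty$ to obtain a polynomial. Your write-up simply spells out the last step (total ramification implies no finite poles) in slightly more detail than the paper does.
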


\begin{proof}
    Set $m_{n+1} := \sum_{i = 1}^nm_i$ and $d := m_{n+1} + 1$. The following observations are immediate.
    \begin{itemize}
        \item $m_i \leq d -1$ for $i\in\{1,\cdots, n+1\}$.
        \item $\sum_{i = 1}^{n+1}m_i = 2d-2$.
        \item $n+1\leq \sum_{i = 1}^nm_i + 1 = d$.
    \end{itemize}
    By Theorem~\ref{kevin}, there exist a critically fixed rational map $P$ such that  $\text{Crit}(P)=\{c_1,\cdots,c_{n+1}\}$ and the multiplicity of the critical point $c_i$ is $m_i$, $i\in\{1,\cdots,n+1\}$. As the critical point $c_{n+1}$ of the degree $d$ rational map $P$ has multiplicity $m_{n+1}=d-1$ (i.e., $c_{n+1}$ is a fully ramified critical point), the map $P$ can be chosen to be a polynomial, up to M{\"o}bius conjugation.
\end{proof}

\subsection{Virtually mateable maps}\label{mateable_maps_subsec}
We will now briefly describe a collection of maps, called \emph{Bowen-Series maps} and \emph{factor Bowen-Series maps}, which were introduced in \cite{bowen1979markov, mj2023combining, mj2023matings}. These maps are (virtually) orbit equivalent to certain Fuchsian groups and topologically compatible with Blaschke products. The Fuchsian groups associated with (factor) Bowen-Series maps uniformize a class of genus $0$ orbifolds. Specifically, this is the class $\mathcal{F}$ of hyperbolic orbifolds of genus $0$, 
\begin{itemize}
    \item with any number of punctures, 
    \item at most one order $n\geq 3$ orbifold point, and
    \item at most two order $2$ orbifold points.
\end{itemize}
In the following subsections we will briefly recall the construction and a few relevant properties. 

\subsubsection{Continuous Bowen-Series maps}\label{cBS_subsec}
If $S\in\cF$ has no order $n\geq 3$ orbifold points, one can choose an appropriate fundamental domain for the Fuchsian model of $S$ such that the associated (classical) Bowen-Series map turns out to be continuous.

To see this, let $\Pi$ be a closed regular ideal (hyperbolic) polygon with $p$ sides, say $C_{1, s}$, $s\in\{1,\cdots, p\}$, (ordered cyclically), such that $1$ is an vertex of $\Pi$, and $C_{1,1}$ connects $1$ with $\exp (2\pi i/p)$. Throughout this section, the notation has been chosen to ensure consistency with the construction to follow in Section~\ref{fBS_subsec}.  

We shall now spell out two kinds of side-pairings of $\Pi$ by automorphisms of $\mathbb{D}$ (see Figure~\ref{side_pairing_fig}). The second one is specifically for $p$ even, while the first one works for any integer $p$.
\begin{enumerate}[leftmargin=8mm]
    \item[\textbf{I.}] Let $g_s$ be the M{\"o}bius map that is reflection along $C_{1, s}$ followed by a reflection along the real line. It takes the geodesic $C_{1, s}$ to $C_{1, p+1-s}$, for $s\in\{1,\cdots, p\}$. Note that, $g_{p+1-s} = g_s^{-1}$. In particular, if $p$ is odd, then $g_{(p+1)/2}$ is an order two element.
    
    \item[\textbf{II.}] When $p$ is even, there exists a geodesic $\widetilde{l}$ that meets both $C_{1,1}$ and $C_{1,(p+2)/2}$ orthogonally. Define the M{\"o}bius map $\widetilde{g}_s$ to be reflection along $C_{1,s}$ followed by reflection along $\widetilde{l}$, for $s\in\{1,\cdots, p\}$. Again, note that $\widetilde{g}_{p+2-s} = \widetilde{g}_s^{-1}$, where $\widetilde{g}_{p+1}\equiv \widetilde{g}_1$, by convention. This implies that $\widetilde{g}_1$ and $\widetilde{g}_{(p+2)/2}$ are the only order $2$ elements.
\end{enumerate}

By Poincar{\'e} polygon theorem, the maps $g_s$, $s\in\{1,\cdots, p\}$, in Case I (respectively, $\widetilde{g}_s$, $s\in\{1,\cdots, p\}$, in Case II), form a symmetric set of generators for a Fuchsian group $\Gamma_{1, p}$. To avoid notational clutter, we use the same symbol for the group generated by the maps in Cases I and II, both here and in Section~\ref{fBS_subsec}. None of the general statements appearing in the sections to follow will require separate treatment for the two types of groups, except in a few concrete examples in Section~\ref{examples_sec} (and in discussions concerning these examples in Sections~\ref{correspondences_sec} and~\ref{Hurwitz_sec}), where different notations will be used. Note that the closed regular ideal $p$-gon $\Pi$ is a closed fundamental domain for the action of $\Gamma_{1, p}$ on $\mathbb{D}$.

\noindent In Case I, the quotient $\mathbb{D}/\Gamma_{1, p}$ is a sphere with 
\begin{itemize}
    \item $\lfloor p/2\rfloor+1$ punctures,
    \item at most one order $2$ orbifold point, and
    \item no order $n\geq 3$ orbifold point.
\end{itemize}
In Case II, the quotient $\mathbb{D}/\Gamma_{1, p}$ is a sphere with
\begin{itemize}
    \item $p/2$ punctures,
    \item two order $2$ orbifold points, and
    \item no order $n\geq 3$ orbifold point.
\end{itemize}

Note that $\cU_{\Gamma_{1, p}}:=\mathbb{D}\backslash\Pi$ is a disjoint union of $p$ hyperbolic half-planes, each bound by the geodesic $C_{1, s}$, $s \in \{1, 2, \cdots, p\}$, and a part of the boundary circle $\mathbb{S}^1$. Denote by $A_{\Gamma_{1, p}}^{\text{BS}}:\overline{\cU_{\Gamma_{1, p}}}\to\overline{\D}$, the Bowen-Series map for the aforementioned side-pairing transformations (see \cite{bowen1979markov}). Specifically, the map is given by the action of $g_s$ (respectively, $\widetilde{g}_s$) in the half-plane bound by the geodesic $C_{1, s}$ and $\mathbb{S}^1$.
\begin{figure}
\captionsetup{width=0.98\linewidth}
    \centering
    \includegraphics[width=0.96\linewidth]{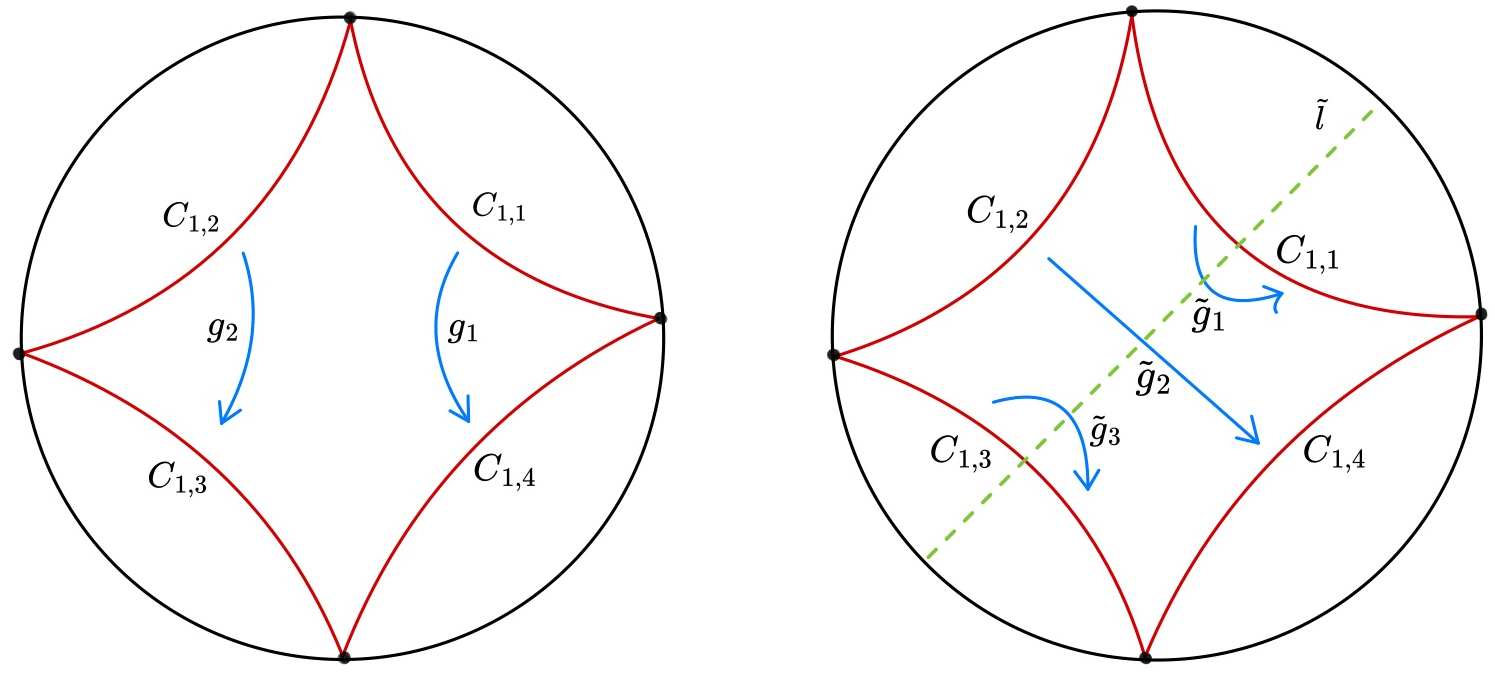}
    \caption{Left: Depicted is a side-pairing of the first kind. The quotient surface for the group generated by these side-pairings has three cusps and no orbifold points. Right: A side-pairing of the second kind is shown here. The corresponding quotient surface has two order $2$ orbifold points and two cusps.} 
    \label{side_pairing_fig}
\end{figure}

According to \cite[\S 3]{mj2023combining}, $A_{\Gamma_{1, p}}^{\text{BS}}:\mathbb{S}^1\rightarrow \mathbb{S}^1$ is an orientation-preserving covering map of degree $p-1$ and is \emph{orbit equivalent} to the group $\Gamma_{1,p}$. Further, $A_{\Gamma_{1, p}}^{\text{BS}}\vert_{\mathbb{S}^1}$ is an expansive map, implying that it is topologically conjugate to the map $z^{p-1}\big|_{\mathbb{S}^1}$. We denote by $h:\mathbb{S}^1 \rightarrow \mathbb{S}^1$ the homeomorphism that conjugates $z^{p-1}$ to $A_{\Gamma_{1, p}}^{\text{BS}}$, normalized so that 
\begin{enumerate}[leftmargin=8mm]
    \item in Case I, the fixed point $1$ of $z^{p-1}$ goes to the fixed point $1$ of~$A_{\Gamma_{1, p}}^{\text{BS}}$, and
    \item in Case II, the fixed point $1$ of $z^{p-1}$ goes to the endpoint of $\widetilde{l}$ in the half-plane determined by $C_{1,1}$ (contained in $\overline{\cU_{\Gamma_{1, p}}}$).
\end{enumerate}

The elements of the Teichm{\"u}ller space $\text{Teich}(\Gamma_{1, p})$ are given by Fuchsian representations $\rho: \Gamma_{1, p}\rightarrow \Gamma < \text{PSL}_2(\mathbb{R})$, $\rho(g) = \psi_{\rho}\circ g \circ \psi_{\rho}^{-1}$, for $g\in \Gamma_{1, p}$, where $\psi_{\rho}$ is a quasiconformal homeomorphism of $\widehat{\mathbb{C}}$ that preserves $\mathbb{D}$. We can and will require that $\psi_\rho(\Pi)$ is a hyperbolic ideal polygon and that $\psi_{\rho}(1) = 1$. 
\begin{definition}
    For each representation $(\rho: \Gamma_{1, p}\rightarrow \Gamma) \in \text{Teich}(\Gamma_{1, p})$, the \textbf{Bowen-Series map} $A_{\Gamma}^{\text{BS}}:\overline{\cU_{\Gamma}}\rightarrow\overline{\mathbb{D}}$ associated to the representation $\rho$ is given by $\psi_{\rho}\circ A_{\Gamma_{1, p}}^{\text{BS}}\circ \psi_{\rho}^{-1}$, where, $\cU_{\Gamma}:=\psi_\rho\big(\cU_{\Gamma_{1, p}}\big)$.
\end{definition}
\noindent By construction, the deformation space of the Bowen-Series map $A_{\Gamma_{1, p}}^{\text{BS}}$ is parametrized by the Teichm{\"u}ller space of the quotient $\mathbb{D}/\Gamma_{1, p} \in \mathcal{F}$.

Finally, we set $h_\Gamma:=\psi_\rho\circ h:\mathbb{S}^1\to\mathbb{S}^1$, which is a normalized conjugacy between $z^{p-1}$ and $A_{\Gamma}^{\text{BS}}$. The surgery construction in Section~\ref{conf_mating_sec} relies crucially on this conjugacy.

\subsubsection{Factor Bowen-Series Maps}\label{fBS_subsec} 
If the surface $S \in \mathcal{F}$ contains an orbifold point of order $n \geq 3$, selecting a fundamental domain for which the associated (classical) Bowen-Series map is continuous becomes intrinsically problematic. To overcome this difficulty, we pass to a subgroup of index $n$ and construct a set of generators for the subgroup such that the corresponding Bowen-Series map exhibits only controlled discontinuities. This Bowen-Series map, under an $n$-fold branched covering, descends to the desired continuous map.

We now proceed to expound the construction of factor Bowen-Series maps, and refer the reader to Figure~\ref{fac_BS_modular_fig} for a pictorial illustration. Pick $n, p \in \mathbb{N}$, with $n\geq 3$. Set $\omega = e^{\frac{2\pi i}{n}}$, and $M_{\omega}(z) := \omega z$. 

Let $\Pi$ be a closed regular ideal (hyperbolic) polygon with $np$ sides, say $C_{r, s}$, $r\in\{1,\cdots, n\}$, $s\in\{1,\cdots, p\}$, where each side is explicitly given by the following bi-infinite hyperbolic geodesic,
$$
C_{r, s}:= \overline{e^{\frac{2\pi i(r-1)}{n} + \frac{2\pi i(s-1)}{np}}, e^{\frac{2\pi i(r-1)}{n}+\frac{2\pi is}{np}}}.
$$
As before, we shall present two kinds of side pairings of $\Pi$ by automorphisms of $\mathbb{D}$. The latter is specifically for even $p$, while the former works for any pair $n, p$.
\begin{enumerate}[leftmargin=8mm]
    \item[\textbf{I.}] Let $l$ denote the diameter in $\mathbb{D}$ joining $\pm e^{i\pi/n}$, and let $g_s$ be the M{\"o}bius map that is reflection along $C_{1, s}$ followed by reflection along $l$, for $s\in\{1,\cdots, p\}$. It takes the geodesic $C_{1, s}$ to $C_{1, p+1-s}$. This sets up side pairings for geodesics in the sector bound by points $1$ and $e^{\frac{2\pi i}{n}}$. For the other $n-1$ sectors, the side pairings are given by the collection of maps: $M_{\omega}^{r-1}\circ g_s\circ M_{\omega}^{-(r-1)}$, for $r\in \{2, \cdots, n\}$ and $s\in \{1, 2, \cdots, p\}$. 
    
    \item[\textbf{II.}] When $p$ is even, there exists a geodesic $\widetilde{l}$ that intersects both $C_{1,1}$ and $C_{1, (p+2)/2}$ orthogonally. Define the M{\"o}bius map $\widetilde{g}_s$ to be reflection along $C_{1, s}$ followed by reflection along $\widetilde{l}$, for $s\in\{1,\cdots, p\}$. We now transfer the side pairings in the first sector to the other $n-1$ sectors via the collection of maps: $M_{\omega}^{r-1}\circ \widetilde{g}_s\circ M_{\omega}^{-(r-1)}$, for $r\in \{2, \cdots, n\}$, and $s\in \{1, 2, \cdots, p\}$.
\end{enumerate}
\begin{figure}[h!]
\captionsetup{width=0.98\linewidth}
    \centering
    \includegraphics[width=0.8\linewidth]{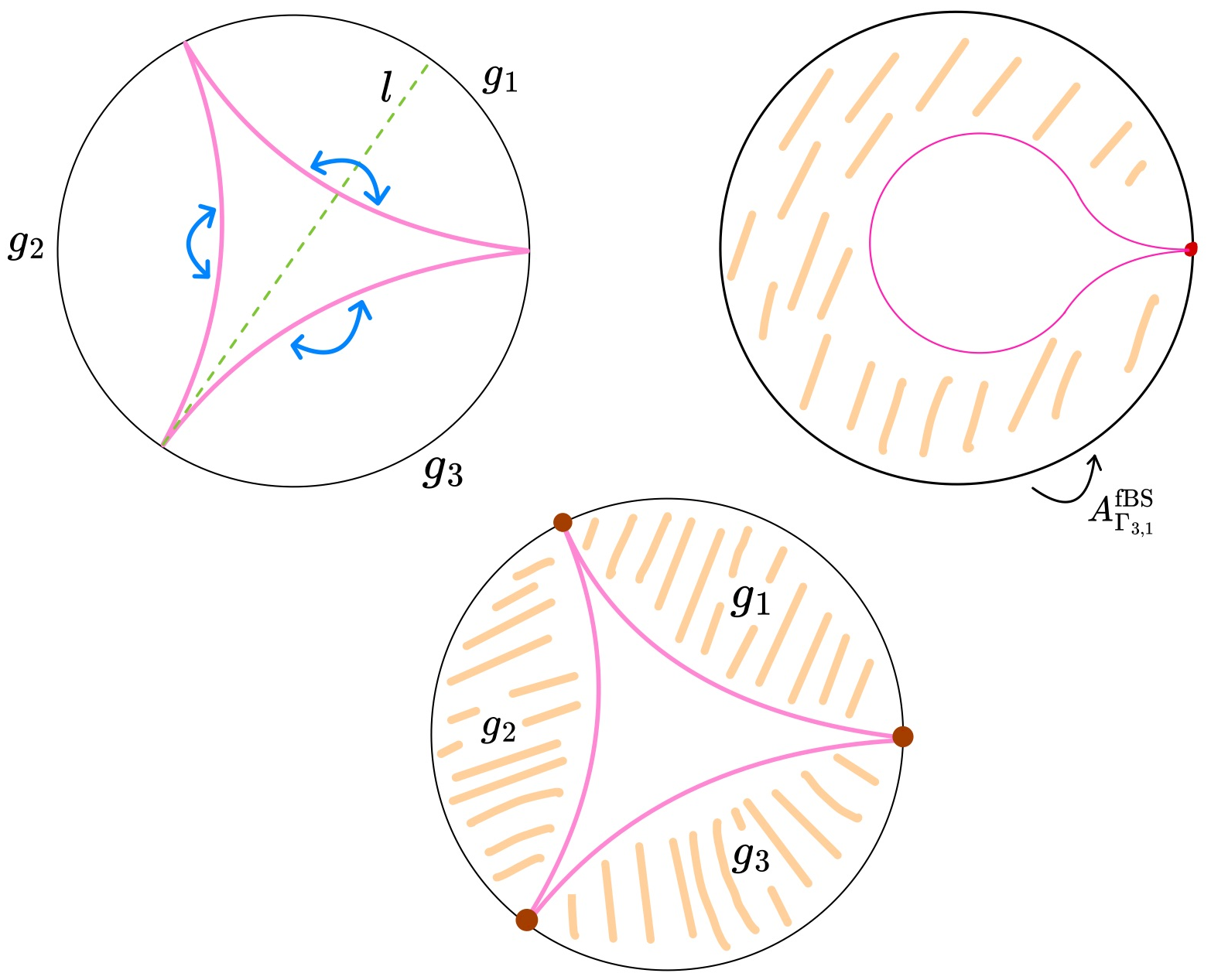}
    \caption{Illustrated above is the construction for $n = 3$, $p = 1$. The side pairing $g_1$ is defined to be reflection along the geodesic $C_{1, 1}$ followed by the reflection along the perpendicular bisector of $C_{1,1}$ (drawn in green). The map $A_{\Gamma_{3, 1}}$ is defined on the three orange pockets as shown. And finally, by taking the quotient under a $\mathbb{Z}/3\mathbb{Z}$ action, we end up with a factor of $A_{\Gamma_{3, 1}}^{\text{BS}}$, shown on the top right.} 
    \label{fac_BS_modular_fig}
\end{figure}

\noindent By the Poincar{\'e} polygon theorem, the maps 
$$
M_{\omega}^{r-1}\circ g_s\circ M_{\omega}^{-(r-1)}, \hspace{2mm} r\in \{1, 2, \cdots, n\}, \hspace{1mm} s\in \{1, 2, \cdots, p\}, \text{ in Case I,}
$$
and the maps
$$
M_{\omega}^{r-1}\circ \widetilde{g}_s\circ M_{\omega}^{-(r-1)}, \hspace{2mm} r\in \{1, 2, \cdots, n\}, \hspace{1mm} s\in \{1, 2, \cdots, p\}, \text{ in Case II,}
$$
generate a Fuchsian group $\Gamma_{n, p}$, where the closed regular ideal $np$-gon $\Pi$ is a closed fundamental domain for the $\Gamma_{n, p}$-action on $\mathbb{D}$. The orbifold $\mathbb{D}/\Gamma_{n, p}$ is an $n$-fold cyclic cover of $\mathbb{D}/\widehat{\Gamma}_{n,p}$, where $\widehat{\Gamma}_{n,p}:=\langle\Gamma_{n, p}, M_{\omega}\rangle$. 
The closed polygon bound by the geodesics $\{C_{1,s}\}_{s=1}^p$, and the geodesics connecting $0$ to $1$ and $e^{\frac{2\pi i}{n}}$, is a closed fundamental domain for the action of $\widehat{\Gamma}_{n,p}$ on $\mathbb{D}$.  

\noindent It now follows that in Case I, the quotient $\mathbb{D}/\widehat{\Gamma}_{n,p}$ is a sphere with 
\begin{itemize}
    \item $\lfloor p/2\rfloor+1$ punctures,
    \item at most one order $2$ orbifold point, and
    \item one order $n\geq 3$ orbifold point;
\end{itemize}
and in Case II, the quotient $\mathbb{D}/\widehat{\Gamma}_{n,p}$ is a sphere with
\begin{itemize}
    \item $p/2$ punctures,
    \item two order $2$ orbifold points, and
    \item one order $n\geq 3$ orbifold point.
\end{itemize}

The domain $\overline{\mathbb{D}}\backslash \Pi$ is a collection of hyperbolic half-planes (or ``pockets"), each bound by the geodesic $C_{r, s}$ and the boundary circle $\mathbb{S}^1$. We now look at the usual Bowen-Series map $A_{\Gamma_{n, p}}^{\text{BS}}$ for the aforementioned side-pairing transformation \cite{bowen1979markov}. The map
$$
A_{\Gamma_{n, p}}^{\text{BS}}: \overline{\mathbb{D}}\backslash \Int{\Pi}\rightarrow \overline{\mathbb{D}}
$$
is given by the action of $M_{\omega}^{r-1}\circ g_s\circ M_{\omega}^{-(r-1)}$ (or $M_{\omega}^{r-1}\circ \widetilde{g}_s\circ M_{\omega}^{-(r-1)}$) in the pocket bound by $C_{r, s}$ and $\mathbb{S}^1$. Note that in Case I this map is discontinuous at points in the $M_{\omega}$-orbit of $1$ (i.e., the $n$-th roots of unity), and in Case II it is discontinuous at the $M_{\omega}$-orbits of the points $1, e^{2\pi i/np}$. But the discontinuities are controlled, in the sense that in both the cases, the left-sided and right-sided limits of $A_{\Gamma_{n, p}}^{\text{BS}}$ at the points of discontinuity lie in the same $M_{\omega}$-orbit. Moreover, $A_{\Gamma_{n, p}}^{\text{BS}}$ commutes with the map $M_{\omega}$. Thus, the map descends via the standard $n$-fold branched covering $z\mapsto z^n$ to a factor of the Bowen-Series map, denoted by $A_{\Gamma_{n, p}}^{\text{fBS}}$. Moreover, the map $A_{\Gamma_{n, p}}^{\text{fBS}}$ has a unique critical value in $\mathbb{D}$ at $0$ with $p$ points in its fiber each of multiplicity~$n-1$.

Define $\mathcal{U}_{\Gamma_{n,p}}$ to be the projection of $\mathbb{D}\backslash \Pi$ under $z\mapsto z^n$, then $A_{\Gamma_{n, p}}^{\text{fBS}}$ is defined on $\overline{\mathcal{U}}_{\Gamma_{n,p}}$. Note that $\mathbb{S}^1 \subset \partial \mathcal{U}_{\Gamma_{n,p}}$. By \cite[Proposition~2.5]{mj2023matings}, the map $A_{\Gamma_{n, p}}^{\text{fBS}}: \mathbb{S}^1 \rightarrow \mathbb{S}^1$ is an orientation-preserving expansive covering map of degree $np-1$ . In particular, it is topologically conjugate to the map $z^{np-1}\big|_{\mathbb{S}^1}$. Let us denote by $h:\mathbb{S}^1 \rightarrow \mathbb{S}^1$ the homeomorphism  that conjugates $z^{np-1}$ to $A_{\Gamma_{n, p}}^{\text{fBS}}$, normalized so that,
\begin{itemize}[leftmargin=6mm]
    \item in Case I, the fixed point $1$ of $z^{np-1}$ goes to the fixed point $1$ of $A_{\Gamma_{n, p}}^{\text{fBS}}$, and
    \item in Case II, the fixed point $1$ of $z^{np-1}$ goes to the projection under $z\mapsto z^n$ of the endpoint of $\widetilde{l}$ in the half-plane determined by $C_{1,1}$ (contained in $\overline{\D}\setminus\Pi$).
\end{itemize}

We denote by $\text{Teich}(\Gamma_{n, p})$ the Teichm{\"u}ller space of $\Gamma_{n,p}$, where the quasiconformal conjugations $\psi_\rho$ are normalized as in Section~\ref{cBS_subsec}.
Its subset $\text{Teich}^{\omega}(\Gamma_{n, p})$ is defined as the collection of $(\rho: \Gamma_{n, p}\rightarrow \Gamma) \in \text{Teich}(\Gamma_{n, p})$ such that $\psi_\rho$ commutes with $M_{\omega}$. Define
$$
\widehat{\Gamma} := \langle\Gamma, M_{\omega}\rangle.
$$
By associating to each $(\rho:\Gamma_{n,p}\to\Gamma) \in \text{Teich}^{\omega}(\Gamma_{n, p})$ the representation $(\widehat{\rho}:\widehat{\Gamma}_{n,p}\to\widehat{\Gamma})\in \text{Teich}(\widehat{\Gamma}_{n,p})$, where
$$
\widehat{\rho}\big|_{\Gamma_{n,p}} \equiv \rho\big|_{\Gamma_{n,p}} \hspace{2mm}\text{and}\hspace{2mm} \widehat{\rho}(M_{\omega}) = M_{\omega},
$$
we see that $\text{Teich}(\widehat{\Gamma}_{n,p})$ can be identified with $\text{Teich}^{\omega}(\Gamma_{n, p})$.
\begin{figure}[ht!]
    \captionsetup{width=0.98\linewidth}
    \begin{tikzpicture}
            \node[anchor=south west,inner sep=0] at (2.8,6) {\includegraphics[width=0.56\textwidth]{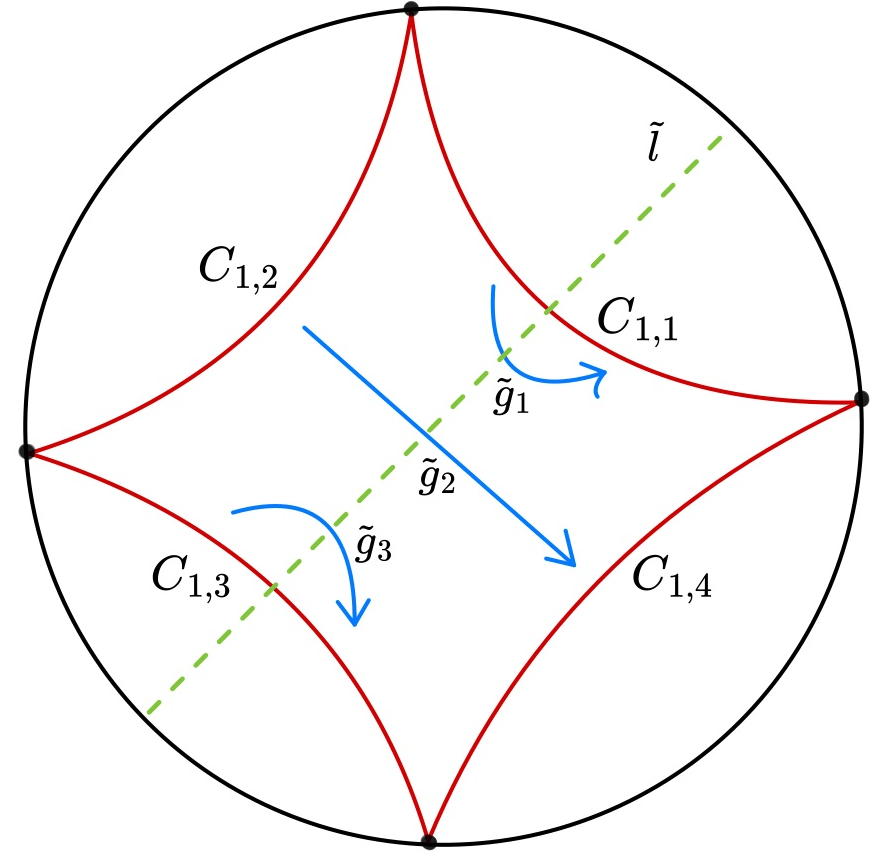}};
            \node[anchor=south west,inner sep=0] at (0,0)     {\includegraphics[width=0.48\linewidth]{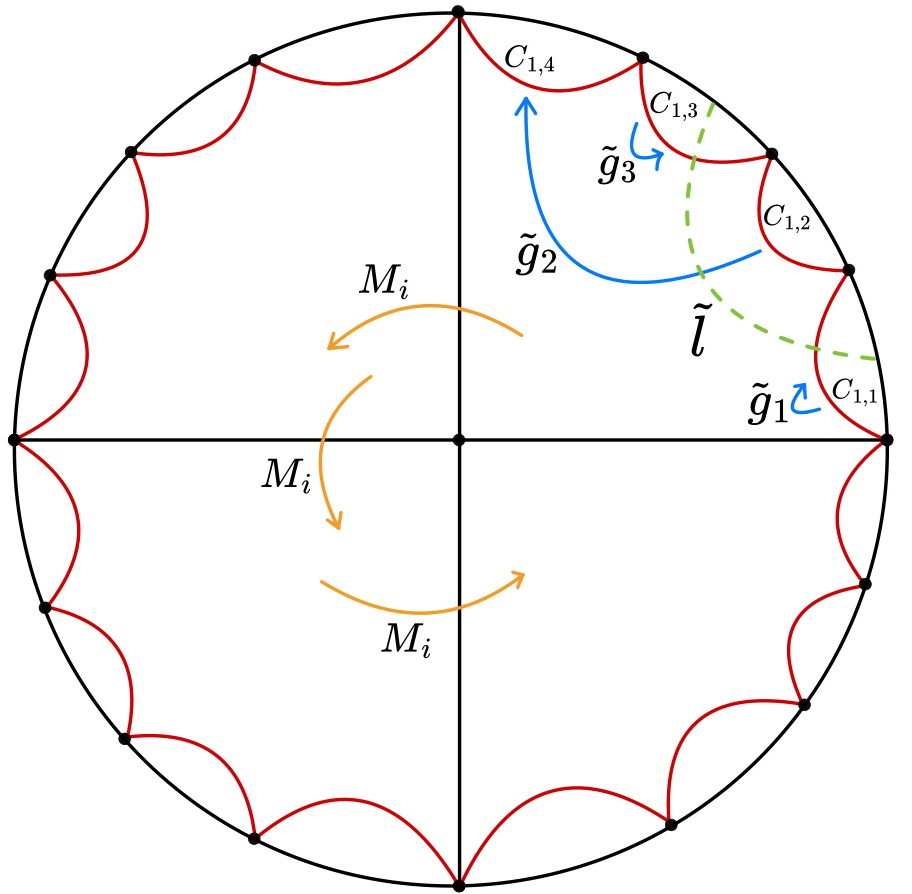}};
            \node[anchor=south west,inner sep=0] at (6.4,-0.2) {\includegraphics[width=0.48\linewidth]{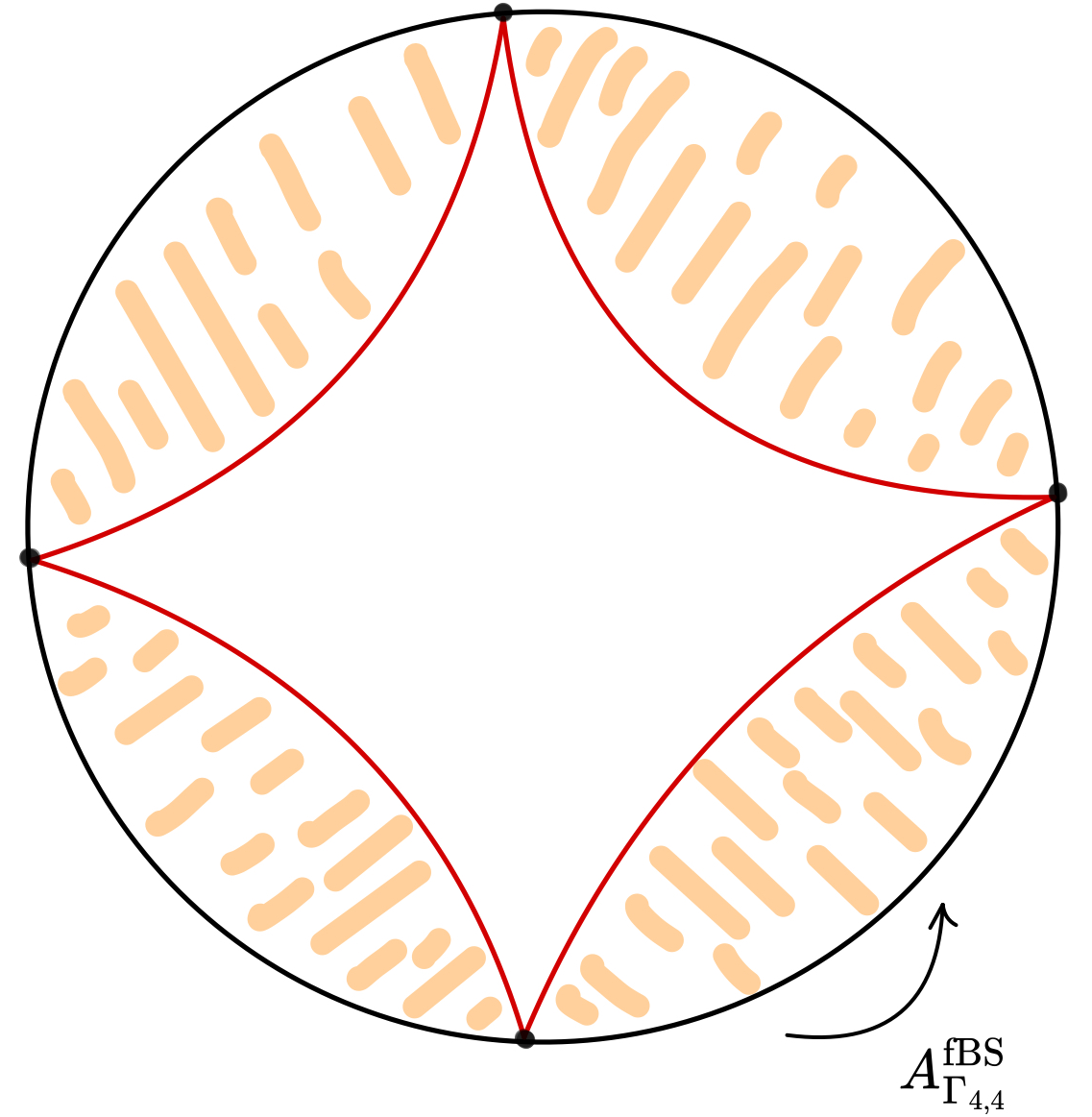}};
    \end{tikzpicture}
    \caption{Top: Shown are the side-pairings for a fundamental domain of a group that uniformizes a surface with two cusps and two order $2$ orbifold points, and no other orbifold points. Bottom left: The side pairings in the first quadrant here are combinatorially identical to the ones shown in the top figure. Those pairings are transferred to the other three sectors via the iterates of the map $z\mapsto iz$. The resulting quotient surface has two cusps, two order $2$ orbifold points, and one order $4$ orbifold point. Bottom right: The (classical) Bowen-Series map associated to the side-parings illustrated in the bottom left, descends under $z\mapsto z^4$ to a continuous map. The region shaded in orange is the domain where the said factor map is~defined.}
    \label{remark_2.5_fig}
\end{figure}

\begin{definition}
    For a given representation $(\rho:\Gamma_{n, p} \rightarrow \Gamma) \in \text{Teich}^{\omega}(\Gamma_{n, p})$, the \textbf{factor Bowen-Series map} $A_{\Gamma}^{\text{fBS}}: \overline{\mathcal{U}}_{\Gamma} \rightarrow \overline{\mathbb{D}}$, associated with the representation $\rho$, is given by $\psi_{\rho}\circ A_{\Gamma_{n, p}}^{\text{fBS}}\circ \psi_{\rho}^{-1}$, where $\mathcal{U}_{\Gamma}:= \psi_{\rho}\big(\mathcal{U}_{\Gamma_{n, p}}\big)$. 
\end{definition}

Note that for each $\rho \in \text{Teich}^{\omega}(\Gamma_{n, p})$, the associated Bowen-Series map $A_{\Gamma}^{\text{BS}}$ commutes with $M_{\omega}$, hence the quasiconformal conjugacy $\psi_{\rho}$ between $\Gamma_{n, p}$ and $\Gamma$ descends to a quasi-conformal conjugacy $\widehat{\psi}_{\rho}$ between $A_{\Gamma_{n, p}}^{\text{fBS}}$ and $A_{\Gamma}^{\text{fBS}}$. As the space $\text{Teich}^{\omega}(\Gamma_{n, p})$ can be identified with $\text{Teich}(\widehat{\Gamma}_{n,p})$, the collection of the factor Bowen-Series maps $A_\Gamma^{\text{fBS}}$ is parametrized by the Teichm{\"u}ller space of the quotient $\mathbb{D}/\widehat{\Gamma}_{n,p}~\in~\mathcal{F}$. 

Define $h_\Gamma:=\widehat{\psi}_\rho\circ h:\mathbb{S}^1\to\mathbb{S}^1$, the normalized map that establishes a topological conjugacy between $z^{np-1}$ and $A_{\Gamma}^{\text{fBS}}$. This conjugacy will play an important role in the surgery construction of Section~\ref{conf_mating_sec}.

\begin{remark}
     Notice that the side-pairing constructions in Sections~\ref{cBS_subsec} and~\ref{fBS_subsec} are identical when $n=1$. In this case, we use the convention that $\widehat{\Gamma}_{n,p} = \Gamma_{n,p}$, $\widehat{\Gamma}=\Gamma$, $\widehat{\Pi} = \Pi$, $\widehat{\rho}=\rho$, and $\text{Teich}^{\omega}(\Gamma_{n, p})=\text{Teich}(\Gamma_{n, p})$.
\end{remark}

\begin{remark}
     The key difference between the constructions in Sections~\ref{cBS_subsec} and~\ref{fBS_subsec} is the emergence of an order $n\geq 3$ orbifold point in the quotient surface of the latter. This particular feature is accommodated for by 
    {\upshape
    \begin{enumerate}[leftmargin=8mm,label=\alph*)]
        \item taking $n$ sectors of angle $\frac{2\pi}{n}$ each, and fitting geodesics appearing in Section~\ref{cBS_subsec} in every sector, and
        \item preserving the identifications among the geodesics in one sector, and transferring the identifications to the geodesics in the other sectors via maps $M_{\omega}^{r-1}$, for $r\in\{1,\cdots, n\}$. (See Figure~\ref{remark_2.5_fig}.)
    \end{enumerate}
    }
    Therefore, as the groups in Section~\ref{cBS_subsec} are associated with orbifolds that have any number of punctures and at most $2$ order two orbifold points; the groups in Section~\ref{fBS_subsec} are associated with orbifolds that admit an arbitrary number of punctures, at most $2$ order two orbifold points, and exactly $1$ order $n\geq 3$ orbifold point. The maps in Section~\ref{fBS_subsec} were first introduced in \cite{mj2023matings} to present an alternate way of combining modular groups with quadratic maps, thus bringing to attention a different perspective on the Bullett-Penrose family of correspondences \cite{bullett1994mating}.
\end{remark}

\begin{remark}\label{BSmap_domain_rem}
The domain of definition of the (factor) Bowen-Series map $A$ can be split into three parts, 
    $$
    \overline{\mathcal{U}}_{\Gamma} = \mathbb{S}^1\cup \cU_{\Gamma} \cup \big(\partial \mathcal{U}_{\Gamma}\backslash\mathbb{S}^1\big).
    $$
    {\upshape
\begin{enumerate}[leftmargin=8mm,label=(\roman*)]
        \item On the ``outer boundary", $\mathbb{S}^1$, the map $A$ is topologically equivalent to $z^d$, enabling boundary compatibility with Blaschke products. In Section~\ref{conf_mating_sec}, this will be used to construct conformal matings of Fuchsian groups (uniformizing orbifolds in $\mathcal{F}$) with Blaschke products.
        \item On the ``inner boundary", $\partial \mathcal{U}_{\Gamma}\backslash\mathbb{S}^1$, the map $A$ is an orientation-reversing analytic involution. This aspect of the map is imperative in the welding construction of Section~\ref{b_involutions_sec}, leading us to an explicit algebraic description of the conformal mating. It also underpins the construction of correspondences on higher genus hyperelliptic Riemann surfaces in Section~\ref{correspondences_sec}.
        \item  And finally, the map $A\big|_{\mathcal{U}_{\Gamma}}$ is parametrized by the Teichm\"uller space of an appropriate group. This allows us to quasiconformally deform the map $A$ and study its parameter space consequences in Section~\ref{Hurwitz_sec}. 
    \end{enumerate}
    }
    In summary, the three different ways the map behaves on three different parts of its domain of definition - spans four different sections of this paper encapsulating its core narrative in totality. 
\end{remark}

\subsection{David Homeomorphisms}
The class of maps that will feature in this section is crucial our surgery construction. In fact, it is one of the key steps in our combination framework. Recall that a measurable function $f: U
\rightarrow \mathbb{C}$ is said to belong to the Sobolev space $W_{\text{loc}}^{1, p}(U)$ if $f \in L_{\text{loc}}^p(U)$ and $f$ has distributional derivatives lying in $L_{\text{loc}}^p(U)$.

\begin{definition}
    An orientation-preserving homeomorphism $H: U\rightarrow V$ between domains in the Riemann sphere is called a \textbf{David homeomorphism} if $H\in W_{\text{loc}}^{1, 1}(U)$, and there exist constants $C, \alpha, K_0 >0$ such that, 
    $$
    \sigma (\{z\in U: K_H(z) \geq K\}) \leq Ce^{-\alpha K}, \hspace{2mm} K\geq K_0,
    $$
    where $\sigma$ is the spherical measure, and $K_H$ is the dilatation of $H$ given by, 
    $$
    K_H(z) = \frac{1+|\mu_H(z)|}{1 - |\mu_H(z)|},
    $$
    where $\mu_H(z) = \frac{\partial H/\partial\overline{z}}{\partial H/\partial z}$ is the Beltrami coefficient of $H$. In addition, a measurable function $\mu: U \rightarrow \mathbb{D}$ that satisfies the above condition will be called a \textbf{David coefficient}. 
\end{definition}

An aspect of the theory of David homeomorphisms relevant for our purpose is the following integrability theorem  \cite[Theorem~20.6.2]{astala2008elliptic}.

\begin{theorem}[David Integrability Theorem]
    Let $\mu: \widehat{\mathbb{C}} \rightarrow \mathbb{D}$ be a David coefficient. Then there exists a David homeomorphism $H:\widehat{\mathbb{C}} \rightarrow \widehat{\mathbb{C}}$ in $W_{\text{loc}}^{1,1}(\widehat{\mathbb{C}})$ that solves the Beltrami equation
    $$
    \frac{\partial H}{\partial\overline{z}} = \mu\frac{\partial H}{\partial z}.
    $$
    Moreover, $H$ is unique up to post-composition with M{\"o}bius transformations.
\end{theorem}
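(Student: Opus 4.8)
The plan is to obtain $H$ as a limit of classical quasiconformal solutions, using the David condition to supply the uniform estimates that the unbounded dilatation would otherwise destroy. The first step is to record the standard reformulation of the David condition: the exponential decay $\sigma(\{z : K_H(z) \geq K\}) \leq Ce^{-\alpha K}$ is equivalent to the exponential integrability $\int_{\widehat{\C}} e^{\beta K_H}\, d\sigma < \infty$ for some $\beta > 0$, and this integrability (in particular $K_H \in L^1(\widehat{\C})$) is the analytic hypothesis that all the estimates below will consume. Throughout, the strategy is that of the measurable Riemann mapping theorem, pushed to the degenerate case $\|\mu\|_\infty = 1$.

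For existence I would truncate. Set $\mu_n := \mu \cdot \mathbbm{1}_{\{|\mu| \leq 1 - 1/n\}}$, so that $\|\mu_n\|_\infty \leq 1 - 1/n < 1$. The classical measurable Riemann mapping theorem produces quasiconformal homeomorphisms $H_n : \widehat{\C} \to \widehat{\C}$ solving $\partial_{\bar z} H_n = \mu_n\, \partial_z H_n$, which I normalize to fix $0$, $1$, $\infty$. The dilatations $K_{H_n}$ are dominated by $K_H$ off the truncation set and equal $1$ on it, so they inherit a uniform exponential-integrability bound from $\mu$. The crux, and the main obstacle, is to convert this uniform bound into (a) a uniform modulus of continuity for the family $\{H_n\}$ and (b) a quantitative lower bound that prevents the limit from degenerating to a constant. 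In the genuinely quasiconformal case both follow from a single uniform Hölder estimate; here the dilatation is unbounded, so one must instead run the Gr\"otzsch/modulus-of-ring argument with the exponential weight, estimating the conformal modulus of the image of a round annulus by integrating $1/K_{H_n}$ against the relevant extremal metric and invoking the exponential-integrability bound. This yields a uniform (weak, essentially logarithmic) modulus of continuity $\omega$ together with the matching lower modulus bound that forbids collapse. This is precisely the step where the quantitative David condition, rather than mere $\|\mu\|_\infty = 1$, is indispensable.

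Granting these estimates, Arzel\`a--Ascoli produces a subsequence $H_{n_k}$ converging locally uniformly to a map $H$, and the two-sided modulus control forces $H$ to be a homeomorphism of $\widehat{\C}$. To identify $H$ as the desired solution I would first establish a uniform $W^{1,1}_{\mathrm{loc}}$ bound via the pointwise identity $|DH_n|^2 = K_{H_n}\, \mathrm{Jac}(H_n)$ and Cauchy--Schwarz, giving $\int_E |DH_n| \leq \big(\int_E K_{H_n}\big)^{1/2}\,|H_n(E)|^{1/2}$, which is uniformly finite because $K_{H_n}$ is in $L^1$ and $H_n$ has bounded image. Passing to the limit places $H \in W^{1,1}_{\mathrm{loc}}(\widehat{\C})$, and weak convergence of the derivatives together with $\mu_n \to \mu$ almost everywhere identifies $\partial_{\bar z} H = \mu\, \partial_z H$ a.e.; checking $\mathrm{Jac}(H) > 0$ a.e. then confirms that $H$ is itself a David homeomorphism.

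Finally, for uniqueness suppose $H_1, H_2$ are two David solutions with the same coefficient $\mu$. Then $\Phi := H_1 \circ H_2^{-1}$ is a homeomorphism of $\widehat{\C}$ whose Beltrami coefficient vanishes almost everywhere, so a Weyl-type lemma forces $\Phi$ to be conformal, and a conformal homeomorphism of the sphere is a M\"obius transformation, giving $H_1 = (\text{M\"obius}) \circ H_2$. The delicate ingredient is the regularity/removability statement ensuring that $\Phi \in W^{1,1}_{\mathrm{loc}}$ with vanishing Beltrami coefficient is genuinely holomorphic despite the unbounded dilatations of $H_1$ and $H_2$; as above, this rests on the David condition through the exponential integrability of the individual dilatations, which controls the size of the exceptional set where the chain rule for the Beltrami coefficients could fail.
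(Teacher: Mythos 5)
This statement is not proved in the paper: it is quoted verbatim as the David Integrability Theorem from \cite[Theorem~20.6.2]{astala2008elliptic}, so there is no in-paper argument to compare against. Your outline is essentially the standard proof from that reference (truncate $\mu$, use exponential integrability of $K$ to get two-sided modulus-of-continuity and non-degeneracy estimates for the normalized quasiconformal approximants, extract a locally uniform limit with uniform $W^{1,1}_{\mathrm{loc}}$ bounds via $|DH_n|^2=K_{H_n}\,\mathrm{Jac}(H_n)$, and deduce uniqueness from the David version of Weyl's lemma), and it is correct as a sketch, with the understood caveat that the equicontinuity of the inverses and the regularity needed for the composition $H_1\circ H_2^{-1}$ are the genuinely technical steps you are deferring to the modulus estimates.
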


David maps can be regarded as natural generalizations of quasiconformal maps, and the integrability theorem (above) serves as an analogue of the Measurable Riemann Mapping Theorem \cite[Theorem~5.3.4]{astala2008elliptic}. There is also a version of Weyl's lemma for David maps \cite[Theorem~20.4.19]{astala2008elliptic}. We record it in the following result.

\begin{theorem}
    Let $U\subset \widehat{\mathbb{C}}$ be open and let $f, g: U \rightarrow \widehat{\mathbb{C}}$ be David homeomorphisms with
    $$
    \mu_f = \mu_g
    $$
    almost everywhere. Then $f\circ g^{-1}$ is a conformal map on $g(U)$.
\end{theorem}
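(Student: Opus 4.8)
The plan is to show that $h:=f\circ g^{-1}:g(U)\to f(U)$ is an orientation-preserving homeomorphism lying in $W^{1,1}_{\mathrm{loc}}(g(U))$ whose Beltrami coefficient vanishes almost everywhere; the classical Weyl lemma then forces $h$ to be holomorphic, and being a homeomorphism it is conformal. The guiding heuristic is the transformation rule for Beltrami coefficients under composition: writing $z=g^{-1}(w)$, one has
$$
\mu_h(w)=\left(\frac{\mu_f-\mu_g}{1-\overline{\mu_g}\,\mu_f}\right)(z)\cdot\frac{\partial g/\partial z}{\,\overline{\partial g/\partial z}\,},
$$
so the hypothesis $\mu_f=\mu_g$ a.e. kills the numerator and hence $\mu_h$ vanishes a.e. All the substance lies in justifying this pointwise and in upgrading it to Sobolev regularity.

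First I would set up the almost-everywhere differential calculus. A David homeomorphism is differentiable a.e. with $J>0$ a.e., and it satisfies Lusin's conditions $(N)$ and $(N^{-1})$ \cite{astala2008elliptic}. Let $E\subset U$ be the null set off which both $f$ and $g$ are differentiable with positive Jacobian; since $g$ satisfies $(N^{-1})$, the image $g(E)$ is null, so for a.e. $w\in g(U)$ the point $z=g^{-1}(w)$ is a common point of differentiability. At such $w$ the inverse $g^{-1}$ is differentiable with $Dg^{-1}(w)=(Dg(z))^{-1}$, and the chain rule gives $Dh(w)=Df(z)\circ(Dg(z))^{-1}$. Because $\mu_f(z)=\mu_g(z)$, the real-linear maps $Df(z)$ and $Dg(z)$ share a Beltrami coefficient, whence $Df(z)=\lambda\,Dg(z)$ for some $\lambda\in\C^{*}$; consequently $Dh(w)=\lambda\cdot\mathrm{id}$ is $\C$-linear and $\bar\partial h(w)=0$ for a.e. $w$.

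The main obstacle is establishing $h\in W^{1,1}_{\mathrm{loc}}(g(U))$, since compositions of David maps need not be David in general. Here the cancellation $\mu_h=0$ is decisive. From $Dh=\lambda\cdot\mathrm{id}$ we get $|Dh(w)|^2=J_h(w)=J_f(z)/J_g(z)$, so $|Dh(w)|=\sqrt{J_f(z)/J_g(z)}$. For a compact $K\subset g(U)$, the change-of-variables formula (valid since $f,g$ satisfy $(N)$) yields
$$
\int_K|Dh|\,dA=\int_{g^{-1}(K)}\sqrt{\tfrac{J_f}{J_g}}\;J_g\,dA=\int_{g^{-1}(K)}\sqrt{J_f\,J_g}\,dA\le\tfrac12\big(\mathrm{Area}(h(K))+\mathrm{Area}(K)\big)<\infty,
$$
using $\sqrt{ab}\le\tfrac12(a+b)$. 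Thus $|Dh|\in L^1_{\mathrm{loc}}$. Combined with a.e. differentiability and the monotonicity of the homeomorphism $h$, this should give the ACL property and hence $h\in W^{1,1}_{\mathrm{loc}}$. I expect verifying ACL to be the genuinely delicate point — it is precisely where the regularity theory of Sobolev homeomorphisms and mappings of exponentially integrable distortion enters. Once $h\in W^{1,1}_{\mathrm{loc}}$, its distributional derivative agrees with the a.e. derivative, so $\bar\partial h=0$ distributionally, and Weyl's lemma concludes that $h$ is holomorphic, hence conformal.

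Finally, I would note that in the special case $U=\widehat{\C}$ there is a shortcut bypassing the regularity analysis: $\mu:=\mu_f=\mu_g$ is then a David coefficient on the whole sphere, so by the uniqueness clause of the David Integrability Theorem the two David solutions $f,g$ of $\bar\partial H=\mu\,\partial H$ differ by post-composition with a M\"obius map, i.e.\ $h=f\circ g^{-1}$ is M\"obius. The general case, however, is genuinely local and does not reduce cleanly to this global uniqueness statement, which is why the direct Sobolev argument above is the one I would carry out.
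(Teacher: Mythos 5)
First, a point of reference: the paper does not prove this statement at all — it is quoted verbatim from \cite[Theorem~20.4.19]{astala2008elliptic} as a black box (a Weyl-type lemma for David maps), so there is no in-paper argument to compare against. Judged on its own terms, your outline follows the right strategy and correctly isolates the crux, but it leaves that crux open, and it is not a formality: it is the entire analytic content of the theorem. Everything up to and including the pointwise computation is fine — a.e.\ differentiability of David homeomorphisms, the Lusin conditions, differentiability of $g^{-1}$ at images of points where $Dg$ is invertible, the linear-algebra fact that $\mu_{Df(z)}=\mu_{Dg(z)}$ forces $Df(z)=\lambda\,Dg(z)$, and the estimate $\int_K|Dh|\,dA\le\tfrac12\big(|h(K)|+|K|\big)$ via $\sqrt{J_fJ_g}\le\tfrac12(J_f+J_g)$. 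But a homeomorphism that is differentiable a.e.\ with a.e.\ conformal (even identity) derivative and locally integrable pointwise derivative need \emph{not} be holomorphic: the map $h(x+iy)=x+c(x)+iy$, with $c$ the Cantor function, is a homeomorphism with $Dh=\mathrm{id}$ a.e.\ and $|Dh|\in L^\infty$ pointwise, yet it is not ACL and $\bar\partial h\neq 0$ distributionally. So the step you flag as ``genuinely delicate'' — that $h=f\circ g^{-1}$ lies in $W^{1,1}_{\mathrm{loc}}$ with distributional derivative equal to the pointwise one — cannot be absorbed into ``should give the ACL property''; without it the a.e.\ identity $\bar\partial h=0$ proves nothing.

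To close the gap one needs input beyond the $L^1$ bound on the pointwise derivative, and this is precisely where the cited proof does its work: for a homeomorphism $g$ of exponentially integrable (in particular David) distortion, the inverse satisfies $g^{-1}\in W^{1,2}_{\mathrm{loc}}$ with finite distortion (via $\int|Dg^{-1}|^2\le\int K_{g^{-1}}J_{g^{-1}}=\int K_g\,dA<\infty$), and one then invokes a Sobolev chain rule for the composition $f\circ g^{-1}$ together with condition $(N)$ to get $h\in W^{1,1}_{\mathrm{loc}}$ honestly, after which Weyl's lemma applies. Your concluding remark about the global case is also slightly circular: the ``uniqueness up to M\"obius'' clause of the David Integrability Theorem is exactly the $U=\widehat{\mathbb{C}}$ instance of the statement being proved, so it cannot be used as independent evidence. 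In summary: right skeleton, correct identification of the hard step, but that step is asserted rather than proved, and it is the theorem.
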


In the following lemma, David extension of certain circle homeomorphisms is established. This will play a key role in the construction of conformal matings in Section~\ref{conf_mating_sec}. Recall the map $h_{\Gamma}$ introduced in Sections~\ref{cBS_subsec} and~\ref{fBS_subsec}. 

\begin{lem}\label{david_ext_lem}
The circle homeomorphism $h_\Gamma:\mathbb{S}^1\rightarrow \mathbb{S}^1$ continuously extends to a David homeomorphism $h_\Gamma$ of $\mathbb{D}$. 

More generally, if $B$ is a degree $d$ Blaschke product with an attracting fixed point in $\D$, then there exists a circle homeomorphism $h_{B,\Gamma}$ that conjugates $B$ to $A_\Gamma^{\text{fBS}}$ (respectively, $A_{\Gamma}^{\text{BS}}$), and extends continuously to a David homeomorphism of $\D$.
\end{lem}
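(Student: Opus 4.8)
The plan is to prove the statement by two quasiconformal reductions to an explicit local model at the cusps, where a David estimate is carried out directly.

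\emph{Reductions.} First I would reduce the general Blaschke product $B$ to the power map. Since $B$ has an attracting fixed point in $\D$, it is hyperbolic with Julia set $\mathbb{S}^1$, so $B|_{\mathbb{S}^1}$ is a real-analytic expanding circle covering; for a conjugacy to $A_\Gamma^{\text{fBS}}$ (resp. $A_\Gamma^{\text{BS}}$) to exist its degree $d$ must equal that of the (factor) Bowen-Series map. Any two analytic expanding circle maps of the same degree are topologically conjugate, and by bounded distortion the conjugacy $\psi$ carrying $B|_{\mathbb{S}^1}$ to $z^d|_{\mathbb{S}^1}$ is quasisymmetric, hence extends to a quasiconformal homeomorphism of $\D$ (Beurling-Ahlfors / Douady-Earle). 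Setting $h_{B,\Gamma}:=h_\Gamma\circ\psi$ then conjugates $B$ to $A_\Gamma^{\text{fBS}}$. Because the composition, in either order, of a David homeomorphism with a quasiconformal one is again David (in the harder order the quasiconformal factor distorts area by a controlled power, via Astala's area-distortion estimate, so the exponential decay of the level sets of the dilatation survives), it suffices to extend $h_\Gamma$ itself. Next, recalling from Sections~\ref{cBS_subsec} and~\ref{fBS_subsec} that $h_\Gamma=\psi_\rho\circ h$ (resp. $\widehat{\psi}_\rho\circ h$) with $\psi_\rho$ (resp. $\widehat{\psi}_\rho$) quasiconformal on $\D$, the same composition principle reduces everything to extending the base conjugacy $h$ between $z^d|_{\mathbb{S}^1}$ and $A_{\Gamma_{1,p}}^{\text{BS}}$ (resp. $A_{\Gamma_{n,p}}^{\text{fBS}}$).

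\emph{Localization.} In the base case both maps are orientation-preserving expansive degree $d$ coverings of $\mathbb{S}^1$, uniformly hyperbolic away from the finitely many parabolic fixed points that arise from the cusps (punctures) of the orbifold; the order $2$ and order $n$ orbifold points contribute elliptic elements whose fixed points lie inside $\D$ and so are invisible on $\mathbb{S}^1$. On the complement of fixed neighborhoods of these cusps both maps have bounded distortion, whence $h$ is quasisymmetric there and admits a quasiconformal extension. The entire difficulty is thus concentrated at the cusps.

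\emph{Local David estimate (the main obstacle).} Near a cusp the (factor) Bowen-Series map is, in Fatou coordinates, conjugate to the parabolic translation $w\mapsto w+1$, whereas $z^d$ is linear with multiplier $d>1$. Hence $h$ conjugates the geometric shrinking (rate $d^{-k}$) of the Markov intervals to the parabolic shrinking (rate $\asymp 1/k$), forcing $h$ to have a purely logarithmic modulus of continuity at each cusp. I would then estimate the Beltrami coefficient of an explicit extension (Beurling-Ahlfors adapted to the collar) near a cusp $c$, the upshot being $K_H(z)\lesssim \log\tfrac{1}{\mathrm{dist}(z,c)}$. Consequently $\{z:K_H(z)\geq K\}$ is contained in $e^{-\alpha' K}$-scale neighborhoods of the cusps and, with geometrically shrinking size, of their backward orbit; since at backward-orbit level $m$ there are $\asymp d^{m}$ preimages whose bad neighborhoods have area $\asymp(d^{-m}e^{-\alpha'K})^2$, the total area forms a convergent geometric series in $m$, yielding $\sigma(\{K_H\geq K\})\leq Ce^{-\alpha K}$. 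This is exactly the David condition, so $H$ is a David homeomorphism whose coefficient is admissible for the David Integrability Theorem. Finally, gluing the finitely many local David extensions to the global quasiconformal extension through a quasiconformal cutoff keeps the large-dilatation set inside the cusp neighborhoods and preserves the estimate. I expect this local logarithmic-dilatation bound at the parabolic points—equivalently, the sharp logarithmic modulus of continuity of $h$ together with the ensuing summable exponential area decay—to be the crux; the two reductions, the quasisymmetry away from cusps, and the gluing are comparatively routine.
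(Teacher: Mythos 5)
Your treatment of the second statement is essentially the paper's proof: take a quasisymmetric conjugacy from $B$ to $z^d$, extend it quasiconformally to $\D$, post-compose with the David extension of $h_\Gamma$, and invoke the fact that a David homeomorphism pre-composed with a quasiconformal one is again David (this is exactly the cited \cite[Proposition~2.5~(iv)]{lyubich2020david}, and your remark that Astala's area-distortion theorem is what makes this the harder order of composition is correct). Where you diverge is the first statement: the paper does not prove it at all, but cites \cite[Lemma~3.4]{mj2023matings} and \cite[Proposition~2.7]{mj2023combining}, whereas you attempt a from-scratch argument. Your reduction through $h_\Gamma=\psi_\rho\circ h$ and your localization are sound, and you correctly identify that the only obstruction to quasisymmetry of $h$ is at the parabolic fixed points coming from the cusps (the elliptic elements indeed have no fixed points on $\mathbb{S}^1$).

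The gap is that the step you yourself label the crux --- the local estimate $K_H(z)\lesssim \log\frac{1}{\mathrm{dist}(z,c)}$ for an explicit extension near a cusp, together with the summation over the backward orbit --- is asserted, not carried out. This estimate is precisely the technical content of the results the paper cites (ultimately the David extension criterion of \cite{lyubich2020david}, which occupies a substantial part of that memoir): one must actually compute the modulus of continuity of $h$ at a parabolic point, where geometric shrinking of Markov intervals on the $z^d$ side is matched against the $\asymp 1/k$ shrinking on the Bowen--Series side, and then verify that the chosen extension (Beurling--Ahlfors in the collar, or a model extension in a horodisk) realizes the logarithmic dilatation bound rather than something worse. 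Your counting of preimages at depth $m$ also tacitly uses that the dilatation bound at a depth-$m$ preimage is uniform in $m$, which requires a rescaling/bounded-distortion argument that should be made explicit. So the architecture is right and matches the literature, but as written the proof of the first statement reduces to the very lemma it is meant to establish; either cite the extension results as the paper does, or supply the local computation.
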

\begin{proof}
The first statement is the content of \cite[Lemma~3.4]{mj2023matings}, and \cite[Proposition~2.7]{mj2023combining}. For the second statement, let $\theta_{B}: \mathbb{S}^1\rightarrow\mathbb{S}^1$ be a quasisymmetric conjugacy from $B$ to $z^d$. The map $\theta_{B}$ can be extended continuously as a quasiconformal homeomorphism of $\mathbb{D}$. And by the first statement, the map $h_{\Gamma}$ can be extended continuously to $\mathbb{D}$ as a David homeomorphism. Using the same notation for the extensions of $\theta_B$ and  $h_{\Gamma}$ to the unit disk $\mathbb{D}$, define $h_{B, \Gamma}:= h_{\Gamma}\circ \theta_{B}$. By \cite[Proposition~2.5 (iv)]{lyubich2020david}, $h_{B, \Gamma}$ is a David homeomorphism of $\mathbb{D}$.
\end{proof}

\section{Conformal matings of Fuchsian groups and Blaschke products}\label{conf_mating_sec}

In this section, we shall prescribe a way to house conformal mating of several Fuchsian groups (uniformizing certain genus $0$ orbifolds) with Blaschke products via subhyperbolic rational maps. 

We shall continue to use the notation introduced in Section~\ref{mateable_maps_subsec}. In addition, we direct the reader to \cite{petersen2012notions} for a thorough treatment of topological/conformal matings.

Let $R$ be a subhyperbolic rational map with connected Julia set, and let $\{U_i\}_{i=1}^k$ be a collection of invariant Fatou components of $R$. We intend to replace the dynamics of $R$ on some of these Fatou components with the actions of appropriate (factor) Bowen-Series maps or hyperbolic Blaschke products. 

To this end, consider $(\rho_i: \Gamma_{n_i, p_i} \rightarrow \Gamma_i) \in \text{Teich}^{\omega_i}(\Gamma_{n_i, p_i})$, and let $A_{\Gamma_i}$ be the associated continuous Bowen-Series map or factor Bowen-Series map for $i\in \{1, 2, \cdots, l\}$, $l\leq k$, such that, 
$$
\text{deg}\Big(R\Big|_{U_i}\Big) = \text{deg}\Big(A_{\Gamma_i}\Big|_{\mathbb{S}^1}\Big) = n_ip_i-1.
$$

Let us also consider a collection of hyperbolic Blaschke products $B_j$, $j\in\{1,\cdots,r\}$, $r+l\leq k$ (i.e., Blaschke products with an attracting fixed point in $\D$), such that 
$$
\deg (R\vert_{U_{l+j}})=\deg (B_j),\ j\in\{1,\cdots,r\}.
$$

Let $r_i: \mathbb{D}\rightarrow U_i$ be the uniformizer of $U_i$, $i\in\{1,\cdots,l+r\}$. We know that the Julia set of $R$ is locally connected \cite[Theorem~19.7]{milnor2011dynamics}. Therefore, $r_i$ extends continuously to the boundary circle $\mathbb{S}^1$. The map
$$
\mathscr{B}_i\Big|_{\mathbb{D}}:= r_i^{-1}\circ R\circ r_i,\ i\in\{1,\cdots,l+r\},
$$
is a hyperbolic Blaschke product, and $r_i$ is a semi-conjugacy between $\mathscr{B}_i\Big|_{\mathbb{S}^1}$ and~$R\Big|_{\partial U_i}$. 

For $i\in\{1,\cdots,l\}$, let $h_i:\mathbb{S}^1\to\mathbb{S}^1$ be the normalized topological conjugacy from $\mathscr{B}_i$ to $A_{\Gamma_i}$ that admits a David extension to $\D$ (see Lemma~\ref{david_ext_lem}). Set $\zeta_i:=h_i\circ r_i^{-1}:U_i\to \D$, and note that $\zeta_i^{-1}$ extends as a continuous surjection $\zeta_i^{-1}:\mathbb{S}^1\to\partial U_i$ semi-conjugating $A_{\Gamma_i}$ to $R$, $i\in\{1,\cdots,l\}$.

For $j\in\{1,\cdots,r\}$, let $h_{l+j}:\mathbb{S}^1\to\mathbb{S}^1$ be a quasisymmetric conjugacy from $\mathscr{B}_{l+j}$ to $B_{j}$. We continuously extend these quasisymmetric conjugacies to quasiconformal homeomorphisms of $\D$.
Set $\zeta_{l+j}:=h_{l+j}\circ r_{l+j}^{-1}:U_{l+j}\to \D$, and note that $\zeta_{l+j}^{-1}$ extends as a continuous surjection $\zeta_{l+j}^{-1}:\mathbb{S}^1\to\partial U_{l+j}$ semi-conjugating $B_{j}$ to $R$, $j\in\{1,\cdots,r\}$.

Let us now formally define the notion of a conformal mating of the dynamical systems $R$, $\{A_{\Gamma_i}\}_{i=1}^l$, and $\{B_j\}_{j=1}^r$. Consider the disjoint union 
$$
\Bigg(\widehat{\mathbb{C}} - \bigsqcup_{i = 1}^{l+r} U_i\Bigg)\bigsqcup\left(\bigsqcup_{i = 1}^{l+r}\overline{\mathbb{D}}_i\right),
$$
and the map 
$$
R\bigsqcup_{i= 1}^l A_{\Gamma_i}\bigsqcup_{j=1}^r B_j: \Bigg(\widehat{\mathbb{C}} - \bigsqcup_{i = 1}^{l+r} U_i\Bigg)\bigsqcup\left(\bigsqcup_{i = 1}^l \overline{\cU}_{\Gamma_i}\right)\bigsqcup\left(\bigsqcup_{j=1}^r\overline{\D}_{l+j}\right) \longrightarrow \widehat{\mathbb{C}}\bigsqcup\left(\bigsqcup_{i = 1}^{l+r}\overline{\mathbb{D}}_i\right)
$$
where,
\begin{equation*}
    R\bigsqcup_{i= 1}^l A_{\Gamma_i}\bigsqcup_{j=1}^r B_j:= \begin{cases*}
        R & on $\ \widehat{\mathbb{C}} - \displaystyle\bigsqcup_{i = 1}^{l+r} U_i$\\
        A_{\Gamma_i} & on $\ \overline{\cU}_{\Gamma_i} ,\ i\in\{1,\cdots,l\}$\\
        B_j & on $\ \overline{\D}_{l+j},\ j\in\{1,\cdots,r\}$.
    \end{cases*}
\end{equation*}
Let $\sim_m$ be an equivalence relation on $\displaystyle\Bigg(\widehat{\mathbb{C}} - \bigsqcup_{i = 1}^{l+r} U_i\Bigg)\bigsqcup\left(\bigsqcup_{i = 1}^{l+r}\overline{\mathbb{D}}_i\right)$ where the points $w \in \partial U_i$ and $z \in \mathbb{S}^1_i$ are related if
$$
\zeta_i^{-1}(z) = w,\ i\in\{1,\cdots,l+r\}.
$$
After identifying $U_i$ with $\D_i$ in the co-domain via the homeomorphism $\zeta_i^{-1}$, $i\in\{1,\cdots,l+r\}$, it is easy to see that the map $R\bigsqcup_{i= 1}^l A_{\Gamma_i}\bigsqcup_{j=1}^r B_j$ gives rise to a partially defined continuous map, say $\widetilde{S}$, on the quotient, 
$$
\Bigg(\widehat{\mathbb{C}} - \bigsqcup_{i = 1}^{l+r} U_i\Bigg)\bigsqcup\left(\bigsqcup_{i = 1}^{l+r}\overline{\mathbb{D}}_i\right)/\sim_m \hspace{2mm}\cong \hspace{2mm}S^2.
$$
The map $\widetilde{S}$ will be called a the \textbf{topological mating} of $R$, $\{A_{\Gamma_i}\}_{i=1}^l$, and $\{B_j\}_{j=1}^r$. We say that the maps $R$, $\{A_{\Gamma_i}\}_{i=1}^l$, and $\{B_j\}_{j=1}^r$ are \textbf{conformally mateable} if the topological $2$-sphere, featured above, admits a complex structure that turns the topological mating $\widetilde{S}$ into a meromorphic map.

\begin{theorem}\label{conf_mating_thm}
    There exists a conformal mating $S$ of $R$, $\{A_{\Gamma_i}\}_{i=1}^l$, and $\{B_j\}_{j=1}^r$.
\end{theorem}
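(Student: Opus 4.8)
The plan is to equip the topological $2$-sphere underlying $\widetilde{S}$ with an $\widetilde{S}$-invariant David structure and then straighten it. First I would transport everything to a single copy of $\widehat{\C}$: letting $\Psi$ be the homeomorphism that is the identity on $\widehat{\C}-\bigsqcup_{i=1}^{l+r}U_i$ and equals $\zeta_i$ on each $U_i$, the topological mating becomes a partially defined self-map $F:=\Psi^{-1}\circ\widetilde{S}\circ\Psi$ of $\widehat{\C}$. Using $\mathscr{B}_i=r_i^{-1}\circ R\circ r_i$ one checks directly that $F$ agrees with $R$ off the modified components $U_i$, while on each of them it equals $\zeta_i^{-1}\circ A_{\Gamma_i}\circ\zeta_i$ (respectively $\zeta_{l+j}^{-1}\circ B_j\circ\zeta_{l+j}$); these match $R$ only along $\partial U_i$, since $h_i$ conjugates the boundary dynamics but is merely a David (respectively quasiconformal) extension in the interior. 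Thus $F$ is conformal on every piece but fails to be globally holomorphic, and the task reduces to finding an $F$-invariant complex structure.

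Next I would define the candidate Beltrami coefficient $\mu$. On each Bowen--Series component I set $\mu:=\zeta_i^{*}(\text{std})$, the David coefficient of $h_i$ transported by the conformal uniformizer $r_i$; on each Blaschke component I set $\mu:=\zeta_{l+j}^{*}(\text{std})$, which is a bounded coefficient. Since $A_{\Gamma_i}$ is piecewise M\"obius (hence conformal) and $B_j$ is holomorphic, each of these is automatically invariant under the restriction of $F$ to the corresponding component. I then extend $\mu$ to the whole Fatou set by spreading: on any Fatou component $V$ with $R^{n}(V)$ a modified component I pull $\mu$ back by the locally conformal iterate $R^{n}$, and I put $\mu=0$ on the measure-zero Julia set and on components that never enter a modified basin. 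By construction $F^{*}\mu=\mu$, so $F$ becomes holomorphic with respect to $\mu$.

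The heart of the argument, and the step I expect to be the main obstacle, is proving that this spread coefficient is a genuine David coefficient on $\widehat{\C}$. The bounded (Blaschke) contributions are harmless, but David maps behave badly under iteration, so one cannot naively pull the David coefficient of $h_i$ back infinitely often. The way around this is that, by Lemma~\ref{david_ext_lem} and the underlying estimate, the dilatation of $h_i$ exceeds $K$ only in an exponentially thin collar of $\mathbb{S}^1$; after transport and pullback the super-level set $\{K_{\mu}\ge K\}$ therefore sits in a correspondingly thin neighborhood of the Julia set inside each Fatou component. Because $R$ is subhyperbolic (uniform expansion in the orbifold metric, so the Julia set has measure zero and the Fatou components are uniformly John), the Koebe distortion theorem lets me compare the pulled-back collars in all preimage components to their images in a single modified basin, and the finiteness of the total spherical area of the Fatou set makes the sum over preimage generations converge. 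This yields $\sigma(\{K_{\mu}\ge K\})\le Ce^{-\alpha K}$, i.e. $\mu$ is David; as the boundaries $\partial U_i$ lie in the measure-zero Julia set, the regional estimates combine into the global one without further work.

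Finally I would invoke the David Integrability Theorem to obtain a David homeomorphism $\Phi:\widehat{\C}\to\widehat{\C}$ with $\mu_{\Phi}=\mu$, and set $S:=\Phi\circ F\circ\Phi^{-1}$, a partially defined self-map of $\widehat{\C}$ whose domain is the complement of the images of the fundamental-domain holes (where $A_{\Gamma_i}$ is undefined). Since $F$ preserves $\mu$ and $\Phi$ integrates $\mu$, the David version of Weyl's lemma, applied on small disks off the finite critical set together with continuity and removability of the isolated critical points, shows that $S$ is holomorphic wherever defined, hence meromorphic. In the holomorphic coordinate $\Phi\circ\Psi^{-1}$ this $S$ is exactly the topological mating $\widetilde{S}$, so it is the desired conformal mating of $R$, $\{A_{\Gamma_i}\}_{i=1}^{l}$ and $\{B_j\}_{j=1}^{r}$.
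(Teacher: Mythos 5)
Your overall strategy is the same as the paper's: build the $\widetilde{S}$-invariant Beltrami coefficient by pulling back the standard structure under the $\zeta_i$ and spreading it along the dynamics of $R$, verify it is a David coefficient, straighten it with the David Integrability Theorem, and use the David version of Weyl's lemma to get holomorphy. (Your sketch of why the spread coefficient is David is in the right spirit; the paper simply cites \cite[Lemma~7.1]{lyubich2020david} for this.)

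There is, however, one genuine gap at the final step. Weyl's lemma gives you holomorphy of $S$ only on the two kinds of \emph{open} pieces separately: on $H(U_i)\cap\mathrm{Dom}(S)$ (comparing $\zeta_i$ and $H$) and on the image of the complement of the $\overline{U_i}$ (comparing $H\circ R$ and $H$). It does not apply across the seams $H(\partial U_i)$, because the glued map $F$ is defined by \emph{different} formulas on the two sides of $\partial U_i$ and is only known to be continuous there --- you cannot write $S$ near such a point as $f\circ g^{-1}$ for two David homeomorphisms with matching coefficients without first knowing $H\circ F$ is $W^{1,1}$ across the curve, which is essentially the point at issue. Your appeal to ``removability of the isolated critical points'' only handles a finite set, not these curves. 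The paper closes this gap by a removability argument that you omit: subhyperbolicity makes each $U_i$ a John domain, so $\partial U_i$ is removable for $W^{1,1}$ functions \cite{jones2000removability}, and then \cite[Theorem~2.7]{lyubich2020david} shows that the David image $H(\partial U_i)$ is conformally removable; since $S$ is continuous and holomorphic off these conformally removable compacta, it is holomorphic across them. You have the John-domain property in hand (you use it for the area estimates), but you never deploy it for this removability step, and without it the piecewise holomorphy does not assemble into meromorphy of $S$ on its whole domain.
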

\begin{proof}
    Simply connected Fatou components of subhyperbolic rational maps are John domains \cite[Chapter~7, Theorem~3.1]{carleson1996complex}. In particular, $\{U_i\}_{i=1}^{l+r}$ are all John domains, which implies $\partial U_i$ is removable for $W^{1, 1}$ functions \cite{jones2000removability}.

  Now define the following topological dynamical system:
    \begin{equation*}
        \widetilde{S} = \begin{cases*}
      R & on\ $\ \widehat{\mathbb{C}}\backslash \displaystyle\bigsqcup_{i=1}^{l+r}U_i$ \\
      \zeta_i^{-1}\circ A_{\Gamma_i}\circ \zeta_i  & on $\ \zeta_i^{-1}(\overline{\cU}_{\Gamma_i})\subseteq \overline{U_i},\ i\in\{1,\cdots,l\}$\\
      \zeta_{l+j}^{-1}\circ B_{j}\circ \zeta_{l+j} & on $\ \zeta_{l+j}^{-1}(\overline{\D}_{l+j})=\overline{U_{l+j}},\ j\in\{1,\cdots,r\}$.
    \end{cases*}
    \end{equation*}
    To upgrade $\widetilde{S}$ to a conformal dynamical system, we construct an invariant Beltrami coefficient. Let $\mu_i$ be the pullback of the standard complex structure on $\mathbb{D}_i$ under the map $\zeta_i$, $i\in\{1,\cdots,l+r\}$. Use the dynamics of $R$ to spread $\mu_i$ on all pre-images of $U_i$, and everywhere else put the standard complex structure. It is now evident that the Beltrami coefficient $\mu$ so defined is $\widetilde{S}$ invariant. By applying arguments in \cite[Lemma~7.1]{lyubich2020david} (as is) one can prove that $\mu$ is a David coefficient. Hence, by David Integrability Theorem \cite[Theorem~20.6.2]{astala2008elliptic}, there exists a David homeomorphism 
    $$
    H:\widehat{\mathbb{C}}\rightarrow \widehat{\mathbb{C}}
    $$
    that straightens the coefficient $\mu$. Set, 
    $$
    S:= H\circ \widetilde{S}\circ H^{-1}
    $$
    As $\partial U_i$ is $W^{1, 1}$ removable, by \cite[Theorem~2.7]{lyubich2020david}, the set $H(\partial U_i)$ is conformally removable for $i\in \{1, 2, \cdots, l+r\}$. It is therefore sufficient to check that $S$ is conformal on $\displaystyle\text{Dom}(S)\backslash \bigcup_{i=1}^{l+r} H(\partial U_i)$.

    The maps $\zeta_i$ and $H$ are David homeomorphisms on $U_i$ straightening $\mu$, therefore by \cite[Theorem~20.4.19]{astala2008elliptic}, the map $\zeta_i\circ H^{-1}$ is conformal on $H(U_i)\cap \text{Dom}(S)$, $i\in \{1, 2, \cdots, l+r\}$. This implies that $S$ is holomorphic on $H(U_i)\cap\mathrm{Dom}(S)$, $i\in \{1, 2, \cdots, l+r\}$. Further, the maps $H\circ R$ and $H$ restricted to $\displaystyle\widehat{\mathbb{C}}\backslash \bigsqcup_{i=1}^{l+r} U_i$ again straighten $\mu$. Therefore, the map $S$ is holomorphic on $\displaystyle H\big(\widehat{\mathbb{C}}\backslash \bigsqcup_{i=1}^{l+r} U_i\big)$ as well. This concludes the proof of the fact that $S$ is holomorphic in the interior of its domain of definition. Evidently,
    \begin{enumerate}[leftmargin=6mm]
        \item $\mathfrak{X}_R:= H$ is a conformal conjugacy from $R$ to $S$ outside the grand orbit of~$\displaystyle\bigsqcup_{i=1}^{l+r} U_i$,
    
        \item $\mathfrak{X}_i := H\circ\zeta_i^{-1}$ is a conformal conjugacy from $A_{\Gamma_i}$ to $S$ on $\D_i$, $i\in\{1,\cdots,l\}$, and

         \item $\mathfrak{X}_{l+j}:= H\circ\zeta_{l+j}^{-1}$ is a conformal conjugacy from $B_{j}$ to $S$ on $\D_{l+j}$, for indices $j\in\{1,\cdots,r\}$.
    \end{enumerate}
    Thus, $S$ is the desired conformal mating of the dynamical systems under consideration.
\end{proof}

\begin{remark}\label{invariant_subset_rem}\upshape
Define $\cT_i:=H(U_i)$, $i\in\{1,\cdots,l\}$, and $\mathcal{V}_j:=H(U_{l+j})$, $j\in\{1,\cdots,r\}$. Let $\mathcal{W}_i$ stand for the grand orbit of $\cT_i$ under $S$, for $i\in\{1,\cdots,l\}$, and the grand orbit of $\mathcal{V}_{i-l}$ under $S$, for $i\in\{l+1,\cdots,l+r\}$. Finally, let $\cK := \widehat{\mathbb{C}}\backslash \bigcup_{i=1}^{l} \mathcal{W}_i$.
    
    The tessellation induced by the group dynamics of $\Gamma_i$ on $\mathbb{D}$ can be transferred via appropriate conjugacies to $\cT_i \subset \mathcal{W}_i$, $i\in\{1,\cdots,l\}$. The totally invariant sets $\displaystyle\bigcup_{i = 1}^{l} \mathcal{W}_i$ and $\cK$, which are the \emph{escaping} and \emph{non-escaping} sets of $S$ (respectively), partition the Riemann sphere. This partition will be of importance in Section~\ref{correspondences_sec}.
\end{remark}

\section{Algebraicity of conformal matings, and hyperelliptic surfaces}\label{b_involutions_sec}

Here we introduce a class of maps we call \emph{weak boundary involutions} (abbreviated as weak B-involutions). These maps subsume a class of complex-analytic maps called \emph{B-involutions} introduced in \cite{luo2024general}. Weak boundary involutions appear naturally as conformal matings of factor Bowen-Series maps and Blaschke products produced in Theorem~\ref{conf_mating_thm}. We will give a complete algebraic description of such maps with the intention of promoting the conformal mating discussed in this paper to algebraic correspondences.

\subsection{Weak B-involution}\label{weak_b_inv_def_subsec}

\begin{definition}\label{weak_b_inv_def}
    Let $\{\Omega_1, \cdots, \Omega_k\}$ be disjoint finitely connected proper subdomains of $\widehat{\mathbb{C}}$ with $\text{int}(\overline{\Omega_i}) = \Omega_i$ for $i\in \{1, 2, \cdots, k\}$. Set $\mathcal{D}:= \bigsqcup_{i=1}^k \Omega_i$. Further, let $X\subset \partial \cD$ be a finite set such that $\partial^0\cD := \partial\cD\backslash X$ is a finite union of disjoint non-singular real analytic curves. For notational convenience, let us also set $\partial^0 \Omega_i:= \partial^0\cD \cap \overline{\Omega}_i$. We call $\cD$ an \textbf{inversive multi-domain} if there exists a continuous map $S: \overline{\cD} \rightarrow \widehat{\C}$ satisfying the following:
    \begin{enumerate}
        \item\label{cond_mero} $S$ is meromorphic on $\cD$,
        \item\label{cond_inv} $S:(\partial\cD,X)\to (\partial\cD,X)$ is an involution, and
        \item\label{cond_or} $S: \partial^0 \cD \rightarrow \partial^0 \cD$ is orientation-reversing.
    \end{enumerate}
    The map $S$ is called a \textbf{weak B-involution} of the inversive multi-domain $\cD$. If $k= 1$, the domain $\cD$ is called an \textbf{inversive domain}. Set $T:= \widehat{\C}\backslash \cD$ and $T^{0}:= T\backslash X$.
\end{definition}

\begin{remark}\label{B_involution_rem}
\upshape
Definition~\ref{weak_b_inv_def} generalizes the class of maps called \emph{B-involutions}, which were introduced and studied in \cite[\S 14]{luo2024general}.
The key difference between B-involutions a la \cite{luo2024general} and our notion of weak B-involutions lies in the subtle fact that Definition~\ref{weak_b_inv_def} only requires $S$ to be orientation-reversing on the desingularized boundary $\partial^0\cD$ of its domain of definition; while a B-involution in the sense of \cite{luo2024general}
is also required to reverse the cyclic order of the local real-analytic arcs of $\partial^0\cD$ meeting at a singular point in $X$. Further, we drop the requirement that $S$ carries the boundary of some component $\Omega_i$ to that of some component $\Omega_{i'}$, allowing for richer boundary behavior. Finally, we relax the condition that each $\Omega_i$ is simply connected. 

All of the above relaxations are dictated by our general combination procedure for multiple Fuchsian groups with multiple Blaschke products (see Section~\ref{examples_sec}).
\end{remark}

\begin{remark}
    A conformal mating $S$ in the sense of Theorem~\ref{conf_mating_thm} is meromorphic on the interior $\cD$ of its domain of definition. Note that by Remark~\hyperref[BSmap_domain_rem]{\ref*{BSmap_domain_rem}~{\upshape(ii)}}, a (factor) Bowen-Series map is an orientation-reversing involution on (the closure of) its inner boundary, fixing the set of ideal vertices. And since $S$ is conjugate to the dynamics of (factor) Bowen-Series maps on appropriate regions, $S$ satisfies Conditions~\eqref{cond_inv} and~\eqref{cond_or} in Definition~\ref{weak_b_inv_def}. Therefore, such maps readily show up as instances of weak B-involutions on inversive multi-domains. 
\end{remark}

\subsection{Characterization of weak B-involutions}\label{char_weak_b_inv_subsec}
We shall now establish the algebraicity of general weak $B$-involutions.  

\begin{theorem}\label{alg_weak_b_inv_thm}
    Let $\cD$ be an inversive multi-domain with a weak B-involution $S:\overline{\cD} \rightarrow \widehat{\C}$. Then there exist
    \begin{itemize}
        \item $\Sigma$, a finite union of compact Riemann surfaces,
        \item $\mathfrak{D}\subset\Sigma$, a disjoint union of finitely connected domains with a piecewise non-singular real-analytic boundary,
        \item a conformal involution $\eta: \Sigma\rightarrow\Sigma$ with $\eta(\mathfrak{D})=\Sigma\backslash\overline{\mathfrak{D}}$, and
        \item a holomorphic map $\mathcal{R}:\Sigma \rightarrow \widehat{\C}$ that carries $\mathfrak{D}$ conformally onto $\cD$,        
    \end{itemize}
    such that 
    \begin{equation}\label{alg_char_eq}
        S\big|_{\cD} \equiv \mathcal{R}\circ \eta\circ \big(\mathcal{R}\big|_{\mathfrak{D}}\big)^{-1}.
    \end{equation}
\end{theorem}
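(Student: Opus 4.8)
The plan is to realize $\Sigma$ as the \emph{double} of $\overline{\cD}$ welded along its boundary by $S$. Concretely, I would take two labelled copies $\overline{\cD}\times\{+,-\}$ and form the quotient $\Sigma$ that identifies $(\zeta,+)$ with $(S(\zeta),-)$ for every $\zeta\in\partial\cD$. Since $S|_{\partial\cD}$ is a continuous involution by Condition~\eqref{cond_inv}, it is a homeomorphism of the compact set $\partial\cD$, so this is a gluing of two boundary copies by a homeomorphism and produces a compact topological surface, possibly disconnected, i.e.\ a finite union of closed surfaces. I set $\mathfrak{D}$ to be the image of $\cD\times\{+\}$, define $\eta$ to be the copy-swap $[\zeta,\pm]\mapsto[\zeta,\mp]$, and define $\cR$ to be the identity $z\mapsto z$ on the $+$ copy and the map $S$ on the $-$ copy. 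Using $(\zeta,+)\sim(S\zeta,-)$ together with $S\circ S=\mathrm{id}$ on $\partial\cD$, one checks directly that $\eta$ and $\cR$ descend to well-defined continuous maps on $\Sigma$, that $\eta(\mathfrak{D})=\Sigma\setminus\overline{\mathfrak{D}}$, and that the identity~\eqref{alg_char_eq} holds by construction: for $z\in\cD$ one has $(\cR|_{\mathfrak{D}})^{-1}(z)=[z,+]$, then $\eta[z,+]=[z,-]$, then $\cR[z,-]=S(z)$.

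Next I would install the complex structure. On each of the two open copies I use the complex structure of $\cD$, so that on the $+$ copy $\cR$ is tautologically holomorphic and on the $-$ copy $\cR=S$ is holomorphic by Condition~\eqref{cond_mero}. Along the non-singular seam $\partial^0\cD$ I invoke Condition~\eqref{cond_or}: since $S|_{\partial^0\cD}$ is a real-analytic orientation-reversing involution, the Schwarz reflection principle extends it to an anti-holomorphic map of a neighborhood, which is exactly the compatibility needed to fuse the two adjacent charts into a single holomorphic chart straddling the seam (in the reflected coordinate the transition is holomorphic, not anti-holomorphic, and the orientation bookkeeping closes up because gluing two identically oriented bordered surfaces along an orientation-reversing boundary map yields a consistently oriented surface). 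This equips $\Sigma\setminus\widetilde{X}$ with a Riemann surface structure, where $\widetilde{X}$ is the finite image of $X$. I would then show that each point of $\widetilde{X}$ is a removable end: near $x\in X$ the domain $\cD$ is a finite union of sectors cut out by the analytic arcs of $\partial^0\cD$, so welding two copies produces a finite cone point, whose intrinsic conformal structure is that of a punctured disk; hence $\Sigma$ fills in to a compact Riemann surface.

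With $\Sigma$ a compact Riemann surface in hand, holomorphicity of the maps is immediate: $\cR$ and $\eta$ are continuous on all of $\Sigma$ and holomorphic off the finite set $\widetilde{X}$, and being valued respectively in $\widehat{\C}$ and in the compact surface $\Sigma$ they are locally bounded, so the removable singularity theorem upgrades them to holomorphic maps on $\Sigma$. Thus $\cR|_{\mathfrak{D}}$ is a conformal isomorphism onto $\cD$, $\eta$ is a conformal involution (its square is the identity by continuity from $\partial\cD$), and together with the already-verified relation this yields every asserted property of $(\Sigma,\mathfrak{D},\eta,\cR)$.

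The main obstacle I anticipate is precisely the local analysis at the singular set $X$: one must certify that the welding genuinely yields manifold points carrying honest puncture-type conformal structure, and that $\cR$ extends holomorphically there rather than acquiring a fractional or branch-type singularity. The control should come from coupling the boundary-involution hypothesis with meromorphy of $S$. For a fixed point $x=S(x)$ one writes $S(z)\sim c(z-x)^{k}$ and imposes $S\circ S=\mathrm{id}$ on the two analytic arcs meeting at $x$; tracking how $S$ reverses the orientation of the boundary forces the arcs to be interchanged, and the involution relation on these arcs forces $k=1$ while pinning the sector angle, so the doubled cone angle is an integer multiple of $2\pi$ and $\cR$ has integral local degree. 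The two-cycle case $S(x)\neq x$ and the multi-sector points are to be handled analogously by bookkeeping the induced permutation of the arcs at $X$. Verifying this fillability uniformly from the axioms — in particular excluding cusp-type (zero-angle) ends, which is where I would lean on the hypotheses $\mathrm{int}(\overline{\Omega_i})=\Omega_i$ and the non-singularity of $\partial^0\cD$ — is the delicate technical core of the proof.
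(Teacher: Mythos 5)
Your overall architecture (weld two copies of $\cD$ along the boundary via $S$, let $\eta$ swap the copies, let $\cR$ be the identity on one copy and $S$ on the other, verify Equation~\eqref{alg_char_eq}, and extend by removability) matches the paper's, and your treatment of the non-singular seam via Schwarz reflection is essentially the paper's Step~1. One small correction there: since $S$ is holomorphic on $\cD$ and preserves the real-analytic curves of $\partial^0\cD$, its reflected extension across $\partial^0\cD$ is \emph{holomorphic}, not anti-holomorphic; the paper then uses $S^{\circ 2}=\mathrm{id}$ on $\partial^0\cD$ and the Identity Theorem to see that $S$ is a conformal involution near $\partial^0\cD$, which is what makes the transition maps of the welded surface holomorphic.

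The genuine gap is at the singular set $X$, precisely where you flagged the difficulty. Forming the quotient of $\overline{\cD}\times\{+,-\}$ by $(\zeta,+)\sim(S(\zeta),-)$ for \emph{all} $\zeta\in\partial\cD$, including $\zeta\in X$, does not produce a surface in general. Near $x\in X$ the set $\overline{\cD}$ consists of several local sectors (accesses to $x$); after welding, the sectors from the two copies are glued along their bounding arcs according to the permutation induced by $S$, and they organize into cycles, each cycle closing up into a cone. If there is more than one cycle, your quotient consists of several cones all sharing the single tip $[x,+]$ --- a nodal point, not a manifold point. This already happens in Example~\ref{001_ex}: there $\cD=\Omega_1\sqcup\Omega_2$ is two disks touching at a point $x$, $S$ interchanges the two boundary circles, the correct $\Sigma$ is a \emph{disjoint} union of two Riemann spheres, but your quotient is two spheres joined at the node $[x,+]$ (since $x$ lies in $\overline{\Omega_1}\cap\overline{\Omega_2}$). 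So the local model ``one finite cone point per $x\in X$'' is false, and the claimed compact topological surface does not exist as described; consequently your subsequent local analysis ($S(z)\sim c(z-x)^k$, forcing $k=1$, etc.) is aimed at the wrong object. The repair is to open up each singular point into its separate local accesses: weld only $\cD\cup\partial^0\cD$, obtaining a bordered/punctured surface $\Sigma^0$, and fill in one point per end. This is what the paper does, and since compactness is no longer free, it must \emph{prove} that $\Sigma^0$ has finite type: it shows that $\cR$ (restricted away from the finitely many points of the second copy that $S$ sends into $X$) is a proper holomorphic map onto $\widehat{\C}\setminus X$, hence a finite-degree branched cover, and lifts a triangulation; only then does Riemann removability extend $\cR$ and $\eta$, exactly as in your last step. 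Your remaining worry --- that $\cR$ might acquire a fractional singularity at a filled-in point --- is not the real obstruction: once the point is an honest puncture of a Riemann surface, continuity of $S$ on $\overline{\cD}$ together with holomorphy off the point yields holomorphy by removability.
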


\begin{remark}\upshape

1) For the restricted class of weak B-involutions considered in \cite{luo2024general}, an analogous algebraic description was given in \cite[Theorem~14.5]{luo2024general}, where the surface $\Sigma$ turned out to be a disjoint union of Riemann spheres. On the other hand, in our relaxed setting, the surface $\Sigma$ can have arbitrary genus even if $\cD$ is connected and simply connected (see Section~\ref{examples_sec}).

2) Let us also point out that an analogous construction exists in the antiholomorphic setting; namely, the Schwarz reflection map of a quadrature domain (an anti-meromorphic map of a domain that extends as the identity map on the boundary) admits an algebraic description in terms of a reflection map and a meromorphic function on a compact Riemann surface \cite{Gus83}. However, the antiholomorphic setting is somewhat restricted: for a Schwarz reflection map on a simply connected quadrature domain, the associated meromorphic function is always defined on $\widehat{\C}$.
\end{remark}

\begin{proof}
We shall prove this statement in steps (cf. \cite[Lemma~14.3]{luo2024general}).
\medskip
    
\noindent\textbf{Step 1: Welding a Riemann Surface.}    
    Observe that $S:\partial \cD\rightarrow \partial \cD$ is a homeomorphism that is orientation-reversing on $\partial^0\cD$. Further, $S$ is meromorphic on $\cD$ and extends continuously to $\partial^0\cD$ preserving $\partial^0\cD$ . Hence, by Schwarz reflection principle $S$ extends to a holomorphic map in a neighborhood of $\partial^0 \cD$. As $S^{\circ 2}\big|_{\partial^0\cD} = \mathrm{id}$, by the Identity Theorem, $S$ is a conformal involution in a neighborhood of $\partial^0 \cD$. We now weld $\cD \cup \partial^0 \cD$ with a copy of itself via the conformal involution $S: \partial^0 \cD\rightarrow \partial^0 \cD$ (cf. \cite[Chapter~2, Page~117]{ahlfors2015riemann}). The resulting object is a finite union of bordered Riemann surfaces of finite genus (see Figure~\ref{top_weld_surf_1_fig}). We denote this surface by $\Sigma^0$.
\begin{figure}
\captionsetup{width=0.98\linewidth}
    \centering
    \includegraphics[width=0.88\linewidth]{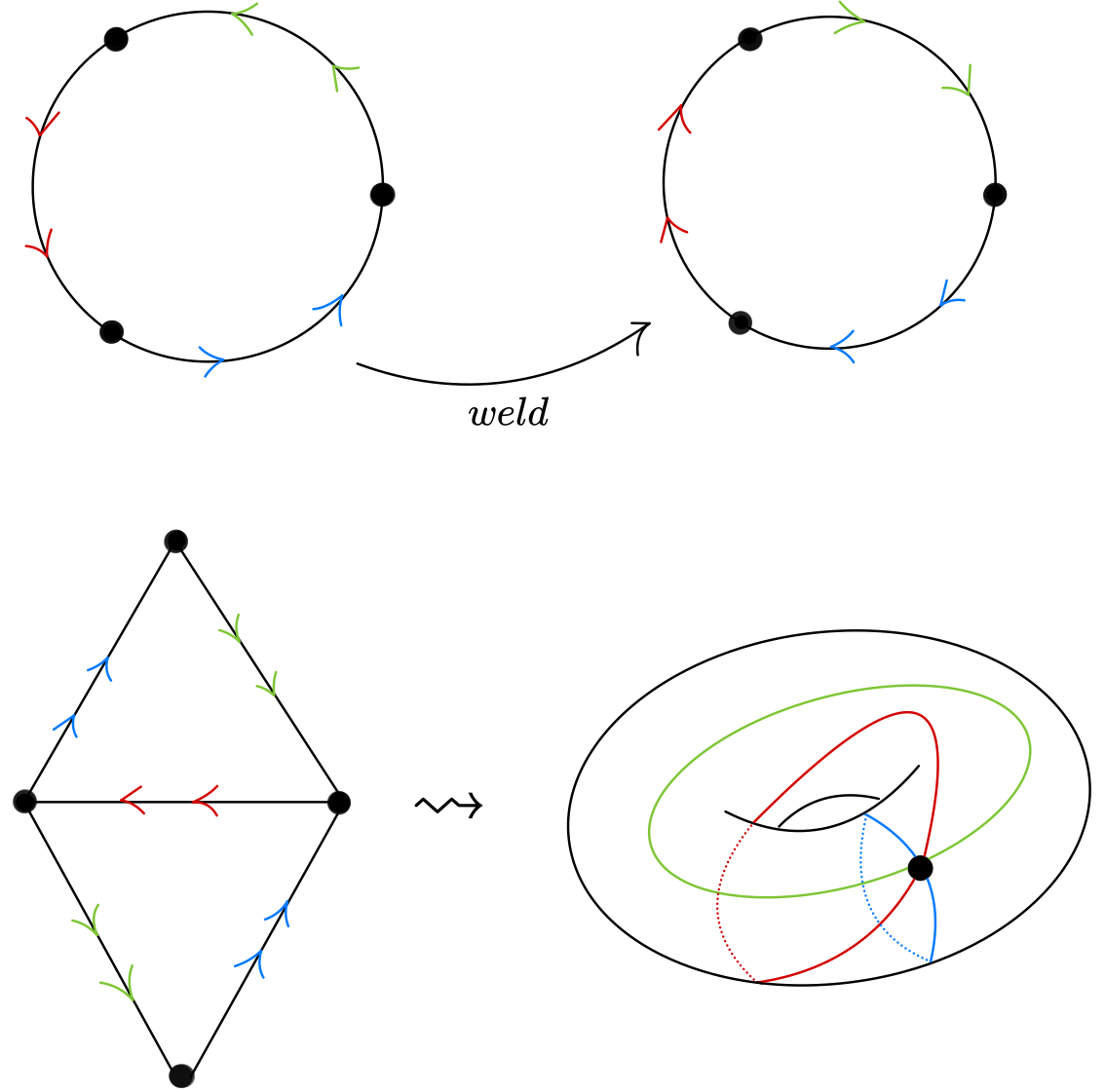}
    \caption{A domain whose boundary is a wedge of three circles is uniformized by the disk (above). The single meeting point of the three circles opens up to three points drawn in black. The disks are then welded via the lift of the conformal involution near the boundary of the original domain. Finally, the topology of the surface one gets from such a welding procedure is illustrated.}
    \label{top_weld_surf_1_fig}
\end{figure}
\medskip

\noindent\textbf{Step 2: Conformal Involution $\eta$ on $\Sigma^0$.}   
The welding of $\cD\cup \partial^0\cD$ with a copy of itself to get $\Sigma^0$ comes with maps $\psi_1$, $\psi_2: \cD\cup \partial^0\cD \rightarrow \Sigma^0$, such that, 
\begin{itemize}
    \item $\psi_i$ is conformal in $\cD$ and a homeomorphism on $\cD\cup \partial^0 \cD$, $i\in\{1,2\}$, 
    \item $\psi_1(\cD)\cap \psi_2(\cD) = \emptyset$,
    \item $\psi_1(\partial^0 \cD) = \psi_2(\partial^0 \cD)$ is a union of non-singular real analytic curves,
    \item $\psi_2^{-1}\circ\psi_1 \equiv S$ on $\partial^0\cD$, and
    \item $\Sigma^0 = \psi_1(\cD\cup \partial^0\cD) \cup \psi_2 (\cD)$.
\end{itemize}
\vspace{1mm}
Define $\eta: \Sigma^0\rightarrow \Sigma^0$ as
\begin{equation*}
    \eta:= \begin{cases*}
      \psi_2\circ \psi_1^{-1} & on $\psi_1(\cD\cup \partial^0\cD)$ \\
      \psi_1\circ \psi_2^{-1}        & on $\psi_2(\cD)$.
    \end{cases*}
\end{equation*}
Since $\psi_1 = \psi_2\circ S$ on $\partial^0\cD$ and $S^{\circ 2} \equiv \mathrm{id}$ on $\partial^0\cD$, we have that $\eta$ is a conformal involution on $\Sigma^0$.
\medskip

\noindent\textbf{Step 3: $\Sigma^0$ is a finite type surface.}
Define the meromorphic map, 
$$\mathcal{R}: \Sigma^0\backslash \psi_2((S\vert_{\cD})^{-1}(X)) \rightarrow \widehat{\mathbb{C}}\backslash X$$
as follows, 
\begin{equation*}
    \mathcal{R}:= \begin{cases*}
        \psi_1^{-1} & on $\psi_1(\cD\cup \partial^0\cD)$ \\
        S\circ \psi_1^{-1}\circ \eta = S\circ \psi_2^{-1} & on $\psi_2(\cD\backslash (S\vert_\cD)^{-1}(X))$
    \end{cases*}
\end{equation*}
We first claim that $\mathcal{R}$ is a proper mapping. Indeed, $\psi_1$, $\psi_2$ being homeomorphisms are proper maps. As $S$ is continuous on $\mathcal{D}$, for any sequence of points $\{x_n\}_n$ converging to $\psi_2((S\vert_{\cD})^{-1}(X))$, the image sequence $\mathcal{R}(x_n)$ converges to $X$. Now suppose that the sequence $\{x_n\}_n$ escapes to a boundary component of $\Sigma^0$, then $\psi_2^{-1}(x_n)$ converges to a point in $X$. Since $S\vert_{\overline{\cD}}$ is continuous carrying $X$ to itself, it follows that $\mathcal{R}(x_n) = S(\psi_2^{-1}(x_n))$ converges to a point in $S(X) = X$.
This shows that $\mathcal{R}$ is a proper holomorphic map. 

Suppose that $\text{deg}(\mathcal{R}) = \infty$. Then, for points $z\in \widehat{\mathbb{C}}\backslash X$, the pre-image $\mathcal{R}^{-1}(z)$ is an infinite set in $\Sigma^0\backslash \psi_2((S\vert_{\cD})^{-1}(X))$. By the Identity Theorem, as $\mathcal{R}$ is non-constant, $\mathcal{R}^{-1}(z)$ accumulates either on $\psi_2((S\vert_{\cD})^{-1}(X))$ or on $\partial \Sigma^0$ which violates the fact that $\mathcal{R}$ is proper. Hence, $\mathcal{R}$ is a finite degree branched cover from $\Sigma^0\backslash \psi_2((S\vert_{\cD})^{-1}(X))$ to $\widehat{\mathbb{C}}\backslash X$. 

We make further punctures in $\widehat{\mathbb{C}}\backslash X$ (if necessary) to ensure that $\mathcal{R}$ is a covering map over the resulting punctured sphere. We then triangulate this punctured sphere, and lift the triangulation via the covering map $\mathcal{R}$ to a triangulation upstairs. This would yield a finite triangulation on the surface $\Sigma^0\backslash \psi_2((S\vert_{\cD})^{-1}(X))$, with finitely many points removed. This would force $\Sigma^0\backslash \psi_2((S\vert_{\cD})^{-1}(X))$ to be a surface of finite type. This implies that $(S\vert_{\cD})^{-1}(X)$ is a finite set, and hence $\Sigma^0$ is a finite union of punctured surfaces of finite genus. We will denote the compactification of $\Sigma^0$ (which is obtained by filling in the finitely many punctures) by $\Sigma$.
\medskip

\noindent\textbf{Step 4: Explicit Algebraic Description of $S$.}
By the Riemann Removability Theorem, the maps $\mathcal{R}$ and $\eta$ extend holomorphically to the compact Riemann surface $\Sigma$. We remark that if $\Sigma$ is the Riemann sphere, one can after a change of coordinates assume that $\eta(z)= 1/z$. Set $\mathfrak{D} = \psi_1(\cD)$. Then from the definition of $\mathcal{R}$ one obtains, 
$$
\mathcal{R}\Big|_{\Sigma\backslash \overline{\mathfrak{D}}} \equiv S\Big|_{\cD}\circ \mathcal{R}\Big|_{\mathfrak{D}}\circ \eta\Big|_{\Sigma\backslash \overline{\mathfrak{D}}}
$$
which yields, 
$$
S\Big|_{\cD} = \mathcal{R}\circ \eta\circ \Big(\mathcal{R}\Big|_{\mathfrak{D}}\Big)^{-1}.
$$
The facts that $\eta(\mathfrak{D})=\Sigma\backslash\overline{\mathfrak{D}}$ and
 $\mathcal{R}:\mathfrak{D}\to\cD$ is a conformal isomorphism follow directly from the construction. 
\end{proof}

\begin{cor}\label{b_inv_fin_deg_cor}
With the setup as in Theorem~\ref{alg_weak_b_inv_thm}, we have that $S: S^{-1}(\cD) \rightarrow \cD$ is a branched covering of degree $d_{\mathcal{R}}-1$, and  $S: S^{-1}(\text{int}(T)) \rightarrow \text{int}(T)$ is a branched covering of degree $d_{\mathcal{R}}$, where $d_{\mathcal{R}}$ is the degree of the meromorphic map $\mathcal{R}:\Sigma\to\widehat{\C}$.
\end{cor}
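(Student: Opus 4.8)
The plan is to leverage the algebraic description $S\big|_{\cD} = \cR\circ\eta\circ(\cR|_{\mathfrak{D}})^{-1}$ from Theorem~\ref{alg_weak_b_inv_thm} and reduce both assertions to a degree count for the proper branched cover $\cR$. First I would set $\Phi := \eta\circ(\cR|_{\mathfrak{D}})^{-1}$. Since $\cR|_{\mathfrak{D}}:\mathfrak{D}\to\cD$ is a conformal isomorphism and $\eta$ restricts to a conformal isomorphism $\mathfrak{D}\to\eta(\mathfrak{D})=\Sigma\backslash\overline{\mathfrak{D}}$, the composite $\Phi:\cD\to\Sigma\backslash\overline{\mathfrak{D}}$ is a conformal isomorphism and $S|_{\cD}=\cR\circ\Phi$. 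Hence the branched-covering structure of $S|_{\cD}$ over any open subset $V\subseteq\widehat{\C}$ coincides with that of $\cR$ restricted to $\Sigma\backslash\overline{\mathfrak{D}}$; concretely, $S^{-1}(V)\cap\cD = \Phi^{-1}\big(\cR^{-1}(V)\cap(\Sigma\backslash\overline{\mathfrak{D}})\big)$. Since Condition~\eqref{cond_inv} forces $S(\partial\cD)\subseteq\partial\cD$, both $S^{-1}(\cD)$ and $S^{-1}(\text{int}(T))$ are contained in $\cD$, so the boundary contributes nothing.

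Next I would record the two inputs about $\cR$. Because $\Sigma$ is compact, $\cR:\Sigma\to\widehat{\C}$ is proper of degree $d_{\cR}$, and for every open $V\subseteq\widehat{\C}$ the restriction $\cR:\cR^{-1}(V)\to V$ is again a proper branched cover of degree $d_{\cR}$. The one geometric fact I need concerns the behaviour of $\cR$ on $\overline{\mathfrak{D}}$: by construction $\cR(\mathfrak{D})=\cD$ while $\cR(\partial\mathfrak{D})=\partial^0\cD\subset\partial\cD$, so $\cR(\overline{\mathfrak{D}})\subseteq\overline{\cD}$ and, crucially, $\cR^{-1}(\cD)\cap\partial\mathfrak{D}=\emptyset$.

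For the first assertion I would decompose $\cR^{-1}(\cD)$. As it misses $\partial\mathfrak{D}$, it is a disjoint union of the two open sets $\mathfrak{D}$ and $W:=\cR^{-1}(\cD)\cap(\Sigma\backslash\overline{\mathfrak{D}})$, each clopen in $\cR^{-1}(\cD)$. Restricting the proper degree-$d_{\cR}$ cover $\cR:\cR^{-1}(\cD)\to\cD$ to these clopen pieces and using additivity of the degree together with $\deg(\cR|_{\mathfrak{D}})=1$ shows that $\cR:W\to\cD$ is a proper branched cover of degree $d_{\cR}-1$. Transporting this through the isomorphism $\Phi$, and noting $S^{-1}(\cD)=\Phi^{-1}(W)$, yields that $S:S^{-1}(\cD)\to\cD$ is a branched cover of degree $d_{\cR}-1$. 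For the second assertion, since $\cR(\overline{\mathfrak{D}})\subseteq\overline{\cD}$ is disjoint from $\text{int}(T)=\widehat{\C}\backslash\overline{\cD}$, the entire preimage $\cR^{-1}(\text{int}(T))$ lies in $\Sigma\backslash\overline{\mathfrak{D}}$, so $\cR:\cR^{-1}(\text{int}(T))\to\text{int}(T)$ carries the full degree $d_{\cR}$; transporting through $\Phi$ gives that $S:S^{-1}(\text{int}(T))\to\text{int}(T)$ is a branched cover of degree $d_{\cR}$.

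Once the conformal conjugacy $S|_{\cD}=\cR\circ\Phi$ is in place, the remaining steps are essentially bookkeeping. The main point requiring care — and the only place where the defining structure of a weak B-involution enters — is the clean separation of $\cR^{-1}(\cD)$ into the two clopen pieces $\mathfrak{D}$ and $W$; this rests on the identity $\cR(\partial\mathfrak{D})=\partial^0\cD$, i.e. on the fact that $\cR$ sends the welded boundary exactly onto the non-singular boundary $\partial^0\cD$ rather than into the interior $\cD$. Checking properness of the restricted maps (so that the notion of degree is meaningful over these non-compact targets) is routine, following from properness of $\cR$ over open subsets of $\widehat{\C}$.
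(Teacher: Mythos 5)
Your proof is correct and follows essentially the same route as the paper, whose entire argument is the one-line observation that both statements follow from the relation $S|_{\cD}=\cR\circ\eta\circ(\cR|_{\mathfrak{D}})^{-1}$; you have simply made explicit the degree bookkeeping (one sheet of $\cR$ over $\cD$ is consumed by the isomorphism $\cR|_{\mathfrak{D}}$, while no sheet over $\mathrm{int}(T)$ is). The only nitpick is that $\cR(\partial\mathfrak{D})$ equals $\partial^0\cD$ together with finitely many points of $X$ coming from the filled-in punctures, but this does not affect the conclusion $\cR^{-1}(\cD)\cap\partial\mathfrak{D}=\emptyset$ that your decomposition needs.
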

\begin{proof}
The statements follow from the relation $S\Big|_{\cD} = \mathcal{R}\circ \eta\circ \Big(\mathcal{R}\Big|_{\mathfrak{D}}\Big)^{-1}$. 
\end{proof}

\begin{cor}\label{crit_points_cor}
    Let the weak B-involution $S:\overline{\cD}\to\widehat{\C}$ be a conformal mating of a sub-hyperbolic map $R$, the Blaschke products $\{B_j\}_{j=1}^r$ and the (factor) Bowen-Series maps $\{A_{\Gamma_i}\}_{i=1}^l$ associated with the representations $\rho:\Gamma_{n_i, p_i}\rightarrow \Gamma_i$ (constructed in Theorem~\ref{conf_mating_thm}). Let $\cT_i$ be as in Remark~\ref{invariant_subset_rem}. 
        For indices $i \in \{1, 2, \cdots, l\}$ with $n_i\geq 3$, the map $\cR$ has a critical value in $\cT_i$. Moreover, that critical value admits at least $p_i$ distinct pre-images under $\mathcal{R}$, each with multiplicity $n_i-1$.
\end{cor}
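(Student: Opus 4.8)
The plan is to transport the critical structure of the factor Bowen-Series map $A_{\Gamma_i}$ onto $\cR$ through the two conjugacies already at our disposal: the conformal conjugacy $\mathfrak{X}_i$ from $A_{\Gamma_i}$ to the conformal mating $S$ supplied by Theorem~\ref{conf_mating_thm}, followed by the algebraic factorization $S\big|_{\cD} = \cR\circ\eta\circ\big(\cR\big|_{\mathfrak{D}}\big)^{-1}$ supplied by Theorem~\ref{alg_weak_b_inv_thm}. Since every map interposed between $A_{\Gamma_i}$ and $\cR$ is biholomorphic on the relevant region, local degrees transfer unchanged, and the corollary reduces to the explicit local description of $A_{\Gamma_i}$ recorded in Section~\ref{fBS_subsec}.

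First I would recall from Section~\ref{fBS_subsec} that for $n_i\geq 3$ the model map $A_{\Gamma_{n_i,p_i}}^{\text{fBS}}$ has a single critical value, located at $0\in\mathbb{D}$, whose fiber consists of exactly $p_i$ distinct points, each of multiplicity $n_i-1$. Conjugating by the quasiconformal homeomorphism $\widehat{\psi}_{\rho_i}$—a homeomorphism, hence local-degree preserving—shows that the deformed map $A_{\Gamma_i}$ has a critical value $c_i := \widehat{\psi}_{\rho_i}(0)\in\mathbb{D}$ whose $p_i$ distinct preimages lie in $\cU_{\Gamma_i}$, each of multiplicity $n_i-1$. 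Next I would push this forward by $\mathfrak{X}_i = H\circ\zeta_i^{-1}:\mathbb{D}\to\cT_i$, which by Theorem~\ref{conf_mating_thm} is a \emph{conformal} conjugacy from $A_{\Gamma_i}$ to $S$. Conformality is decisive here: it certifies that $v_i := \mathfrak{X}_i(c_i)\in\cT_i$ is a critical value of $S$, and that the $p_i$ critical points of $A_{\Gamma_i}$ are carried to $p_i$ distinct critical points of $S$ lying in $\mathfrak{X}_i(\cU_{\Gamma_i})\subset\cT_i\cap\cD$, each still of multiplicity $n_i-1$. (Note that $c_i$ lies in the complement of $\cU_{\Gamma_i}$ in $\mathbb{D}$, so $v_i$ in fact lands in the hole $\cT_i\cap\text{int}(T)$; this is immaterial, as the statement only asserts that the critical value lies in $\cT_i$.)

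Finally I would invoke the factorization in the form $S\big|_{\cD} = \cR\circ\Phi$, where $\Phi := \eta\circ\big(\cR\big|_{\mathfrak{D}}\big)^{-1}:\cD\to\Sigma\setminus\overline{\mathfrak{D}}$ is a biholomorphism, being the composition of the conformal involution $\eta$ with the conformal isomorphism $\big(\cR\big|_{\mathfrak{D}}\big)^{-1}$ (Theorem~\ref{alg_weak_b_inv_thm}). Because the $p_i$ critical points of $S$ lie in $\cT_i\cap\cD$, the map $\Phi$ sends them to $p_i$ distinct critical points of $\cR$ in $\Sigma\setminus\overline{\mathfrak{D}}$ with the same multiplicities $n_i-1$, and for each such critical point $w$ we have $\cR(\Phi(w)) = S(w) = v_i$. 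Hence $v_i\in\cT_i$ is a critical value of $\cR$ admitting at least $p_i$ distinct preimages, each of multiplicity $n_i-1$, which is precisely the claim.

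The step I expect to be most delicate is the domain bookkeeping: confirming that $\mathfrak{X}_i$ is genuinely conformal (so that multiplicities pass through without distortion), and that the $p_i$ critical points—though not the critical value $v_i$—sit in $\cD$, where the identity $S=\cR\circ\Phi$ is valid. The conformality is exactly the nontrivial conclusion of Theorem~\ref{conf_mating_thm} (obtained there via Weyl's lemma for David maps), so beyond this verification no further analytic input is required.
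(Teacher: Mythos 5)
Your proposal is correct and follows essentially the same route as the paper's proof: read off the unique critical value of $A_{\Gamma_{n_i,p_i}}^{\text{fBS}}$ at $0$ with its $p_i$ fibers of multiplicity $n_i-1$, transport it to $S$ via the conjugacy $\mathfrak{X}_i$ of Theorem~\ref{conf_mating_thm}, and then push through the factorization $S\vert_{\cD}=\cR\circ\eta\circ(\cR\vert_{\mathfrak{D}})^{-1}$ of Relation~\eqref{alg_char_eq}. The paper states this in three lines; your write-up merely makes explicit the domain bookkeeping (critical points in $\cT_i\cap\cD$, critical value in the rank-zero tile) that the paper leaves implicit.
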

\begin{proof}
    Note that for indices $i \in \{1, 2, \cdots, l\}$ with $n_i\geq 3$, the factor Bowen-Series map $A_{\Gamma_{n_i, p_i}}$  has a unique critical value in $\mathbb{D}$ at $0$ with $p_i$ distinct pre-images each of multiplicity $n_i$; and hence so does $A_{\Gamma_i}$. Recall from Theorem~\ref{conf_mating_thm} that the map $S:\cT_i\cap\overline{\cD}\to\cT_i$  is conjugate to $A_{\Gamma_i}:\overline{\cU}_{\Gamma_i}\cap\D\to\D$, for all $i\in \{1, 2, \cdots, l\}$. The result now follows directly from Relation~\eqref{alg_char_eq}  (cf. \cite[Proposition~4.14]{mj2023matings}). 
\end{proof}

\begin{remark}
It is easy to see that $(\Sigma,\mathfrak{D},\eta,\mathcal{R})$ satisfying the conditions of Theorem~\ref{alg_weak_b_inv_thm} gives rise to a weak B-involution $S:\overline{\cD}\to\widehat{\C}$, where $\cD=\mathcal{R}(\mathfrak{D})$ and $S\big|_{\cD} \equiv \mathcal{R}\circ \eta\circ \big(\mathcal{R}\big|_{\mathfrak{D}}\big)^{-1}$. In fact, one can check that the quadruples $(\Sigma,\mathfrak{D},\eta,\mathcal{R})$ and the pairs $(S,\cD)$ are in bijective correspondence, up to appropriate equivalences.
\end{remark}

\begin{definition}
    For a weak B-involution $S:\overline{\cD}\to\widehat{\C}$ arising from the conformal mating construction of Theorem~\ref{conf_mating_thm}, the associated welded Riemann surface $\Sigma$ (defined in Theorem~\ref{alg_weak_b_inv_thm}) will be called the \emph{blender surface}. We call the map $\cR$, also defined in Theorem~\ref{alg_weak_b_inv_thm}, the \emph{uniformizing meromorphic map}.
\end{definition}

\subsection{Welding graph}\label{weld_graph_subsec}

We will now introduce a combinatorial object that records the connected components of the welded surface $\Sigma$ and the action of the involution $\eta$ on these components.

\begin{definition}\label{weld_graph_defn}
    Given a weak B-involution $S:\overline{\cD}\to\widehat{\C}$, with $\cD:= \bigsqcup_{i=1}^k \Omega_i$, let $\mathcal{G}$ be a graph with vertices $\{v_i^\pm\}_{i=1}^k$, where $v_{i_1}^-$, $v_{i_2}^+$ share an edge iff $S(\partial^0\Omega_{i_1})\cap\partial^0\Omega_{i_2} \neq \emptyset$. The graph $\mathcal{G}$ will be called the \textbf{welding graph} for $S:\overline{\cD}\to\widehat{\C}$.
\end{definition}
\noindent(See Section~\ref{examples_sec} for illustrations.)

\begin{lem}\label{graph_symm_lem}
There exists an edge between $v_{i_1}^-$ and $v_{i_2}^+$ if and only if there exists an edge between $v_{i_1}^+$ and $v_{i_2}^-$. 
\end{lem}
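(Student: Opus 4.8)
The plan is to reduce the claim to a symmetry of an intersection condition that follows immediately from the involutivity of $S$ on the boundary. First I would unwind the definition of the welding graph to restate both edges as set-theoretic conditions. By Definition~\ref{weld_graph_defn}, the edge $\{v_{i_1}^-, v_{i_2}^+\}$ is present exactly when $S(\partial^0\Omega_{i_1})\cap\partial^0\Omega_{i_2}\neq\emptyset$; applying the same defining rule, but now with the argument index supplied by the ``$-$'' vertex and the target index by the ``$+$'' vertex, the edge $\{v_{i_1}^+, v_{i_2}^-\}$ is present exactly when $S(\partial^0\Omega_{i_2})\cap\partial^0\Omega_{i_1}\neq\emptyset$. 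Hence the lemma is equivalent to the symmetry
$$
S(\partial^0\Omega_{i_1})\cap\partial^0\Omega_{i_2}\neq\emptyset \iff S(\partial^0\Omega_{i_2})\cap\partial^0\Omega_{i_1}\neq\emptyset .
$$

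Next I would invoke Condition~\eqref{cond_inv} of Definition~\ref{weak_b_inv_def}: since $S:(\partial\cD,X)\to(\partial\cD,X)$ is an involution, it preserves $X$ and therefore restricts to a bijection of $\partial^0\cD=\partial\cD\setminus X$ with $S\circ S=\mathrm{id}$ there; in particular $S$ is its own inverse on $\partial^0\cD$. With this in hand, the symmetry is a one-line argument: assuming the left-hand intersection is nonempty, pick $q$ in it, so that $q\in\partial^0\Omega_{i_2}$ and $q=S(p)$ for some $p\in\partial^0\Omega_{i_1}$; applying $S$ gives $S(q)=p\in\partial^0\Omega_{i_1}$, while $S(q)\in S(\partial^0\Omega_{i_2})$ because $q\in\partial^0\Omega_{i_2}$, so $p$ witnesses $S(\partial^0\Omega_{i_2})\cap\partial^0\Omega_{i_1}\neq\emptyset$. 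Interchanging $i_1$ and $i_2$ yields the reverse implication.

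There is no substantial obstacle here; the statement is essentially a repackaging of the fact that $S$ restricts to an involution on $\partial^0\cD$. The only point that warrants care is the bookkeeping in the first step — correctly pairing the unordered edge $\{v_{i_1}^+,v_{i_2}^-\}$ with the ordered intersection condition dictated by the definition, which always places a ``$-$''-indexed domain inside $S(\cdot)$ and a ``$+$''-indexed domain as the set it meets. Once this translation is pinned down, involutivity closes the argument, and no further structural input (such as the orientation-reversing property or the meromorphicity of $S$) is needed.
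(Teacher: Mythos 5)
Your proof is correct and follows essentially the same route as the paper: both reduce the edge conditions to the symmetry of the intersection condition $S(\partial^0\Omega_{i_1})\cap\partial^0\Omega_{i_2}\neq\emptyset \iff S(\partial^0\Omega_{i_2})\cap\partial^0\Omega_{i_1}\neq\emptyset$ and derive it from the fact that $S$ is an involution of $\partial\cD$ preserving $X$. The paper simply asserts this equivalence; you spell out the witness-chasing, which is a harmless elaboration.
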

\begin{proof}
Since $S:\partial\cD\to\partial\cD$ is an involution, we have that 
\begin{align*}
&S(\partial^0\Omega_{i_1})\cap\partial^0\Omega_{i_2}\neq\emptyset\\
\iff & S(\partial^0\Omega_{i_2})\cap\partial^0\Omega_{i_1}\neq\emptyset .
\end{align*}
The result now follows from the definition of $\cG$.
\end{proof}

The vertices $v_i^\pm$ of $\cG$ correspond to the copies $\Omega_i^\pm$ (sitting inside $\Sigma$) of the component $\Omega_i$ of $\cD$. 
By definition of $\Sigma$ and $\cG$, the domains $\Omega_{i_1}^-$ and $\Omega_{i_2}^+$ share parts of their boundary in $\Sigma$ precisely when there is an edge in $\cG$ connecting $v_{i_1}^-$ and $v_{i_2}^+$.  
The next statement follows from the above discussion. 

\begin{lem}\label{graph_conn_lem}
The connected components of $\Sigma$ are in bijective correspondence with those of $\cG$.   
\end{lem}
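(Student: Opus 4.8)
The plan is to realise $\Sigma$ as a finite union of connected closed pieces indexed by the vertices of $\cG$, and then to match connectivity in $\Sigma$ with connectivity in $\cG$ via the standard principle that the connected components of a union of connected sets are governed by the intersection pattern of those sets. First I would recall from the construction in Theorem~\ref{alg_weak_b_inv_thm} that $\Sigma$ is built by welding $\cD\cup\partial^0\cD$ with a copy of itself through the maps $\psi_1,\psi_2$, so that $\Sigma=\psi_1(\cD\cup\partial^0\cD)\cup\psi_2(\cD\cup\partial^0\cD)$. Writing $\Omega_i^+:=\psi_1(\Omega_i)$ and $\Omega_i^-:=\psi_2(\Omega_i)$, and letting $\overline{\Omega_i^{\pm}}$ denote their closures in $\Sigma$, I obtain $\Sigma=\bigcup_{i=1}^k\big(\overline{\Omega_i^+}\cup\overline{\Omega_i^-}\big)$. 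Each piece $\overline{\Omega_i^{\pm}}$ is connected, being the closure of the homeomorphic image of the connected domain $\Omega_i$ under $\psi_1$ or $\psi_2$. Thus the $2k$ pieces $\overline{\Omega_i^{\pm}}$ are in natural bijection with the $2k$ vertices $v_i^{\pm}$ of $\cG$.

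The crux is then to show that two pieces lie in the same connected component of $\Sigma$ if and only if the corresponding vertices lie in the same component of $\cG$. For this I would record the adjacency dictionary supplied by the discussion preceding the statement: the welding identifies $\psi_1(x)$ with $\psi_2(S(x))$ for $x\in\partial^0\cD$, so a boundary arc of $\Omega_{i_1}^+$ is glued in $\Sigma$ to a boundary arc of $\Omega_{i_2}^-$ exactly when $S(\partial^0\Omega_{i_1})\cap\partial^0\Omega_{i_2}\neq\emptyset$, that is, precisely when $v_{i_1}^+$ and $v_{i_2}^-$ span an edge of $\cG$; Lemma~\ref{graph_symm_lem} lets me pass freely between the two sign conventions. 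Granting this dictionary, a path in $\Sigma$ joining interior points of two pieces can be subdivided into sub-arcs each contained in a single piece and passing from one piece to an adjacent piece only along a shared welded arc. Reading off the ordered sequence of pieces traversed yields a walk in $\cG$ between the corresponding vertices, and conversely each edge of $\cG$ produces a shared arc along which the two pieces are joined inside $\Sigma$. Hence the relation ``lie in the same component of $\Sigma$'' on pieces coincides with ``lie in the same component of $\cG$'' on vertices, and assigning to each component of $\Sigma$ the set of vertices whose pieces it contains furnishes the desired bijection.

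The main obstacle I anticipate is the careful treatment of the finite singular set $X$ and the compactification step from $\Sigma^0$ to $\Sigma$. At a point of $X$ several real-analytic arcs of $\partial\cD$ meet; under welding such a point ``opens up'' into several punctures of $\Sigma^0$ (as in Figure~\ref{top_weld_surf_1_fig}), which are then filled back in to form $\Sigma$. To make the nerve argument rigorous I would need to verify two points: first, that distinct pieces are glued to one another along arcs \emph{only} through the orientation-reversing action of $S$ on $\partial^0\cD$, so that any incidental meeting of equal-sign pieces $\overline{\Omega_{i_1}^{+}}$ and $\overline{\Omega_{i_2}^{+}}$ occurs solely over points of $X$ and never along a full arc; and second, that filling a puncture is a purely local operation, adding a single regular point whose neighbourhood lies in the closures of pieces already mutually linked by edges of $\cG$, so that compactification neither creates nor destroys connections between components. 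Once these two facts are checked, the path-to-walk correspondence of the previous paragraph applies verbatim and the bijection of components follows.
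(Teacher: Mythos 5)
Your proposal is correct and follows the same route the paper takes: the paper simply records (in the discussion preceding the lemma) that the pieces $\Omega_i^\pm\subset\Sigma$ correspond to the vertices $v_i^\pm$ and share boundary in $\Sigma$ exactly when the corresponding vertices are joined by an edge, and then asserts the lemma as an immediate consequence. Your version supplies the nerve/path argument and the check that filling the punctures over $X$ cannot merge or split components, which the paper leaves implicit.
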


Observe that there are two types of components of $\Sigma$: components that are preserved by the involution $\eta$, and components that lie in a $2-$cycle under $\eta$. This dichotomy is encoded in the graph $\cG$. We now proceed to explicate this connection.

For $i\in\{1,\cdots,k\}$, the involution $\eta$ maps $\Omega_i^-\subset\Sigma$ onto $\Omega_i^+\subset\Sigma$. This induces an action 
\begin{align*}
&\widehat{\eta}:V(\cG)\to V(\cG),\\
& \widehat{\eta}(v_i^-)=v_i^+,
\end{align*}
where $V(\cG)$ denotes the vertex set of the graph $\cG$.
Let us consider a component $\mathcal{C}_\Sigma$ of $\Sigma$. We denote the corresponding component of $\cG$ by $\mathcal{C}_\cG$.
Suppose that 
$$
V(\mathcal{C}_\cG)=\{v_{i_1}^-,\cdots,v_{i_\alpha}^-,v_{j_1}^+,\cdots,v_{j_\beta}^+\}.
$$
This means, using the notation from Theorem~\ref{alg_weak_b_inv_thm}, that 
$$
\mathcal{C}_\Sigma\setminus\partial\mathfrak{D}=\Omega_{i_1}^-\sqcup\cdots\sqcup\Omega_{i_\alpha}^-\sqcup\Omega_{j_1}^+\sqcup\cdots\sqcup\Omega_{j_\beta}^+.
$$
By Lemma~\ref{graph_symm_lem}, there exists a symmetric component $\mathcal{C}'_\cG$ of $\cG$ with
$$
V(\mathcal{C}'_\cG)=\{v_{j_1}^-,\cdots,v_{j_\beta}^-,v_{i_1}^+,\cdots,v_{i_\alpha}^+\}=\widehat{\eta}(V(\mathcal{C}_\cG)).
$$
(See Figure~\ref{weld_graph_fig} for an illustration.)
We denote the corresponding component of $\Sigma$ by $\mathcal{C}'_\Sigma$, and note that $\eta(\mathcal{C}_\Sigma)=\mathcal{C}_\Sigma'$.

\begin{lem}\label{graph_eta_lem}
With notation as above, we have 
\begin{align*}
& \eta(\mathcal{C}_\Sigma)=\mathcal{C}_\Sigma\\
\iff & \widehat{\eta}(\mathcal{C}_\cG)=\mathcal{C}_\cG\\
\iff & \{i_1,\cdots,i_\alpha\}=\{j_1\cdots,j_\beta\}\\
\iff &\{i_1,\cdots,i_\alpha\}\cap\{j_1\cdots,j_\beta\}\neq\emptyset.
\end{align*}
\end{lem}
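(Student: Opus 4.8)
Label the four conditions, in the order stated, as $(\mathrm{A})$, $(\mathrm{B})$, $(\mathrm{C})$, $(\mathrm{D})$. The plan is to prove the web of equivalences by establishing $(\mathrm{A})\Leftrightarrow(\mathrm{B})$, $(\mathrm{B})\Leftrightarrow(\mathrm{C})$, $(\mathrm{C})\Rightarrow(\mathrm{D})$, and $(\mathrm{D})\Rightarrow(\mathrm{B})$, which together close all the claimed equivalences. Throughout I would exploit the data already recorded in the paragraph preceding the lemma: that $\widehat{\eta}(V(\mathcal{C}_\cG)) = V(\mathcal{C}'_\cG) = \{v_{j_1}^-,\ldots,v_{j_\beta}^-,v_{i_1}^+,\ldots,v_{i_\alpha}^+\}$, that $\eta(\mathcal{C}_\Sigma)=\mathcal{C}'_\Sigma$, and that components of $\Sigma$ and $\cG$ correspond bijectively by Lemma~\ref{graph_conn_lem}. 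First I would note that since $\eta$ is an involution with $\eta(\Omega_i^-)=\Omega_i^+$, one has $\eta(\Omega_i^+)=\Omega_i^-$, so $\widehat{\eta}(v_i^+)=v_i^-$ and $\widehat{\eta}$ is itself an involution on $V(\cG)$. I would also record that $\cG$ is bipartite with parts $\{v_i^-\}$ and $\{v_i^+\}$ (edges join only a minus-vertex to a plus-vertex), and that no vertex is isolated, since $S$ maps $\partial^0\Omega_i$ into $\partial^0\cD=\bigcup_{i'}\partial^0\Omega_{i'}$ (and by Lemma~\ref{graph_symm_lem}); consequently every component contains a vertex of each sign, so $\alpha,\beta\geq 1$.

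For $(\mathrm{A})\Leftrightarrow(\mathrm{B})$ I would invoke the equivariant bijection between components: since $\eta(\mathcal{C}_\Sigma)=\mathcal{C}'_\Sigma$ corresponds under Lemma~\ref{graph_conn_lem} to $\widehat{\eta}(\mathcal{C}_\cG)=\mathcal{C}'_\cG$, fixing $\mathcal{C}_\Sigma$ is equivalent to $\mathcal{C}'_\cG=\mathcal{C}_\cG$, which is exactly $(\mathrm{B})$. For $(\mathrm{B})\Leftrightarrow(\mathrm{C})$ I would equate vertex sets: $\widehat{\eta}(\mathcal{C}_\cG)=\mathcal{C}_\cG$ means $V(\mathcal{C}'_\cG)=V(\mathcal{C}_\cG)$, i.e.
\[
\{v_{j_1}^-,\ldots,v_{j_\beta}^-,v_{i_1}^+,\ldots,v_{i_\alpha}^+\}=\{v_{i_1}^-,\ldots,v_{i_\alpha}^-,v_{j_1}^+,\ldots,v_{j_\beta}^+\};
\]
comparing the minus-vertices (disjoint from the plus-vertices by bipartiteness) yields precisely $\{i_1,\ldots,i_\alpha\}=\{j_1,\ldots,j_\beta\}$. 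The implication $(\mathrm{C})\Rightarrow(\mathrm{D})$ is then immediate, as $\alpha\geq 1$ forces the common index set to be nonempty.

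The one implication carrying genuine content, and the step I expect to be the main obstacle, is $(\mathrm{D})\Rightarrow(\mathrm{B})$. Here I would argue as follows: if some $m$ lies in both $\{i_1,\ldots,i_\alpha\}$ and $\{j_1,\ldots,j_\beta\}$, then both $v_m^-$ and $v_m^+$ belong to $V(\mathcal{C}_\cG)$. Applying $\widehat{\eta}$ to $v_m^-\in V(\mathcal{C}_\cG)$ gives $v_m^+=\widehat{\eta}(v_m^-)\in\widehat{\eta}(V(\mathcal{C}_\cG))=V(\mathcal{C}'_\cG)$, so $v_m^+\in V(\mathcal{C}_\cG)\cap V(\mathcal{C}'_\cG)$ and the two components share a vertex. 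Since distinct connected components of a graph have disjoint vertex sets, this forces $\mathcal{C}_\cG=\mathcal{C}'_\cG=\widehat{\eta}(\mathcal{C}_\cG)$, which is $(\mathrm{B})$. The crux is recognizing that a \emph{mere overlap} of index sets, via the involutivity of $\widehat{\eta}$ together with the dichotomy that graph components are equal or vertex-disjoint, upgrades to full $\widehat{\eta}$-invariance of the component; the remaining implications are bookkeeping with the bipartite structure.
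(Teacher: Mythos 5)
Your proof is correct and follows essentially the same route as the paper's: both establish $(\mathrm{A})\Leftrightarrow(\mathrm{B})$ via the component bijection, $(\mathrm{B})\Rightarrow(\mathrm{C})$ by comparing vertex sets using $\widehat{\eta}(v_i^\pm)=v_i^\mp$, note $(\mathrm{C})\Rightarrow(\mathrm{D})$ is immediate, and close the cycle by observing that a common index forces two components related by the involution to overlap and hence coincide. The only (immaterial) difference is that you run the final implication as $(\mathrm{D})\Rightarrow(\mathrm{B})$ on the graph $\cG$, whereas the paper proves $(\mathrm{D})\Rightarrow(\mathrm{A})$ directly on $\Sigma$ using $\eta(\Omega_i^-)=\Omega_i^+$; these are the same argument transported through the bijection of Lemma~\ref{graph_conn_lem}.
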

\begin{figure}[ht!]
\captionsetup{width=0.98\linewidth}
\includegraphics[width=0.96\linewidth]{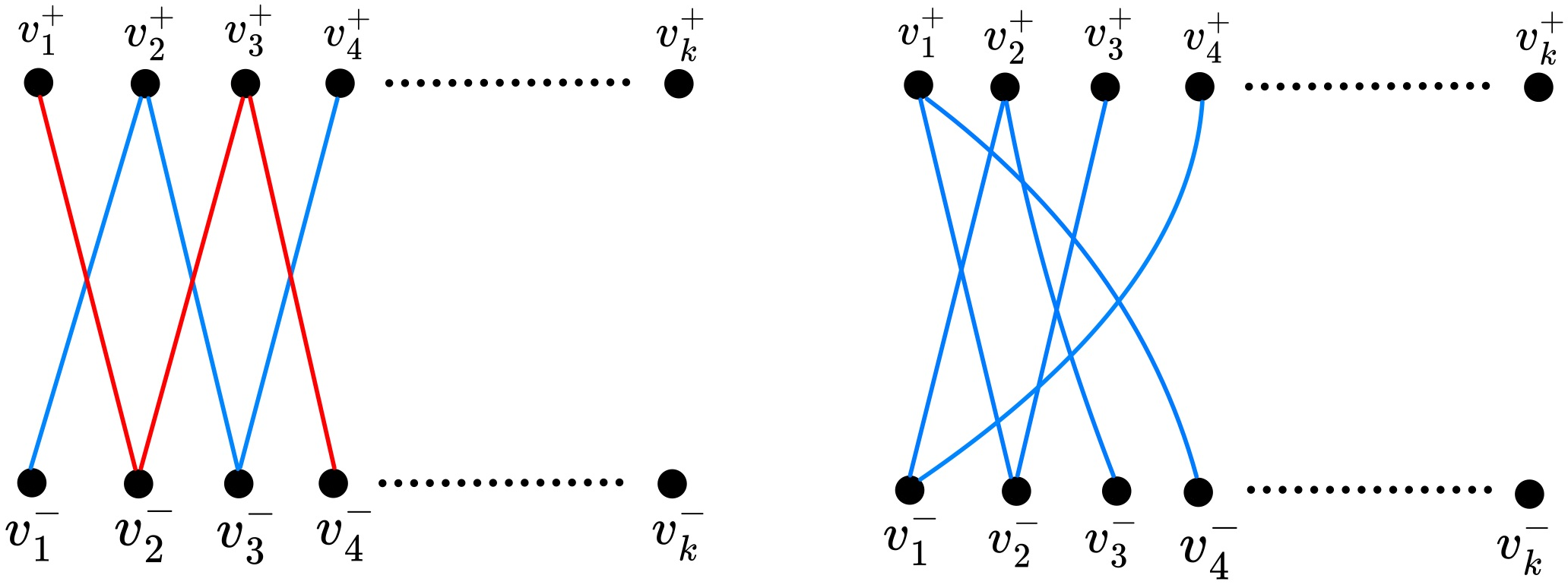}
\caption{Left: Depicted are two disjoint, symmetric components $\mathcal{C}_\cG, \mathcal{C}_\cG'$ of a welding graph $\cG$. The vertices of $\mathcal{C}_\cG$ are mapped to those of $\mathcal{C}_\cG'$ under the involution $\widehat{\eta}$. Right: Depicted is a self-symmetric component $\mathcal{C}_\cG$ of a welding graph $\cG$. The induced map $\widehat{\eta}$ preserves the vertex set of $\mathcal{C}_\cG$.}
\label{weld_graph_fig}
\end{figure}
\begin{proof}
The equivalence of the first two statements is obvious from the definition of $\cG$ and the induced map $\widehat{\eta}$.

Suppose that $\widehat{\eta}(\mathcal{C}_\cG)=\mathcal{C}_\cG$. Since $\widehat{\eta}(v_i^\pm)=v_i^\mp$, it follows that $v_i^\pm\in V(\mathcal{C}_\cG)$ if and only if $v_i^\mp\in V(\mathcal{C}_\cG)$. Thus, $\{i_1,\cdots,i_\alpha\}=\{j_1\cdots,j_\beta\}$.

The third statement trivially implies the fourth.

Finally, let $i\in\{i_1,\cdots,i_\alpha\}\cap\{j_1\cdots,j_\beta\}$. Then $\Omega_i^-, \Omega_i^+\subset\mathcal{C}_\Sigma$. As $\eta$ carries $\Omega_i^-$ to $\Omega_i^+$, we have $\eta(\mathcal{C}_\Sigma)\cap\mathcal{C}_\Sigma\neq\emptyset$. But the homeomorphism $\eta$ must carry a component of $\Sigma$ to a component of $\Sigma$, and hence $\eta(\mathcal{C}_\Sigma)=\mathcal{C}_\Sigma$.
\end{proof}

\subsection{Hyperellipticity of blender surfaces}\label{blender_surf_subsec}
For a weak B-involution $S:\overline{\cD} \rightarrow \widehat{\C}$, let $\Sigma, \mathfrak{D}, \eta$ be as in Theorem~\ref{alg_weak_b_inv_thm}. Note that the involution $\eta$ gives rise to a degree two branched covering $\Sigma\to\Sigma/\langle\eta\rangle$ of which $\eta$ is a deck transformation. We will now give a more direct description of the quotient surface $\Sigma/\langle\eta\rangle$ in terms of the  weak B-involution $S:\overline{\cD} \rightarrow \widehat{\C}$. We intend to use this description to show that the welded surfaces $\Sigma$ associated to the weak B-involutions that arise from our conformal mating procedure (see Theorem~\ref{conf_mating_thm}) are hyperelliptic.

\subsubsection{An auxiliary surface}
Consider a inversive multi-domain $\mathcal{D}$. The map $S$ is a conformal involution in a neighborhood of $\partial^0\cD$; this gives a pairing of the connected components of $\partial^0\mathcal{D}$. We construct a Riemann surface from $\cD\cup\partial^0\cD$ by welding the paired boundary components via $S$. The resulting surface would be a finite union of bordered Riemann surfaces of finite genus. Let us denote this surface by $\widecheck{\Sigma}^0$.

\begin{theorem}\label{zipped_surface_thm}
    Let $(\Sigma, \widecheck{\Sigma}^0, \eta)$ be as above. Then the following hold.
    \begin{enumerate}\upshape
        \item $\widecheck{\Sigma}^0$ is a finite type surface.
        \item $\widecheck{\Sigma} \cong \Sigma/\langle\eta\rangle$, where $\widecheck{\Sigma}$ is the compactification of the surface $\widecheck{\Sigma}^0$ (obtained by filling in the punctures). In particular, the components of $\widecheck{\Sigma}$ correspond bijectively to the $\eta-$orbits of the components of $\Sigma$.
    \end{enumerate}
\end{theorem}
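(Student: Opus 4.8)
The plan is to produce an explicit biholomorphism identifying $\widecheck{\Sigma}^0$ with the quotient $\Sigma^0/\langle\eta\rangle$, from which both assertions follow with little extra work. I would keep the notation of Theorem~\ref{alg_weak_b_inv_thm}: the welding maps $\psi_1,\psi_2:\cD\cup\partial^0\cD\to\Sigma^0$ with $\psi_1=\psi_2\circ S$ on $\partial^0\cD$, the domain $\mathfrak{D}=\psi_1(\cD)$ with $\eta(\mathfrak{D})=\Sigma\setminus\overline{\mathfrak{D}}$ and $\eta=\psi_2\circ\psi_1^{-1}$ on $\overline{\mathfrak{D}}$. Writing $q:\cD\cup\partial^0\cD\to\widecheck{\Sigma}^0$ for the self-welding projection (which glues $x$ to $S(x)$) and $\pi:\Sigma^0\to\Sigma^0/\langle\eta\rangle$ for the quotient projection, the starting computation is the identity $\eta(\psi_1(x))=\psi_1(S(x))$ for $x\in\partial^0\cD$: since $S$ is an involution, $\psi_1=\psi_2\circ S$ gives $\psi_2(x)=\psi_1(S(x))$, whence $\eta(\psi_1(x))=\psi_2(x)=\psi_1(S(x))$. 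This shows $\pi\circ\psi_1$ already identifies $x$ with $S(x)$, so it factors through $q$ to yield a continuous map $\overline{\Phi}:\widecheck{\Sigma}^0\to\Sigma^0/\langle\eta\rangle$.

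Next I would check that $\overline{\Phi}$ is a bijection. Surjectivity is formal: $\Sigma^0=\overline{\mathfrak{D}}\cup\eta(\mathfrak{D})$, so every $\eta$-orbit meets $\overline{\mathfrak{D}}=\psi_1(\cD\cup\partial^0\cD)$, the image of $\psi_1$. For injectivity, a coincidence $\pi(\psi_1(p))=\pi(\psi_1(p'))$ with $p\neq p'$ forces $\psi_1(p')=\eta(\psi_1(p))$; the disjointness $\mathfrak{D}\cap\eta(\mathfrak{D})=\emptyset$ rules out interior points, so $p\in\partial^0\cD$, and then the identity above gives $p'=S(p)$, i.e. $q(p)=q(p')$. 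The more delicate point — and the step I expect to be the main obstacle — is to verify that $\overline{\Phi}$ is biholomorphic across the welding locus, and in particular at the fixed points of $S|_{\partial^0\cD}$. Away from such fixed points the welding glues two distinct arcs and matches the local biholomorphism $\pi$ directly. At a fixed point $x_0=S(x_0)$ one must pin down the local model: since $S$ extends conformally across $\partial^0\cD$ (Schwarz reflection), reverses the boundary orientation, and swaps the two sides of $\partial^0\cD$, in suitable coordinates it reads $S(z)=-z$ with $\cD$ the upper half-plane; the self-welding $x\sim -x$ is then uniformized by $z\mapsto z^2$ and yields a smooth point, precisely matching the local double branching $z\mapsto z^2$ of $\pi$ over the $\eta$-fixed point $\psi_1(x_0)$. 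Granting this, $\overline{\Phi}$ is a biholomorphism; I would also note that the fixed points of $S|_{\partial^0\cD}$ are isolated (Identity Theorem) and hence finite in number.

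With the isomorphism $\widecheck{\Sigma}^0\cong\Sigma^0/\langle\eta\rangle$ in hand, assertion~(1) is immediate: $\Sigma^0$ is of finite type by Step~3 of Theorem~\ref{alg_weak_b_inv_thm}, and the quotient of a finite-type surface by the finite group $\langle\eta\rangle$ (acting with finitely many fixed points) is again of finite type, e.g. by Riemann--Hurwitz for the branched double cover $\pi$. For assertion~(2), $\eta$ extends to the compactification $\Sigma$ and permutes its finitely many punctures, so filling in punctures commutes with forming the quotient; hence the compactification $\widecheck{\Sigma}$ satisfies $\widecheck{\Sigma}\cong\Sigma/\langle\eta\rangle$. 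Finally, because $\eta$ permutes the connected components of $\Sigma$, with each component either $\eta$-invariant or swapped with exactly one other (Lemma~\ref{graph_eta_lem}), the components of $\widecheck{\Sigma}\cong\Sigma/\langle\eta\rangle$ correspond bijectively to the $\eta$-orbits of components of $\Sigma$, as claimed.
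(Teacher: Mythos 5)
Your proof is correct and is essentially the paper's argument run in the opposite direction: the paper defines the degree-$2$ holomorphic map $\mathcal{Q}$ (equal to $\phi\circ\psi_1^{-1}$ on $\mathfrak{D}\cup\partial^0\mathfrak{D}$ and $\phi\circ\psi_2^{-1}$ on $\eta(\mathfrak{D})$) from $\Sigma^0$ onto $\widecheck{\Sigma}^0$ and concludes via $\mathcal{Q}\circ\eta\equiv\mathcal{Q}$, whereas you build the inverse identification $\widecheck{\Sigma}^0\to\Sigma^0/\langle\eta\rangle$ from the same key identity $\eta\circ\psi_1=\psi_2=\psi_1\circ S$ on $\partial^0\cD$. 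Your explicit verification of the local model $z\mapsto z^2$ at the fixed points of $S|_{\partial^0\cD}$ is a detail the paper leaves implicit, but the substance of the two proofs is the same.
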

\begin{proof}
Recall that $\Sigma^0$ stands for the finite type surface obtained by gluing two copies of $\cD\cup\partial^0\cD$ via the boundary involution $S$, and $\Sigma$ is its compactification. Recall also the welding maps $\psi_1$ and $\psi_2$; the conformal embeddings of $\mathcal{D}\cup\partial^0\cD$ into $\Sigma$. Further, we have $\mathfrak{D}=\psi_1(\cD)$ and $\eta(\mathfrak{D})=\psi_2(\cD)$.
We will denote the boundary of $\mathfrak{D}$ in the punctured surface $\Sigma^0$ by $\partial^0\mathfrak{D}$, so that $\psi_1(\partial^0\cD)=\psi_2(\partial^0\cD)=\partial^0\mathfrak{D}$. The boundary of $\mathfrak{D}$ in the compact surface $\Sigma$ will be denoted by the usual notation $\partial\mathfrak{D}$. Then, we have $\Sigma = \mathfrak{D} \cup \eta(\mathfrak{D})\cup \partial\mathfrak{D}$, and $\Sigma^0 = \mathfrak{D} \cup \eta(\mathfrak{D})\cup \partial^0\mathfrak{D}$.

1) Let $\phi$ be the quotient map from $\mathcal{D}\cup \partial^0\mathcal{D}$ onto $\widecheck{\Sigma}^0$, coming from the welding construction of $\widecheck{\Sigma}^0$. It is evident that the map $\phi|_{\mathcal{D}}$ is a conformal embedding. 
Define the map $\mathcal{Q}: \Sigma^0 \rightarrow \widecheck{\Sigma}^0$ as
    \begin{equation*}
    \mathcal{Q}\equiv \begin{cases*}
      \phi\circ \psi_1^{-1} & on $\mathfrak{D}\cup \partial^0\mathfrak{D}$, \\
      \phi\circ \psi_2^{-1}        & on $\eta(\mathfrak{D})$.
    \end{cases*}
\end{equation*}
The map $\mathcal{Q}$ is clearly continuous on the (real-analytic) common boundary $\partial^0\mathfrak{D}$ because $\phi\equiv \phi\circ S$ on $\partial^0\cD$. This shows that $\mathcal{Q}$ is a degree $2$ holomorphic map from the finite type surface $\Sigma^0$ onto $\widecheck{\Sigma}^0$, and hence $\widecheck{\Sigma}^0$ is a finite type surface. 

2) By continuity of $\mathcal{Q}$, punctures in $\Sigma^0$ go to punctures in $\widecheck{\Sigma}^0$. This gives us an extended map $\mathcal{Q}:\Sigma \rightarrow \widecheck{\Sigma}$, where $\widecheck{\Sigma}$ is the compactification of $\widecheck{\Sigma}^0$ (by abuse of notation, we shall reuse the name $\mathcal{Q}$). Note that $\mathcal{Q}\circ\eta \equiv \mathcal{Q}$. In other words, the involution $\eta$ generates the group of deck transformations for $\mathcal{Q}$. Therefore, $\widecheck{\Sigma} \cong \Sigma/\langle\eta\rangle$.    
\end{proof}

\begin{remark}
If $\cD$ is an inversive domain (i.e., it is connected), then the associated welding graph $\cG$ and hence the welded surface $\Sigma$ as well as the auxiliary surface $\widecheck{\Sigma}$ are also connected. However, in general, the surfaces $\widecheck{\Sigma}$ and $\Sigma$ can be connected even if $\cD$ is disconnected (see Section~\ref{examples_sec}).
\end{remark}

\subsubsection{Components of blender surfaces are spheres or hyperelliptic}
We will now see that for a weak B-involution $S:\overline{\cD}\to\widehat{\C}$ constructed via the conformal mating procedure of Theorem~\ref{conf_mating_thm}, there are strong restrictions on the topology of the auxiliary surface $\widecheck{\Sigma}$. Indeed, regardless of the connectivities of the components of $\cD$, the components of $\widecheck{\Sigma}$ are necessarily simply connected. This, in turn, forces the blender surface $\Sigma$ to be made up of copies of Riemann spheres and hyperelliptic Riemann surfaces.

Recall from Theorem~\ref{zipped_surface_thm} that there exists a degree two meromorphic map $\mathcal{Q}:\Sigma\to\widecheck{\Sigma}\cong\Sigma/\langle\eta\rangle$ such that $\mathcal{Q}\circ\eta\equiv \mathcal{Q}$.

\begin{theorem}
\noindent\begin{enumerate}[leftmargin=8mm]\upshape
    \item For weak B-involutions arising from the conformal mating construction of Theorem~\ref{conf_mating_thm}, the surface $\widecheck{\Sigma}$ is a disjoint union of  Riemann spheres $\widehat{\C}$.
    \item Each component of the blender surface $\Sigma$ is either
    \begin{enumerate}
        \item a copy of the Riemann sphere that is mapped conformally to $\widehat{\C}$ by $\mathcal{Q}$, or
        \item a hyperelliptic Riemann surface (possibly the Riemann sphere) that is mapped as a degree $2$ branched covering to $\widehat{\C}$ by $\mathcal{Q}$.
    \end{enumerate} 
\end{enumerate}
\label{hyperelliptic_thm}
\end{theorem}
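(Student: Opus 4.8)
The plan is to prove part~(1) by a topological analysis of the welding that produces $\widecheck{\Sigma}$, and then to deduce part~(2) by transporting this information along the degree two map $\mathcal{Q}:\Sigma\to\widecheck{\Sigma}$ furnished by Theorem~\ref{zipped_surface_thm}.

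For part~(1), I would first pin down the complement $T=\widehat{\C}\setminus\cD$ for a conformal mating arising from Theorem~\ref{conf_mating_thm}. Since a hyperbolic Blaschke product is defined on all of $\overline{\D}$, the regions carrying the maps $B_j$ contribute no holes; the only holes come from the (factor) Bowen--Series pieces, where $S$ fails to be defined exactly on the (conjugated) fundamental domain of the corresponding group. Hence $T$ is a disjoint union of $l$ closed topological disks $T_1,\dots,T_l$, one per group, with $\partial T_t\subset\partial\cD$, and $\cD$ is the complement in $\widehat{\C}$ of these finitely many disjoint disks, so it is connected and planar with exactly $l$ boundary circles. By Remark~\hyperref[BSmap_domain_rem]{\ref*{BSmap_domain_rem}~{\upshape(ii)}}, the involution $S|_{\partial^0\cD}$ coincides with the side-pairing transformations of the corresponding fundamental polygon; in particular $S$ pairs the boundary arcs of each $T_t$ among themselves, so the welding defining $\widecheck{\Sigma}$ is internal to each hole.

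I would then run an Euler characteristic count. Writing $\sigma_t:=S|_{\partial T_t}$ for the side-pairing on the $t$-th hole, the self-gluing $T_t/\sigma_t$ is, by construction, the quotient orbifold uniformized by the $t$-th Fuchsian group; since this orbifold belongs to $\cF$ it has genus zero, so its underlying surface is a sphere. Consequently the image $\Theta_t:=\partial T_t/\sigma_t$ of the hole boundary is the $1$-skeleton of a sphere, whence $\chi(\Theta_t)=2-1=1$. As $\overline{\cD}$ is a compact genus zero surface with $l$ boundary circles, $\chi(\overline{\cD})=2-l$, and $\widecheck{\Sigma}$ is the cellular quotient of $\overline{\cD}$ collapsing $\partial\cD$ (a union of $l$ circles, of Euler characteristic $0$) onto $\Theta:=\bigsqcup_t\Theta_t$. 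Additivity of the Euler characteristic under such a collapse gives
$$
\chi(\widecheck{\Sigma})=\chi(\overline{\cD})-\chi(\partial\cD)+\chi(\Theta)=(2-l)-0+l=2 .
$$
Since $\cD$, and hence $\overline{\cD}$, is connected, so is its quotient $\widecheck{\Sigma}$, and a compact connected orientable surface with $\chi=2$ is the Riemann sphere; the same bookkeeping applied component by component yields part~(1) in the general (possibly disconnected) case. I expect this to be the crux of the argument: the delicate points are the identification of each hole with a genus zero fundamental domain, the verification that the side-pairings glue only within a single hole, and the correct treatment of the ideal vertices $X$ when passing to the compactification $\widecheck{\Sigma}$. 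It is exactly here that the genus zero hypothesis built into $\cF$ is indispensable: for a genus $g_t$ orbifold one would instead have $\chi(\Theta_t)=1-2g_t$, forcing $\sum_t 2g_t$ handles onto $\widecheck{\Sigma}$.

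For part~(2), recall from Theorem~\ref{zipped_surface_thm} that $\mathcal{Q}:\Sigma\to\widecheck{\Sigma}$ is a degree two holomorphic map with $\mathcal{Q}\circ\eta\equiv\mathcal{Q}$ whose deck group is $\langle\eta\rangle$. Fix a component $\mathcal{C}_\Sigma$ of $\Sigma$; being compact and connected, it maps onto a single component $\mathcal{C}_{\widecheck{\Sigma}}\cong\widehat{\C}$ of $\widecheck{\Sigma}$ by part~(1). By the dichotomy of Lemma~\ref{graph_eta_lem}, either $\eta$ interchanges $\mathcal{C}_\Sigma$ with a distinct component of $\Sigma$, or $\eta(\mathcal{C}_\Sigma)=\mathcal{C}_\Sigma$. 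In the first case $\mathcal{Q}^{-1}(\mathcal{C}_{\widecheck{\Sigma}})=\mathcal{C}_\Sigma\sqcup\eta(\mathcal{C}_\Sigma)$ splits the degree two between two sheets, so $\mathcal{Q}|_{\mathcal{C}_\Sigma}$ has degree one and is therefore a biholomorphism onto $\widehat{\C}$, giving alternative~(a). In the second case $\eta$ restricts to a nontrivial conformal involution of $\mathcal{C}_\Sigma$, and $\mathcal{Q}|_{\mathcal{C}_\Sigma}:\mathcal{C}_\Sigma\to\widehat{\C}$ is a degree two branched cover; hence $\mathcal{C}_\Sigma$ is a double branched cover of the sphere, that is, hyperelliptic (possibly the sphere or a torus, both included in our convention), giving alternative~(b). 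As these two cases are exhaustive, this completes the proof.
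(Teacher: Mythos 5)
Part~(2) of your argument is correct and is essentially identical to the paper's: the dichotomy between $\eta$-invariant components and $2$-cycles of components, with $\mathcal{Q}$ restricting to a degree one, respectively degree two, map onto a sphere component of $\widecheck{\Sigma}$.

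Part~(1) is where you diverge from the paper, and it is also where there is a genuine gap. Your Euler characteristic bookkeeping rests on the topological model ``$T$ is a disjoint union of $l$ closed topological disks, and $\cD$ is connected with exactly $l$ boundary circles.'' This model fails for most of the paper's own examples. The closures $\overline{V_i}$ of the holes may meet at points of the singular set $X$ (Examples~\ref{001_ex}, \ref{011_ex}), a single hole's boundary may be a wedge of several circles because distinct ideal vertices of the fundamental polygon land at the same Julia set point, and $\cD$ may be disconnected (Examples~\ref{000_ex}--\ref{011_ex}). Concretely, in the Newton map Example~\ref{111'''_ex} all $n$ holes share the point $z_\infty$, so $\partial\cD$ is a wedge of $n$ circles with $\chi(\partial\cD)=1-n$, not $0$, and the images $\Theta_t$ all share the image of $z_\infty$, so $\chi(\Theta)=1$, not $\sum_t\chi(\Theta_t)=n$. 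Your two errors happen to cancel in that example, but you neither notice nor justify such a cancellation, and the deferral to ``the same bookkeeping component by component'' does not address it, since the failure is caused by the singular points of $\partial\cD$ rather than by disconnectedness of $\cD$. You flag the treatment of $X$ as delicate but never actually carry it out; as written, the count $\chi(\widecheck{\Sigma})=(2-l)-0+l$ is only valid when the holes are honestly disjoint closed disks with Jordan curve boundaries.

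For comparison, the paper avoids Euler characteristics entirely. It uses a structural feature of the conformal mating that your proof does not exploit: $S|_{\partial V_i}$ is a topological reflection of $\partial V_i$ in a ``line of symmetry'' joining two distinguished points. Zipping therefore collapses each closed hole $\overline{V_i}\setminus X$ onto a finite union of disjoint simple arcs inside the quotient of $\widehat{\C}\setminus X$ by the relation $\sim_{\text{zip}}$, which exhibits $\widecheck{\Sigma}^0$ directly as a finite union of punctured spheres, with no case analysis at the points of $X$. If you want to salvage your approach, you would need to either (i) set up a genuine CW structure on $\overline{\cD}$ with the points of $X$ as $0$-cells and track carefully how the welding identifies cells across touching holes and within wedge-of-circle boundaries, or (ii) import the paper's symmetry-line description of $S|_{\partial V_i}$ to control the quotient $\Theta$ explicitly.
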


\begin{proof} 
1) Let $S:\overline{\cD}\to\widehat{\C}$ be a weak B-involution on an inversive multi-domain $\mathcal{D}$ realizing a conformal mating in the sense of Theorem~\ref{conf_mating_thm}. Let $V_1, V_2, \cdots, V_l$ be the components of $\widehat{\mathbb{C}}\setminus\overline{\cD}$. In other words, $\displaystyle T = \bigcup_{i = 1}^l\overline{V_i}$. 

We will now show that the process of passing from $\cD\cup\partial^0\cD$ to $\widecheck{\Sigma}^0$ amounts to zipping the holes $V_i$.
By the very nature of the (factor) Bowen-Series map construction, we see that $\partial V_i$ comes equipped with a line of symmetry, and $S\vert_{\partial V_i}$ is a topological reflection with respect to this line of symmetry. This line of symmetry connects (the projections of) two parabolic fixed points or order $2$ orbifold points of the corresponding Fuchsian group. Further, this line of symmetry divides $\partial V_i$ into $\partial V_i^+$ and $\partial V_i^-$. We define as follows an equivalence relation $\sim_{\text{zip}}$ on  $\widehat{\mathbb{C}}\setminus X$, where $X$ is the set of singular points on $\partial\Omega$. For $\displaystyle x, y \in \widehat{\mathbb{C}}\setminus X=\big(\mathcal{D}\cup\partial^0\mathcal{D}\big) \bigcup \big(\bigcup_{i = 1}^lV_i\big)$, we say that $x \sim_{\text{zip}} y$, if and only if for some $i\in \{1, 2, \cdots, l\}$, $x$ and $y$ belong to the closure of a hyperbolic geodesic of $V_i$ connecting some point $p \in \partial V_i^-\setminus X$ to $S(p)\in \partial V_i^+\setminus X$. Clearly, $\widecheck{\Sigma}^0 \cong \big(\widehat{\mathbb{C}}\backslash X\big)/\sim_{\text{zip}}$. It also follows from the above zipping constructing that for each $i\in\{1,\cdots,l\}$, the image of $\overline{V_i}\setminus X$ in $\big(\widehat{\mathbb{C}}\backslash X\big)/\sim_{\text{zip}}$ is the union of finitely many disjoint (possibly a single) simple arcs. This implies that $\big(\widehat{\mathbb{C}}\backslash X\big)/\sim_{\text{zip}}$ is a finite union of punctured spheres. Hence, its compactification $\widecheck{\Sigma}$ is a disjoint union of finitely many copies of Riemann spheres.

2) Recall from the discussion in Section~\ref{weld_graph_subsec} that there are two types of components of $\Sigma$: those that are preserved by the involution $\eta$, and those that lie in a $2-$cycle under $\eta$. If $\mathcal{C}_\Sigma$ is a component that lies in a $2-$cycle under $\eta$, then by definition, $\mathcal{Q}$ maps $\mathcal{C}_\Sigma \sqcup \eta(\mathcal{C}_\Sigma)$ to a component of $\widecheck{\Sigma}$, a Riemann sphere. Note that the degree of the map $\mathcal{Q}\big|_{\mathcal{C}_\Sigma}$ is $1$. This implies that both $\mathcal{C}_\Sigma$ and $\eta(\mathcal{C}_\Sigma)$ are Riemann spheres. Now let $\mathcal{C}_\Sigma$ be an $\eta$-invariant component of $\Sigma$. Then $\mathcal{Q}\big|_{\mathcal{C}_\Sigma}$ maps $\mathcal{C}_\Sigma$ to a component of $\widecheck{\Sigma}$, a Riemann sphere. This gives rise to a degree two meromorphic function on $\mathcal{C}_\Sigma$, forcing $\mathcal{C}_\Sigma$ to be hyperelliptic (possibly the Riemann sphere). 
\end{proof} 

\noindent The description of $\widecheck{\Sigma}$ given in Theorem~\ref{hyperelliptic_thm} motivates the following nomenclature.

\begin{definition}\label{zipped_def}
For a weak B-involution $S$ arising from the conformal mating construction of Theorem~\ref{conf_mating_thm}, the surface $\widecheck{\Sigma}$ will be called the \emph{zipped surface} associated with $S$.
\end{definition}

\begin{prop}\label{genus_prop}
Let $\displaystyle S:\overline{\cD} \to\widehat{\C}$, where $\cD:= \bigsqcup_{i=1}^k \Omega_i$, be a conformal mating produced in Theorem~\ref{conf_mating_thm} such that the associated welding graph $\cG$ is connected. Then $\Sigma$ is a compact Riemann surface of genus $g=\frac{\#\mathrm{Fix}(\eta)-2}{2}$, where $\mathrm{Fix}(\eta)$ denotes the set of fixed points of $\eta$. In particular, under the above assumptions, if $S$ has at least three fixed points on $\partial^0\cD$, then $g\geq 1$.
\end{prop}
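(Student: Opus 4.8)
The plan is to realize $\Sigma$ as a degree two branched cover of the Riemann sphere and to read off the genus via Riemann--Hurwitz. Since the welding graph $\cG$ is assumed connected, Lemma~\ref{graph_conn_lem} shows that $\Sigma$ is connected, and Lemma~\ref{graph_eta_lem} then forces $\eta(\Sigma)=\Sigma$, so that $\eta$ is a conformal involution of the single connected compact surface $\Sigma$. By Theorem~\ref{zipped_surface_thm} the quotient map $\mathcal{Q}:\Sigma\to\widecheck{\Sigma}\cong\Sigma/\langle\eta\rangle$ is a degree two meromorphic map whose group of deck transformations is $\langle\eta\rangle$, and by Theorem~\ref{hyperelliptic_thm} the target $\widecheck{\Sigma}$ is a single copy of $\widehat{\C}$ (connectedness of $\cG$ again guaranteeing a single component).

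First I would identify the ramification locus of $\mathcal{Q}$. For a degree two cover with deck transformation $\eta$, the ramification points are exactly the fixed points of $\eta$, and at each such point the local degree equals $2$. Applying Riemann--Hurwitz to $\mathcal{Q}:\Sigma\to\widehat{\C}$ gives
$$
2-2g \;=\; 2\cdot(2-2\cdot 0) - \sum_{p\in\mathrm{Fix}(\eta)}(2-1) \;=\; 4 - \#\mathrm{Fix}(\eta),
$$
which rearranges to $g=\frac{\#\mathrm{Fix}(\eta)-2}{2}$. In particular this shows $\#\mathrm{Fix}(\eta)=2g+2$ is even, a fact I will invoke at the very end.

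Next, for the ``in particular'' clause, I would relate the fixed points of $S$ on $\partial^0\cD$ to those of $\eta$. Since $\eta$ interchanges $\mathfrak{D}$ and $\Sigma\setminus\overline{\mathfrak{D}}=\eta(\mathfrak{D})$, every fixed point of $\eta$ must lie on $\partial\mathfrak{D}$. On the real-analytic part $\partial^0\mathfrak{D}$, using the welding relations $\psi_1=\psi_2\circ S$ and $\eta=\psi_2\circ\psi_1^{-1}$ from Theorem~\ref{alg_weak_b_inv_thm}, a boundary point $x=\psi_1(z)=\psi_2(S(z))$ with $z\in\partial^0\cD$ satisfies $\eta(x)=\psi_2(z)$, so that $\eta(x)=x$ if and only if $S(z)=z$ (using injectivity of $\psi_2$). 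Thus each fixed point of $S$ on $\partial^0\cD$ produces a distinct fixed point of $\eta$ on $\partial^0\mathfrak{D}$, distinctness coming from injectivity of $\psi_1$, and hence $\#\mathrm{Fix}(\eta)\geq \#\{z\in\partial^0\cD : S(z)=z\}$.

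Finally I would combine the two ingredients. If $S$ has at least three fixed points on $\partial^0\cD$, the bound yields $\#\mathrm{Fix}(\eta)\geq 3$; but $\#\mathrm{Fix}(\eta)=2g+2$ is even, so in fact $\#\mathrm{Fix}(\eta)\geq 4$, whence $g\geq 1$. The step I expect to require the most care is the correspondence of the third paragraph: verifying that the welding identifications are tracked correctly, that distinct fixed points of $S$ on $\partial^0\cD$ do not collapse to a single point of $\Sigma$, and that any additional fixed points of $\eta$ possibly hidden in the singular set $\partial\mathfrak{D}\setminus\partial^0\mathfrak{D}$ can only strengthen the inequality. The Riemann--Hurwitz computation itself is routine once the covering structure furnished by Theorems~\ref{zipped_surface_thm} and~\ref{hyperelliptic_thm} is in hand.
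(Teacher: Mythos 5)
Your proposal is correct and follows essentially the same route as the paper: both realize $\Sigma$ as the degree two branched cover $\mathcal{Q}:\Sigma\to\widecheck{\Sigma}\cong\widehat{\C}$ furnished by Theorems~\ref{zipped_surface_thm} and~\ref{hyperelliptic_thm}, identify the ramification points with $\mathrm{Fix}(\eta)$, and apply Riemann--Hurwitz, then transfer fixed points of $S$ on $\partial^0\cD$ to fixed points of $\eta$ on $\partial^0\mathfrak{D}$ (the paper phrases this via the conjugacy by $\mathcal{R}$, which is exactly your $\psi_1$, $\psi_2$ computation). The only cosmetic difference is that you close with the parity of $\#\mathrm{Fix}(\eta)$ while the paper uses integrality of $g$; these are equivalent.
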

\begin{proof}
By Theorems~\ref{zipped_surface_thm} and~\ref{hyperelliptic_thm}, the surface $\widecheck{\Sigma}$ is the Riemann sphere. Further, the critical points of the degree $2$ holomorphic map $\mathcal{Q}:\Sigma\to\Sigma/\langle\eta\rangle\cong\widecheck{\Sigma}\cong\widehat{\C}$ are precisely the fixed points of $\eta$. The result now follows from the Riemann-Hurwitz formula.

Recall that $\mathcal{R}:\partial^0\mathfrak{D}\to\partial^0\mathfrak{\cD}$ conjugates $\eta\vert_{\partial^0\mathfrak{D}}$ to $S\vert_{\partial^0\cD}$. Thus, if $S$ has at least three fixed points on $\partial^0\cD$, then $\eta$ has at least three fixed points on $\partial^0\mathfrak{D}$. The second statement now follows from this observation and the first part.
\end{proof}

\subsubsection{Constructing hyperelliptic blender surfaces}\label{construct_blender_subsec}
We now present an explicit construction of conformal matings $S:\overline{\cD}\to\widehat{\C}$ whose associated blender surfaces are connected, and hence hyperelliptic.

\begin{prop}\label{mating_surf_exists_prop}
    Let $\{\Gamma_i\}_{i=1}^l$ be Fuchsian groups (introduced in Section~\ref{mateable_maps_subsec}), with $A_{\Gamma_i}$ the associated (factor) Bowen-Series map for $\Gamma_i$, $i\in\{1,\cdots, l\}$. Let $\{B_j\}_{j=1}^r$ be hyperbolic Blaschke products. Then there exists a critically fixed polynomial $P$ with the following properties.
    {\upshape
    \begin{enumerate}[leftmargin=8mm]
        \item $P$, $\{A_{\Gamma_i}\}_{i=1}^l$, and $\{B_j\}_{j=1}^r$ can be conformally mated.
        \item Let $S$ be the conformal mating of $P$, $\{A_{\Gamma_i}\}_{i=1}^l$, and $\{B_j\}_{j=1}^r$.
        Then the corresponding blender surface $\Sigma$ is connected.
    \end{enumerate}
    }
\end{prop}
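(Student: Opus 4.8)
The plan is to produce $P$ by a single application of Lemma~\ref{combi_lem} and then extract both assertions from the structure of its superattracting basins. First I would record the degrees that must be matched: by the discussion in Section~\ref{mateable_maps_subsec} the circle map $A_{\Gamma_i}\vert_{\mathbb{S}^1}$ has degree $n_ip_i-1$, while each $B_j$ has some degree $d_j\geq 2$. Set
$$
m_i := n_ip_i-2 \quad (1\leq i\leq l), \qquad m_{l+j}:=d_j-1 \quad (1\leq j\leq r),
$$
which are positive integers since the orbifolds in $\mathcal{F}$ force $n_ip_i-1\geq 2$. Applying Lemma~\ref{combi_lem} to the data $m_1,\dots,m_{l+r}$ yields a critically fixed polynomial $P$ with distinct critical points $c_1,\dots,c_{l+r}$ of multiplicities $m_1,\dots,m_{l+r}$ (together with the fully ramified critical point at $\infty$). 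Each $c_i$ is a superattracting fixed point, so no critical point escapes; hence $P$ is postcritically finite, in particular subhyperbolic, with connected and locally connected Julia set. Let $U_i$ be the immediate basin of $c_i$; since all critical points are fixed, $U_i$ contains only the critical point $c_i$, so $P\vert_{U_i}$ is proper of degree $m_i+1$, giving $\deg(P\vert_{U_i})=n_ip_i-1$ for $i\leq l$ and $\deg(P\vert_{U_{l+j}})=d_j$. These are exactly the degree hypotheses of Theorem~\ref{conf_mating_thm}, which therefore furnishes the conformal mating $S$ of $P$, $\{A_{\Gamma_i}\}_{i=1}^l$ and $\{B_j\}_{j=1}^r$; this proves (1).

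For (2) I would first describe $\cD$ explicitly. By construction the mating replaces $P\vert_{U_i}$ by a conformal copy of $A_{\Gamma_i}$ on $U_i$ for $i\leq l$, by $B_j$ on $U_{l+j}$ for $j\leq r$, and leaves $P$ untouched elsewhere. Since each $A_{\Gamma_i}$ is defined only on $\overline{\cU}_{\Gamma_i}$ and not on (the interior of) the fundamental domain $\Pi_i$, exactly one hole is excised from each group component $U_i$, namely the transplanted closed fundamental domain $\overline{V_i}$; whereas $B_j$ is defined on all of $\overline{\D}$ and the basin of infinity carries the full polynomial map, so neither the Blaschke components $U_{l+j}$ nor the unbounded component contribute any hole. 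Thus, writing $T=\widehat{\C}\setminus\cD$, we have $T=\bigcup_{i=1}^{l}\overline{V_i}$, a union of exactly $l$ sets. Because $J(P)$ is locally connected, each $U_i$ is a Jordan domain and each $\overline{V_i}$ is a closed Jordan region lying in $U_i$ and meeting $\partial U_i$ only at its finitely many ideal vertices; in particular the $\overline{V_i}$ are pairwise disjoint, each contained in a distinct bounded Fatou component.

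It then remains to conclude that $\cD$, and hence $\Sigma$, is connected. The complement in $\widehat{\C}$ of a finite disjoint family of closed Jordan regions is connected: concretely, the basin of infinity is a connected subset of $\cD$ whose closure meets the boundary of every $U_i$, and every ``pocket'' of $U_i\setminus\overline{V_i}$ abuts the set $\partial U_i$ minus its ideal vertices, which lies in $\cD$, so all pockets and all components are joined through the basin of infinity. Hence $\cD$ is an inversive domain (i.e.\ $k=1$), so its welding graph $\cG$ consists of the two vertices $v_1^{\pm}$ joined by the edge arising from the involution $S$ on $\partial^0\cD$, and is therefore connected; by Lemma~\ref{graph_conn_lem} (equivalently, the remark following Theorem~\ref{zipped_surface_thm}) the surface $\Sigma$ is connected. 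The step requiring most care is this topological analysis of $\cD$: one must verify that the surgery excises precisely one Jordan hole per group component and none elsewhere, and that these holes are mutually disjoint and sit inside distinct Jordan Fatou components. This is exactly where the choice of a critically fixed polynomial (rather than a general subhyperbolic rational map) is decisive, since it simultaneously supplies the matching basin degrees via Lemma~\ref{combi_lem}, the Jordan boundary regularity coming from local connectivity of $J(P)$, and the single connected escaping region that glues the remaining pieces together.
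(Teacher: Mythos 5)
Part (1) of your argument is the same as the paper's: apply Lemma~\ref{combi_lem} to the multiplicities $n_ip_i-2$ and $d_j-1$, observe that each resulting bounded immediate basin contains exactly one (fixed) critical point so the restricted degrees match, and invoke Theorem~\ref{conf_mating_thm}. For part (2), however, your assertion that the closed holes $\overline{V_i}$ are \emph{pairwise disjoint} does not follow from what precedes it and is false in general. Each $\overline{V_i}$ meets $\partial U_i$ only in the finite set of its ideal vertices, but the closures of two distinct bounded Fatou components of a critically fixed polynomial can touch at a point of the Julia set, and the ideal vertices of $V_{i_1}$ and $V_{i_2}$ can coincide there: the normalization of Section~\ref{standard_matings_subsec} places an ideal vertex of $V_i$ at a boundary fixed point $\xi_i$ of $P$, and the paper explicitly notes that the $\xi_i$ need not be distinct. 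This is precisely the subtlety the paper's proof is built around: it does not claim disjointness, but instead rules out a \emph{cyclic} chain of holes touching at points of $X$, since such a cycle of Jordan Fatou components would enclose a complementary region and contradict the fullness of $H(K(P))$. Note that two closed Jordan regions touching at two points already have disconnected complement, so once touching is possible you cannot appeal to ``the complement of a finite disjoint family of closed Jordan regions is connected.''

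Fortunately, the concrete argument you append in the same sentence does not use disjointness and is sound: the basin of infinity $A_\infty$ together with $J(P)\setminus X$ is a connected subset of $\cD$ (it is squeezed between the connected set $A_\infty$ and its closure), every pocket of $U_i\setminus\overline{V_i}$ carries a nondegenerate boundary arc on $\partial U_i$ whose complement of the finite set $X$ lies in $\cD$, and these arcs attach every pocket (and every non-immediate-basin Fatou component) to $A_\infty$ inside $\cD$. So your proof is repairable by deleting the disjointness claim and resting entirely on this basin-of-infinity argument, which is a genuinely different, more direct route than the paper's contradiction via touching chains; both routes ultimately exploit the same design decision, namely that Lemma~\ref{combi_lem} confines all the surgery to bounded Fatou components and leaves the connected basin of infinity intact as a corridor. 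As written, though, the stated justification rests on a false premise, and the one general topological fact you invoke is exactly the one that fails in the configuration the paper has to worry about.
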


\begin{proof}
    1) Applying Lemma~\ref{combi_lem} to the set of positive integers 
    $$
   \displaystyle \{\text{deg}(A_{\Gamma_i})\vert_{\mathbb{S}^1}-1\}_{i=1}^l \bigcup \{\text{deg}(B_j)-1\}_{j=1}^r,
    $$ 
    we get a critically fixed polynomial $P$, whose $l+r$ finite critical points have the given multiplicities. By Theorem~\ref{conf_mating_thm}, $P$, $\{A_{\Gamma_i}\}_{i=1}^l$, and $\{B_j\}_{j=1}^r$ can be conformally mated. 

    2) Let $S: \overline{\mathcal{D}} \rightarrow \widehat{\mathbb{C}}$ be the associated weak $B$-involution, with $X$ the set of singular points on $\partial\cD$, and let $\widehat{\C}\setminus\overline{\mathcal{D}} = \bigcup_{i=1}^lV_i$.  We shall show that the corresponding blender surface $\Sigma$ is connected by showing that $\mathcal{D}$ is connected.
    Recall the David homeomorphism $H:\widehat{\C}\to\widehat{\C}$ from Theorem~\ref{conf_mating_thm}. By construction of $S$, each $V_i$ is contained in a Jordan domain $\mathfrak{U}_i:=H(U_i)$, where $U_i$ is a bounded Fatou component of $P$. Further, $\mathfrak{U}_i\cap\mathfrak{U}_j=\emptyset$ for $i\neq j$. It also follows from the construction of $S$ that each $V_i$ is a Jordan domain and $\partial V_i\cap\partial \mathfrak{U}_i$ is a subset of $X$. 
    By way of contradiction, suppose that $\mathcal{D}$ is disconnected. Then, there exist $k>1$ and a chain $V_{i_0}, V_{i_1}, \cdots, V_{i_{k-1}}$, such that $\partial V_{i_j}$ meets $\partial V_{i_{j+1}}$ at a point in $X$ for all $j\in\{0,\cdots,k-1\}$ (where $j+1$ is taken mod $k$). This implies that $\partial\mathfrak{U}_{i_j}$ touches $\partial\mathfrak{U}_{i_{j+1}}$ at a point in $X$ for all $j\in\{0,\cdots,k-1\}$ (where $j+1$ is taken mod $k$). But this contradicts the fact that the filled Julia set $K(P)$ of the polynomial $P$ is a full compact set and hence so is $H(K(P))$. Therefore, $\cD$ is connected.    
\end{proof}

\begin{cor}\label{orbifold_genus_cor}
   Given $\{\Gamma_i\}_{i = 1}^l$ and $\{B_j\}_{j=1}^r$, consider the weak B-involution $S: \overline{\mathcal{D}}\rightarrow \widehat{\mathbb{C}}$ and the blender surface $\Sigma$ constructed in Proposition~\ref{mating_surf_exists_prop}. Let $b$ be the number of orbifold points of order $2$ in $\displaystyle\bigsqcup_{i = 1}^l \mathbb{D}/\widehat{\Gamma}_i$, and $g$ the genus of the surface $\Sigma$. If $b\geq 3$, then $g\geq 1$.
\end{cor}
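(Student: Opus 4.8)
The plan is to reduce the statement to the fixed-point count already packaged in Proposition~\ref{genus_prop}, and then to exhibit, for each order $2$ orbifold point, a distinct fixed point of $S$ on the non-singular boundary $\partial^0\cD$.

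First I would invoke Proposition~\ref{mating_surf_exists_prop}, which guarantees that the domain $\cD$ of the conformal mating $S$ is connected; hence the welding graph $\cG$ is connected (by the remark following Theorem~\ref{zipped_surface_thm}), so that Proposition~\ref{genus_prop} applies. According to that proposition it suffices to show that $S$ possesses at least three fixed points on $\partial^0\cD$. I would therefore aim to prove the sharper statement that the number of fixed points of $S$ on $\partial^0\cD$ is at least $b$; combined with the hypothesis $b\geq 3$, this yields $g\geq 1$.

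The core of the argument is a correspondence between the order $2$ orbifold points of $\bigsqcup_{i=1}^l \mathbb{D}/\widehat{\Gamma}_i$ and fixed points of $S$. Each order $2$ orbifold point of $\mathbb{D}/\widehat{\Gamma}_i$ arises from an order $2$ elliptic side-pairing element $g$ of the group (in the notation of Section~\ref{mateable_maps_subsec}, one of the self-paired generators $g_{(p+1)/2}$ in Case~I, or $\widetilde{g}_1,\widetilde{g}_{(p+2)/2}$ in Case~II). Such a $g$ fixes a geodesic side $C$ of the fundamental polygon setwise, and its elliptic fixed point $x_g$ — the intersection of $C$ with the relevant line of symmetry — lies in the interior of $\mathbb{D}$ on $C$, hence on the non-singular inner boundary $\partial\cU_{\Gamma_i}\setminus\mathbb{S}^1$. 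Since $A_{\Gamma_i}$ acts as $g$ on the pocket bounded by $C$ and restricts there to an orientation-reversing analytic involution (Remark~\hyperref[BSmap_domain_rem]{\ref*{BSmap_domain_rem}~{\upshape(ii)}}), the point $x_g$ is a fixed point of $A_{\Gamma_i}$ on its inner boundary. For indices with $n_i\geq 3$ I would descend this picture through the branched cover $z\mapsto z^{n_i}$, noting that $x_g$ is distinct from the order $n_i$ cone point at the center, so that its image is a genuine fixed point of the factor map $A_{\Gamma_i}^{\text{fBS}}$ on the inner boundary. I would then transport these to $S$: recalling from Theorem~\ref{conf_mating_thm} that $\mathfrak{X}_i=H\circ\zeta_i^{-1}$ conformally conjugates $A_{\Gamma_i}$ to $S$, carrying the inner boundary $\partial\cU_{\Gamma_i}\setminus\mathbb{S}^1$ onto the boundary $\partial V_i$ of the hole $V_i=\mathfrak{X}_i(\mathrm{int}(\Pi_i))\subset T$, each $x_g$ becomes a fixed point of $S$ on $\partial V_i\subset\partial^0\cD$. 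Because the holes $\{V_i\}_{i=1}^l$ are pairwise disjoint, and within a single group the (at most two) order $2$ orbifold points have distinct elliptic centers, the $b$ fixed points produced are pairwise distinct; thus $S$ has at least $b\geq 3$ fixed points on $\partial^0\cD$, and Proposition~\ref{genus_prop} delivers $g\geq 1$.

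I expect the main obstacle to be the bookkeeping in the correspondence above: precisely matching order $2$ orbifold points of the quotient orbifold with self-paired geodesics of the chosen fundamental domain, and verifying — especially in the factor case, where one works modulo $z\mapsto z^{n_i}$ — that the elliptic fixed points descend to genuine, pairwise distinct, non-singular fixed points of $S$ on $\partial^0\cD$, rather than being absorbed into the ideal vertices $X$ or collapsing onto the higher-order cone point. Once this correspondence is set up cleanly, the genus conclusion is immediate from Proposition~\ref{genus_prop}.
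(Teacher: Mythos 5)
Your proposal is correct and follows essentially the same route as the paper: the paper's proof simply cites Proposition~\ref{genus_prop} and asserts that every order $2$ orbifold point of $\mathbb{D}/\widehat{\Gamma}_i$ corresponds to a unique fixed point of $S$ on $\partial^0\cD$, which is exactly the correspondence you construct (your version just spells out the elliptic side-pairing elements, the descent under $z\mapsto z^{n_i}$, and the distinctness bookkeeping that the paper leaves implicit).
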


\begin{proof}
    By Proposition~\ref{genus_prop}, we see that if $S$ has at least three fixed points on $\partial^0\mathcal{D}$, then $g\geq 1$. Every orbifold point of order $2$ in $\mathbb{D}/\widehat{\Gamma}_i$, $i\in\{1,\cdots,l\}$, corresponds to a unique fixed point of $S$ on $\partial^0\mathcal{D}$. The corollary now follows from the assumption that~$b\geq 3$.  
\end{proof}

\section{A gallery of examples}\label{examples_sec}

In this section, we present a host of examples that guided our investigation leading to Theorem~\ref{mating_thm_intro}. We describe the conformal matings for several explicit examples of the rational map $R$, Fuchsian groups $\{\widehat{\Gamma}_i\}_{i=1}^l$, and hyperbolic Blaschke products $\{B_j\}_{j=1}^r$. More precisely, we invoke Theorem~\ref{conf_mating_thm} to get weak B-involutions $S:\overline{\cD}\rightarrow\widehat{\mathbb{C}}$, whose dynamics is equivalent to the dynamics of the constituents $R$, $\{A_{\Gamma_i}\}_{i=1}^l$, and $\{B_j\}_{j=1}^r$ on appropriate domains. We shall also describe the topology of the associated blender surface $\Sigma$ (arising from results in Section~\ref{b_involutions_sec}).  

Recall that in Section~\ref{mateable_maps_subsec}, for $p$ even, we used $\Gamma_{n, p}$ to denote two different groups corresponding to Cases~I and~II. Since the specific nature of the groups will play a role in this section, we reserve the notation $\Gamma_{n,p}$ for groups constructed as in Case~I, and the notation $\Gamma_{n,p}^2$ for groups constructed as in Case~II.

    \begin{figure}[h!]
    \captionsetup{width=0.98\linewidth}
    \begin{tikzpicture}
    \node[anchor=south west,inner sep=0] at (0,0) {\includegraphics[width=0.4\textwidth]{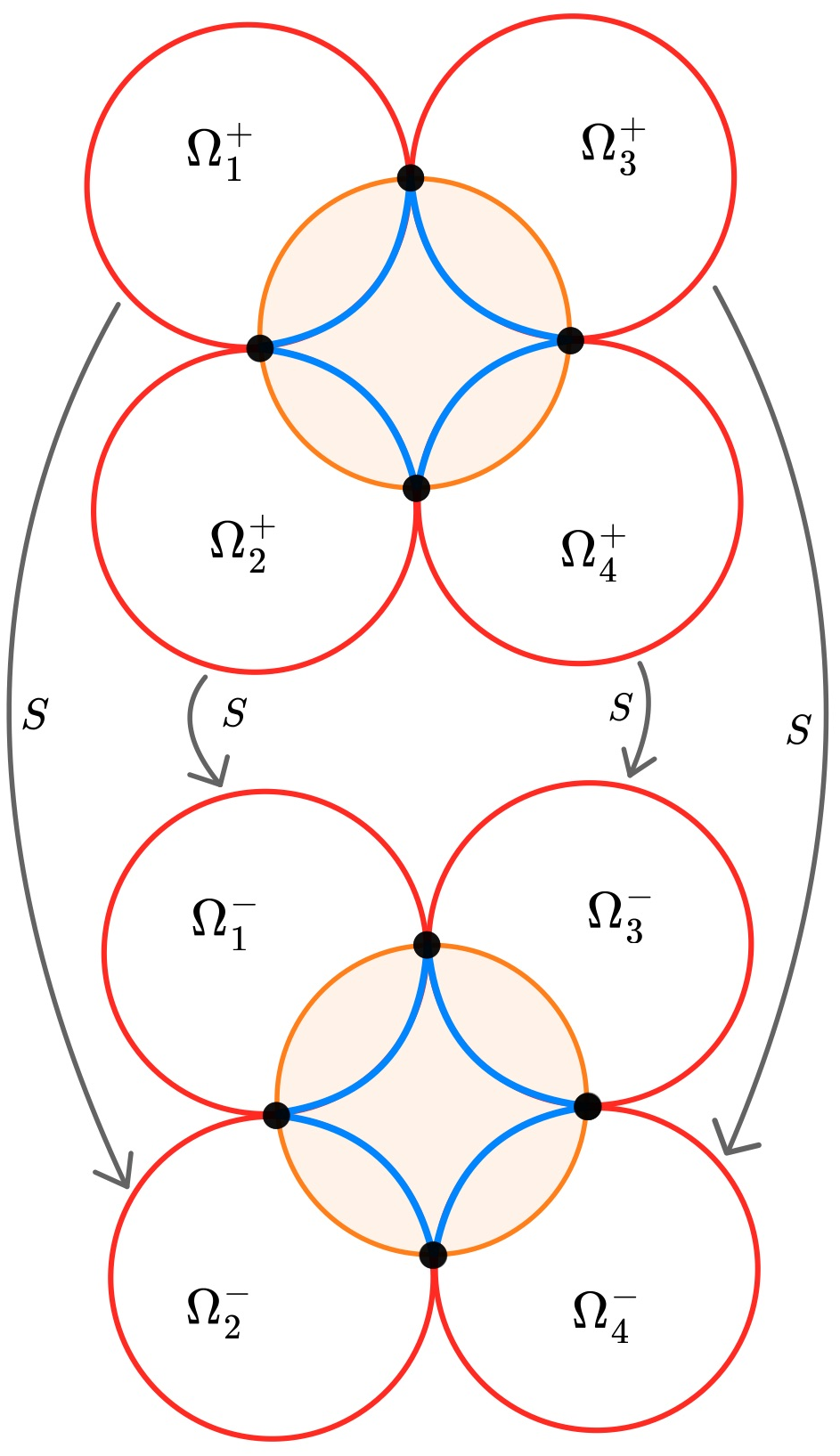}}; 
    \node[anchor=south west,inner sep=0] at (6.6,1.2) {\includegraphics[width=0.32\textwidth]{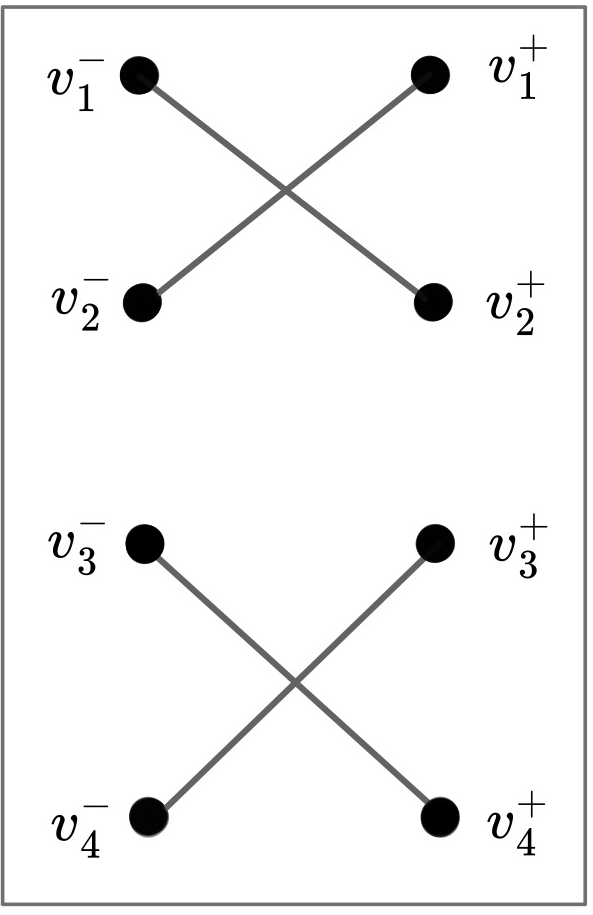}};
    \end{tikzpicture}
    \caption{Left: The boundary identifications that set up the welding construction of the blender surface of Example~\ref{000_ex} is shown here. The arrows indicate how the map $S$ take the boundary components of the top copy of the domain $\cD = \bigcup_{i=1}^4\Omega_i$ to its bottom copy. Right: The welding graph associated with the conformal mating $S$ is shown here. The four connected components of the welding graph would correspond to the four components of the blender surface $\Sigma$. Further, each connected component of $\Sigma$ is obtained by capping a disk off by a disk. So, $\Sigma$ is a disjoint union of four Riemann spheres.}
    \label{000_fig}
    \end{figure}
\begin{example}[\textbf{Disconnected blender and zipped surfaces}]\label{000_ex}
    The map $R(z) = z^3$ has two completely invariant Fatou components. Let $S:\overline{\cD}\rightarrow\widehat{\mathbb{C}}$ be the conformal mating of $R$, and $\{A_{\Gamma_i}^{\text{BS}}\}_{i=1}^{2}$, where $\Gamma_1 = \Gamma_2 = \Gamma_{1, 4}$.
    The domain $\overline{\cD}$ is a union of four closed topological disks, and the corresponding welding graph has four connected components as shown in Figure~\ref{000_fig}. It is, in fact, not hard to see that $S$ is a piecewise M{\"o}bius map. The blender surface $\Sigma$ is a disjoint union of four spheres, and since the action of the group $\langle\eta\rangle$ partitions $\Sigma$ into two orbits; the zipped surface $\widecheck{\Sigma} \cong \Sigma/\langle\eta\rangle$ is a disjoint union of two spheres.
\end{example}

\begin{example}[\textbf{Disconnected blender and connected zipped surfaces}]\label{001_ex}
    Let $R$ be the critically fixed polynomial given by $R(z) =  z^3 + \frac{3z}{2}$. Borrowing notation from Section~\ref{crit_fixed_rat_subsec}, note that, $\text{Fix}(R) \supsetneq \text{Crit}(R) = \{\frac{i}{\sqrt{2}}, -\frac{i}{\sqrt{2}}, \infty\}$. The point $\infty$ is a critical point with multiplicity $2$, while the other two critical points have multiplicities $1$. 
\begin{figure}[h!]
    \captionsetup{width=0.98\linewidth}
    \begin{tikzpicture}
    \node[anchor=south west,inner sep=0] at (0,0) {\includegraphics[width=0.4\textwidth]{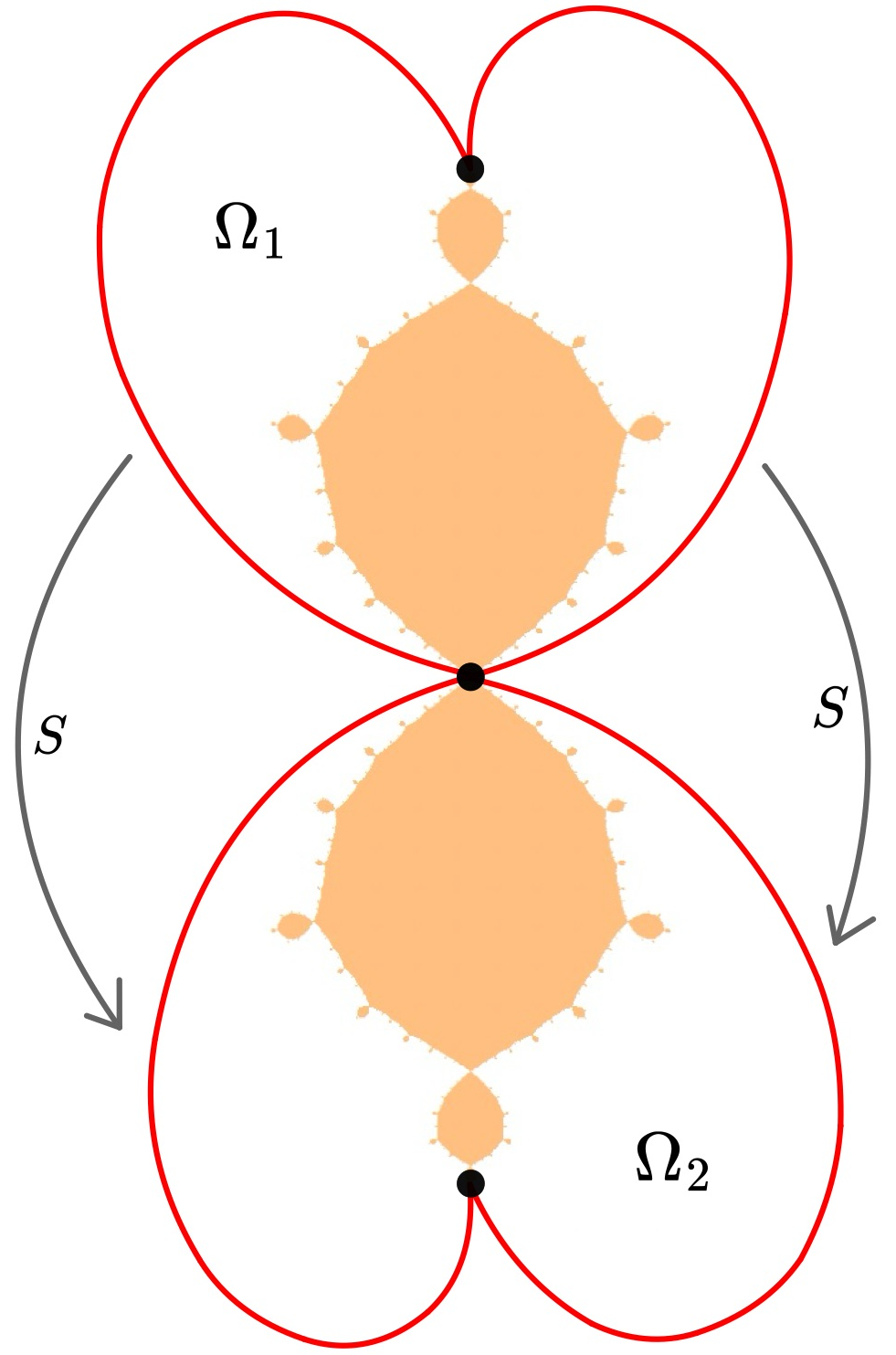}}; 
    \end{tikzpicture}
    \caption{Shown here is a cartoon of the dynamical plane of $S$ of Example~\ref{001_ex}. The map $S$ takes the boundary of the topological disk $\Omega_1$ to that of $\Omega_2$, and vice versa, as indicated.}
    \label{001_fig}
    \end{figure}    
    
     We replace the dynamics of $R$ on the basin of infinity with the Bowen-Series map $A_{\Gamma_{1, 4}}^{\text{BS}}$; and on the other two immediate basins corresponding to the fixed-points $\frac{i}{\sqrt{2}}$ and $-\frac{i}{\sqrt{2}}$; we replace the dynamics of $R$ by $\{B_j\}_{j=1}^{2}$, where $B_j$ is a quadratic Blaschke product with an attracting fixed point in $\D$, for $j\in \{1, 2\}$. Let $S$ denote a conformal mating that realizes this construction.  

    The domain of definition of the conformal mating $S$ is a disjoint union of two closed topological disks touching at a point (see Figure~\ref{001_fig}). The welding graph associated with $S$ would have two components, and the blender surface $\Sigma$ in this case would turn out to be a disjoint union of two Riemann spheres. The group $\langle\eta\rangle$ acts transitively on $\Sigma$; as a result, the zipped surface $\widecheck{\Sigma}$ is a single sphere.    
\end{example}

\begin{example}[\textbf{A connected blender surface from an inversive multi-domain}]\label{011_ex}
    Let $R(z) = z^3 +\frac{3z}{2}$ (as in Example~\ref{001_ex}). We now set $S$ to be a conformal mating of $R$, and $\{A_{\Gamma_i}\}_{i=1}^3$, where $\Gamma_1 = \Gamma_{1, 4}$, and $\Gamma_2 = \Gamma_3 = \Gamma_{3, 1}$. The map $S$ replaces the dynamics of $R$ on the basin of infinity with that of $A_{\Gamma_1}=A_{\Gamma_1}^{\mathrm{BS}}$, and on the other two immediate basins with that of $A_{\Gamma_2}=A_{\Gamma_2}^{\mathrm{fBS}}$ and $A_{\Gamma_3}=A_{\Gamma_3}^{\mathrm{BS}}$.
 \begin{figure}[h!]
    \captionsetup{width=0.98\linewidth}
    \begin{tikzpicture}
    \node[anchor=south west,inner sep=0] at (0,0) {\includegraphics[width=0.38\textwidth]{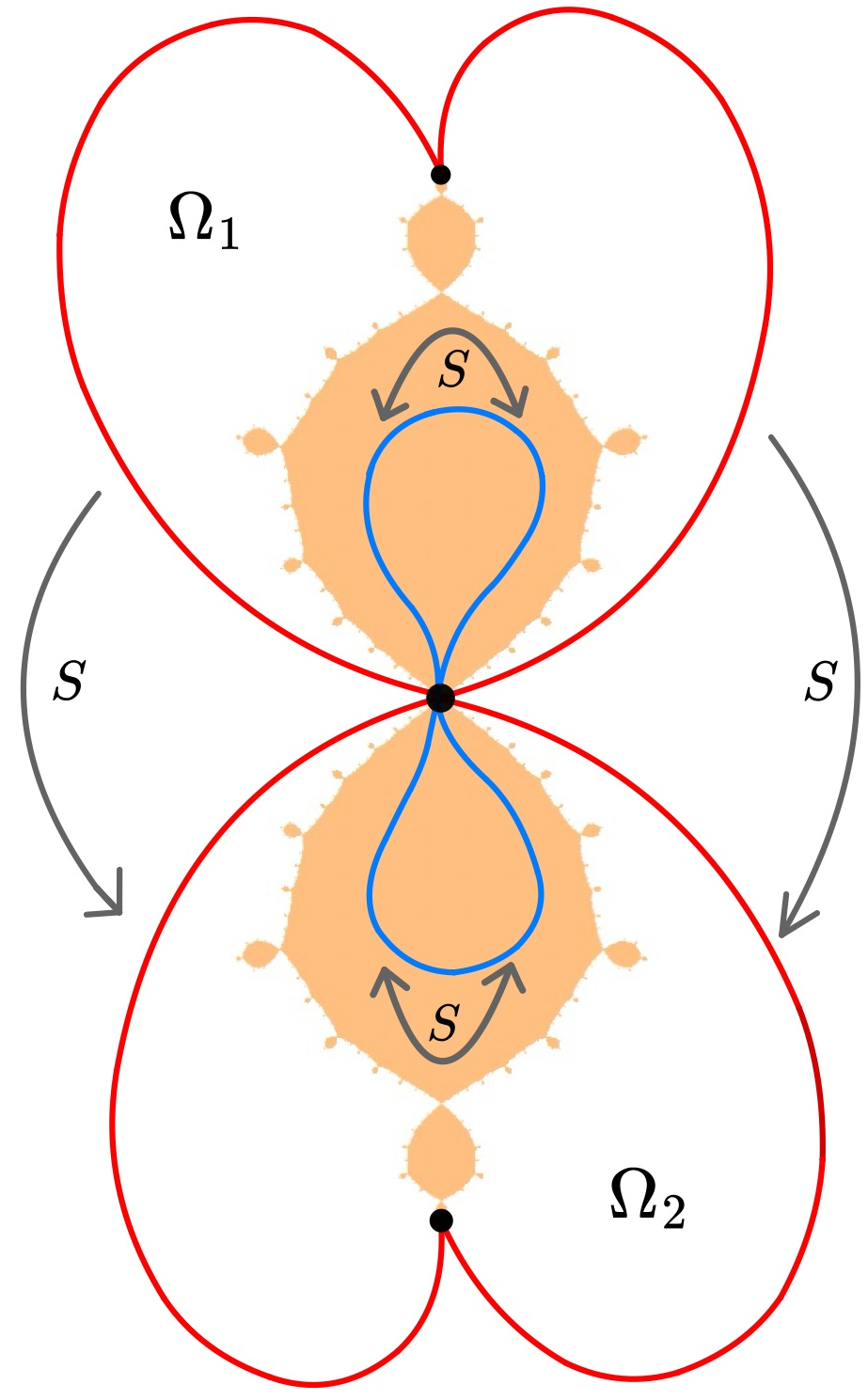}}; 
    \node[anchor=south west,inner sep=0] at (6,0) {\includegraphics[width=0.36\textwidth]{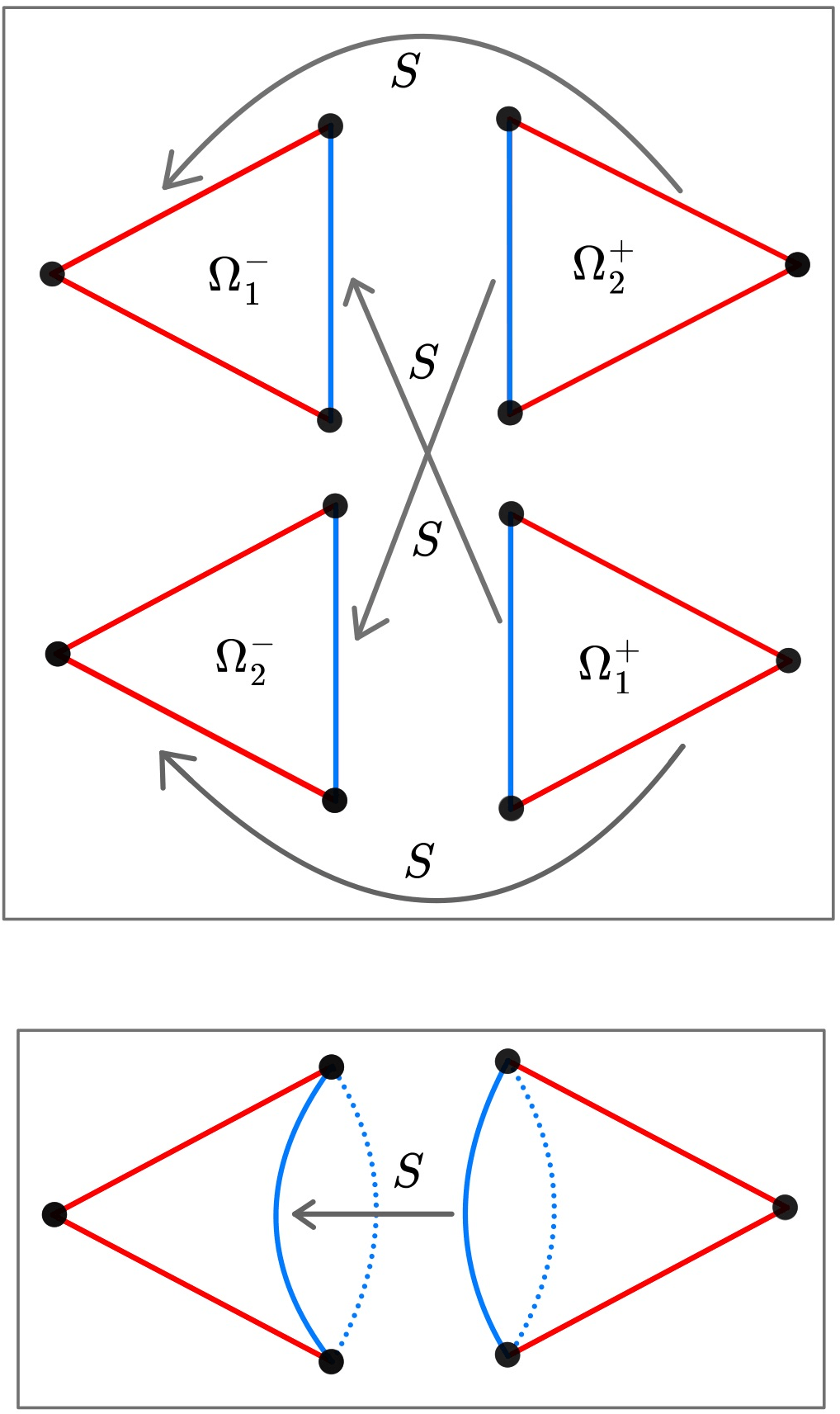}}; 
    \end{tikzpicture}
    \caption{Left: Here we see the dynamical plane of the conformal mating $S$ of Example~\ref{011_ex}, with arrows indicating the action of $S$ on $\partial\cD$. Top Right: The two triangles on the left each map to $\Omega_1$ and $\Omega_2$ under appropriate Riemann maps, and the two triangles on the right are their copies. The induced action of $S$ on the boundaries of the four triangles is indicated. Bottom Right: The blender surface $\Sigma$, topologically speaking, is obtained by gluing the two cones via an orientation-reversing identification of the blue curves, as~shown.}
    \label{011_fig}
    \end{figure}

    The set $\cD$ is a disjoint union of two open topological disks, call them $\Omega_1$ and $\Omega_2$. Each $\partial\Omega_i$ is a wedge of two circles, and $\partial\Omega_1\cap\partial\Omega_2$ is a single point (see Figure~\ref{011_fig}). The welding graph associated with $S$ is connected, and the blender surface $\Sigma$ is a single sphere; forcing $\widecheck{\Sigma}\cong \Sigma/\langle\eta\rangle$ to also be a sphere. By Proposition~\ref{genus_prop}, $\eta$ is a M\"obius map of order $2$ with two fixed points, therefore $\eta$ is equivalent to the map~$z\mapsto 1/z$. 
\end{example}

\begin{example}[\textbf{Elliptic blender surfaces}]\label{111_ex}
    Let $R(z) = z^3 + \frac{3z}{2}$ (as in Example~\ref{001_ex}). Let $A_{\Gamma}=A_{\Gamma}^{\mathrm{BS}}$ be the continuous Bowen-Series map associated with a representation $(\rho: \Gamma_{1, 4}^2\rightarrow\Gamma)\in \text{Teich}(\Gamma_{1,4}^2)$, and let $B$ be a quadratic Blaschke product with an attracting fixed point in $\mathbb{D}$. The conformal mating $S$ is such that it replaces the dynamics of $R$ on the basin of infinity with that of $A_{\Gamma}$, on the immediate basin of the fixed point $\frac{i}{\sqrt{2}}$ with that of $A_{\Gamma_{3,1}}^{\text{fBS}}$, and on the immediate basin of the fixed point $-\frac{i}{\sqrt{2}}$ with that of $B$.

    Note that the domain $\cD$ is an annulus bound by the two connected components of $\partial \cD$ (see Figure~\ref{111_fig}). The welding construction that gives us the blender surface $\Sigma$, topologically speaking, doubles the annulus to a torus. By Theorem~\ref{hyperelliptic_thm} the zipped surface $\widecheck{\Sigma}$ would be a single sphere. 
    \begin{figure}[h!]
    \captionsetup{width=0.98\linewidth}
    \begin{tikzpicture}
    \node[anchor=south west,inner sep=0] at (0,0) {\includegraphics[width=0.66\textwidth]{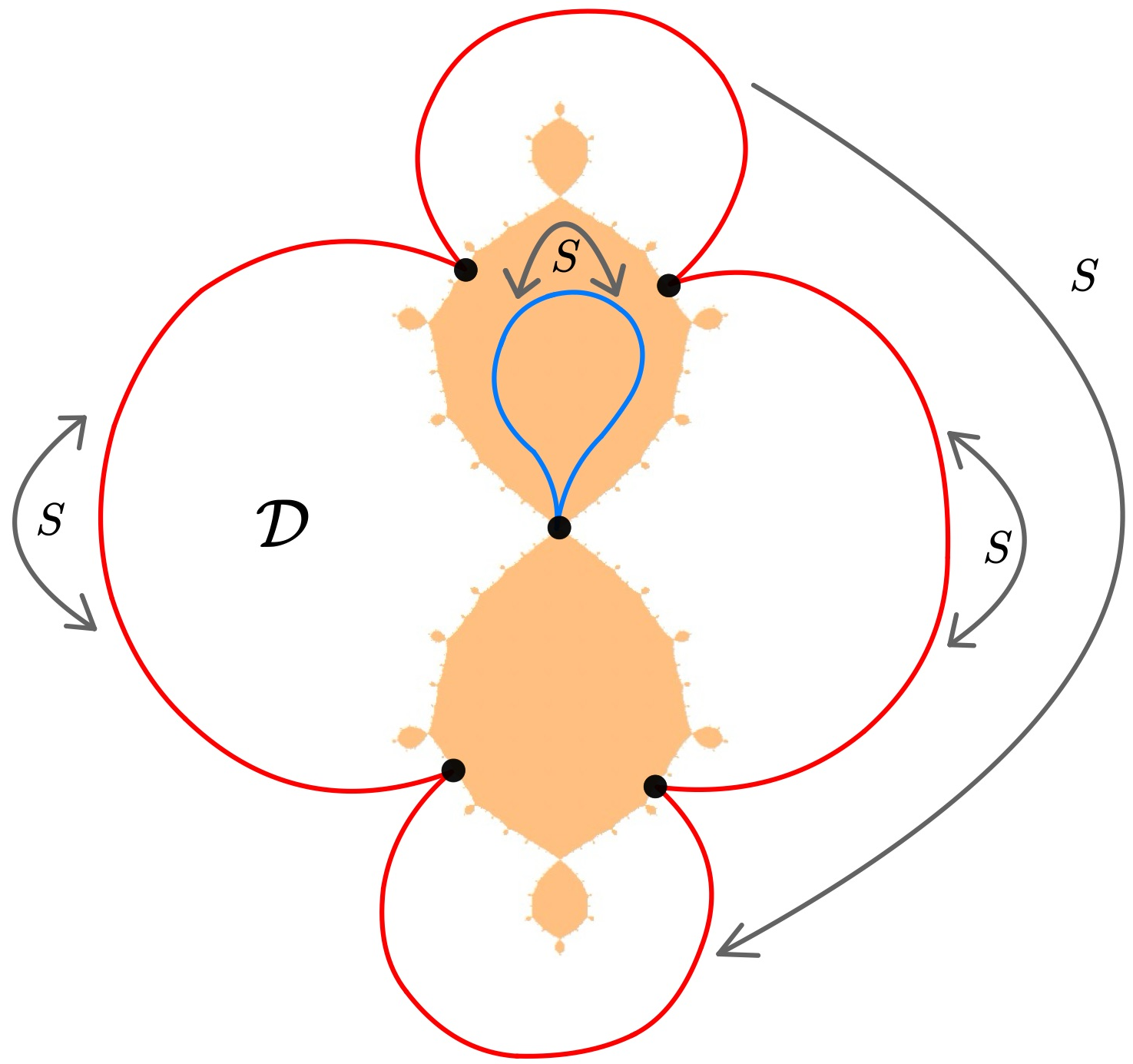}}; 
    \end{tikzpicture}
    \caption{The domain of definition $\overline{\cD}$ of the weak B-involution $S$ of Example~\ref{111_ex} is bound by the red quadrilateral and the blue Jordan curve. The map $S$ permutes the components of $\partial^0\cD$ as depicted in the figure.}
    \label{111_fig}
    \end{figure}
    The group $\Gamma_{1,4}^2$ uniformizes a genus $0$ surface with two punctures and two order $2$ orbifold points. Therefore, unlike all the previous examples, the space $\text{Teich}(\Gamma_{1,4}^2)$ is non-trivial, in fact a $1-$dimensional complex manifold. Let $\mathcal{B}_2$ parametrize the quadratic Blaschke products with an attracting fixed point in $\mathbb{D}$, so $B\in\mathcal{B}_2$. Now note that for every parameter $t\in \text{Teich}(\Gamma_{1,4}^2)\times\mathcal{B}_2$, this specific construction would give us an elliptic curve $\Sigma_t$ (i.e., the blender surface), along with a degree $4$ elliptic function $\cR_t:\Sigma_t\rightarrow\widehat{\mathbb{C}}$ (i.e., the uniformizing meromorphic map)  with $6$ simple critical points and a double critical~point.
\end{example}

\begin{example}[\textbf{Genus $2$ blender surfaces from triply connected inversive domains}]\label{111''_ex}
    Let $R(z) = z^4 + \frac{4z^3}{\sqrt[3]{9}}$, and let $A_{\Gamma}$ be as in Example~\ref{111_ex}. One can check that $R$ is a critically fixed polynomial with $\text{Fix}(R) \supsetneq \text{Crit}(R) = \{-\sqrt[3]{3}, 0, \infty\}$. The critical points at $\infty, 0,$ and $-\sqrt[3]{3}$ have multiplicity $3$, $2$, and $1$, respectively. We let the conformal mating $S$ be the one that replaces the dynamics of $R$ on the basin of infinity with that of $A_{\Gamma_{5, 1}}^{\text{fBS}}$ (such that the fixed point $1$ of $A_{\Gamma_{5, 1}}^{\text{fBS}}$ is identified with the unique non-separating fixed point of $R$ on $\mathcal{J}(R)$); on the basin of the fixed point $0$ with that of $A_{\Gamma}$, and on the basin of the fixed point $-\sqrt[3]{3}$ with that of $A_{\Gamma_{1,3}}^{\text{BS}}$. Further, we conjugate $S$ via an appropriate  M\"obius map to ensure that it has a unique pole.
    \begin{figure}[h!]
    \captionsetup{width=0.98\linewidth}
    \begin{tikzpicture}
    \node[anchor=south west,inner sep=0] at (0,0) {\includegraphics[width=0.72\textwidth]{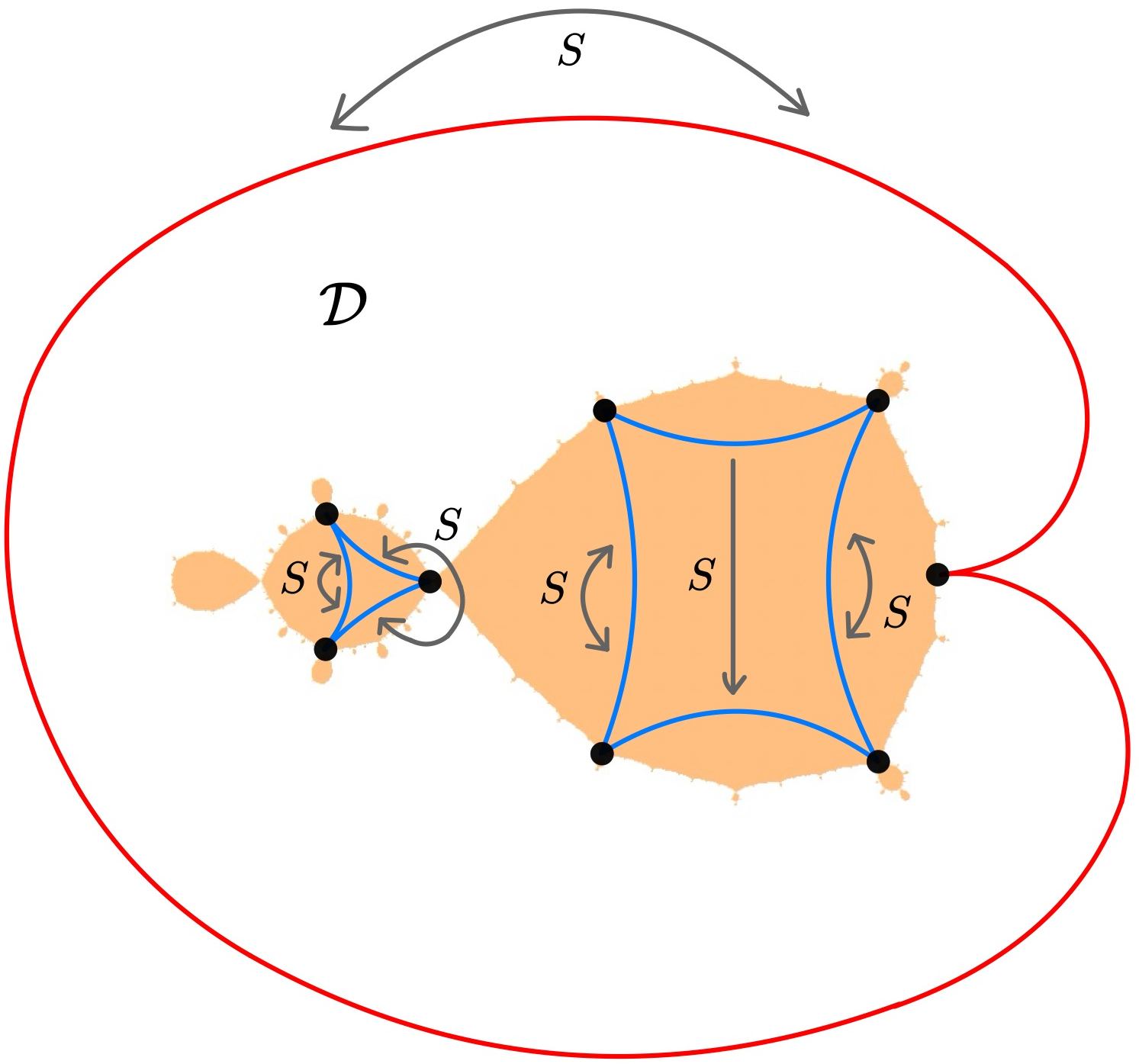}}; 
    \end{tikzpicture}
    \caption{The domain of definition $\overline{\cD}$ of the conformal mating $S$ of Example~\ref{111''_ex} is the region bound by the cardioid, the triangle, and the quadrilateral.}
    \label{111''_fig}
    \end{figure}  
    
    The domain $\overline{\cD}$ is topologically a pair of pants (see Figure~\ref{111''_fig}), and the blender surface is obtained by gluing $\overline{\cD}$ to itself. Therefore, $\Sigma$ is a genus $2$ hyperelliptic Riemann surface, and the uniformizing meromorphic map $\cR:\Sigma\rightarrow\widehat{\mathbb{C}}$ has degree $5$ with a unique pole. 
\end{example}

\begin{example}[\textbf{Rigid Newton examples}]\label{111'''_ex}
    Consider the polynomial $P(z) = z^n-1$, $n\geq 3$, and let $R(z):= z - \frac{P(z)}{P'(z)}$ be the Newton map for the polynomial $P(z)$. Note that, 
    $$
    R(z) = z - \frac{z^n - 1}{nz^{n-1}} = z - \frac{z}{n} + \frac{1}{nz^{n-1}}, \hspace{2mm}\text{and}\hspace{2mm}R'(z)= \Bigg(\frac{n-1}{n}\Bigg)\cdot\Bigg(1-\frac{1}{z^n}\Bigg).
    $$
    As $R$ maps the point $0$ to the point $\infty$ with local degree $n-1$, we conclude that $\text{Crit}(R) = \{0\}\cup\{e^{\frac{2\pi ik}{n}}\}_{k\in\{1,\cdots, n\}}$, where $0$ is a critical point of multiplicity $n-2$, while all the $n$-th roots of unity are simple critical points of $R$. Let $S$ be the conformal mating of $R$, and $n$ copies of $A_{\Gamma_{3, 1}}^{\text{fBS}}$, one for each immediate basin of attraction of~$R$.

    The domain $\cD$ is a topological disk whose boundary is a wedge of $n$ Jordan curves. Since $\cD$ is connected, the associated blender surface $\Sigma$ is also connected (see Figure~\ref{111'''_fig}). The involution $\eta$ has $2g + 2$ fixed points, where $g$ denotes the genus of $\Sigma$. 
       \begin{figure}[h!]
    \captionsetup{width=0.98\linewidth}
    \begin{tikzpicture}
    \node[anchor=south west,inner sep=0] at (0,0) {\includegraphics[width=0.48\textwidth]{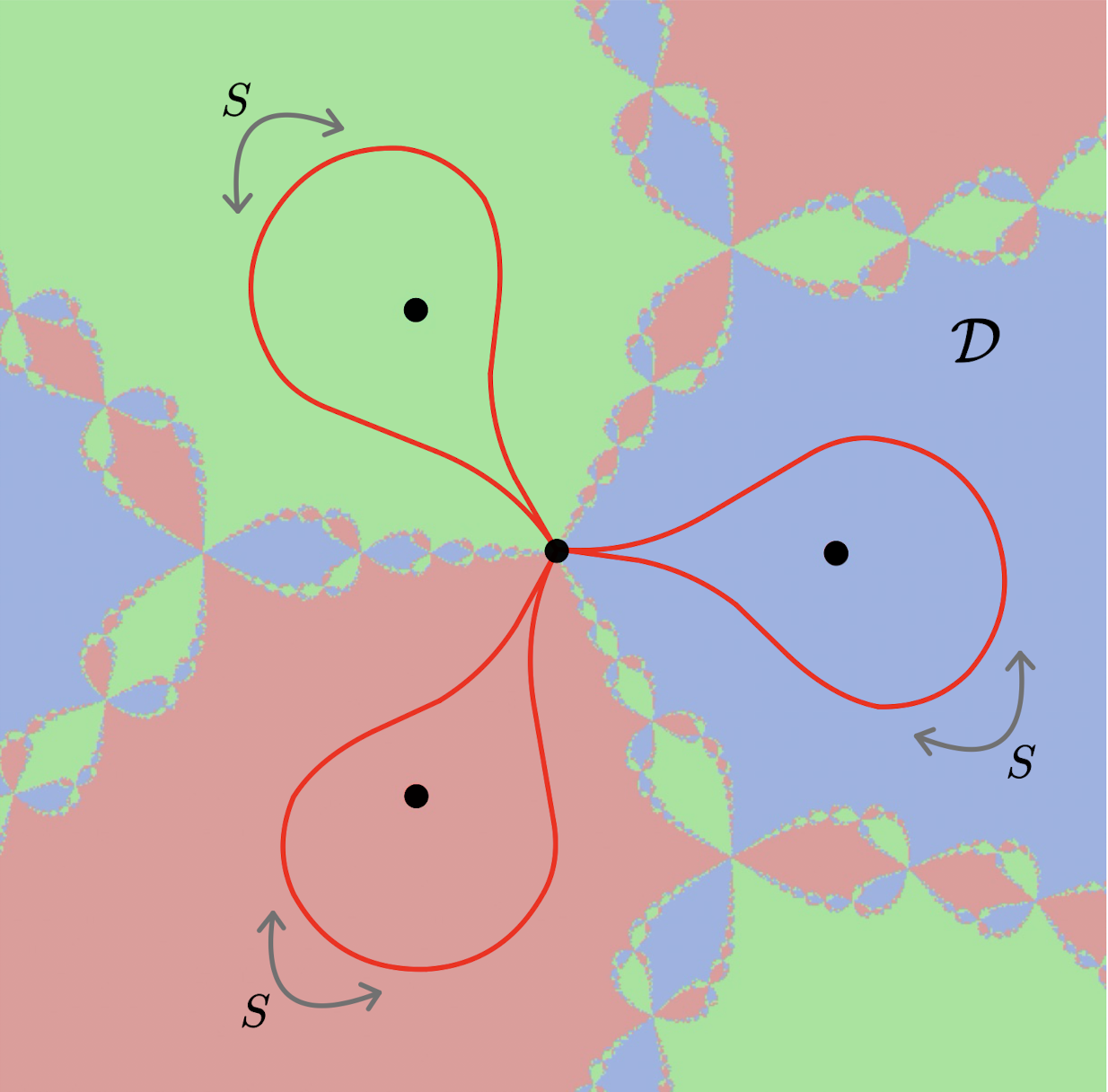}}; 
    \node[anchor=south west,inner sep=0] at (6.2,0) {\includegraphics[width=0.48\textwidth]{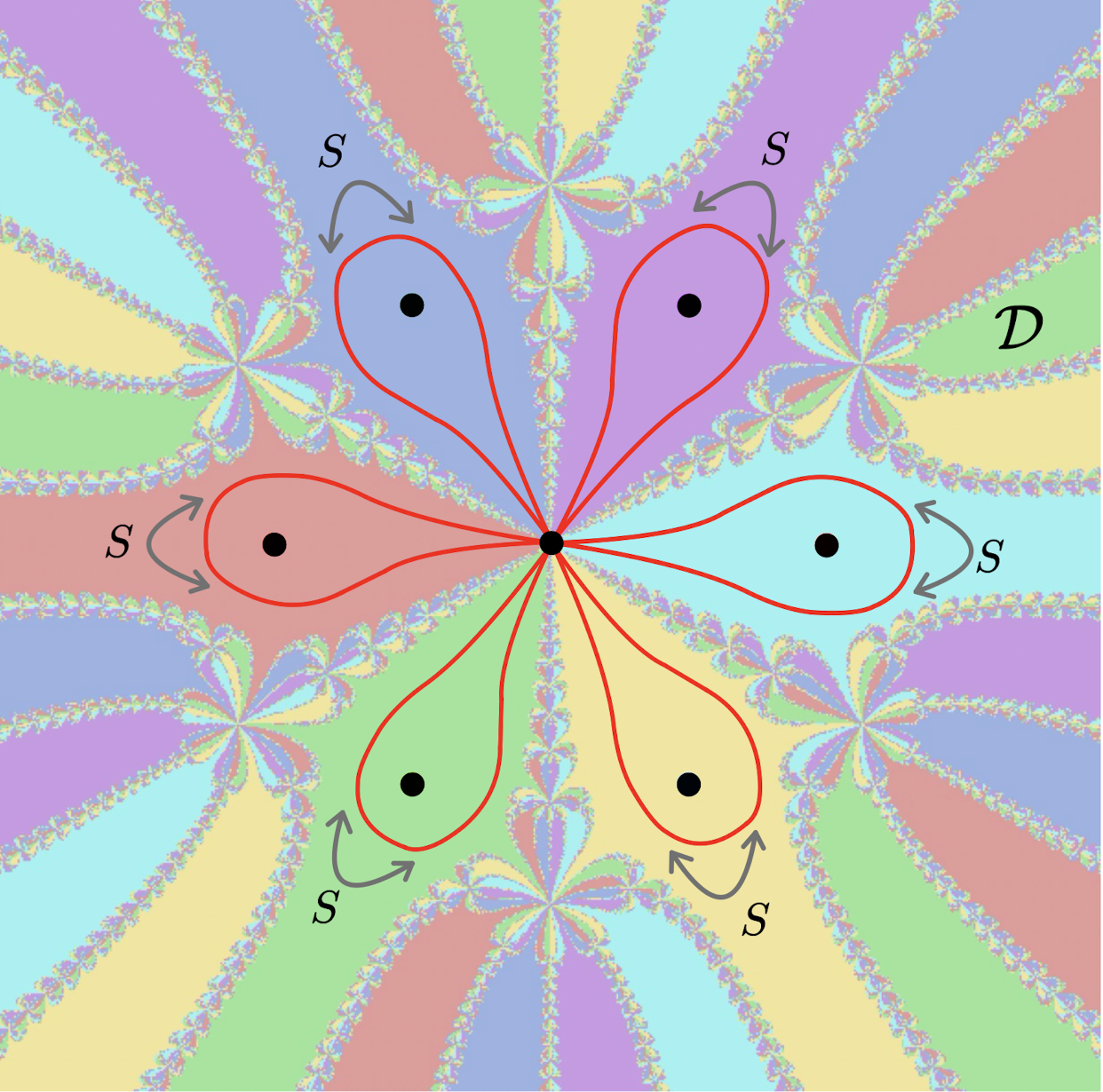}}; 
    \end{tikzpicture}
    \caption{Illustrated here are instances of the dynamical plane of $S$ from Example~\ref{111'''_ex}, for $n =3$ and $n=6$. Left: The three colored regions represent the immediate basins of the Newton map for the polynomial $z^3-1$, along with their pre-images. In each immediate basin, the original dynamics has been replaced by that of $A_{\Gamma_{3, 1}}^{\text{fBS}}$. Right: This image is based on the Newton map for the polynomial $z^6 - 1$, with modifications: in each of the six immediate basins, centered at the sixth roots of unity, the Newton dynamics has been surgically replaced by $A_{\Gamma_{3,1}}^{\text{fBS}}$. The conformal mating $S$ is defined in the exterior of the six red Jordan curves meeting at the central marked point; this corresponds to the point at infinity where all the immediate basins of the Newton map meet.}
    \label{111'''_fig}
    \end{figure}  
    
    We shall now attempt to arrive at a formula for $g$. Note that $A_{\Gamma_{3, 1}}^{\text{fBS}}$ has a fixed point on $l\cap C_{1,1}$ (see Section~\ref{fBS_subsec}). Under a conformal conjugation, this fixed point is carried to a fixed point of the map $S$. In addition, the map $R$ has a fixed point at $\infty$, and via a topological conjugation to the dynamical plane of $S$, the fixed point at $\infty$ maps to the intersection point of the $n$ Jordan curves forming $\partial\mathcal{D}$. We denote this point by $z_{\infty}$ (see the proof of Theorem~\ref{conf_mating_thm} for an explicit description of these conjugations). Thus, the map $S$ has a total of $n+1$ fixed points on $\partial\cD$: one arising from each copy of $A_{\Gamma_{3, 1}}^{\text{fBS}}$, and one corresponding to the point at $\infty$. 
    
    Note that the action of $S$ on $\partial^0\cD$ is topologically conjugate via the map $\cR$ to the involution $\eta$ on $\partial\mathfrak{D}\setminus\cR^{-1}(z_\infty)$ (see Equation~\ref{alg_char_eq}). Therefore, $\eta$ has exactly $n$ fixed points on $\partial\mathfrak{D}\setminus\cR^{-1}(z_\infty)$. Since all fixed points of $\eta$ lie on $\partial\mathfrak{D}$, it remains to study the action of $\eta$ on $\cR^{-1}(z_{\infty})\cap\partial\mathfrak{D}$. Observe that the quadratic map $\cQ:\Sigma\rightarrow\widecheck{\Sigma}$ maps $\cR^{-1}(z_{\infty})\cap\partial\mathfrak{D}$ to the point corresponding to $z_{\infty}$ in $\widecheck{\Sigma}$. Therefore, $\cR^{-1}(z_{\infty})\cap\partial\mathfrak{D}$ must either consist of a fixed point of $\eta$ or form a 2-cycle under $\eta$. Applying Proposition~\ref{genus_prop} and accounting for parity considerations, we arrive at the following conclusion:
    \begin{equation*}
        g = \begin{cases*}
            \frac{n}{2} - 1, & \text{if }$n$ is even,\\
            \frac{n - 1}{2}, & if $n$ is odd.
        \end{cases*}
    \end{equation*}
\end{example}

\begin{example}[\textbf{Genus two blender surfaces from doubly connected inversive domains}]\label{final_ex}
    Consider the critically fixed polynomial 
    $$
    R(z) = z^7 - \frac{7}{5}(\alpha^2 + \overline{\alpha}^2)z^5 + \frac{7}{3}|\alpha|^4z^3,
    $$ 
    where, $\alpha$ is a complex number in the first quadrant satisfying 
$$
15\alpha+6\alpha^7-14\alpha^5\overline{\alpha}^2=0.
$$    
    Since $R$ is symmetric about the real and imaginary axes; one can check that $\text{Fix}(R) \supsetneq \text{Crit}(R) = \{0, \alpha, \overline{\alpha}, -\alpha, -\overline{\alpha}, \infty\}$. The point $0$ is a critical point with multiplicity $2$, and all other finite critical points are simple. Let $B\in\mathcal{B}_2$ be as in Example~\ref{111_ex}. We let the conformal mating $S:\overline{\cD}\rightarrow\widehat{\C}$ be such that it replaces the dynamics of $R$ on the basin of the fixed point $0$ with that of $A_{\Gamma_{4,1}}^{\text{fBS}}$; on the basins of fixed points $\{\alpha, -\alpha, -\overline{\alpha}\}$ with that of $A_{\Gamma_{3,1}}^{\text{fBS}}$; and on the basin of the fixed point $\overline{\alpha}$ with that of $B$. 
    \begin{figure}[h!]
    \captionsetup{width=0.98\linewidth}
    \begin{tikzpicture}
    \node[anchor=south west,inner sep=0] at (0,0) {\includegraphics[width=0.8\textwidth]{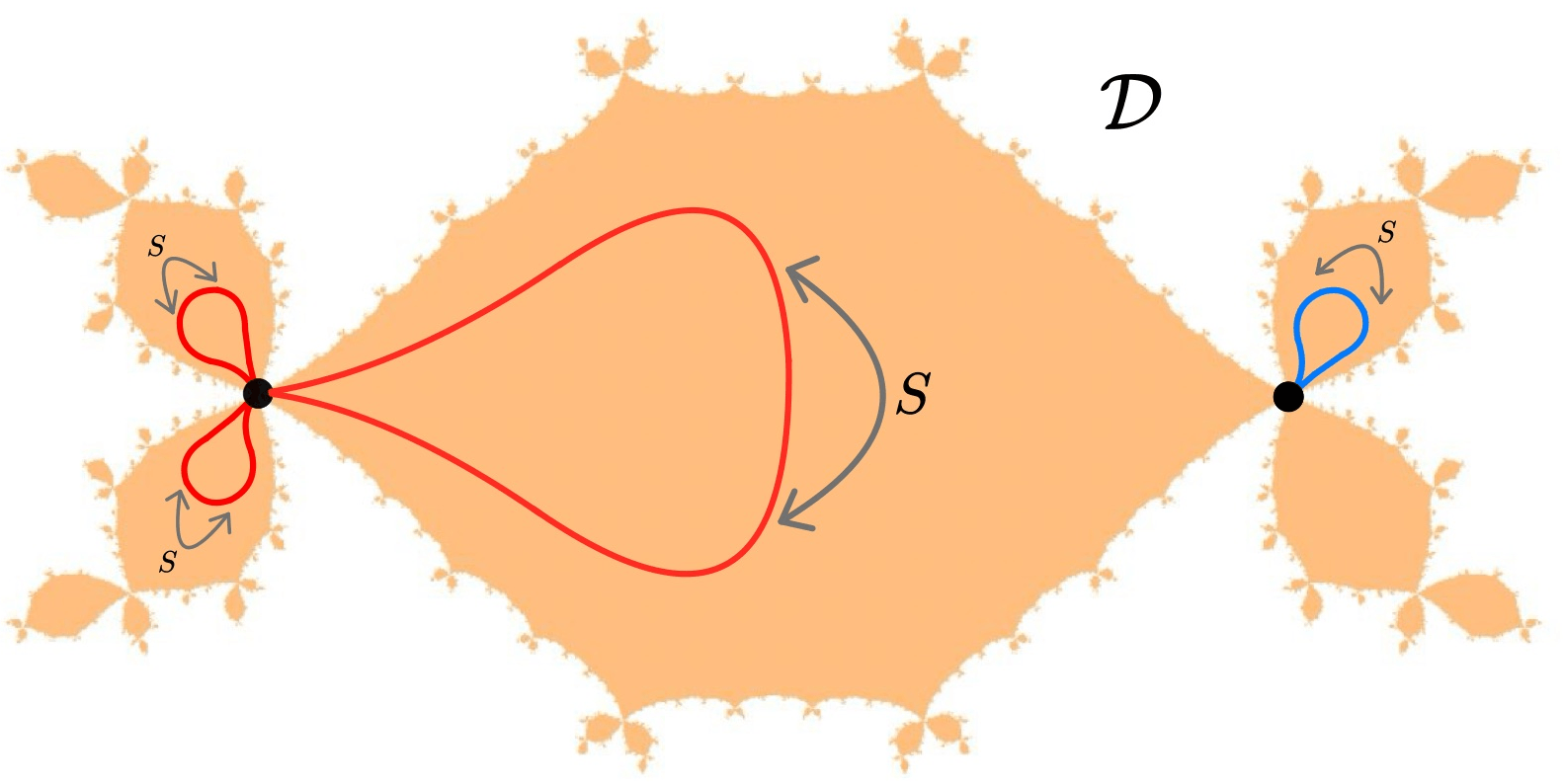}}; 
    \end{tikzpicture}
    \caption{Shown here is the dynamical plane of the conformal mating $S$ of Example~\ref{final_ex}. The two marked points correspond to the two repelling fixed points of the polynomial $R$.}
    \label{final_fig}
    \end{figure}  
    
    The domain $\cD$ is an annulus bound by the red and blue curves as shown in Figure~\ref{final_fig}. Let us denote the red curve by $\gamma_1$, and the blue curve by $\gamma_2$. Observe that welding two copies of $\cD$ along $\gamma_1$ via the map $S|_{\gamma_1}$ yields a genus $1$ surface with two boundary components; we call them $C_1$ and $C_2$ (cf. Figure~\ref{top_weld_surf_1_fig}). By construction, both $C_1$ and $C_2$ can be identified with $\gamma_2$, and the conformal involution $S|_{\gamma_2}$ induces an orientation-reversing analytic map $\widecheck{S}:C_1\rightarrow C_2$. Finally, gluing the two boundary components $C_1$ and $C_2$ via the map $\widecheck{S}$ would conclude the welding procedure by giving us a closed genus $2$ blender surface $\Sigma$. The uniformizing meromorphic map $\cR:\Sigma\rightarrow\widehat{\C}$ in this case has degree $8$ with a pole of order $7$ and a simple pole. Further, $\cR$ has a critical point of multiplicity $6$ (namely, the order $7$ pole),  a critical point of multiplicity $3$ (coming from $A_{\Gamma_{4,1}}^{\text{fBS}}$), three critical points of multiplicity $2$ each (coming from $A_{\Gamma_{3,1}}^{\text{fBS}}$), and three simple critical points (one from the Blaschke product $B$, and the other two from the singular points on $\partial\cD$). 
\end{example}

\begin{remark} Observe that if $\cD$ is connected, then both the surfaces $\Sigma$ and $\widecheck{\Sigma}$ are connected. On the other hand if $\cD$ is disconnected but $\Sigma$ is connected, then $\widecheck{\Sigma}\cong \Sigma/\langle\eta\rangle$ is also connected. Thus, with respect to connectedness of $\cD$, $\Sigma$, and $\widecheck{\Sigma}$, there are four distinct possibilities. We have demonstrated the occurrence of all four possibilities through Examples~\ref{000_ex} to \ref{111_ex}.
\end{remark}

\section{Correspondences on hyperelliptic surfaces}\label{correspondences_sec}

Let $S: \overline{\mathcal{D}} \rightarrow \widehat{\mathbb{C}}$ be the conformal mating of $R$, $\{A_{\Gamma_i}\}_{i = 1}^l$ and $\{B_j\}_{j = 1}^r$, as in Section~\ref{conf_mating_sec}.  The non-invertibility of the factor Bowen-Series maps $\{A_{\Gamma_i}\}_{i=1}^l$, that go into the construction of the conformal mating $S$, is an obstruction to the recovery of the full group structure of $\widehat{\Gamma}_i$ from the action of the map $S$ itself. To mitigate this problem, we will now use the the map $\cR$ to define an algebraic correspondence $\mathfrak{C}$ on the blender surface $\Sigma$ (where $\cR, \Sigma$ are associated with the weak B-involution $S$, see Theorem~\ref{alg_weak_b_inv_thm}). A careful analysis of the dynamics of $\mathfrak{C}$ facilitates the full reconstruction of the underlying Fuchsian groups $\{\widehat{\Gamma}_i\}_{i = 1}^l$, as well as the recovery of the maps $\{B_j\}_{j=1}^r$. This will demonstrate that the correspondence $\mathfrak{C}$ unites the actions of the groups $\widehat{\Gamma}_i$ and the Blaschke products $B_j$.

Let $\mathcal{R}$ and $\mathfrak{D}$ be as in Theorem~\ref{alg_weak_b_inv_thm}. Define a holomorphic correspondence $\mathfrak{C}\subset \Sigma \times \Sigma$ of bi-degree $d:d$ (cf. \cite{dinh2006distribution}) as
\begin{equation}
(z, w) \in \mathfrak{C} \iff \cR(w)=\cR(\eta(z)),\ w\neq \eta(z).
\label{corr_eq}
\end{equation}
Note that the equation $\cR(w)=\cR(\eta(z))$ defines a reducible holomorphic curve in $\Sigma\times\Sigma$, and $\{(z,w)\in\Sigma\times\Sigma: w=\eta(z)\}$ consists of a certain component thereof. The correspondence $\mathfrak{C}$ is defined by removing this component from the former curve.
We regard the correspondence $\mathfrak{C}$ as a multi-valued map: 
$$
z\mapsto w=\mathcal{R}^{-1}(\mathcal{R}(\eta(z))).
$$
Evidently, the local branches of the correspondences are defined by $\eta$ and the (local) deck transformations of $\cR$.

The explicit algebraic description of $S$ in Theorem~\ref{alg_weak_b_inv_thm} is captured in the following commutative diagram,

\begin{equation}
\begin{tikzcd}
	{\overline{\mathfrak{D}}} && {\overline{\mathcal{D}}} \\
	\\
	{\Sigma\backslash\mathfrak{D}} && {\widehat{\mathbb{C}}}
	\arrow["{\mathcal{R}}", from=1-1, to=1-3]
	\arrow["\eta",swap, from=1-1, to=3-1]
	\arrow["{\mathcal{R}}"', from=3-1, to=3-3]
	\arrow["S"',swap, from=1-3, to=3-3]
\end{tikzcd}
\label{alg_char_cd}
\end{equation}

\noindent As a consequence, we immediately get that for $z\in \overline{\mathfrak{D}}$,
\begin{equation}\label{forward_corr_eq}
 (z, w) \in \mathfrak{C} \iff \cR(w) = \cR(\eta(z)) = S(\cR(z)),\ \ w\neq \eta(z), 
\end{equation}
and for $z \in \Sigma\backslash\mathfrak{D}$,
\begin{equation}\label{backward_corr_eq}
    (z,w)\in \mathfrak{C} \iff \cR(w) = \cR(\eta(z)) = S^{-1}(\cR(z)),\ \ w\neq \eta(z),
\end{equation}
for a suitable inverse branch of $S^{-1}$.
\begin{remark}
    The brief discussion above shows that a forward branch of the correspondence $\mathfrak{C}$ sends $z$ to $w$ if and only if either the conformal mating $S$ or one of its inverse branches sends $R(z)$ to $R(w)$. In other words, the meromorphic map $\cR$ lifts the dynamics of $S$ to the correspondence $\mathfrak{C}$.
\end{remark}

\subsection{Dynamically invariant sets for $\mathfrak{C}$}\label{inv_partition_corr_subsec}
Recall from Remark~\ref{invariant_subset_rem} the dynamical partition
$$
\widehat{\mathbb{C}} = \Bigg(\bigsqcup_{i=1}^{l}\mathcal{W}_i\Bigg)\bigsqcup \cK.
$$
Define
$$
\widetilde{\cK}:= \cR^{-1}(\cK),\quad \text{and} \quad \widetilde{\mathcal{W}}_i := \cR^{-1}(\mathcal{W}_i),\ i\in\{1, \cdots,l\},
$$
the lifts of the non-escaping and escaping sets of the conformal mating $S$ under $\mathcal{R}$. Further, let $\Lambda(\mathfrak{C}):=\partial\widetilde{\cK}=\cR^{-1}(\partial\cK)$. We shall call $\Lambda(\mathfrak{C})$ the \emph{limit set} of $\mathfrak{C}$.

\begin{prop}\label{invariance_prop}\upshape 
The following statements hold for $i \in \{1, \cdots, l\}$.

\begin{enumerate}
    \item $\eta(\widetilde{\mathcal{W}_i}) = \widetilde{\mathcal{W}_i}$, and $\eta(\widetilde{\cK}) = \widetilde{\cK}$.
    \item Given a point $(z, w) \in \mathfrak{C}$, we have $z\in \widetilde{\mathcal{W}}_i$ (respectively, $z\in \widetilde{\cK}$) if and only if $w\in \widetilde{\mathcal{W}}_i$ (respectively, $w\in \widetilde{\cK}$).
\end{enumerate} 
\end{prop}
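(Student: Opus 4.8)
The plan is to derive both statements from a single observation: for every $x\in\Sigma$, the images $\mathcal{R}(x)$ and $\mathcal{R}(\eta(x))$ lie in the same member of the partition $\{\mathcal{W}_1,\dots,\mathcal{W}_l,\cK\}$ of $\widehat{\mathbb{C}}$. The crucial input is that each block of this partition is totally invariant under $S$: each $\mathcal{W}_i$, being a grand orbit, satisfies $S^{-1}(\mathcal{W}_i)=\mathcal{W}_i$, while $\cK$ is totally invariant as the complement of $\bigcup_{i=1}^l\mathcal{W}_i$ (see Remark~\ref{invariant_subset_rem}). Equivalently, for any $y\in\mathrm{Dom}(S)$ and any block $\mathcal{A}\in\{\mathcal{W}_1,\dots,\mathcal{W}_l,\cK\}$ one has $y\in\mathcal{A}\iff S(y)\in\mathcal{A}$.

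To prove the observation I would split according to which side of $\partial\mathfrak{D}$ the point $x$ lies on. If $x\in\mathfrak{D}$, then $\mathcal{R}(x)\in\cD\subset\mathrm{Dom}(S)$, and the commutative diagram~\eqref{alg_char_cd} (equivalently Equation~\eqref{forward_corr_eq}) gives $\mathcal{R}(\eta(x))=S(\mathcal{R}(x))$; total invariance then places $\mathcal{R}(x)$ and $\mathcal{R}(\eta(x))$ in the same block. If instead $x\in\Sigma\setminus\overline{\mathfrak{D}}=\eta(\mathfrak{D})$, then $\eta(x)\in\mathfrak{D}$, and applying the previous case to $\eta(x)$ yields $\mathcal{R}(x)=\mathcal{R}(\eta(\eta(x)))=S(\mathcal{R}(\eta(x)))$, so again the two images differ by a single application of $S$ and lie in the same block. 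The boundary $\partial\mathfrak{D}$ is handled by continuity: on $\partial^0\mathfrak{D}$ the map $\mathcal{R}$ conjugates $\eta$ to $S|_{\partial^0\cD}$, so the identity $\mathcal{R}(\eta(x))=S(\mathcal{R}(x))$ persists, and the finitely many points of $\mathcal{R}^{-1}(X)$ may be checked directly since $\eta$ and $S$ permute them compatibly. Granting the observation, Part~(1) is immediate: $x\in\widetilde{\mathcal{W}}_i\iff\mathcal{R}(x)\in\mathcal{W}_i\iff\mathcal{R}(\eta(x))\in\mathcal{W}_i\iff\eta(x)\in\widetilde{\mathcal{W}}_i$, whence $\eta(\widetilde{\mathcal{W}}_i)=\widetilde{\mathcal{W}}_i$, and the same chain with $\cK$ in place of $\mathcal{W}_i$ gives $\eta(\widetilde{\cK})=\widetilde{\cK}$.

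For Part~(2), let $(z,w)\in\mathfrak{C}$, so that $\mathcal{R}(w)=\mathcal{R}(\eta(z))$ by definition~\eqref{corr_eq}. Using Part~(1) in the form $\mathcal{R}(z)\in\mathcal{W}_i\iff\mathcal{R}(\eta(z))\in\mathcal{W}_i$, I would chain the equivalences
\[
z\in\widetilde{\mathcal{W}}_i\iff\mathcal{R}(z)\in\mathcal{W}_i\iff\mathcal{R}(\eta(z))\in\mathcal{W}_i\iff\mathcal{R}(w)\in\mathcal{W}_i\iff w\in\widetilde{\mathcal{W}}_i,
\]
and identically with $\cK$ replacing $\mathcal{W}_i$; this is exactly the assertion. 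I expect the only genuine obstacle to be the careful use of total invariance for the partially defined, non-injective map $S$ — in particular ensuring that the relevant points really lie in $\mathrm{Dom}(S)$ before $S$ is applied — together with the bookkeeping on $\partial\mathfrak{D}$ and at the singular set $X$; the interior argument itself is a direct consequence of the algebraic characterization~\eqref{alg_char_eq}.
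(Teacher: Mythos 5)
Your proposal is correct and follows essentially the same route as the paper: both arguments reduce to the observation that, by the commutative diagram~\eqref{alg_char_cd}, the points $\cR(x)$ and $\cR(\eta(x))$ differ by one application of $S$ (or of a branch of $S^{-1}$, according to which side of $\partial\mathfrak{D}$ the point $x$ lies on), and then invoke the complete invariance of each $\mathcal{W}_i$ and of $\cK$ under $S$, with Part~(2) following from $\cR(w)=\cR(\eta(z))$ and the definition $\widetilde{\mathcal{W}}_i=\cR^{-1}(\mathcal{W}_i)$. Your additional bookkeeping on $\partial^0\mathfrak{D}$ and at $\cR^{-1}(X)$ is sound and, if anything, slightly more careful than the paper's two-case treatment.
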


\begin{proof} Let $\mathcal{X}$ be a stand-in notation for the sets $\{\widetilde{\mathcal{W}_i}\}_{i=1}^{l}$, and $\widetilde{\cK}$. Clearly, by Relation~\eqref{alg_char_cd}, one of the following holds for $z\in \mathcal{X}$,
    $$
    \eta(z) \in \cR^{-1}(S(\cR(z))), \hspace{2mm}\text{or}\hspace{1.5mm} \eta(z) \in \cR^{-1}(S^{-1}(\cR(z))), 
    $$
\noindent depending on whether $z\in \mathcal{X}\cap\overline{\mathfrak{D}}$ or $z\in \mathcal{X}\backslash\mathfrak{D}$. By definition, $\cR(\mathcal{X})$ is completely invariant under the conformal mating $S$, therefore, $\eta(\mathcal{X}) = \mathcal{X}$. A similar reasoning applies to the second claim of the proposition.
\end{proof}

Fix an index $i\in \{1, 2, \cdots, l\}$. Observe that points in every component of the escaping set $\cW_i$ of $S$ eventually map to $\cT_i$ before leaving the domain $\overline{\cD}$ of the conformal mating (see Remark~\ref{invariant_subset_rem} for the definition of $\cT_i$). 
For a component $C$ of $\cW_i$, let $C'$ be components of $\cW_i$ such that $S(C') = C$. If $C\neq \cT_i$, then by Relation~\ref{alg_char_cd},
\begin{equation}
\cR^{-1}(C) = \Bigg(\big(\cR\big|_{\mathfrak{D}}\big)^{-1}(C)\Bigg)\bigsqcup \Bigg(\bigsqcup_{C'}\eta\Big(\big(\cR\big|_{\mathfrak{D}}\big)^{-1}(C')\Big)\Bigg).
\label{preimage_formula_eq}
\end{equation}
\noindent On the other hand, if $C = \cT_i$, then for $\widecheck{C} = C\cap \overline{\cD}$, 
$$
\cR^{-1}(C) = \Bigg(\big(\cR\big|_{\overline{\mathfrak{D}}}\big)^{-1}(\widecheck{C})\Bigg)\bigcup\Bigg(\eta\Big(\big(\cR\big|_{\overline{\mathfrak{D}}}\big)^{-1}(\widecheck{C})\Big)\Bigg)\bigsqcup\Bigg(\bigsqcup_{C'\neq C} \eta\Big(\big(\cR\big|_{\mathfrak{D}}\big)^{-1}(C')\Big)\Bigg).
$$
In a nutshell, the involution $\eta$, restricted to $\mathfrak{D}$, mimics the action of $S$ on $\mathcal{D}$. Specifically, it maps components of $\cR^{-1}(C') \cap \mathfrak{D}$ to components of $\cR^{-1}(C) \cap \mathfrak{D}^c$, if $S$ maps $C'$ to $C$. Now define,
\begin{equation}
\widetilde{\cT_i}:= \big(\cR\big|_{\overline{\mathfrak{D}}}\big)^{-1}(\widecheck{\cT_i})\cup\eta\Big(\big(\cR\big|_{\overline{\mathfrak{D}}}\big)^{-1}(\widecheck{\cT_i})\Big)\subseteq \cR^{-1}(\cT_i),
\label{tiling_comp_eq}
\end{equation}
where $\widecheck{\cT_i} = \cT_i\cap\overline{\cD}$. By the construction of the blender surface $\Sigma$, the open set $\widetilde{\cT_i}$ is obtained by gluing two copies of $\widecheck{\cT}_i$ along $\partial\widecheck{\cT}_i\cap\partial^0\cD$ via the weak B-involution $S$. Consequently, $\widetilde{\cT}_i$ is a disjoint union of $p_i$ topological disks (cf. \cite[Lemma~5.3, Figure~10]{mj2023matings}). By Corollary~\ref{crit_points_cor} and Relation~\ref{alg_char_eq}, the map $\cR$ carries each component of $\widetilde{\cT_i}$ as an $n_i:1$ branched cover onto $\cT_i$, and when $n_i\geq 3$, each component of $\widetilde{\cT_i}$ contains a unique critical point of $\cR$ of multiplicity $n_i-1$.

\begin{remark}\label{tesselation_rem}
    The tessellation of the unit disk $\mathbb{D}$ by the group $\Gamma_i$ can be transferred to the domain $\cT_i$. Define the rank zero tile as $\cT^0_i = \cT_i \setminus \cD$. The rank $m$ tiles are the connected components of the pre-image $S^{-m}(\cT^0_i)$. Every point in $\cT_i$ lies in a rank $m$ tile for some $m \geq 0$, and the action of $S$ maps each rank $m\geq 1$ tile to a rank $m-1$ tile. Therefore, all points in $\cT_i \cap \overline{\cD}$ eventually escape to the “hole” (i.e., the rank zero tile).

This dynamical tessellation can be lifted via the map $\cR|_{\widetilde{\cT}_i}$ to obtain a similar tiling structure in $\widetilde{\cT}_i$. A rank $m$ tile in $\widetilde{\cT}_i$ is defined as a connected component of the pre-image under $\cR|_{\widetilde{\cT}_i}$ of a rank $m$ tile in $\cT_i$. We denote the rank zero tile(s) in $\widetilde{\cT}_i$ by $\widetilde{\cT}_i^0$; these are the $p_i$ disjoint pre-images of $\cT^0_i$, one in each component of $\widetilde{\cT}_i$.
\end{remark}

\begin{remark}\label{esc_inv_rem}
The set $\widetilde{\cT_i}$ is invariant under $\eta$, which brings to attention a stark difference between the dynamics of the correspondence $\mathfrak{C}$ and the conformal mating $S$. While the points of $\cW_i$ eventually escape $\cD$ under the iterates of $S$; points in $\widetilde{\cW}_i$ ``eventually" map to $\widetilde{\cT_i}$, after which they remain  in $\widetilde{\cT_i}$ under $\eta$ as well as under suitable local deck transformations of $\cR$. In other words, by lifting the dynamics of the conformal mating $S$ to that of the correspondence $\mathfrak{C}$, we managed to suppress the escaping behavior exhibited by the former. Broadly speaking, we shall employ this philosophy to recover from $\mathfrak{C}$ the groups $\{\widehat{\Gamma}_i\}_{i=1}^l$. 
\end{remark}

Now fix an index $j\in \{1, 2, \cdots, r\}$ and define, 
\begin{equation}
\widetilde{\cV}_j:= \big(\cR\big|_{\mathfrak{D}}\big)^{-1}(\cV_j)\sqcup\eta\Big(\big(\cR\big|_{\mathfrak{D}}\big)^{-1}(\cV_j)\Big)\subseteq \cR^{-1}(\cV_j),
\label{blaschke_comp_eq}
\end{equation}
(see Remark~\ref{invariant_subset_rem} for the definition of $\cV_j$). By definition, $\widetilde{\cV}_j$ is a disjoint union of a pair of topological disks. By Theorem~\ref{conf_mating_thm} and Relation~\ref{alg_char_eq}, the map $\cR$ carries the component of $\widetilde{\cV}_j$ lying in $\mathfrak{D}$ univalently onto $\cV_j$; while the other component maps onto $\cV_j$ as a $(\deg(B_j)-1):1$ branched cover. 

As in Relation~\ref{preimage_formula_eq}, given a component $C$ of $\cW_{l+j}$, the pre-image $\cR^{-1}(C)$ can be written down explicitly in terms of components $C'$ of $\cW_{l+j}$ such that $S(C') = C$. 

Define $\mathfrak{C}_i:= \mathfrak{C}\cap\big(\widetilde{\cT}_i \times \widetilde{\cT}_i\big)$. One can think of $\mathfrak{C}_i$ as a multi-valued map:
\begin{equation}\label{restricted_corr_eq}
   z\mapsto w = \Big(\cR\big|_{\widetilde{\cT}_i}\Big)^{-1}\bigg(\Big(\cR\big|_{\widetilde{\cT}_i}\Big)(\eta(z))\bigg), 
\end{equation}
where the local branches are generated by $\eta$ and the local deck transformations of the branched covering map $\cR\big|_{\widetilde{\cT_i}}: \widetilde{\cT_i} \rightarrow\cT_i$. In the coming two subsections we shall prove the following theorem (cf. \cite[Theorem~5.2]{mj2023matings}). We recall the notation $\widehat{\Gamma}_i = \langle\Gamma_i, M_{\omega_i}\rangle$ from Section~\ref{fBS_subsec}. 

\begin{theorem}\label{corr_recover_thm}
    The correspondence $\mathfrak{C}$ is a mating of the groups $\{\widehat{\Gamma}_i\}_{i=1}^l$, and the Blaschke products $\{B_j\}_{j=1}^r$, in the following sense.
    \begin{enumerate}[leftmargin=8mm]
        \item The action of $\mathfrak{C}_i$ on $\widetilde{\cT_i}$ is equivalent to the action of a group $\langle\eta, \tau_i\rangle$ of conformal automorphisms of $\widetilde{\cT_i}$, $i\in\{1,\cdots, l\}$. Moreover, the above group acts properly discontinuously, and the quotient orbifold $\widetilde{\cT_i}/\mathfrak{C}_i$ is biholomorphic to~$\mathbb{D}/\widehat{\Gamma}_i$.
       
        \item The correspondence $\mathfrak{C}$ has a forward branch (respectively, a backward branch) carrying $\widetilde{\cV}_j\cap \mathfrak{D}$ (respectively, $\widetilde{\cV}_j\setminus \overline{\mathfrak{D}}$) onto itself, and this branch is conformally conjugate to the Blaschke product $B_j$, $j\in\{1,\cdots,r\}$.
    \end{enumerate}
\end{theorem}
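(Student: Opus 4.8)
The plan is to lean on the explicit algebraic description of $\mathfrak{C}$ recorded in Equations~\eqref{forward_corr_eq} and~\eqref{backward_corr_eq}, together with the conjugacies supplied by Theorem~\ref{conf_mating_thm}, dispatching the (easier) Blaschke statement~(2) first and then the group statement~(1). Throughout, the organizing principle is the one flagged in Remark~\ref{esc_inv_rem}: lifting the single-valued mating $S$ through the branched cover $\cR$ converts the \emph{escaping} dynamics of $S$ into \emph{recurrent} branch dynamics of $\mathfrak{C}$, and it is precisely this recurrence that lets us recover genuine group actions rather than mere orbit equivalences.

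For statement~(2), recall that $\cR$ carries $\widetilde{\cV}_j\cap\mathfrak{D}$ univalently onto $\cV_j$, while the symmetric component satisfies $\widetilde{\cV}_j\setminus\overline{\mathfrak{D}}=\eta(\widetilde{\cV}_j\cap\mathfrak{D})$ and maps onto $\cV_j$ as a $(\deg B_j-1):1$ cover. For $z\in\widetilde{\cV}_j\cap\mathfrak{D}$, Equation~\eqref{forward_corr_eq} gives $\cR(w)=S(\cR(z))$; since $\cV_j$ is $S$-invariant with $S|_{\cV_j}$ conformally conjugate to $B_j$ by Theorem~\ref{conf_mating_thm}, and since $\cR|_{\widetilde{\cV}_j\cap\mathfrak{D}}$ is invertible, the assignment $w=(\cR|_{\widetilde{\cV}_j\cap\mathfrak{D}})^{-1}(S(\cR(z)))$ is a well-defined (single-valued) forward branch of $\mathfrak{C}$ preserving $\widetilde{\cV}_j\cap\mathfrak{D}$ and conjugate, via $\cR$, to $S|_{\cV_j}$, hence to $B_j$. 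Conjugating by $\eta$ and invoking Equation~\eqref{backward_corr_eq} then yields the claimed backward branch on $\widetilde{\cV}_j\setminus\overline{\mathfrak{D}}$. The only things to check are that these branches are genuinely single-valued (the univalence of $\cR$ on $\widetilde{\cV}_j\cap\mathfrak{D}$ removes the ambiguity of $\cR^{-1}$) and that the excluded diagonal $w=\eta(z)$ does not interfere.

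For statement~(1), the first step is to realize the local branches of $\mathfrak{C}_i$ in Equation~\eqref{restricted_corr_eq} as conformal automorphisms of $\widetilde{\cT_i}$: the involution $\eta$ preserves $\widetilde{\cT_i}$ (Remark~\ref{esc_inv_rem}), and the deck transformations $\tau_i$ of the branched cover $\cR|_{\widetilde{\cT_i}}:\widetilde{\cT_i}\to\cT_i$ permute its fibers, so that $G_i:=\langle\eta,\tau_i\rangle$ acts by biholomorphisms with orbits equal to the $\mathfrak{C}_i$-grand orbits; this already gives the asserted equivalence of the action of $\mathfrak{C}_i$ with that of $G_i$. Next I would transfer the group tessellation: by Remark~\ref{tesselation_rem} the rank-$m$ tiles of $\widetilde{\cT_i}$ are the $\cR$-lifts of the Bowen-Series tessellation of $\cT_i$, which in turn mirrors the $\Gamma_i$-tessellation of $\mathbb{D}$. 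Using that $S|_{\cT_i}$ is conjugate to $A_{\Gamma_i}$ and that $A_{\Gamma_i}$ is orbit equivalent to $\Gamma_i$, I would show that $\cR$---whose deck group over $\cT_i$ is cyclic of order $n_i$, generated by a map conjugate to $M_{\omega_i}$ (Corollary~\ref{crit_points_cor})---promotes this orbit equivalence to a conjugacy of the $G_i$-action on $\widetilde{\cT_i}$ with the $\widehat{\Gamma}_i=\langle\Gamma_i,M_{\omega_i}\rangle$-action on $\mathbb{D}$, matching $\tau_i\leftrightarrow M_{\omega_i}$ and the $\eta$-composed branches with the side-pairing generators. Finally, proper discontinuity follows from the tile structure (a rank-$0$ tile serving as a fundamental domain), and the quotient orbifold is computed by accounting for cone points: the multiplicity-$(n_i-1)$ critical points of $\cR$ descend to the order-$n_i$ point, and the fixed points of $\eta$ on $\partial\widetilde{\cT_i}\cap\partial\mathfrak{D}$ to the order-$2$ points, yielding $\widetilde{\cT_i}/\mathfrak{C}_i\cong\mathbb{D}/\widehat{\Gamma}_i$.

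The main obstacle is the group-identification step in~(1): upgrading the orbit equivalence between the single-valued factor Bowen-Series map $A_{\Gamma_i}$ and $\Gamma_i$ to an honest conjugacy of the $G_i$-action on $\widetilde{\cT_i}$ with the $\widehat{\Gamma}_i$-action on $\mathbb{D}$. This requires showing that the deck symmetry $\tau_i$ exactly restores the rotational factor $M_{\omega_i}$ that was quotiented away in passing from $A_{\Gamma_{n_i,p_i}}^{\text{BS}}$ to $A_{\Gamma_i}^{\text{fBS}}$, and that the composites of $\eta$ with the inverse branches of $\cR$ realize precisely the side-pairing generators of $\widehat{\Gamma}_i$---with the bookkeeping differing between the continuous case $n_i=1$ and the factor case $n_i\geq 3$, and between the Case~I and Case~II side-pairings (which govern the number of order-$2$ cone points). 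Establishing proper discontinuity and ruling out spurious identifications in the quotient (so that one lands on $\mathbb{D}/\widehat{\Gamma}_i$ exactly, and not a further quotient) is where the tile/fundamental-domain analysis must be made fully rigorous.
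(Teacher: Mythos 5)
Your proposal is correct and follows essentially the same route as the paper: part (2) is exactly Proposition~\ref{recover_Blaschke_prop} (univalence of $\cR$ on $\widetilde{\cV}_j\cap\mathfrak{D}$ singles out the forward branch $(\cR|_{\mathfrak{D}})^{-1}\circ\cR\circ\eta$, conjugated by $\cR$ to $S|_{\cV_j}\cong B_j$, with $\eta$ transporting it to the backward branch), while part (1) is carried out in Lemma~\ref{aut_corr_lem}, Corollary~\ref{group_action_corr} and Proposition~\ref{orbifold_isom_prop} by constructing the cyclic deck symmetry $\tau_i$, checking that the forward branches generate $\langle\eta,\tau_i\rangle$, and identifying the stabilizer of a single component of $\widetilde{\cT}_i$ with $\widehat{\Gamma}_i$. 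The step you single out as the main obstacle is precisely where the paper's work lies: the germ computation~\eqref{long_derivation_eq} shows that the composites of $\eta$ with deck transformations realize the side-pairing generators, with $\tau_i^{p_i}$ (rather than $\tau_i$ itself) corresponding to $M_{\omega_i}$.
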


The proof would be split as follows. In Section~\ref{group-like_subsubsec} we shall study the action of $\mathfrak{C}$ on $\widetilde{\cT_i}$, $i\in\{1,\cdots, l\}$, to recover the groups $\{\widehat{\Gamma}_i\}_{i=1}^l$, and establish the quotient orbifold equivalence $\widetilde{\cT_i}/\mathfrak{C}_i\cong \mathbb{D}/\widehat{\Gamma}_i$. In Section~\ref{map-like_subsubsec}, we will study the action of $\mathfrak{C}$ on $\widetilde{\cV}_j$, to recover the Blaschke products $\{B_j\}_{j=1}^r$.

\subsection{Group-like dynamics in $\mathfrak{C}$}\label{group-like_subsubsec}Fix an index $i\in \{1, 2, \cdots, l\}$. Recall (from the proof of Theorem~\ref{conf_mating_thm}) the conformal map $\mathfrak{X}_i:\mathbb{D}\rightarrow\cT_i$ that satisfies
\begin{equation}\label{fBS_conj_eq}
     \mathfrak{X}_i \circ A^{\text{fBS}}_{\Gamma_i} =  S \circ \mathfrak{X}_i
\end{equation}
on appropriate regions. This map sends the origin in the dynamical plane of $A_{\Gamma_i}$ to a point in $\cT_i$; we will refer to this image point as a \emph{distinguished point}. 

Note that each component of $\widetilde{\cT}_i$ contains a unique pre-image of the distinguished point under $\cR$. If $n_i\geq 3$, the point $0$ is the unique critical value of $A_{\Gamma_i}$, and hence the distinguished point would be the unique critical value of $\cR\vert_{\widetilde{\cT}_i}$ in $\cT_i$. Further, each component of $\widetilde{\cT}_i$ contains a pre-image of the distinguished point (under the map $\cR$) that maps to it with local degree $n_i$.   

\begin{lem}\label{aut_corr_lem}
    The open set $\widetilde{\cT_i}$ admits a biholomorphism $\tau_i$ such that
    $$\tau^{n_ip_i}_i\equiv \text{id}, \hspace{2mm} and \hspace{2mm}\Big(\cR\big|_{\widetilde{\cT}_i}\Big)^{-1}\bigg(\Big(\cR\big|_{\widetilde{\cT}_i}\Big)(z)\bigg) = \{z, \tau_i(z), \cdots, \tau_i^{n_ip_i-1}(z)\} \hspace{2mm}\forall z\in \widetilde{\cT_i}.$$
\end{lem}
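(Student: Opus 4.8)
The plan is to show that the branched covering $\cR|_{\widetilde{\cT}_i}\colon\widetilde{\cT}_i\to\cT_i$ is regular with a cyclic deck group of order $n_ip_i$, and to take $\tau_i$ to be a generator. The input is the ramification data already recorded: $\widetilde{\cT}_i$ is a disjoint union of $p_i$ topological disks $C_1,\dots,C_{p_i}$, and by Corollary~\ref{crit_points_cor} together with Relation~\eqref{alg_char_eq}, $\cR$ maps each $C_s$ onto $\cT_i$ as an $n_i\!:\!1$ branched cover; for $n_i\geq 3$ this cover is branched over the single distinguished point $c=\mathfrak{X}_i(0)\in\cT_i$, with full ramification at the unique critical point of $C_s$, while for $n_i=1$ each $C_s\to\cT_i$ is a biholomorphism. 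Since $\cT_i\cong\D$ is simply connected, $\cR|_{C_s}$ is conformally the model map $z\mapsto z^{n_i}\colon\D\to\D$, hence a regular cyclic cover whose deck group is $\Z/n_i\Z$, generated by a rotation $R_s$ of order $n_i$ fixing the critical point of $C_s$.

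Next I would assemble these component-wise symmetries into one cyclic automorphism that also permutes the components. Because the covers $C_1,\dots,C_{p_i}$ are pairwise isomorphic (each being the model $z\mapsto z^{n_i}$), there exist fiber-preserving biholomorphisms $\phi_s\colon C_s\to C_{s+1}$ (indices mod $p_i$) with $\cR\circ\phi_s=\cR$, each unique up to post-composition with a deck transformation of the target. I define $\tau_i$ by $\tau_i|_{C_s}:=\phi_s$, so that $\tau_i$ is a fiber-preserving biholomorphism of $\widetilde{\cT}_i$ cyclically permuting the $p_i$ components, and set $h:=\phi_{p_i}\circ\cdots\circ\phi_1\in\mathrm{Deck}(C_1\to\cT_i)=\Z/n_i\Z$. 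Post-composing $\phi_{p_i}$ with a power $R_1^k$ replaces $h$ by $R_1^k h$, so the $\phi_s$ can be chosen so that $h$ is a \emph{generator} of $\Z/n_i\Z$.

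With this normalization I would verify the two asserted properties. As $\tau_i^{p_i}$ preserves each $C_s$ and equals $h$ on $C_1$ (and a conjugate of $h$, again of order $n_i$, on each $C_s$), we get $\tau_i^{n_ip_i}=(\tau_i^{p_i})^{n_i}=\mathrm{id}$. For the fiber statement, fix $z\in C_1$: the $\tau_i$-orbit of $z$ meets $C_1$ in $\{z,hz,\dots,h^{n_i-1}z\}$, which is the full $\cR|_{C_1}$-fiber over $\cR(z)$ precisely because $h$ generates the cyclic deck group of $C_1\to\cT_i$, while the intermediate iterates and their $h$-translates sweep out the full fibers in $C_2,\dots,C_{p_i}$. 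Hence $\{z,\tau_iz,\dots,\tau_i^{n_ip_i-1}z\}=(\cR|_{\widetilde{\cT}_i})^{-1}\big((\cR|_{\widetilde{\cT}_i})(z)\big)$; the same bookkeeping at the $p_i$ critical points, where the orbit collapses to the $p_i$-point critical fiber, finishes the verification.

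The main obstacle is not the existence of symmetries but their coherent cyclic assembly. Since $\widetilde{\cT}_i$ is disconnected when $p_i>1$, the covering $\cR|_{\widetilde{\cT}_i}$ is not a connected regular cover, and its full group of topological deck transformations is the wreath product $(\Z/n_i\Z)\wr S_{p_i}$, which is strictly larger than $\Z/n_ip_i\Z$. The content of the lemma is exactly to single out a cyclic \emph{biholomorphic} subgroup acting simply transitively on fibers, and the crux is the holonomy normalization guaranteeing that $\tau_i^{p_i}$ is a generator, rather than a proper power, of each per-component deck group; only then do the $\tau_i$-orbits exhaust the $n_i$ fiber points inside every component.
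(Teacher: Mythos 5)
Your proof is correct and follows essentially the same strategy as the paper: the paper conjugates $\cR|_{\widetilde{\cT}_i}$ to the explicit model $(w,j)\mapsto w^{n_i}$ on $\D\times\{1,\dots,p_i\}$ and defines $\widetilde{\tau}_i$ to shift the index, applying the primitive rotation $e^{2\pi i/n_i}$ upon wrapping around — which is exactly your cyclic assembly of the fiber-preserving maps $\phi_s$ with the holonomy $h$ normalized to generate the deck group $\Z/n_i\Z$. Your closing remark correctly identifies the one point requiring care (that $\tau_i^{p_i}$ must be a generator, not a proper power, of the per-component deck group), which the paper handles implicitly by its choice of rotation factor.
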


\begin{proof}
    Enumerate the $p_i$ components of $\widetilde{\cT_i}$ by the index set $\cI:=\{1,\cdots,p_i\}$. Let $\Phi: \mathbb{D}\times\cI\rightarrow \widetilde{\cT_i}$ be a biholomorphism such that for each $j\in \cI$, the map $\Phi$ sends $(0, j)$ to the unique pre-image of the distinguished point (of $\cT_i$) in the $j$-th component of $\widetilde{\cT_i}$. Therefore, after possibly pre-composing $\Phi$ with a rotation on each $\mathbb{D}\times\{j\}$, we can write $\widetilde{\cR}:= \mathfrak{X}_i^{-1}\circ \cR\circ \Phi:\mathbb{D}\times\cI\rightarrow \D$ as
\begin{equation}\label{auxillary_cover_eq}
    (w, j) \mapsto w^{n_i}, \hspace{2mm}w\in \mathbb{D}, \hspace{1mm}j\in \cI.
    \end{equation}
    Now define the map $\widetilde{\tau}_i:\mathbb{D}\times\cI\rightarrow\mathbb{D}\times\cI$ as follows,
\begin{equation}\label{auxillary_aut_eq}
    \widetilde{\tau}_i(w, j):= \begin{cases*}
        (w, j+1), & $\text{for } j \in \{1, \cdots, p_i-1\}$\\
        (e^{\frac{2i\pi}{n_i}}w, 1), & $\text{for } j = p_i$.
    \end{cases*}
    \end{equation}
    One can check using Formulas~\eqref{auxillary_cover_eq} and~\eqref{auxillary_aut_eq} that, 
    $$
    \widetilde{\tau}_i^{n_ip_i} = \text{id}, \hspace{2mm} \text{and  }\widetilde{\cR}^{-1}(\widetilde{\cR}(w, j)) = \{(w, j), \widetilde{\tau}(w, j), \cdots, \widetilde{\tau}^{n_ip_i-1}(w, j)\},
    $$
    for $(w, j) \in \mathbb{D}\times\cI$. The required automorphism $\tau_i$ is given by $\Phi\circ\widetilde{\tau}_i\circ\Phi^{-1}$.
\end{proof}

\begin{cor}\label{group_action_corr}
    The dynamics of $\mathfrak{C}_i$ on $\widetilde{\cT}_i$ is equivalent to the action of the group $\langle\eta, \tau_i\rangle$ of conformal automorphisms of $\widetilde{\cT}_i$.
\end{cor}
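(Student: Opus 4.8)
The plan is to read off the local branches of $\mathfrak{C}_i$ directly from Lemma~\ref{aut_corr_lem} and then translate the grand-orbit equivalence of the correspondence into a purely group-theoretic statement about $\langle\eta,\tau_i\rangle$. First I would record that, by definition, $(z,w)\in\mathfrak{C}_i$ means $z,w\in\widetilde{\cT_i}$, $\cR(w)=\cR(\eta(z))$ and $w\neq\eta(z)$. Since $\widetilde{\cT_i}$ is $\eta$-invariant (Remark~\ref{esc_inv_rem}), the point $\eta(z)$ again lies in $\widetilde{\cT_i}$, so the admissible $w$ form the fiber $\big(\cR|_{\widetilde{\cT_i}}\big)^{-1}\big(\cR(\eta(z))\big)$ with $\eta(z)$ deleted. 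Lemma~\ref{aut_corr_lem} identifies this fiber as the $\langle\tau_i\rangle$-orbit of $\eta(z)$, whence the (finitely many) branches of $\mathfrak{C}_i$ are exactly the conformal automorphisms $\tau_i^{k}\circ\eta$ of $\widetilde{\cT_i}$, for $k\in\{1,\dots,n_ip_i-1\}$; the corresponding backward branches are their inverses $\eta\circ\tau_i^{-k}$.

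Next I would use the fact that a chain of forward/backward moves of $\mathfrak{C}_i$ starting at $z$ lands at $g(z)$, where $g$ ranges over the group $H$ generated by the branch maps $\{\tau_i^{k}\eta:1\le k\le n_ip_i-1\}$ and their inverses. Thus the grand orbit of any point under $\mathfrak{C}_i$ is precisely its $H$-orbit, and the Corollary reduces to the identity $H=\langle\eta,\tau_i\rangle$. The inclusion $H\subseteq\langle\eta,\tau_i\rangle$ is immediate. For the reverse inclusion I would compute $(\tau_i^{2}\eta)(\tau_i\eta)^{-1}=\tau_i$, which places $\tau_i\in H$, and then $\tau_i^{-1}(\tau_i\eta)=\eta\in H$. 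Both computations are available because the (factor) Bowen--Series maps entering the mating have degree $n_ip_i-1\ge 2$, i.e.\ $n_ip_i\ge 3$, so that the exponents $k=1$ and $k=2$ are genuinely present among the branches. This yields $H=\langle\eta,\tau_i\rangle$ and hence the desired orbit equivalence.

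The step I expect to require the most care is not the generation argument but the behavior at the ramification points of $\cR|_{\widetilde{\cT_i}}$ (the pre-images of the distinguished point when $n_i\ge 3$), where the fiber has fewer than $n_ip_i$ distinct points and several branches $\tau_i^{k}\eta$ collide. There I would argue that orbit equivalence is a set-theoretic statement about unordered grand orbits, so collisions are harmless: the $H$-orbit of such a point is still well defined and still coincides with its $\mathfrak{C}_i$-grand orbit. I would also double-check that $\eta$ genuinely restricts to a conformal automorphism of $\widetilde{\cT_i}$ and that no branch escapes $\widetilde{\cT_i}$, both of which follow from the $\eta$-invariance of $\widetilde{\cT_i}$ and from $\tau_i\in\mathrm{Aut}(\widetilde{\cT_i})$, so that $\langle\eta,\tau_i\rangle$ is a well-defined group of conformal automorphisms of $\widetilde{\cT_i}$ acting with the claimed orbits.
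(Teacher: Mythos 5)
Your proposal is correct and follows essentially the same route as the paper: both identify the forward branches of $\mathfrak{C}_i$ as $\tau_i^k\circ\eta$ via Lemma~\ref{aut_corr_lem} and then recover $\tau_i$ (hence $\eta$) from the relation $\tau_i=(\tau_i^2\circ\eta)\circ(\tau_i\circ\eta)^{-1}$ to conclude that the branches generate $\langle\eta,\tau_i\rangle$. Your additional remarks on the availability of the exponents $k=1,2$ and on the harmlessness of branch collisions at ramification points are sensible elaborations of what the paper leaves implicit.
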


\begin{proof}
    By Lemma~\ref{aut_corr_lem} it is clear that the forward branches of $\mathfrak{C}_i$ on $\widetilde{\cT}_i$ are given by $\tau_i\circ\eta, \cdots, \tau^{n_ip_i-1}_i\circ\eta$. Thanks to the relation $\tau_i = (\tau_i^2\circ\eta)\circ(\tau_i\circ\eta)^{-1}$, we conclude that
    $$
    \langle\tau_i\circ\eta, \cdots, \tau^{n_ip_i-1}_i\circ\eta\rangle = \langle\eta, \tau_i\rangle.
    $$
    Hence, the corollary follows.
\end{proof}

\begin{prop}\label{orbifold_isom_prop}
    The group $\langle\eta, \tau_i\rangle$ acts properly discontinuously on $\widetilde{\cT_i}$, and the quotient orbifold $\widetilde{\cT_i}/\langle\eta, \tau_i\rangle$ is biholomorphic to $\mathbb{D}/\widehat{\Gamma}_i$. 
\end{prop}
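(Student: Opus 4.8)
The plan is to exploit the product model from Lemma~\ref{aut_corr_lem} to reduce the statement to a single component of $\widetilde{\cT_i}$, where the group $\langle\eta,\tau_i\rangle$ restricts to a genuine group of M\"obius automorphisms. Write $G:=\langle\eta,\tau_i\rangle$ and let $D_1,\dots,D_{p_i}$ denote the $p_i$ components of $\widetilde{\cT_i}$, identified via Lemma~\ref{aut_corr_lem} with the slices $\mathbb{D}\times\{j\}$ of the model $\mathbb{D}\times\cI$. Since $\tau_i$ sends $D_j$ to $D_{j+1}$ cyclically (see Equation~\eqref{auxillary_aut_eq}), the subgroup $\langle\tau_i\rangle\le G$ already permutes the components transitively; consequently
\[
\widetilde{\cT_i}/G \;\cong\; D_1/G_1, \qquad G_1:=\mathrm{Stab}_G(D_1),
\]
so it suffices to analyze the stabilizer $G_1$ acting on the single disk $D_1$. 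Because every element of $G$ is a composition of the conformal automorphisms $\eta$ and $\tau_i$ of $\widetilde{\cT_i}$, each element of $G_1$ restricts to a conformal automorphism of $D_1\cong\mathbb{D}$; thus $G_1$ is a subgroup of $\mathrm{Aut}(\mathbb{D})=\PSL_2(\mathbb{R})$.

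First I would fix the identification of $D_1$ with the ``$\Gamma_i$-disk''. By Lemma~\ref{aut_corr_lem} the map $\mathfrak{X}_i^{-1}\circ\cR$ restricts on $D_1$ to the factor map $w\mapsto w^{n_i}$ onto $\cT_i\cong\mathbb{D}$, which is precisely the branched cover intertwining $A^{\text{BS}}_{\Gamma_i}$ on the source disk with $A^{\text{fBS}}_{\Gamma_i}=\mathfrak{X}_i^{-1}\circ S\circ\mathfrak{X}_i$ on $\cT_i$ (see Section~\ref{fBS_subsec}). Hence $D_1$ is canonically the disk on which $\Gamma_i$ and $M_{\omega_i}$ act. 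Under this identification I would recognize the two natural families of generators of $G_1$. The rotation $\tau_i^{p_i}$ fixes $D_1$ and, by Equation~\eqref{auxillary_aut_eq}, acts there as $w\mapsto\omega_i w$, i.e.\ exactly as $M_{\omega_i}$, generating a cyclic group of order $n_i$. The remaining elements of $G_1$ are the words in $\eta$ and $\tau_i$ that return to $D_1$; via the commutative diagram~\eqref{alg_char_cd} these cover iterates of $A^{\text{fBS}}_{\Gamma_i}$ and its inverse branches on $\cT_i$, and lifting through $w\mapsto w^{n_i}$ they act on $D_1$ by the side-pairing transformations defining $A^{\text{BS}}_{\Gamma_i}$, which are generators of $\Gamma_i$.

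The key step is then to show that these restricted words generate \emph{exactly} $\widehat{\Gamma}_i=\langle\Gamma_i,M_{\omega_i}\rangle$. This is where the Bowen--Series orbit equivalence recorded in Section~\ref{mateable_maps_subsec} enters: it guarantees that two points of the source disk lie in the same $\Gamma_i$-orbit if and only if they share a grand orbit under $A^{\text{BS}}_{\Gamma_i}$, whence the $G_1$-orbit relations on $D_1$ match the $\widehat{\Gamma}_i$-orbit relations on $\mathbb{D}$, the extra $\langle M_{\omega_i}\rangle$ factor being supplied by $\tau_i^{p_i}$. Combined with the expansivity of $A^{\text{BS}}_{\Gamma_i}\vert_{\mathbb{S}^1}$, which makes the tessellation of $\cT_i$ from Remark~\ref{tesselation_rem} (and its lift to $\widetilde{\cT_i}$) locally finite, this identifies $G_1$ with a Fuchsian group conformally conjugate to $\widehat{\Gamma}_i$, with fundamental domain a lift of $\Pi$. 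Discreteness of $G_1<\PSL_2(\mathbb{R})$ immediately yields proper discontinuity of $G_1$ on $D_1$; and since a compact subset of $\widetilde{\cT_i}$ meets only finitely many components, each with stabilizer conjugate to $G_1$, the full group $G$ acts properly discontinuously on $\widetilde{\cT_i}$. Finally $D_1/G_1\cong\mathbb{D}/\widehat{\Gamma}_i$ conformally, so $\widetilde{\cT_i}/G\cong\mathbb{D}/\widehat{\Gamma}_i$ as required.

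I expect the main obstacle to be the rigorous matching in the previous paragraph: verifying that the $\langle\eta,\tau_i\rangle$-words preserving $D_1$ generate precisely $\Gamma_i$ (no more and no fewer), and that the induced orbit equivalence on $D_1$ extends the boundary Bowen--Series orbit equivalence into the interior while correctly accounting for the order-$n_i$ branch point at the center of each component and for the cusps and order-$2$ orbifold points carried on $\partial\cT_i$. Handling these distinguished points, together with deducing discreteness of $G_1$ directly from expansivity rather than assuming it, is the technical heart of the argument; by contrast, the reduction to a single component and the recognition of $\tau_i^{p_i}$ as $M_{\omega_i}$ are comparatively routine.
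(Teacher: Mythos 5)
Your reduction to a single component via the transitivity of $\langle\tau_i\rangle$, and your identification of $\tau_i^{p_i}$ with $M_{\omega_i}$ acting on $\dT_{i,1}$, coincide with the paper's argument. The divergence — and the gap — is in how you propose to identify the stabilizer $G_1$ with (a conjugate of) $\widehat{\Gamma}_i$. You want to deduce this from the Bowen--Series orbit equivalence together with expansivity of $A_{\Gamma_i}^{\mathrm{BS}}\vert_{\mathbb{S}^1}$. But orbit equivalence is a statement about orbits of points (and on the boundary circle at that): knowing that the $G_1$-orbit relation agrees with the $\widehat{\Gamma}_i$-orbit relation does not determine $G_1$ as a subgroup of $\mathrm{Aut}(\mathbb{D})$ — two distinct subgroups of $\PSL_2(\mathbb{R})$ can induce the same orbit partition on a dense set — and it certainly does not hand you discreteness, which is why you are left proposing to extract discreteness ``from expansivity'' as a separate (and unexplained) step. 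As written, the technical heart that you correctly flag is not actually carried out, and the tool you reach for is not strong enough to carry it.

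The paper closes exactly this gap by a germ computation rather than an orbit-counting argument. Using the commutative square $\mathcal{R}\circ\eta=S\circ\mathcal{R}$ on $\overline{\mathfrak{D}}$ together with the semi-conjugacies defining the factor Bowen--Series map, one derives the identity $\Phi_j\circ A_{\Gamma_i}^{\mathrm{BS}}=f_j\circ\Phi_j$ (with $f_j\in G_j$ a specific branch of the correspondence) between germs of functions near each welding line $\Phi_j(\psi_{\rho_i}(C_{1,j})\times\{j\})$; see Equations~\eqref{long_derivation_eq} and~\eqref{generators_eq}. The Identity Theorem then propagates this local identity, so each generator $\rho_i(g_{1,j})$ of $\Gamma_i$, conjugated by $\Phi_1$, is \emph{literally} an element of $G_1$, and together with $\tau_i^{p_i}=\Phi_1\circ M_{\omega_i}\circ\Phi_1^{-1}$ these generate $G_1$. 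Hence $G_1=\Phi_1\circ\widehat{\Gamma}_i\circ\Phi_1^{-1}$ with $\Phi_1$ conformal, so discreteness, proper discontinuity, the fundamental domain $\mathfrak{S}=\Phi_1(\psi_{\rho_i}(\widehat{\Pi}))$, and the biholomorphism $\dT_{i,1}/G_1\cong\mathbb{D}/\widehat{\Gamma}_i$ are all inherited at once — no separate discreteness argument, and no issue at the branch point or at the cusps and order-$2$ points, since the conjugacy is global on the component. If you replace your orbit-equivalence step with this exact matching of local branches (which you gesture at when you say the returning words ``act by the side-pairing transformations'' but do not justify), your outline becomes the paper's proof.
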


\begin{proof}
Let $\Phi$ be as in the proof of Lemma~\ref{aut_corr_lem}. Recall that $\widetilde{\cT}_i$ has $p_i$ components $\dT_{i,1}, \dT_{i,2}, \cdots, \dT_{i, p_i}$, where $\dT_{i,j} := \Phi(\mathbb{D}\times\{j\})$, for $1\leq j\leq p_i$. The interiors of these $p_i$ components divide $\partial\mathfrak{D}\cap\widetilde{\cT}_i$ into $p_i$ open arcs. We shall refer to the collection of these $p_i$ open arcs as the collection of \emph{welding lines}. 
    
Note that $\Phi\vert_{\D\times\{j\}}$ maps $\text{int}(\psi_{\rho_i}(\Pi))\times\{j\})$ conformally onto $\dT_{i,j}\cap\text{int}\widetilde{\cT}^0_i$, and extends homeomorphically to the boundary (in $\mathbb{D}\times\{j\}$).  After possibly pre-composing the maps $\Phi|_{\D\times\{j\}}$ with suitable powers of $M_{\omega_i}$, we can assume that $\Phi\vert_{\D\times\{j\}}$ takes the bi-infinite geodesic $\psi_{\rho_i}(C_{1,j})\times\{j\}\subset \partial\psi_{\rho_i}(\Pi)\times\{j\}$ to the welding line in $\dT_{i,j}$, for $1\leq j\leq p_i$. Let 
    $$
    G_j:= \{f\in \langle\eta, \tau_i\rangle: f(\dT_{i,j}) = \dT_{i,j}\}
    $$
    be the stabilizer of $\dT_{i,j}$ under the action of $\langle\eta, \tau_i\rangle$. 
Note that by definition, the cyclic group $\langle\tau_i\rangle$ generated by the conformal automorphism $\tau_i$ acts transitively on the components of $\widetilde{\cT}_i$. Therefore, it is enough to show that $G_1$ acts properly discontinuously on $\dT_{i,1}$, and that the quotient orbifold $\dT_{i,1}/G_1$ is biholomorphic to $\mathbb{D}/\widehat{\Gamma}_i$. Observe that from the proof of Lemma~\ref{aut_corr_lem}, we have the following commutative diagram.
    \begin{equation}\label{prop6.8_comm_eq}
        \begin{tikzcd}
	   {\mathbb{D}\times\{j\}} && \dT_{i,j} \\
	\\
	{\mathbb{D}} && {\mathcal{T}_i}
	\arrow["{\Phi|_{\mathbb{D}\times\{j\}}}", from=1-1, to=1-3]
	\arrow["{\widetilde{\mathcal{R}}|_{\mathbb{D}\times\{j\}}}"', from=1-1, to=3-1]
	\arrow["{\mathcal{R}}", from=1-3, to=3-3]
	\arrow["{\mathfrak{X}_i}"', from=3-1, to=3-3]
\end{tikzcd}
    \end{equation}

\noindent For the sake of brevity, for $1\leq j\leq p_i$, we denote $\widetilde{\cR}|_{\mathbb{D} \times \{j\}}$ simply by $\widetilde{\cR}_j$, the restriction $\cR|_{\dT_{i,j}}$ by $\cR_j$, and $\Phi|_{\mathbb{D} \times \{j\}}$ by $\Phi_j$.  By definition of (factor) Bowen-Series maps (see Section~\ref{fBS_subsec}), we have, 
\begin{equation}\label{fbS_def_eq}
    \widetilde{\cR}_j\circ A^{\text{BS}}_{\Gamma_i} = A^{\text{fBS}}_{\Gamma_i}\circ \widetilde{\cR}_j,\ \mathrm{on}\ \left( \D\setminus\Int{\psi_{\rho_i}(\Pi)} \right)\times\{j\},
\end{equation}
where by $A^{\text{BS}}_{\Gamma_i}$ we mean the possibly discontinuous Bowen-Series map that descends down to $A^{\text{fBS}}_{\Gamma_i}$ under the $n_i$-fold branched cover $w\mapsto w^{n_i}$. In addition, by the definition of $\widetilde{\tau}_i$ and $\tau_i$, we see that the following holds,
\begin{equation}\label{uniformising_maps_eq}
    \tau_i^{j-1}\circ\Phi_1\circ\widetilde{\tau}_i^{-(j-1)} = \Phi_j,\ \mathrm{on}\ \D\times\{j\}.
\end{equation}
Now by Relation~\ref{prop6.8_comm_eq}, there exists a local inverse branch of $\cR_j$ for which the following identity holds between the germs of functions near $\partial \psi_{\rho_i}(C_{1,j})\times\{j\}$, for $1\leq j\leq p_i$.

\begin{equation}\label{long_derivation_eq}
    \begin{split}
        \Phi_j\circ A_{\Gamma_i}^{\text{BS}} &= (\cR^{-1}_j\circ\mathfrak{X}_i\circ\widetilde{\cR}_j)\circ A_{\Gamma_i}^{\text{BS}}\\
        &= \cR^{-1}_j\circ\mathfrak{X}_i\circ A_{\Gamma_i}^{\text{fBS}}\circ \widetilde{\cR}_j \hspace{2mm}\text{        (by \ref{fbS_def_eq})}\\
        &= \cR^{-1}_j\circ S\circ\mathfrak{X}_i\circ\widetilde{\cR}_j\hspace{2mm}\text{        (by \ref{fBS_conj_eq})}\\
        &= \cR^{-1}_j\circ S\circ\mathfrak{X}_i\circ(\mathfrak{X}_i^{-1}\circ \cR_j\circ \Phi_j)\hspace{2mm}\text{        (by definition of }\widetilde{\cR})\\
        &= \cR^{-1}_j\circ S\circ\cR_j\circ \Phi_j\\
        &= \cR^{-1}_j\circ\cR_j\circ\eta\circ \Phi_j\hspace{2mm}\text{        (by \ref{alg_char_cd})}\\ 
        &= f_j\circ\Phi_j,\hspace{2mm}\text{  
        (for some }f_j\in G_j).
    \end{split}
\end{equation}
Note that the local germ of $\Phi_j\circ A_{\Gamma_i}^{\text{BS}}\circ \Phi^{-1}_j$, near $\Phi_j\left(\partial \psi_{\rho_i}(C_{1,j})\times\{j\}\right)$, is precisely the local germ of the generator $\rho_i(g_{1,j})$ of $\Gamma_i$, conjugated by $\Phi_j$. 
We denote by $\gamma_j\in G_1$ the element $\tau_i^{-(j-1)}\circ f_j\circ \tau_i^{j-1}$, and note that
by Equations~\eqref{uniformising_maps_eq} and~\eqref{long_derivation_eq}, we have the relation
\begin{equation}\label{generators_eq}
    \widetilde{\tau}_i^{-(j-1)}\circ A_{\Gamma_i}^{\text{BS}}\circ\widetilde{\tau}_i^{j-1} = \Phi_1^{-1}\circ\tau_i^{-(j-1)}\circ f_j\circ \tau_i^{j-1}\circ \Phi_1=\Phi_1^{-1}\circ\gamma_j\circ \Phi_1,  
\end{equation}
near $\partial \psi_{\rho_i}(C_{1,j})\times\{1\}$.

Further, by construction, $\tau_i^{p_i}$ restricts to an order $n_i$ automorphism of $\dT_{i,1}$, and $\Phi_1$ conjugates $M_{\omega_i}$ to $\tau_i^{p_i}$. Therefore, it follows by the Identity Theorem and Equations~\eqref{long_derivation_eq} and~\eqref{generators_eq} that the generators of $\widehat{\Gamma}_i$, under conjugation by $\Phi_1$, generate $G_1$. Set 
$$
\mathfrak{S}:= \Phi_1(\psi_{\rho_i}(\widehat{\Pi})),
$$
\noindent where $\widehat{\Pi}$ is the fundamental domain of $\widehat{\Gamma}_{n_i, p_i}$ (see Section~\ref{fBS_subsec}). The maps $\{\gamma_j\}_{j\in \cI}$ and $\tau_i^{p_i}$ furnish the closed fundamental domain $\mathfrak{S}$ with side-pairings that are conjugate to the preferred side-pairing transformations of $\widehat{\Gamma}_i$.

 It now follows that the group $\langle\eta, \tau_i\rangle$ acts properly discontinuously on $\widetilde{\cT}_i$, admitting $\mathfrak{S}$ as its closed fundamental domain. And finally, the map $\Phi$ would descend down the quotient to give an explicit biholomorphism between the orbifolds $\widetilde{\cT_i}/\langle\eta, \tau_i\rangle$ and $\mathbb{D}/\widehat{\Gamma}_i$.  
\end{proof}

\begin{remark}\label{recover_rep_rem}
       The involution $\eta$ permutes the $p_i$ components of $\widetilde{\cT}_i$, as the set $\widetilde{\cT}_i$ is $\eta$-invariant. Let us denote this permutation by $\sigma_i$. Notably, $\sigma_i$ coincides with the permutation of the bi-infinite geodesics $\{\psi_{\rho_i}(C_{1,j})\}_{j=1}^{p_i}$, induced by the side-pairing transformations of $\Gamma_i$ on $\partial\psi_{\rho_i}(\Pi)$. By Lemma~\ref{aut_corr_lem}, the map $f_j$ in the proof of Proposition~\ref{orbifold_isom_prop} is of the form $\tau_i^{k_j}\circ\eta$, for some $1\leq k_j\leq p_i$, where $k_j$ is dictated by the permutation $\sigma_i$. Now note that by Equation~\ref{generators_eq}, the representation $(\widehat{\rho}_i:\widehat{\Gamma}_{n_i,p_i}\rightarrow\widehat{\Gamma}_i)\in \text{Teich}(\widehat{\Gamma}_{n_i,p_i})$ can be equivalently defined as follows:
       \begin{equation*}
           \widehat{\rho}_i(g_{1,j}):= \Phi_1^{-1}\circ\tau_i^{-(j-1)}\circ (\tau_i^{k_j}\circ\eta)\circ \tau_i^{j-1}\circ\Phi_1,\hspace{2mm}\text{and}\hspace{2mm}\widehat{\rho}_i(M_{\omega_i}):= \Phi_1^{-1}\circ\tau_i^{p_i}\circ\Phi_1 = M_{\omega_i}.
       \end{equation*}
       Therefore, the action of the multi-valued map $\mathfrak{C}_i$ on $\widetilde{\cT}_i$, helps us recover not only the group $\widehat{\Gamma}_i$, but also the representation $\widehat{\rho}_i$ in $\text{Teich}^{\omega_i}(\Gamma_{n_i,p_i})$.   
\end{remark}

\subsection{Map-like dynamics in $\mathfrak{C}$}\label{map-like_subsubsec} Fix an index $j\in \{1, 2, \cdots, r\}$. In the following proposition, we shall prove the second part of Theorem~\ref{corr_recover_thm}.

\begin{prop}\label{recover_Blaschke_prop}
    There exists a forward branch (respectively, a backward branch) of $\mathfrak{C}$ carrying $\widetilde{\cV}_j\cap \mathfrak{D}$ (respectively, $\widetilde{\cV}_j\setminus \overline{\mathfrak{D}}$) onto itself, and this branch is conformally conjugate to the Blaschke product $B_j$.
\end{prop}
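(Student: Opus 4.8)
The plan is to write down both branches explicitly from the algebraic description \eqref{alg_char_eq} and then transport each to $B_j$ through the conjugacy $\mathfrak{X}_{l+j}$ produced in the proof of Theorem~\ref{conf_mating_thm}. Set $\widetilde{\cV}_j^+ := \widetilde{\cV}_j\cap\mathfrak{D} = (\cR|_{\mathfrak{D}})^{-1}(\cV_j)$ and $\widetilde{\cV}_j^- := \widetilde{\cV}_j\setminus\overline{\mathfrak{D}} = \eta(\widetilde{\cV}_j^+)$, as in \eqref{blaschke_comp_eq}. Two inputs are used throughout: $\cR|_{\widetilde{\cV}_j^+}\colon \widetilde{\cV}_j^+\to\cV_j$ is a biholomorphism, since $\cR$ carries $\mathfrak{D}$ conformally onto $\cD\supseteq\cV_j$ by Theorem~\ref{alg_weak_b_inv_thm}; and $S(\cV_j)=\cV_j$ with $S|_{\cV_j}$ conformally conjugate to $B_j$ via $\mathfrak{X}_{l+j}$, by Theorem~\ref{conf_mating_thm}.

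First I would construct the forward branch on $\widetilde{\cV}_j^+$. Set
\[
F := \big(\cR|_{\widetilde{\cV}_j^+}\big)^{-1}\circ S\circ \cR|_{\widetilde{\cV}_j^+}.
\]
Since $S(\cV_j)=\cV_j$, the map $F$ carries $\widetilde{\cV}_j^+$ onto itself and is conjugate to $S|_{\cV_j}$ by $\cR|_{\widetilde{\cV}_j^+}$. That $F$ is a branch of $\mathfrak{C}$ is read off from \eqref{forward_corr_eq}: for $z\in\widetilde{\cV}_j^+\subset\overline{\mathfrak{D}}$ one has $\cR(F(z))=S(\cR(z))=\cR(\eta(z))$, while $F(z)\in\widetilde{\cV}_j^+$ and $\eta(z)\in\widetilde{\cV}_j^-$ give $F(z)\neq\eta(z)$, so $(z,F(z))\in\mathfrak{C}$.

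The backward branch on $\widetilde{\cV}_j^-$ is where care is needed, because $\cR|_{\widetilde{\cV}_j^-}$ is not injective and so cannot serve as a conjugating chart. The correct chart is instead
\[
\beta := \cR\circ\eta\big|_{\widetilde{\cV}_j^-}\colon \widetilde{\cV}_j^-\to\cV_j,
\]
which is a biholomorphism as the composite of the biholomorphisms $\eta|_{\widetilde{\cV}_j^-}\colon\widetilde{\cV}_j^-\to\widetilde{\cV}_j^+$ and $\cR|_{\widetilde{\cV}_j^+}$. Applying the commutative diagram \eqref{alg_char_cd} to $\eta(w)\in\overline{\mathfrak{D}}$ yields the identity $\cR(w)=S(\beta(w))$ for $w\in\widetilde{\cV}_j^-$. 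I would then set
\[
g := \beta^{-1}\circ S\circ\beta = \beta^{-1}\circ \cR|_{\widetilde{\cV}_j^-},
\]
a single-valued self-map of $\widetilde{\cV}_j^-$ conjugate to $S|_{\cV_j}$ by $\beta$, hence onto $\widetilde{\cV}_j^-$ since $S(\cV_j)=\cV_j$. It is a backward branch of $\mathfrak{C}$: from $\cR(\eta(g(w)))=\beta(g(w))=S(\beta(w))=\cR(w)$ together with $\eta(g(w))\in\widetilde{\cV}_j^+\neq w$ we get $(g(w),w)\in\mathfrak{C}$, so $g$ selects a $\mathfrak{C}$-preimage of each $w$.

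Finally, post-composing the two charts with $\mathfrak{X}_{l+j}^{-1}$ shows that $\mathfrak{X}_{l+j}^{-1}\circ\cR|_{\widetilde{\cV}_j^+}$ conjugates $F$ to $B_j$, and $\mathfrak{X}_{l+j}^{-1}\circ\beta$ conjugates $g$ to $B_j$. I expect the only genuine obstacle to be conceptual rather than computational: one must notice that, unlike the forward branch, the backward branch becomes single-valued only in the $\eta$-twisted chart $\beta$ (and not in $\cR|_{\widetilde{\cV}_j^-}$), and one must check in each case that the locus $\{w=\eta(z)\}$ removed from the curve \eqref{corr_eq} is precisely the unwanted preimage, so that $F$ and $g$ are honest branches of $\mathfrak{C}$ rather than of the discarded factor.
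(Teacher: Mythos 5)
Your proposal is correct and follows essentially the same route as the paper: the forward branch you call $F=(\cR|_{\widetilde{\cV}_j^+})^{-1}\circ S\circ\cR$ coincides (via the relation $S\circ\cR=\cR\circ\eta$ on $\overline{\mathfrak{D}}$) with the paper's $(\cR|_{\mathfrak{D}})^{-1}\circ\cR\circ\eta$, and your backward branch $g=\beta^{-1}\circ\cR|_{\widetilde{\cV}_j^-}$ is exactly the paper's $\eta\circ(\cR|_{\mathfrak{D}})^{-1}\circ\cR$, with your $\eta$-twisted chart $\beta$ making explicit the paper's remark that $\eta$ conjugates the two branches. Your additional verifications (that $F(z)\neq\eta(z)$ because the two points lie on opposite sides of $\partial\mathfrak{D}$, and that membership in $\mathfrak{C}$ holds) are correct and only spell out what the paper leaves implicit.
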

\begin{proof}
    The map
    $$
    \Big(\cR\big|_{\mathfrak{D}}\Big)^{-1}\circ R\circ \eta: \widetilde{\cV}_j\cap \mathfrak{D}\rightarrow\widetilde{\cV}_j\cap \mathfrak{D}
    $$
    is a forward branch of the correspondence $\mathfrak{C}$, and clearly it is conformally conjugate to $S\big|_{\cV_j}$ via the univalent map $\cR: \widetilde{\cV}_j\cap \mathfrak{D}\rightarrow \cV_j$. Further, it is easy to see that the map 
    $$
    \eta\circ \Big(\cR\big|_{\mathfrak{D}}\Big)^{-1}\circ R: \widetilde{\cV}_j\setminus \overline{\mathfrak{D}} \rightarrow \widetilde{\cV}_j\setminus \overline{\mathfrak{D}}
    $$
    is a backward branch of $\mathfrak{C}$. The involution $\eta$ restricts to a conformal conjugacy between the backward and forward branches of the correspondence $\mathfrak{C}$. Now, since $S\big|_{\cV_j}$ is conformally conjugate to $B_j$, the result follows.
\end{proof}

\begin{remark}
    Let $\mathcal{Z}:=\Sigma\setminus\bigcup_{i=1}^{l+r} \widetilde{\cW}_i$. Suppose that $\cR$ is injective on $\overline{\mathfrak{D}}$. Then, by replacing $\widetilde{\cV}_j\cap \mathfrak{D}$ (respectively, $\widetilde{\cV}_j\setminus \overline{\mathfrak{D}}$) with $\mathcal{Z}\cap \overline{\mathfrak{D}}$ (respectively, $\mathcal{Z}\setminus \mathfrak{D}$) in Proposition~\ref{recover_Blaschke_prop}, we obtain a topological conjugacy between $R$ and a forward branch (respectively, a backward branch) of the correspondence $\mathfrak{C}$, carrying $\mathcal{Z}\cap \overline{\mathfrak{D}}$ (respectively, $\mathcal{Z}\setminus \mathfrak{D}$) onto itself. Otherwise, one needs to pass to a nodal surface where the map $\cR$ descends to an injective map. This is precisely why a nodal surface was constructed in \cite[\S 5.2]{mj2023matings} to describe matings of Fuchsian groups with polynomials whose Julia sets are not Jordan curves.
\end{remark}

\subsection{Unification of Fuchsian groups and Blaschke products in correspondences}\label{unify_subsec}

We now combine the results proved so far to establish the main theorem of this section, i.e., Theorem~\ref{corr_recover_thm}. This is followed by two concrete instantiations, illustrating examples from Section~\ref{examples_sec}. Finally, we complete the proof of Theorem~\ref{mating_thm_intro}, thereby concluding a key component of the narrative introduced in Section~\ref{intro_sec}.

\begin{proof}[Proof of Theorem~\ref{corr_recover_thm}]
The first item is the content of Corollary~\ref{group_action_corr} and Proposition~\ref{orbifold_isom_prop}, while the second item follows from Proposition~\ref{recover_Blaschke_prop}.  
\end{proof}

\subsubsection{Illustrations}
1) Let $S:\overline{\cD}\rightarrow\widehat{\mathbb{C}}$ be a conformal mating as in Example~\ref{111_ex}. The domain $\cD$ is an annulus bound by a blue teardrop-shaped Jordan curve and a red topological quadrilateral (see Figure~\ref{111_fig}). The groups $\Gamma_1$ and $\Gamma_2$ that go into the construction of $S$ are $\Gamma_1:=\Gamma_{3,1}$ and $\Gamma_2:=\rho(\Gamma_{1, 4}^2)$, for some $(\rho:\Gamma_{1, 4}^2\rightarrow\Gamma_2)\in\text{Teich}(\Gamma_{1,4}^2)$. Here, the set $\widetilde{\cT_1}$ is an $\eta$-invariant topological disk, and the set $\widetilde{\cT}_2$ is a union of four topological disks; two of which are $\eta$-invariant, while the other two form a $2$-cycle under $\eta$. The associated correspondence $\mathfrak{C}$, which is defined on a torus, combines the actions of a quadratic Blaschke product $B$, and the groups $\Gamma_1, \Gamma_2$.

2) Let $S:\overline{\cD}\rightarrow\widehat{\mathbb{C}}$ be a conformal mating as in Example~\ref{111''_ex}. The triply connected domain $\cD$ is bound by a red cardioid-shaped Jordan curve, a blue triangle, and a blue quadrilateral (see Figure~\ref{111''_fig}). The groups $\Gamma_1, \Gamma_2$, and $\Gamma_3$ used to manufacture $S$ are $\Gamma_1:=\Gamma_{5,1}$, $\Gamma_2:=\Gamma_{1,3}$, and $\Gamma_3:=\rho(\Gamma_{1, 4}^2)$, for some $(\rho:\Gamma_{1, 4}^2\rightarrow\Gamma_3)\in\text{Teich}(\Gamma_{1,4}^2)$. In this case, the set $\widetilde{\cT_1}$ is an $\eta$-invariant topological disk, $\widetilde{\cT}_2$ is a union of three topological disks (one of which is $\eta$-invariant, while the other two form a $2$-cycle under $\eta$), and $\widetilde{\cT}_3$ is a union of four topological disks (two of which are $\eta$-invariant, while the other two form a $2$-cycle under $\eta$). The associated correspondence $\mathfrak{C}$, which is defined on a hyperelliptic Riemann surface of genus $2$, combines the actions of the groups $\Gamma_1, \Gamma_2$, and $\Gamma_3$.

\subsubsection{Proof of Theorem~\ref{mating_thm_intro}}\label{mating_thm_intro_proof_subsec}

By Proposition~\ref{mating_surf_exists_prop}, we know that the groups $\{\widehat{\Gamma}_i\}_{i=1}^l$ and the Blaschke products $\{B_j\}_{j=1}^r$ can be conformally mated, and that the associated blender surface is hyperelliptic. The conclusion now follows directly from Theorem~\ref{corr_recover_thm}.

\section{Moduli spaces of correspondences in Hurwitz spaces}\label{Hurwitz_sec} Here we turn to the second main theme of our paper and complete the proof of Theorem~\ref{hurwitz_thm_intro}. To that end, we begin by identifying the equivalence classes of our correspondences with a suitable subset of the Hurwitz space of degree~$d$ meromorphic maps on genus~$g$ Riemann surfaces with~$q$ ordered marked points. We then construct a map from the product of the parameter spaces of Fuchsian groups (as described in Section~\ref{mateable_maps_subsec}) and hyperbolic Blaschke products to this Hurwitz space and show that it is injective.

\subsection{Standard conformal matings}\label{standard_matings_subsec}

Consider the groups $\Gamma_i \in \text{Teich}^{\omega_i}(\Gamma_{n_i,p_i})$, for $1\leq i\leq l$, and the maps $B_j \in \mathcal{B}_{d_j}$, for $1\leq j\leq r$, where $\mathcal{B}_{d_j}$ denotes the parameter space of degree $d_j$ hyperbolic Blaschke products upto M\"obius conjugation. Moving forward, we can and will assume that the attracting fixed point of $B_j$ is at $0$; and that it fixes $1$ on $\partial\D$.

By Lemma~\ref{combi_lem}, there exists a critically fixed polynomial $P$ with bounded immediate basins $\{U_i\}_{i = 1}^{l+r}$, such that $\deg(P|_{U_i}) = n_ip_i-1$, for $1\leq i\leq l$, and $\text{deg}(P|_{U_{l+j}}) = d_{j}$, for $1\leq j\leq r$. Throughout this section, we shall fix such a polynomial $P$. Let the finite critical points of $P$ be $\{c_1, c_2, \cdots, c_{l+r}\}$, with $c_i\in U_i$, for $i\in\{1,\cdots, l+r\}$. In addition, we also fix points $\xi_i\in \partial U_i$ such that $P(\xi_i) = \xi_i$, for $i\in\{1,\cdots, l+r\}$ (we note that the $\xi_i$'s are not necessarily distinct). Let $r_i:\D\rightarrow U_i$ be the uniformizer of $U_i$, such that $r_i(0) = c_i$, and $r_i(1) = \xi_i$. Since $U_i$ is a Jordan domain, we have a homeomorphic extension $r_i:\overline{\D}\to\overline{U_i}$, for $i\in\{1,\cdots, l+r\}$. Then the Blaschke products 
$$
\mathscr{B}_i\Big|_{\mathbb{D}}:= r_i^{-1}\circ P\circ r_i,\ i\in\{1,\cdots,l+r\},
$$
fix the points $1\in\partial\D$, and have a superattracting fixed point of maximal local degree at $0$. As a result, each $\mathscr{B}_i$ is of the form $w\mapsto w^m$, for appropriate $m\geq 2$. 

For $1\leq i\leq l$, recall from Section~\ref{mateable_maps_subsec} the normalized topological conjugacies $h_i:\mathbb{S}^1\to\mathbb{S}^1$ from $\mathscr{B}_i$ to $A_{\Gamma_i}$.
Further, for $1\leq j\leq r$, we normalize the quasisymmetric conjugacy $h_{l+j}:\mathbb{S}^1\rightarrow\mathbb{S}^1$ between $\mathscr{B}_{l+j}$ and $B_j$, by requiring $h_{l+j}(1) = 1$.  Note that these normalizations uniquely determine the topological semi-conjugacies $\zeta_i^{-1}:\mathbb{S}^1\to\partial U_i$, $i\in \{1,2,\cdots,l\}$, between $A_{\Gamma_i}$ and $P$, and $\zeta_{l+j}^{-1}:\mathbb{S}^1\to\partial U_{l+j}$, $j\in \{1,\cdots,r\}$, between $B_{j}$ and $P$; where 
$$
\zeta_i:= h_i\circ r_i^{-1},\ \text{for}\ i\in \{1,2,\cdots,l+r\}.
$$
With the collection of maps $\{\zeta_i\}_{i=1}^{l+r}$, one can run the proof of Theorem~\ref{conf_mating_thm}, to get a conformal mating $S:\overline{\cD}\rightarrow\widehat{\C}$ that replaces the dynamics of $P$ on $U_i$ with that of $A_{\Gamma_i}$, for $1\leq i\leq l$, and on $U_{l+j}$ with that of $B_j$, for $1\leq j\leq r$. We call such a map $S$, a \emph{standard conformal mating} associated with the collections $\{\Gamma_i\}_{i=1}^l$ and~$\{B_j\}_{j=1}^r$. 

\begin{prop}\label{uniqueness_standard_mating_prop}
    Let $S:\overline{\cD}\rightarrow\widehat{\C}$ and $S':\overline{\cD'}\rightarrow\widehat{\C}$ be two standard conformal matings associated with the collections $\{\Gamma_i\}_{i=1}^l$ and $\{B_j\}_{j=1}^r$. Then there exists a M{\"o}bius map $M$ such that $S_1 = M\circ S_2\circ M^{-1}$.
\end{prop}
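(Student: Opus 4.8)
The plan is to show that the \emph{topological} model underlying both matings is forced to coincide by the normalizations, and that the two conformal structures realizing it can differ only by a M\"obius change of coordinates.

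\textbf{Uniqueness of the topological mating.} First I would observe that, because the polynomial $P$ is fixed throughout the section and each uniformizer $r_i$ is pinned down by the requirements $r_i(0)=c_i$, $r_i(1)=\xi_i$ (a conformal isomorphism of a Jordan domain is determined by the image of one interior point and one boundary point), the boundary circle maps $\zeta_i|_{\mathbb{S}^1}=h_i\circ r_i^{-1}$ are completely determined: for $1\leq i\leq l$ the conjugacy $h_i$ is the normalized conjugacy of Section~\ref{mateable_maps_subsec}, which is unique because a conjugacy between the expansive maps $z^{n_ip_i-1}$ and $A_{\Gamma_i}$ is determined by the image of the fixed point $1$ (the fixed points and their iterated preimages being dense); and for $1\leq j\leq r$ the map $h_{l+j}$ is the conjugacy between $z^{d_j}$ and $B_j$ normalized by $h_{l+j}(1)=1$, unique by the same density argument. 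Hence the gluing relation $\sim_m$ and the topological mating $\widetilde{S}$ are identical for $S$ and $S'$, and I may write $S=H\circ\widetilde{S}\circ H^{-1}$ and $S'=H'\circ\widetilde{S}\circ (H')^{-1}$ for the two David straightening maps $H,H'$ produced in Theorem~\ref{conf_mating_thm}.

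\textbf{Assembling a conjugacy.} Next I would reuse the conformal conjugacies to the model dynamics recorded at the end of the proof of Theorem~\ref{conf_mating_thm}: $\mathfrak{X}_R=H$ from $P$ to $S$, $\mathfrak{X}_i=H\circ\zeta_i^{-1}$ from $A_{\Gamma_i}$ to $S$, and $\mathfrak{X}_{l+j}=H\circ\zeta_{l+j}^{-1}$ from $B_j$ to $S$, with primed analogues for $S'$. I define $\Phi$ piecewise by $\Phi=H'\circ H^{-1}$ on the interior of the non-escaping set $\cK$, and by $\Phi=\mathfrak{X}'_i\circ\mathfrak{X}_i^{-1}$ (resp. $\mathfrak{X}'_{l+j}\circ\mathfrak{X}_{l+j}^{-1}$) on $\cT_i=H(U_i)$ (resp. $\cV_j=H(U_{l+j})$), extended over the full grand orbits $\cW_i$ by $S$--$S'$ equivariance. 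Each local formula is a composition of conformal maps, hence conformal, and conjugates $S$ to $S'$ on its piece. The essential compatibility is along the welding curves $H(\partial U_i)$: since the boundary semi-conjugacies satisfy $\zeta_i^{-1}|_{\mathbb{S}^1}=(\zeta_i')^{-1}|_{\mathbb{S}^1}$ (they depend only on the already-determined circle data), a point $H(p)$ with $p\in\partial U_i$ is carried to $H'(p)$ both by the $\cT_i$-formula and by $H'\circ H^{-1}$. Thus $\Phi$ is a well-defined homeomorphism of $\widehat{\mathbb{C}}$, continuous everywhere, conjugating $S$ to $S'$, and conformal off the grand orbit of $\bigcup_i H(\partial U_i)$.

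\textbf{Conformality and conclusion.} It then remains to upgrade $\Phi$ to a global conformal map. As in the proof of Theorem~\ref{conf_mating_thm}, each $U_i$ is a John domain, so $\partial U_i$ is $W^{1,1}$-removable \cite{jones2000removability} and $H(\partial U_i)$ is conformally removable by \cite[Theorem~2.7]{lyubich2020david}. Since $\Phi$ is continuous and conformal on either side of each curve $H(\partial U_i)$, removability lets it extend conformally across; the relation $\Phi\circ S=S'\circ\Phi$ with $S,S'$ meromorphic then propagates conformality across the entire grand orbit, by induction on the rank of the escaping tiles, so that $\Phi$ is conformal on all of $\widehat{\mathbb{C}}$. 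A conformal automorphism of the sphere is M\"obius, giving $\Phi=M$ with $S=M\circ S'\circ M^{-1}$ on $\cD=M(\cD')$. The main obstacle I anticipate is precisely this last step: although each individual welding curve is conformally removable, one must ensure that the continuous, piecewise-conformal $\Phi$ is genuinely conformal across the closure of the whole grand orbit of these curves, equivalently across the limit set $\partial\cK$; this is where the John/removability input together with the expansivity of the boundary dynamics must be invoked carefully. By comparison, the uniqueness of $\widetilde{S}$ and the continuity and gluing of $\Phi$ are essentially bookkeeping once the normalizations are fixed.
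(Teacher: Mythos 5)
Your construction of the conjugacy is essentially the paper's: both proofs patch together the conformal conjugacies $\mathfrak{X}_P'\circ\mathfrak{X}_P^{-1}$, $\mathfrak{X}_i'\circ\mathfrak{X}_i^{-1}$ and their dynamical spreadings $(S')^{-n}\circ\mathfrak{X}_i'\circ\mathfrak{X}_i^{-1}\circ S^{\circ n}$ over the escaping set, obtaining a homeomorphism of $\widehat{\C}$ that conjugates $S$ to $S'$ and is conformal off a bad set, and both invoke the John-domain/removability machinery of \cite{jones2000removability} and \cite[Theorem~2.7]{lyubich2020david} to upgrade it to a M\"obius map. Your preliminary paragraph on uniqueness of the topological mating is bookkeeping the paper leaves implicit, and is fine.

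The one genuine gap is exactly the step you flag at the end, and your proposed route does not close it. You apply conformal removability to each welding curve $H(\partial U_i)$ separately and then try to propagate conformality over the grand orbit by induction on the rank of escaping tiles. That induction only ever establishes conformality on the complement of the \emph{closure} of the grand orbit of $\bigcup_i H(\partial U_i)$; it says nothing about the accumulation set, which is essentially $\partial\cK=\mathfrak{X}_P(\mathcal{J}(P))$, a set of a priori positive capacity on which a continuous map conformal off it need not be conformal. The fix --- and what the paper actually does --- is to apply removability \emph{once, to the whole Julia set}: the basin of infinity of the critically fixed polynomial $P$ is a John domain, so its boundary $\mathcal{J}(P)$ (which contains every $\partial U_i$ and all their iterated preimages and accumulation points) is $W^{1,1}$-removable as a single set, and hence $\mathfrak{X}_P(\mathcal{J}(P))$ is conformally removable by \cite[Theorem~2.7]{lyubich2020david} since $\mathfrak{X}_P=H$ is a David homeomorphism. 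The patched map $M$ is continuous on $\widehat{\C}$ and conformal off $\mathfrak{X}_P(\mathcal{J}(P))$, so it is M\"obius; no curve-by-curve argument or induction on ranks is needed. With that substitution your proof is complete and coincides with the paper's.
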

\begin{proof}
    Let us denote by $\{\mathfrak{X}_i\}_{i=1}^{l+r},\ \mathfrak{X}_P$, and by $\{\mathfrak{X}'_i\}_{i=1}^{l+r},\ \mathfrak{X}'_P$, the conformal conjugacies associated with the maps $S$ and $S'$, respectively (see the proof of Theorem~\ref{conf_mating_thm}). For $1 \leq i \leq l$, the map $S$ is univalent on every component of $\mathcal{W}_i$ except $\mathcal{T}_i$. Similarly, for $1 \leq j \leq r$, $S$ is univalent on every component of $\mathcal{W}_{l+j}$ except $\mathcal{V}_j$. An analogous statement holds for $S'$ in its domain of definition $\overline{\mathcal{D}'}$. Therefore, for each $1\leq i\leq l+r$, the maps, $\mathfrak{X}_P'\circ\mathfrak{X}_P^{-1}$, $\mathfrak{X}_i'\circ\mathfrak{X}_i^{-1}$, and,
    $$
    (S')^{-n}\circ\mathfrak{X}_i'\circ\mathfrak{X}_i^{-1}\circ S^{\circ n},
    $$ 
    defined on the pre-images $\left\{S^{-n}\left(\overline{\cT}_i\right)\right\}_{i=1}^l$ and $\left\{S^{-n}\left(\overline{\cV_j}\right)\right\}_{j=1}^r$, for $n\geq 1$, patch together to give a homeomorphism $M:\widehat{\C}\rightarrow\widehat{\C}$, that is conformal away from $\mathfrak{X}_P(\mathcal{J}(P))$. The basin of infinity of $P$ is a John domain \cite[Chapter~7, Theorem~3.1]{carleson1996complex}; hence $\mathcal{J}(P)$ is $W^{1,1}-$removable \cite{jones2000removability}. By \cite[Theorem~2.7]{lyubich2020david}, the set $\mathfrak{X}_P(\mathcal{J}(P))$ is conformally removable. Thus, the map $M$ extends conformally to all of $\widehat{\C}$ giving the desired M\"obius map. 
\end{proof}

For a standard conformal mating $S:\overline{\cD}\rightarrow\widehat{\C}$ (as above), let $\Sigma$ and $\cR$ be the associated blender surface and the uniformizing meromorphic map, respectively. The following proposition records the extent of uniqueness of such an association. 

\begin{prop}\label{uniqeness_of_mero_prop}
    Suppose there exist $4$-tuples $(\Sigma_1, \cR_1, \mathfrak{D}_1, \eta_1)$, $(\Sigma_2, \cR_2, \mathfrak{D}_2, \eta_2)$ satisfying:
    \begin{enumerate}
        \item $\eta_i$ is an involution of $\Sigma_i$, with $\eta_i(\partial\mathfrak{D}_i) = \partial\mathfrak{D}_i$,
        \item $\cR_i|_{\mathfrak{D}_i}$ is injective,
        \item $\cR_i(\overline{\mathfrak{D}_i}) = \overline{\cD}$, and,
        \item $S|_{\overline{\cD}} = \cR_i\circ\eta_i\circ(\cR_i|_{\overline{\mathfrak{D}}_i})^{-1}$,
    \end{enumerate}
    for $i\in\{1, 2\}$. Then, there exists a biholomorphism $M: \Sigma_1 \rightarrow\Sigma_2$, such that $\eta_2\circ M = M\circ \eta_1$, $M(\mathfrak{D}_1) = \mathfrak{D}_2$, and $\cR_1 = \cR_2\circ M$.
\end{prop}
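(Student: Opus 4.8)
The plan is to build the biholomorphism $M$ piece by piece, using the two copies of $\overline{\cD}$ inside each $\Sigma_i$ and the fact that $\cR_i$ identifies $\mathfrak{D}_i$ conformally with $\cD$. Write $\Phi_i:=(\cR_i|_{\overline{\mathfrak{D}}_i})^{-1}$. By hypotheses (2) and (3), together with the open mapping theorem and the continuity of $\cR_i$ up to the boundary, each $\cR_i|_{\mathfrak{D}_i}$ is a biholomorphism onto $\cD$, so $\Phi_i:\overline{\cD}\to\overline{\mathfrak{D}_i}$ is a homeomorphism, conformal on $\cD$, with $\cR_i\circ\Phi_i=\mathrm{id}_{\overline{\cD}}$. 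As in Theorem~\ref{alg_weak_b_inv_thm} (and forced here by $\deg(\cR_i)\geq 2$, so that $S|_\cD$ is not injective), the involution $\eta_i$ must \emph{swap} the two sides of its fixed boundary curve, i.e. $\eta_i(\mathfrak{D}_i)=\Sigma_i\setminus\overline{\mathfrak{D}_i}$; thus $\Sigma_i=\overline{\mathfrak{D}_i}\cup\eta_i(\mathfrak{D}_i)$. I then define
\begin{equation*}
M:=\Phi_2\circ\cR_1 \quad\text{on } \overline{\mathfrak{D}_1},\qquad
M:=\eta_2\circ M\circ\eta_1 \quad\text{on } \Sigma_1\setminus\mathfrak{D}_1 .
\end{equation*}
The first clause is a biholomorphism $\mathfrak{D}_1\to\mathfrak{D}_2$ (homeomorphism on closures); the second clause makes sense because for $z\in\Sigma_1\setminus\mathfrak{D}_1=\eta_1(\overline{\mathfrak{D}_1})$ we have $\eta_1(z)\in\overline{\mathfrak{D}_1}$, where $M$ is already defined, and the resulting values land in $\eta_2(\mathfrak{D}_2)=\Sigma_2\setminus\overline{\mathfrak{D}_2}$.

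The hard part will be checking that the two clauses agree on $\partial\mathfrak{D}_1$, which is what guarantees $M$ is single-valued and continuous. Fix $z\in\partial\mathfrak{D}_1$ and set $u:=\cR_1(z)\in\partial\cD$. The first clause gives $M(z)=\Phi_2(u)$. For the second, since $z=\Phi_1(u)$, relation (4) for $i=1$ yields $\cR_1(\eta_1(z))=\cR_1(\eta_1(\Phi_1(u)))=S(u)$, so the second clause gives $M(z)=\eta_2(\Phi_2(S(u)))$. Thus I must verify $\eta_2(\Phi_2(S(u)))=\Phi_2(u)$. Applying $\cR_2$ and using relation (4) for $i=2$ together with the involutivity $S(S(u))=u$ on $\partial\cD$,
\begin{equation*}
\cR_2\bigl(\eta_2(\Phi_2(S(u)))\bigr)=S(S(u))=u=\cR_2(\Phi_2(u)).
\end{equation*}
Both $\eta_2(\Phi_2(S(u)))$ and $\Phi_2(u)$ lie in $\partial\mathfrak{D}_2\subset\overline{\mathfrak{D}_2}$ (here I use $\eta_2(\partial\mathfrak{D}_2)=\partial\mathfrak{D}_2$), and $\cR_2$ is injective on $\overline{\mathfrak{D}_2}$, so the two points coincide. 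This is the crux of the argument; the rest is formal.

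With continuity in hand, $M$ is holomorphic on $\Sigma_1\setminus\partial\mathfrak{D}_1$ (a composition of conformal maps on each side) and continuous across $\partial\mathfrak{D}_1$, which is a finite union of non-singular real-analytic arcs meeting at finitely many points (Theorem~\ref{alg_weak_b_inv_thm}). Real-analytic arcs are removable for continuous functions holomorphic off them, and the finitely many corner points are removable as isolated singularities of the (locally bounded) map $M$; hence $M$ extends holomorphically across $\partial\mathfrak{D}_1$ and is holomorphic on all of $\Sigma_1$. Being a continuous bijection that is biholomorphic on each side, $M:\Sigma_1\to\Sigma_2$ is a biholomorphism. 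Finally I would record the three required identities: $M(\mathfrak{D}_1)=\mathfrak{D}_2$ holds by construction; $\eta_2\circ M=M\circ\eta_1$ holds on $\mathfrak{D}_1$ by the definition of the second clause and extends to $\Sigma_1$ by continuity and $\eta_i^{\circ 2}=\mathrm{id}$; and $\cR_2\circ M=\cR_1$ follows on $\overline{\mathfrak{D}_1}$ from $\cR_2\circ\Phi_2=\mathrm{id}$, while on $\eta_1(\mathfrak{D}_1)$ both sides equal $S(\cR_1(\eta_1(\cdot)))$ after one application of relation (4) for each index. This completes the verification.
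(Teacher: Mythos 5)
Your proof is correct and follows essentially the same route as the paper: define $M$ as $(\cR_2|_{\overline{\mathfrak{D}}_2})^{-1}\circ\cR_1$ on $\overline{\mathfrak{D}}_1$ and as its $\eta$-conjugate on the complement, verify agreement along $\partial\mathfrak{D}_1$ using relation (4) and the involutivity of $S$ on $\partial\cD$, and remove the boundary by real-analytic-arc removability; you in fact spell out the boundary-matching computation that the paper only asserts. The one place you are slightly less careful than the paper is at the finitely many preimages of the singular set $X$, where $(\cR_i|_{\overline{\mathfrak{D}}_i})^{-1}$ need not be single-valued (so $\Phi_i$ is not literally a homeomorphism of closures); the paper avoids this by first deleting $(\cR_i|_{\partial\mathfrak{D}_i})^{-1}(X)$ and invoking Riemann removability at the end, which is exactly how your isolated-point step should be phrased.
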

\begin{proof}
    Set $\Sigma_i^0:= \Sigma\setminus(\cR_i|_{\partial\mathfrak{D}_i})^{-1}(X)$, $i\in\{1,2\}$, where $X$ is the set of singular points on $\partial\cD$. Define $M:\Sigma_1^0\rightarrow\Sigma_2$ as follows,
    \begin{equation*}
        M(z):= \begin{cases*}
        \big(\cR_2\big|_{\mathfrak{\overline{\mathfrak{D}}_2}}\big)^{-1}\circ \cR_1(z), & if $z\in \Sigma_1^0\cap\overline{\mathfrak{D}}_1$,\\
        \eta_2\circ\big(\cR_2\big|_{\mathfrak{\overline{\mathfrak{D}}_2}}\big)^{-1}\circ \cR_1\circ\eta_1, &if $z\in \Sigma_1^0\setminus\overline{\mathfrak{D}}_1$.
        \end{cases*}
    \end{equation*}
    Note that the function $\big(\cR_2|_{\mathfrak{\partial\mathfrak{D}_2}}\big)^{-1}\circ \cR_1(z)$ conjugates $\eta_1|_{\partial\mathfrak{D}_1\cap\Sigma_1^0}$ to $\eta_2|_{\partial\mathfrak{D}_2\cap\Sigma_2^0}$. Therefore, $M$ is continuous on $\Sigma_1^0$ and conformal away from $\Sigma_1^0\cap\partial\mathfrak{D}_1$. Further, $\Sigma_1^0\cap\partial\mathfrak{D}_1$ is a disjoint union of non-singular analytic curves, and is hence conformally removable. Therefore, $M$ is an injective holomorphic map on $\Sigma_1^0$. Finally, by the Riemann Removability Theorem, $M$ extends to $\Sigma_1$ univalently. From the definition of $M$, it now follows that $\eta_2\circ M = M\circ \eta_1$, $M(\mathfrak{D}_1) = \mathfrak{D}_2$, and $\cR_1 = \cR_2\circ M$.  
\end{proof}

\subsection{Topological equivalence of limit sets}\label{limit_sets_equiv_subsec} Let $\{\Gamma_i\}_{i=1}^l$, $\{B_j\}_{j=1}^r$, $S$, $\Sigma$, and $\cR$ be as described in Section~\ref{standard_matings_subsec}. Further let $\{\mathfrak{X}_i\}_{i=1}^{l+r}$ and $\mathfrak{X}_P$ be the conformal conjugacies associated with the map $S$ (see the proof of Theorem~\ref{conf_mating_thm}). 

Consider for $1\leq i\leq l$, and $1\leq j\leq r$, a collection of groups $\Gamma_i'\in \text{Teich}^{\omega_i}(\Gamma_{n_i, p_i})$, and maps $B_j'\in \mathcal{B}_{d_j}$. Let $S':\overline{\cD'}\rightarrow\widehat{\C}$ be a standard conformal mating associated with $\{\Gamma_i'\}_{i=1}^l$ and $\{B_j'\}_{j=1}^r$. By results in Section~\ref{b_involutions_sec}, the map $S'$ yields a triple $(\Sigma', \eta', \cR')$, where $\Sigma'$ is the blender surface with hyperelliptic involution $\eta'$, and $\cR'$, the uniformizing meromorphic map. Let $\mathfrak{C}'$ be the correspondence defined by $\cR'$ and $\eta'$ on $\Sigma'$ (as in Equation~\ref{corr_eq}). In this subsection, we will establish the existence of a quasiconformal map that conjugates the action of $\mathfrak{C}$ on $\Lambda(\mathfrak{C})$ to the action of $\mathfrak{C}'$ on~$\Lambda(\mathfrak{C'})$.

Let us denote the conformal conjugacies associated with $S'$ by $\{\mathfrak{X}_i'\}_{i=1}^{l+r}$, and $\mathfrak{X}_P'$. The map $S'$ can be re-constructed from $S$ by the following standard quasiconformal surgery. For $1\leq i\leq l$, we make use of the quasiconformal conjugacy between $A_{\Gamma_i}$ and $A_{\Gamma_i'}$ to deform the complex structure on $\cW_i$. Similarly, for $1\leq j\leq r$, a quasisymmetric conjugacy between $B_j\vert_{\mathbb{S}^1}$ and $B_j'\vert_{\mathbb{S}^1}$ allows us to re-define the map $S$ on $\cV_j$ (cf. \cite{mcmullen1988automorphisms}, \cite[\S 7.5]{branner2014quasiconformal}). After straightening, this gives us a meromorphic map $\widetilde{S}$ that replaces the dynamics of $S$, on appropriate domains, with that of $\{A_{\Gamma_i'}\}_{i=1}^l$ and $\{B_j'\}_{j=1}^r$. In addition, there exists a quasiconformal map that conjugates the dynamics of $S$ to that of $\widetilde{S}$, away from the domains $\{\cV_j\}_{j=1}^r$. Since by Proposition~\ref{uniqueness_standard_mating_prop}, the map $\widetilde{S}$ is M\"obius conjugate to $S'$; there exists a quasiconformal map $\varphi$ that conjugates the action of $S$ to that of $S'$, away from $\bigcup_{j=1}^r\cV_j$.

The following proposition would now help us lift the map $\varphi$ to a quasiconformal map between the blender surfaces $\Sigma$ and $\Sigma'$. Recall that by Proposition~\ref{mating_surf_exists_prop}, $\Sigma$ and $\Sigma'$ are hyperelliptic Riemann surfaces with involutions, say, $\eta$ and $\eta'$, respectively.

\begin{prop}\label{qc_lift_prop}
    There exists a quasiconformal map $\widetilde{\varphi}:\Sigma\rightarrow\Sigma'$, such that $\cR'\equiv \varphi\circ\cR\circ\widetilde{\varphi}^{-1}$ outside $\widetilde{\phi}\left(\eta\left(\left(\cR\vert_{\mathfrak{D}}\right)^{-1}\left(\bigcup_{j=1}^r\cV_j\right)\right)\right)$, and $\widetilde{\varphi}\circ\eta\circ\widetilde{\varphi}^{-1} = \eta'$.
\end{prop}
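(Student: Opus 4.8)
The plan is to build $\widetilde{\varphi}$ by transporting $\varphi$ through the conformal identifications of the welding pieces, and then to force the involution relation by symmetrizing. Recall from Theorem~\ref{alg_weak_b_inv_thm} that $\cR$ restricts to a conformal isomorphism $\cR\colon\overline{\mathfrak{D}}\to\overline{\cD}$ (conformal on the interior, homeomorphic on the boundary) and that $\Sigma=\mathfrak{D}\cup\eta(\mathfrak{D})\cup\partial\mathfrak{D}$, with the analogous statements for $\Sigma'$. Since $\varphi$ conjugates the boundary involution $S|_{\partial\cD}$ to $S'|_{\partial\cD'}$ and carries the holes of $\cD$ (where $S$ is undefined) to those of $\cD'$, the surgery construction forces $\varphi(\overline{\cD})=\overline{\cD'}$. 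I would therefore first define
\[
\widetilde{\varphi}\big|_{\overline{\mathfrak{D}}} := \big(\cR'\big|_{\overline{\mathfrak{D}'}}\big)^{-1}\circ\varphi\circ\big(\cR\big|_{\overline{\mathfrak{D}}}\big),
\]
a quasiconformal homeomorphism $\overline{\mathfrak{D}}\to\overline{\mathfrak{D}'}$ (conformal $\circ$ quasiconformal $\circ$ conformal), and then extend to $\eta(\mathfrak{D})=\Sigma\setminus\overline{\mathfrak{D}}$ by the forced rule $\widetilde{\varphi}:=\eta'\circ\widetilde{\varphi}\circ\eta$. By construction this extension is quasiconformal on $\eta(\mathfrak{D})$, since pre- and post-composition with the conformal involutions $\eta,\eta'$ preserves the dilatation, and it satisfies $\widetilde{\varphi}\circ\eta=\eta'\circ\widetilde{\varphi}$ identically.

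The main point to verify — and the step I expect to be the crux — is that the two definitions agree on the welding locus $\partial\mathfrak{D}$, so that the glued map is a genuine homeomorphism, and that this gluing upgrades to a quasiconformal map of $\Sigma$. For the matching I would take $z\in\partial^0\mathfrak{D}$ and use the two commutative squares from Diagram~\eqref{alg_char_cd}: on $\partial^0\mathfrak{D}$ one has $\cR\circ\eta=S\circ\cR$ and $\cR'\circ\eta'=S'\circ\cR'$, while on $\partial\cD$ (which is disjoint from the interior Blaschke domains $\bigcup_j\cV_j$) $\varphi$ intertwines $S$ and $S'$. Chasing $\widetilde{\varphi}(z)=\eta'\circ(\cR'|_{\overline{\mathfrak{D}'}})^{-1}\circ\varphi\circ\cR\circ\eta(z)$ and applying in turn $\cR\circ\eta=S\circ\cR$, the conjugacy $\varphi\circ S=S'\circ\varphi$, the relation $S'\circ\cR'=\cR'\circ\eta'$, and the involutivity $\eta'\circ\eta'=\mathrm{id}$ collapses the expression to $(\cR'|_{\overline{\mathfrak{D}'}})^{-1}\circ\varphi\circ\cR(z)$, i.e. the value prescribed by the first definition. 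Hence $\widetilde{\varphi}$ is well-defined and continuous across $\partial\mathfrak{D}$. Since $\partial\mathfrak{D}\setminus(\cR|_{\partial\mathfrak{D}})^{-1}(X)$ is a finite union of non-singular real-analytic arcs and $X$ is finite, the welding set has measure zero and is removable for quasiconformal maps; a homeomorphism that is quasiconformal on each complementary piece and continuous across the welding set is therefore globally quasiconformal on $\Sigma$.

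Finally I would check the conjugacy relation and pin down the exceptional set. On $\mathfrak{D}'=\widetilde{\varphi}(\mathfrak{D})$ the identity $\cR'=\varphi\circ\cR\circ\widetilde{\varphi}^{-1}$ is immediate from the defining formula, with no appeal to the dynamics of $\varphi$. On $\Sigma'\setminus\overline{\mathfrak{D}'}=\widetilde{\varphi}(\eta(\mathfrak{D}))$ I would write, for $z\in\eta(\mathfrak{D})$, the backward relation $\cR(z)=S(\cR(\eta(z)))$ with $\cR(\eta(z))\in\cD$, and compute $\cR'(\widetilde{\varphi}(z))=S'\big(\varphi(\cR(\eta(z)))\big)$ using $\cR'\circ\eta'=S'\circ\cR'$ together with the conjugacy already established on $\mathfrak{D}$. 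This equals $\varphi(\cR(z))=\varphi\big(S(\cR(\eta(z)))\big)$ exactly when $\varphi\circ S=S'\circ\varphi$ holds at the point $\cR(\eta(z))$, that is, when $\cR(\eta(z))\notin\bigcup_j\cV_j$. Thus the relation $\cR'\equiv\varphi\circ\cR\circ\widetilde{\varphi}^{-1}$ can fail only on $\widetilde{\varphi}\big(\eta\big((\cR|_{\mathfrak{D}})^{-1}(\bigcup_{j=1}^r\cV_j)\big)\big)$, precisely the excluded set in the statement, while the involution identity $\widetilde{\varphi}\circ\eta\circ\widetilde{\varphi}^{-1}=\eta'$ is already secured by the construction in the first step.
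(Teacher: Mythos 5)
Your proof is correct, and it takes a genuinely different route from the paper. The paper never writes $\widetilde{\varphi}$ down explicitly: it interpolates a quasiregular map $\widecheck{\cR}$ on $\Sigma$ (equal to $\varphi\circ\cR$ on $\overline{\mathfrak{D}}$ and to $S'\circ\varphi\circ\cR\circ\eta$ on the complement), pulls back the standard complex structure to get an $\eta$-invariant Beltrami coefficient, straightens it to obtain $\widecheck{\varphi}:\Sigma\to\widetilde{\Sigma}$, and then invokes the uniqueness statement for the algebraic data of a standard mating (Proposition~\ref{uniqeness_of_mero_prop}) to produce a biholomorphism $M:\widetilde{\Sigma}\to\Sigma'$, finally setting $\widetilde{\varphi}=M\circ\widecheck{\varphi}$. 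You instead construct the map directly as $\big(\cR'\vert_{\overline{\mathfrak{D}'}}\big)^{-1}\circ\varphi\circ\cR$ on $\overline{\mathfrak{D}}$, symmetrize by $\eta,\eta'$, match the two definitions across $\partial^0\mathfrak{D}$ by the same diagram chase the paper uses to check continuity of $\widecheck{\cR}$, and conclude by removability of analytic arcs and finite sets for quasiconformal maps. Since $\cR'$ is univalent on $\mathfrak{D}'$, the relation $\cR'\circ\widetilde{\varphi}=\varphi\circ\cR$ on $\mathfrak{D}$ forces the two constructions to yield literally the same map; your version buys explicitness and avoids both the Measurable Riemann Mapping Theorem and Proposition~\ref{uniqeness_of_mero_prop}, at the price of having to justify $\varphi(\overline{\cD})=\overline{\cD'}$ (which does follow from $\varphi$ being a global homeomorphism conjugating the partially defined maps, hence carrying holes to holes) and a quasiconformal gluing lemma across the welding locus. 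Two details worth making explicit in a final write-up: at the finitely many points of $\partial\mathfrak{D}$ lying over $X$ the inverse $\big(\cR'\vert_{\overline{\mathfrak{D}'}}\big)^{-1}$ is multivalued, so $\widetilde{\varphi}$ there should be defined by continuous extension from $\mathfrak{D}$ (isolated points are removable for quasiconformal embeddings, so this is harmless); and $\varphi$ must carry $X$ to $X'$ so that the boundary chase stays on $\partial^0\cD'$, which again follows from $\varphi$ being a homeomorphism preserving the domains of definition.
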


\begin{proof}(cf. \cite[\S 5.3]{luo2025}.)
    Define a map, 
    \begin{equation*}
        \widecheck{\cR}:\Sigma\rightarrow\widehat{\C}, \hspace{6mm}\widecheck{\cR}:= \begin{cases*}
        \varphi\circ\cR, & on $\overline{\mathfrak{D}}$\\
        S'\circ\varphi\circ\cR\circ\eta, & on $\Sigma\setminus\mathfrak{D}$.
    \end{cases*}
    \end{equation*}
    The piecewise definitions patch up continuously because we have, $S\circ \cR\circ\eta = \cR$ on $\partial\mathfrak{D}$, and $\varphi\circ S = S'\circ \varphi$ on $\partial\cD$. As a result, $\widecheck{\cR}$ is quasi-regular. Let $\mu$ be the Beltrami coefficient on $\Sigma$ defined as the pull-back of the standard complex structure on $\widehat{\C}$ under the map $\widecheck{\cR}$. By construction, 
    $$
    S' = \widecheck{\cR}\circ\eta\circ\left(\widecheck{\cR}\big|_{\mathfrak{D}}\right)^{-1},
    $$
    which implies that $\mu$ is $\eta-$invariant. Let $\widecheck{\varphi}$ be the quasiconformal straightening map for $\mu$; i.e., $\widecheck{\varphi}:\Sigma\to\widetilde{\Sigma}$ is a quasiconformal homeomorphism that pulls back the standard complex structure on some Riemann surface $\widetilde{\Sigma}$ (not necessarily biholomorphic to $\Sigma)$ to the complex structure given by $\mu$ on $\Sigma$. Define the map, 
    $$
    \widetilde{\cR}:\widetilde{\Sigma}\rightarrow\widehat{\C}, \hspace{6mm} \widetilde{\cR}:=\widecheck{\cR}\circ\widecheck{\varphi}^{-1}.  
    $$
    Since $\varphi$ conjugates the action of $S$ to that of $S'$ away from $\bigcup_{j=1}^r\cV_j$, we conclude that the holomorphic branched cover $\widetilde{\cR}$ satisfies 
    $$
    \widetilde{\cR} \equiv \varphi\circ\cR\circ\widecheck{\varphi}^{-1}
    $$ 
    outside $\widetilde{\phi}\left(\eta\left(\left(\cR\vert_{\mathfrak{D}}\right)^{-1}\left(\bigcup_{j=1}^r\cV_j\right)\right)\right)$
    Further, the map $\widecheck{\varphi}\circ\eta\circ\widecheck{\varphi}^{-1}$ is an involution on $\widetilde{\Sigma}$ with $2g+2$ fixed points, where $g$ is the genus of the surface $\Sigma$. Therefore, $\widecheck{\varphi}\circ\eta\circ\widecheck{\varphi}^{-1} = \widetilde{\eta}$, where $\widetilde{\eta}$ is a hyperelliptic involution on $\widetilde{\Sigma}$. Now set, $\widetilde{\mathfrak{D}}:= \widecheck{\varphi}(\mathfrak{D})$, and observe that,
    $$
    S'\big|_{\cD'} = \widetilde{\cR}\circ\widetilde{\eta}\circ\left(\widetilde{\cR}\big|_{\widetilde{\mathfrak{D}}}\right)^{-1}.
    $$
    Therefore, the $4-$tuples $(\widetilde{\Sigma}, \widetilde{\cR}, \widetilde{\mathfrak{D}}, \widetilde{\eta})$ and $(\Sigma', \cR', \mathfrak{D}', \eta')$ both satisfy the hypotheses of Proposition~\ref{uniqeness_of_mero_prop} for the standard conformal mating $S':\overline{\mathcal{D}'}\rightarrow\widehat{\C}$. This gives us a biholomorphism $M:\widetilde{\Sigma}\rightarrow\Sigma'$ satisfying $\eta'\circ M = M\circ \widetilde{\eta}$, $M(\widetilde{\mathfrak{D}}) = \mathfrak{D}'$, and $\widetilde{\cR} = \cR'\circ M$. It is now immediately checked that the map $\widetilde{\varphi}:= M\circ\widecheck{\varphi}$ is the desired quasiconformal map with $\cR'\equiv \widecheck{\cR}\circ\widetilde{\phi}^{-1}$ and $\widetilde{\phi}\circ\eta\circ\widetilde{\phi}^{-1}\equiv \eta'$.
\end{proof}

\begin{cor}\label{equiv_limit_sets_cor}
    The map $\widetilde{\varphi}$ conjugates the action of $\mathfrak{C}$ on $\Lambda(\mathfrak{C})$ to the action of $\mathfrak{C}'$ on $\Lambda(\mathfrak{C}')$.
\end{cor}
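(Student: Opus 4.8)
The plan is to prove the statement in two stages: first establish the set-theoretic identity $\widetilde{\varphi}(\Lambda(\mathfrak{C})) = \Lambda(\mathfrak{C}')$, and then verify that $\widetilde{\varphi}$ intertwines the two correspondence relations on these limit sets. Throughout I will use the two structural properties of $\widetilde{\varphi}$ furnished by Proposition~\ref{qc_lift_prop}, namely that $\widetilde{\varphi}\circ\eta\circ\widetilde{\varphi}^{-1} = \eta'$, and that $\cR' \equiv \varphi\circ\cR\circ\widetilde{\varphi}^{-1}$ holds everywhere on $\Sigma'$ except on the exceptional set $E' := \widetilde{\varphi}\bigl(\eta\bigl((\cR|_{\mathfrak{D}})^{-1}(\bigcup_{j=1}^r\cV_j)\bigr)\bigr)$. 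The whole argument rests on the single geometric fact that $E'$, together with its $\widetilde{\varphi}$-preimage, lies over the Blaschke domains $\cV_j$, which sit in the interior of the non-escaping set and are therefore disjoint from the limit sets.

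I would begin by recording that $\varphi$ maps $\partial\cK$ onto $\partial\cK'$. Since $\varphi$ conjugates $S$ to $S'$ away from $\bigcup_{j}\cV_j$, and each $\cV_j$ lies in $\operatorname{int}(\cK)$ (being a forward-invariant Blaschke domain contained in a grand orbit $\cW_{l+j}\subseteq\cK$), the conjugacy is genuine in a full neighborhood of $\partial\cK$; as $\partial\cK$ is the boundary of the dynamically defined group-escaping complement, $\varphi(\partial\cK)=\partial\cK'$ follows. Next, using $\widetilde{\cV}_j\subseteq\cR^{-1}(\cV_j)\subseteq\cR^{-1}(\operatorname{int}\cK)\subseteq\operatorname{int}\widetilde{\cK}$ together with $\Lambda(\mathfrak{C})=\partial\widetilde{\cK}$, I would note that $\widetilde{\varphi}^{-1}(E')=\bigcup_j(\widetilde{\cV}_j\setminus\overline{\mathfrak{D}})$ is disjoint from $\Lambda(\mathfrak{C})$. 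Consequently, for $z\in\Lambda(\mathfrak{C})$ the relation $\cR'(\widetilde{\varphi}(z))=\varphi(\cR(z))\in\varphi(\partial\cK)=\partial\cK'$ is available, which gives $\widetilde{\varphi}(z)\in(\cR')^{-1}(\partial\cK')=\Lambda(\mathfrak{C}')$, i.e.\ $\widetilde{\varphi}(\Lambda(\mathfrak{C}))\subseteq\Lambda(\mathfrak{C}')$.

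The reverse inclusion is where the real care is needed, and I expect it to be the main obstacle, because $E'$ is expressed through the data of $S$ rather than $S'$, so its disjointness from $\Lambda(\mathfrak{C}')$ is not automatic. To handle it I would compute $\cR'\circ\widetilde{\varphi}$ on $\widetilde{\cV}_j\setminus\overline{\mathfrak{D}}$ directly from the construction in Proposition~\ref{qc_lift_prop}: writing a point as $\eta(z)$ with $z\in(\cR|_{\mathfrak{D}})^{-1}(\cV_j)$, one gets $\cR'(\widetilde{\varphi}(\eta(z)))=\widecheck{\cR}(\eta(z))=S'(\varphi(\cR(z)))$. Since $\varphi$ carries $\cV_j$ onto a forward-invariant Blaschke domain $\cV_j'$ of $S'$ lying in $\operatorname{int}\cK'$, and $S'(\cV_j')\subseteq\cV_j'$ (the Blaschke self-branch being forward invariant), this image lies in $\cV_j'\subseteq\operatorname{int}\cK'$, away from $\partial\cK'$. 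Hence $E'\cap\Lambda(\mathfrak{C}')=\emptyset$, and for $z'\in\Lambda(\mathfrak{C}')$ the relation $\cR(\widetilde{\varphi}^{-1}(z'))=\varphi^{-1}(\cR'(z'))\in\varphi^{-1}(\partial\cK')=\partial\cK$ applies, yielding $\widetilde{\varphi}^{-1}(z')\in\Lambda(\mathfrak{C})$. Together with the previous step this proves $\widetilde{\varphi}(\Lambda(\mathfrak{C}))=\Lambda(\mathfrak{C}')$.

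Finally, I would intertwine the correspondences. Recall $\Lambda(\mathfrak{C})=\partial\widetilde{\cK}$ is $\eta$-invariant by Proposition~\ref{invariance_prop}. Take $(z,w)\in\mathfrak{C}$ with $z,w\in\Lambda(\mathfrak{C})$, so that $\cR(w)=\cR(\eta(z))$ and $w\neq\eta(z)$; then $z,w,\eta(z)$ all lie in $\Lambda(\mathfrak{C})$, whose $\widetilde{\varphi}$-image avoids $E'$. Applying the conjugacy relation for $\cR'$ at these points and using $\eta'\circ\widetilde{\varphi}=\widetilde{\varphi}\circ\eta$, I obtain $\cR'(\widetilde{\varphi}(w))=\varphi(\cR(w))=\varphi(\cR(\eta(z)))=\cR'(\widetilde{\varphi}(\eta(z)))=\cR'(\eta'(\widetilde{\varphi}(z)))$, while injectivity of $\widetilde{\varphi}$ gives $\widetilde{\varphi}(w)\neq\eta'(\widetilde{\varphi}(z))$; thus $(\widetilde{\varphi}(z),\widetilde{\varphi}(w))\in\mathfrak{C}'$. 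The converse implication follows symmetrically via $\widetilde{\varphi}^{-1}$, completing the conjugacy of the action of $\mathfrak{C}$ on $\Lambda(\mathfrak{C})$ with that of $\mathfrak{C}'$ on $\Lambda(\mathfrak{C}')$.
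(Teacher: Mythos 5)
Your proof is correct and follows essentially the same route as the paper: both rest on the identity $\varphi(\partial\cK)=\partial\cK'$ and the relation between $\cR'$, $\varphi$, $\cR$, and $\widetilde{\varphi}$ from Proposition~\ref{qc_lift_prop}, followed by the same pointwise intertwining computation. The only organizational difference is that the paper obtains the set identity $\widetilde{\varphi}(\Lambda(\mathfrak{C}))=\Lambda(\mathfrak{C}')$ in one stroke from the globally valid relation $\cR'\circ\widetilde{\varphi}=\widecheck{\cR}$ together with $\widecheck{\cR}^{-1}(\partial\cK')=\Lambda(\mathfrak{C})$, whereas you prove the two inclusions separately and explicitly verify that the exceptional set $E'$ (where the conjugacy relation for $\cR'$ fails) misses both limit sets --- a more laborious but equally valid, and arguably more transparent, bookkeeping of where the conjugacy actually holds.
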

\begin{proof}
   We continue to use the notation introduced in Proposition~\ref{qc_lift_prop}. Also recall from Section~\ref{inv_partition_corr_subsec} that $\Lambda(\mathfrak{C})=\cR^{-1}(\partial\cK)$ and $\Lambda(\mathfrak{C}')=(\cR')^{-1}(\partial\cK')$, where $\cK, \cK'$ are the non-escaping sets of $S, S'$, respectively. 
   
   Note that since $\phi$ is a conjugacy between $S$ and $S'$ away from $\bigcup_{j=1}^r\cV_j$, it carries $\partial\cK$ onto $\partial\cK'$. This fact, and the definition of the quasi-regular map $\widecheck{\cR}$, together show that $\widecheck{\cR}^{-1}(\partial\cK')=\Lambda(\mathfrak{C})$. 
   Taking the relation $\cR'\equiv \widecheck{\cR}\circ\widetilde{\phi}^{-1}$ into consideration, we now have that
   $$
   \Lambda(\mathfrak{C}')= (\cR')^{-1}(\partial\cK')=\widetilde{\phi}\left(\widecheck{\cR}^{-1}(\partial\cK')\right)=\widetilde{\phi}(\Lambda(\mathfrak{C})).
   $$
   Finally, as $\cR'=\phi\circ\cR\circ\widetilde{\phi}^{-1}$ on $\widetilde{\phi}(\Lambda(\mathfrak{C}))=\Lambda(\mathfrak{C}')$, and $\widetilde{\phi}\circ\eta=\eta'\circ\widetilde{\phi}$, the following conjugacy relation holds between the local branches of $\mathfrak{C}\vert_{\Lambda(\mathfrak{C})}$ and $\mathfrak{C}'\vert_{\Lambda(\mathfrak{C}')}$:
   \begin{align*}
\widetilde{\phi}\circ\left(\cR^{-1}\circ\cR\circ\eta\right)\circ\widetilde{\phi}^{-1}&=(\widetilde{\phi}\circ\cR^{-1}\circ\phi^{-1})\circ(\phi\circ\cR\circ\widetilde{\phi}^{-1})\circ(\widetilde{\phi}\circ\eta\circ\widetilde{\phi}^{-1})\\
&=(\cR')^{-1}\circ\cR'\circ\eta'.
   \end{align*}
\end{proof}

\subsection{Marked blender surfaces}\label{marked_blender surface_subsec} Recall that in Section~\ref{group-like_subsubsec}, for a fixed index $i\in\{1,\cdots,l\}$, the sides of the preferred fundamental polygon $\Pi$ of the group $\Gamma_{n_i,p_i}$ were denoted by $C_{1,s}$, $s\in\{1,\cdots,p_i\}$, and the uniformizing map of the corresponding tiling set component $\dT_{i,s}$ was denoted by $\Phi_s$. Henceforth, we use the same notation for all $i\in \{1, 2, \cdots, l\}$ to avoid an overload of indices (i.e., we suppress the index $i$ in the notation $C_{1,s}$ and $\Phi_s$). 

From the proof of Proposition~\ref{orbifold_isom_prop}, the $p_i$-components $\dT_{i,1}, \dT_{i,2}, \cdots, \dT_{i,p_i}$ of $\widetilde{\mathcal{T}}_i$ can be indexed so that the welding line in $\dT_{i,s}$ is given by $\Phi_s(\psi_{\rho_i}(C_{1,s})\times\{s\})$. Set $p := \sum_{i=1}^lp_i$.

We now describe a procedure for marking an ordered set of points on the blender surface $\Sigma$. The first $p+r+1$ points are marked as follows:
\begin{itemize}[leftmargin=4mm]\label{markings_item}
  \item For $1\leq i\leq l$, $1\leq s\leq p_i$, let $y_{i,s}$ be the point $\Phi_s(\psi_{\rho_i}(z_{1,s})\times\{s\})\in\dT_{i,s}$, where $z_{i,s}$ is the Euclidean mid-point of the arc $C_{1,s}$. Define $x_k$, for $1\leq k\leq p$, to be the $k$-th term in the lexicographically ordered sequence $\{y_{i,s}\}_{i,s}$. 
  
  \item For $1 \leq j \leq r$, define $x_{p+j}$ to be the point in $\widetilde{\mathcal{V}}_j \cap \mathfrak{D}$ such that $\mathcal{R}(x_{p+j}) = \mathfrak{X}_{l+j}(0)$.
  
  \item Finally, let $x_{p+r+1}$ be a point on $\Lambda(\mathfrak{C})$ satisfying $\mathcal{R}(x_{p+r+1}) = \mathfrak{X}_P(z_0)$, where $z_0$ is the landing point of the dynamical $0-$ray of $P$.
\end{itemize}
The remaining ordered points have to be marked with care to avoid repetition. We do it as follows. For $1\leq i\leq l$, and $1\leq s\leq p_i$, we let 
    $$
    w_{i,s}:= \Phi_s\Bigg(\psi_{\rho_i}\left(\exp{\left(\frac{2\pi i(s-1)}{n_ip_i}\right)}\right)\times\{s\}\Bigg),
    $$
which is one of the end-points of the unique welding line in $\dT_{i,s}$. Again, as before, for $1\leq k\leq p$, let $t_k$ be the $k$-th term in the lexicographically ordered sequence $\{w_{i,s}\}_{i,s}$. Further, for $1\leq j\leq r$, we let $t_{p+j}$ to be the point in $\overline{\mathfrak{D}}$ such that $\cR(t_{p+j}) = \mathfrak{X}_{l+j}(1)=\mathfrak{X}_P(\xi_{l+j})$ (see Section~\ref{standard_matings_subsec}). 
\begin{itemize}[leftmargin=4mm]
    \item Consider the ordered sequence $\{x_1,\cdots,x_{p+r+1},t_1,\cdots,t_{p+r}\}$. Note that the first $p+r+1$ points in this sequence are pairwise distinct. Now, if a point appears more than once in the sequence, we keep its first occurrence and truncate the ordered sequence by deleting the later occurrences. Applying this procedure to all repeating points, we obtain an ordered list $\{x_1,\cdots,x_q\}$ of pairwise distinct points on $\Sigma$, where $q\geq p+r+1$.
\end{itemize}

\begin{remark}\label{markings_preserved_rem}
    If the tuples $(\Sigma_1, \cR_1)$, $(\Sigma_2, \cR_2)$ were both associated with the conformal mating $S$, that is, they satisfied Conditions $(1)$-$(4)$ of Proposition~\ref{uniqeness_of_mero_prop}; then $M$ would take the marked points on $\Sigma_1$ to the marked points on $\Sigma_2$ bijectively, as an ordered set. 
\end{remark}

\subsubsection{An invariant graph for the markings}\label{marking_graphs_subsec}

Recall that there is a total number of $q$ marked points on $\Sigma$. Henceforth, all our blender surfaces would be marked according to the convention described above. So for the groups $\{\Gamma_i\}_{i=1}^l$, and the Blaschke products $\{B_j\}_{j=1}^r$, we now have an associated marked blender surface $(\Sigma, x_1, x_2, \cdots, x_q)$, and a uniformizing meromorphic map $\cR$. 

Given such a marked blender surface $\Sigma$, consider the graph $\mathscr{G}$ with vertex set $\{v_1, v_2, \cdots, v_q\}$, where each vertex $v_i$ corresponds to the marked point $x_i$. We place an edge between two vertices $v_{i_1}$ and $v_{i_2}$ if and only if:
\begin{itemize}[leftmargin=4mm]
    \item there is a welding line connecting $x_{i_1}$ and $x_{i_2}$, or
    \item there exists $j\in\{1,\cdots,r\}$ such that, possibly after swapping the indices $i_1$ and $i_2$, we have $\cR(x_{i_1})\in\cV_j$ and $\cR(x_{i_2})=\mathfrak{X}_{l+j}(1)=\mathfrak{X}_P(\xi_{l+j})\in\partial\cV_j$.
\end{itemize}

Note that for $i\in\{1,\cdots,p+r\}$, the vertex $v_i$ has valence $1$ in the graph $\mathscr{G}$. Further, by Corollary~\ref{equiv_limit_sets_cor}, $\mathscr{G}$ would remain the same for marked blender surfaces associated with any collection of groups $\Gamma_i\in\text{Teich}^{\omega_i}(\Gamma_{n_i,p_i})$, $i\in\{1,\cdots,l\}$, and Blaschke products $B_j\in\mathcal{B}_{d_j}$, $j\in\{1,\cdots,r\}$. 

\subsection{Equivalence classes of correspondences as a Hurwitz space}\label{equivalence_of_corr_subsec}
Throughout this section, we denote by $\Sigma$ (or, $\Sigma_i$, $\Sigma'$), a hyperelliptic Riemann surface of genus $g$ with $q$ marked points $\{x_i\}_{i=1}^q$. Typically, the term `hyperelliptic' refers to compact Riemann surfaces of genus $g \geq 2$ that admit a degree 2 branched cover to the Riemann sphere; or equivalently, compact Riemann surfaces of genus $g \geq 2$ admitting a conformal involution with $2g+2$ fixed points. However, for our purposes, we shall extend this terminology to include all compact Riemann surfaces that admit such a cover (equivalently, an involution), regardless of genus. 

It is a classical fact that for Riemann surfaces of genus $g = 0$ and $g = 1$, a hyperelliptic involution $\eta$ can, up to conjugation by a conformal automorphism, be taken to be $\eta(z) = 1/z$ and $\eta(z) = -z$, respectively. Moreover, for surfaces of genus $g \geq 2$, the hyperelliptic involution, when it exists, is unique and central in the automorphism group $\text{Aut}(\Sigma)$ \cite[III.7.9]{farkas1980riemann}. Accordingly, in what follows, by a hyperelliptic involution $\eta$ of a surface $\Sigma$, we would mean $\eta(z) = 1/z$ for $g = 0$, $\eta(z) = -z$ for $g = 1$, and the unique hyperelliptic involution in $\text{Aut}(\Sigma)$ for $g \geq 2$.

With these conventions set, we now consider holomorphic correspondences $\mathfrak{C}\subset \Sigma\times\Sigma$ of the form,
$$
(z,w)\in \mathfrak{C}\iff \cR(w) = \cR(\eta(z)), \hspace{2mm} w\neq \eta(z),
$$
where $\cR$ is a meromorphic map on $\Sigma$ of degree $d$, and $\eta$ the hyperelliptic involution of $\Sigma$. Note that the tuples $(\Sigma, \cR)$ and $(\Sigma, M\circ \cR)$, for $M\in PSL_2(\C)$, generate the same correspondence $\mathfrak{C}$. Thus, it is natural to identify $\mathfrak{C}$ with an equivalence class of marked tuples $[(\Sigma, \cR)]$, where $(\Sigma, \cR)\sim(\Sigma, \cR')$, if there exists a M\"obius map $M\in PSL_2(\C)$, such that $\cR = M\circ \cR'$.

In light of Proposition~\ref{uniqeness_of_mero_prop}, we say that the correspondences $\mathfrak{C}_1$ and $\mathfrak{C}_2$, defined by the classes of marked tuples $[(\Sigma_1, \cR_1)]$ and $[(\Sigma_2, \cR_2)]$, respectively, are equivalent if there exists a biholomorphism $M: \Sigma_1 \to \Sigma_2$, such that $\cR_1= \cR_2\circ M$ and $\eta_2\circ M = M \circ \eta_1$. Additionally, we require $M$ to preserve the markings as an ordered set.

We are thus led to define the marked space of equivalence classes of correspondences by the relation $\sim_c$, where $(\Sigma, \cR, x_1, x_2, \cdots, x_q) \sim_c (\Sigma', \cR', x_1', x_2'\cdots, x_q')$, if and only if, 
\begin{equation}\label{marked_corr_eq}
    \cR = M_2 \circ \cR'\circ M_1,\ \text{and }\ \eta'\circ M_1 = M_1\circ\eta,
\end{equation}
where, as before, $M_1:\Sigma\rightarrow\Sigma'$ is a biholomorphism that preserves the ordered set of marked points, $M_2\in PSL_2(\C)$, and $\eta$, $\eta'$ are the hyperelliptic involutions of $\Sigma$, $\Sigma'$, respectively. Each equivalence class corresponds to a point in the restricted Hurwitz space $\mathcal{H}_{g, d, q}^{\text{hyp}}$, defined as, 
\begin{align*}
\mathcal{H}_{g,d, q}^{\text{hyp}} := \{f:X \to \C \hspace{1mm}|\ 
X, \text{ a hyperelliptic genus }g \text{ surface } \text{with } q \\  \text{ ordered marked points}, \text{ deg}(f)= d\}/\sim,
\end{align*}
where $\sim$ denotes equivalence up to isomorphism of the domain and post-composition by Möbius transformations on the target; preserving the markings (as in Equation~\ref{marked_corr_eq}). Let us denote by $\mathcal{M}$, the following parameter space,

$$
\mathcal{M}:= \Bigg(\prod_{i=1}^l\text{Teich}^{\omega_i}(\Gamma_{n_i,p_i})\Bigg)\times \Bigg(\prod_{j=1}^r\mathcal{B}_{d_j}\Bigg).
$$
For each $t\in \mathcal{M}$, we get a conformal mating $S_t:\overline{\mathcal{D}}_t\rightarrow\widehat{\C}$, that is unique upto a M\"obius conjugation (see Proposition~\ref{uniqueness_standard_mating_prop}). Associated with the map $S_t$, is the tuple
$$
(\Sigma_t, \cR_t, x_1(t), x_2(t),\cdots, x_q(t)),
$$ 
of a marked blender surface of genus $g$, and a uniformizing meromorphic map of degree $d$. By Proposition~\ref{mating_surf_exists_prop}, $\Sigma_t$ is connected, and hence hyperelliptic. Observe that for the map $M\circ S_t\circ M^{-1}$, with $M\in PSL_2({\C})$, the associated marked pair would be 
$$
(\Sigma_t, M\circ\cR_t, x_1(t), x_2(t),\cdots, x_q(t)).
$$ 
Thus, by Proposition~\ref{uniqeness_of_mero_prop} and Remark~\ref{markings_preserved_rem}, we have a well-defined map
$$
\mathfrak{B}: \mathcal{M} \longrightarrow \mathcal{H}_{g, d, q}^{\text{hyp}}
$$
$$
t \mapsto [\Sigma_t, \cR_t, x_1(t), x_2(t),\cdots, x_q(t)].
$$
Let us denote by $\mathscr{C}$ the image of $\mathfrak{B}$ in the restricted Hurwitz space $\mathcal{H}_{g, d, q}^{\text{hyp}}$, and call it the \emph{moduli space of correspondences} associated with $\mathcal{M}$.

\subsection{The inverse map}\label{injectivity_subsec}
In this subsection, we construct an explicit inverse to the map $\mathfrak{B}$ on the moduli space $\mathscr{C}$ of correspondences associated with $\mathcal{M}$. Let $[\mathfrak{C}] = [\Sigma, \cR, x_1, \cdots, x_q] \in \mathscr{C}$. Recall from Theorem~\ref{corr_recover_thm} that the dynamical structure of $\mathfrak{C}$ emerged primarily via the partition of $\Sigma$ induced by the conformal mating $S:\overline{\cD}\rightarrow\widehat{\C}$. However, the conformal mating $S$ is no longer assumed to be available. Our goal is to recover the full dynamical structure of the correspondence $\mathfrak{C}$, defined by the pair $(\Sigma, \cR)$, directly from the data of $\cR$ and the markings $\{x_i\}_{i=1}^{q}$. Getting to the inverse of $\mathfrak{B}$ amounts to achieving this recovery. 

By construction of $\mathscr{C}$, there exist groups $\{\Gamma_i\}_{i=1}^l$, and maps $\{B_j\}_{j=1}^r$, such that
$$
\mathfrak{B}(\Gamma_1,\cdots,\Gamma_l, B_1,\cdots, B_r) = [\mathfrak{C}].
$$
Let $S$ be a standard conformal mating associated with the collections of groups $\{\Gamma_i\}_{i=1}^l$ and maps $\{B_j\}_{j=1}^r$, equipped with conformal conjugacies $\{\mathfrak{X}_i\}_{i=1}^{l+r}$, and $\mathfrak{X}_P$ (see the proof of Theorem~\ref{conf_mating_thm}). Recall that the map $S$ combines the dynamics of the said collections with a critically fixed polynomial $P$ (as outlined in the beginning of Section~\ref{Hurwitz_sec}). From Equation~\ref{marked_corr_eq} we see that, $\cR(x_{p+r+1})\in \mathfrak{X}_P(z_0)$, where $z_0 \in \mathcal{J}(P)$.

Since the iterated pre-images of $z_0$ is dense in $\mathcal{J}(P)$, the iterated pre-images of $\mathfrak{X}_P(z_0)$ under the map $S$, would be dense in $\partial\cK$ (see Remark~\ref{invariant_subset_rem}). As a result, the closure of the grand orbit of $x_{p+r+1}$ under the correspondence $\mathfrak{C}$, denoted by $\widetilde{\Lambda}$ (the limit set of the correspondence $\mathfrak{C}$), would partition $\Sigma$ into countably many topological disks. 

For $1\leq i\leq l$, the components in $\Sigma\setminus\widetilde{\Lambda}$ containing the first $p$ points in the ordered set $\{x_1,\cdots,x_q\}$, give us the open sets $\widetilde{\cT}_i$. At least one of the endpoints of the welding line in $\dT_{i,s}$ can then be recovered from the last $q-p-r$ points (using the graph $\mathscr{G}$); and the welding line itself is the unique bi-infinite geodesic in the hyperbolic metric of $\dT_{i,s}$ connecting the points $x_{i_1}$ and $x_{i_2}$ if the edge $(v_{i_1}, v_{i_2})\in \mathscr{G}$. The union of these welding lines allows us to reconstruct the boundary $\partial\mathfrak{D}$. Further, the univalence of the map $\mathcal{R}$ on one of the components of $\Sigma \setminus \partial\mathfrak{D}$ leads to the recovery of the domain $\mathfrak{D}$ itself.
 
Set $\mathfrak{C}_i:= \mathfrak{C}\cap (\widetilde{\cT}_i\times \widetilde{\cT}_i)$ (cf. Equation~\ref{restricted_corr_eq}). Note that the action of $\mathfrak{C}_i$ on the welding lines in $\widetilde{\cT}_i$ tessellates $\dT_{i,1}$. Note that the welding line in $\dT_{i,1}$ belongs to the boundary of exactly two tiles, one is contained in $\overline{\mathfrak{D}}$, and the other, a rank $0$ tile, is in the complement $\Sigma\setminus\mathfrak{D}$. Let us denote by $\widetilde{\mathfrak{S}}$ the rank $0$ tile in $\dT_{i,1}$. The proofs of Lemma~\ref{aut_corr_lem} and Proposition~\ref{orbifold_isom_prop} yield the following observations.

The map $\cR$ sends $\widetilde{\mathfrak{S}}$ onto the rank $0$ tile of $\cT_i$ (in the dynamical plane of $S$) as an $n_i:1$ branched cover. The conformal automorphism $\tau_i^{p_i}:\dT_{i,1}\to\dT_{i,1}$ can be recovered as a generator of the deck transformation group for $\cR:\dT_{i,1}\to\cT_i$. The map $\tau_i^{p_i}$ preserves the rank $0$-tile $\widetilde{\mathfrak{S}}$, and the set $\mathfrak{S}$ (of Proposition~\ref{orbifold_isom_prop}) can be taken to be a fundamental domain for the $\tau_i^{p_i}$-action on $\widetilde{\mathfrak{S}}$. Further, the multi-valued map $\mathfrak{C}_i$ acts as a group of conformal automorphisms on $\dT_{i,1}$, and this group action is properly discontinuous, admitting $\mathfrak{S}$ as its closed fundamental domain. The quotient $\dT_{i,1}/\mathfrak{C}_i$ is a genus $0$ orbifold, biholomorphic to an element in $\mathcal{F}$ (see Section~\ref{mateable_maps_subsec}). Further, by Remark~\ref{recover_rep_rem}, the side-pairings on $\partial\mathfrak{S}$ induced by the action of $\mathfrak{C}_i$, help us recover the representation $\widehat{\rho}_i \in \text{Teich}^{\omega_i}(\Gamma_{n_i,p_i})$. Therefore, the correspondence $\mathfrak{C}$ determines a unique element,
$$
(\Gamma_1, \Gamma_2, \cdots, \Gamma_l)\in \prod_{i=1}^l\text{Teich}^{\omega_i}(\Gamma_{n_i,p_i}).
$$

Now for a fixed $j\in \{1, 2, \cdots, r\}$, let $\widecheck{\cV}_j$ be the component in $\Sigma\setminus\widetilde{\Lambda}$ that contains the point $x_{p+j}$. Suppose that $v_{k_j}$ is adjacent to $v_{p+j}$ in the graph $\mathscr{G}$. By Proposition~\ref{recover_Blaschke_prop}, the branch
$\Big(\cR\big|_{\widecheck{\cV}_j}\Big)^{-1}\circ\cR\circ\eta:\widecheck{\cV}_j\to\widecheck{\cV}_j$
of the correspondence $\mathfrak{C}$ can be conjugated to a unique Blaschke product $B_j\in\mathcal{B}_{d_j}$, where the normalized conformal conjugacy from $\widecheck{\cV}_j$ to $\D$ sends $x_{p+j}$ to the origin and the homeomorphic boundary extension of the conjugacy sends $x_{k_j}$ to $1$.
As a result, the correspondence $\mathfrak{C}$ also determines a unique element,
$$
(B_1, B_2, \cdots, B_r) \in \prod_{j=1}^r\mathcal{B}_{d_j}.
$$
Therefore, the above procedure gives us a well-defined map
$$
\mathfrak{B}': \mathscr{C}\longrightarrow \mathcal{M},
$$
that is, by construction, the inverse of the map $\mathfrak{B}$. 

\subsection{Unification of Teichm\"uller spaces and Blaschke spaces in Hurwitz spaces}\label{para_unif_subsec}

From the discussions in Sections~\ref{limit_sets_equiv_subsec},~\ref{equivalence_of_corr_subsec} and~\ref{injectivity_subsec}, it follows that the parameter space for correspondences $\mathfrak{C}$ combining the dynamics of groups $\{\widehat{\Gamma}_i\}_{i=1}^l$ and maps $\{B_j\}_{j=1}^r$ is $\mathscr{C} = \mathfrak{B}(\mathcal{M})$, and we have,
$$
\mathscr{C} \cong \left(\text{Teich}(\D/\widehat{\Gamma}_1)\times\cdots\times\text{Teich}(\D/\widehat{\Gamma}_l)\right)\times\left(\mathcal{B}_{d_1}\times\cdots\times\mathcal{B}_{d_r}\right).
$$
We are now in a position to complete the proof of Theorem~\ref{hurwitz_thm_intro}, thus concluding the second main component of the narrative of this paper. 
\begin{proof}[Proof of Theorem~\ref{hurwitz_thm_intro}]
    The maps $\mathfrak{B}$, $\mathfrak{B}'$, constructed in Sections~\ref{equivalence_of_corr_subsec} and~\ref{injectivity_subsec}, show the existence of an injective map from $\mathcal{M}$ into the Hurwitz space $\mathcal{H}_{g, d, q}^{\text{hyp}}$. 
\end{proof}

\begin{remark}
    Recall that in Example~\ref{111_ex}, each parameter $t \in \text{Teich}(\Gamma^2_{1,4})\times \mathcal{B}_2$ gives rise to a marked tuple $(\Sigma_t, \cR_t)$, where $\Sigma_t$ is an elliptic curve, and $\cR_t:\Sigma_t\rightarrow\widehat{\C}$ is a degree $4$ elliptic function with $6$ simple critical points and a double critical point. Therefore, by Theorem~\ref{hurwitz_thm_intro}, we get an injective map from $\text{Teich}(\Gamma^2_{1,4})\times \mathcal{B}_2$ into the Hurwitz space $\mathcal{H}_{1, 4, 11}^{\text{hyp}}$.
\end{remark}

\end{document}